
\documentclass[reqno,11pt]{amsart}

\usepackage[T1]{fontenc}
\usepackage[latin1]{inputenc}
\usepackage{times}
\usepackage{color}
\usepackage{psfrag,graphicx}
\usepackage{amsfonts,amssymb}
\usepackage{amsmath,amscd,amsxtra,latexsym,mathrsfs, epsfig}
\usepackage{dsfont}

\usepackage[english]{babel}
\usepackage[T1]{fontenc} \usepackage[latin1]{inputenc}
\usepackage{graphics,enumerate,amssymb,textcomp,color,graphicx}
\usepackage{color}
\usepackage{cancel}
\usepackage{ulem} 
\usepackage{stmaryrd}

   \def\s{\sigma} 
 \def\phi{\varphi}  \def\b{\beta}
  \def\R{\mathbb{R}} 
 \def\1{\mathbbm{1}}

\theoremstyle{plain} \newtheorem{thm}{Theorem}[section]
\newtheorem{lemma}[thm]{Lemma} 
\newtheorem{hypo}[thm]{Hypothesis} 
\newtheorem{corolary}[thm]{Corollary}
\newtheorem{proposition}[thm]{Proposition}
\newtheorem{corollary}[thm]{Corollary}
\newtheorem{Remark}[thm]{Remark}
\newtheorem{definition}[thm]{Definition}
\def\mathpal#1{\mathop{\mathchoice{\text{\rm #1}}%
    {\text{\rm #1}}{\text{\rm #1}}%
    {\text{\rm #1}}}\nolimits}

\newcommand\ld{\lambda}
\newcommand\sg{\sigma}
\newcommand\te{\theta}

\newcommand\vol{\mathpal{vol}}

\newcommand\dive{\mathpal{div}}

\newcommand\rC{\mathrm{C}}
\newcommand{\TT}{\mathbb{T}}
\newcommand{\st}{\,:\,}
\newcommand\RR{\mathbb{R}}
\newcommand{\pa}{\partial}
\newcommand{\bq}{\begin{eqnarray*}}
\newcommand{\bqn}[1]{\begin{eqnarray}\label{#1}}
\newcommand{\eq}{\end{eqnarray*}}
\newcommand{\eqn}{\end{eqnarray}}
\newcommand{\fo}{\forall\ }

\newcommand{\iy}{\infty}
\newcommand{\ZZ}{\mathbb{Z}}

\definecolor{officegreen}{rgb}{0.0, 0.5, 0.0}

\newcommand{\lt}{\left}
\newcommand{\rt}{\right}

\newcommand{\Ent}{\mathrm{Ent}}
\newcommand{\lin}{\llbracket}
\newcommand{\rin}{\rrbracket}
\newcommand{\lan}{\lt\langle}
\newcommand{\ran}{\rt\rangle}
\newcommand{\ri}{\rightarrow}

\DeclareFontFamily{U}{txsyc}{}
\DeclareFontShape{U}{txsyc}{m}{n}{
   <-> txsyc%
}{}
\DeclareFontShape{U}{txsyc}{bx}{n}{
   <-> txbsyc%
}{}
\DeclareFontShape{U}{txsyc}{l}{n}{<->ssub * txsyc/m/n}{}
\DeclareFontShape{U}{txsyc}{b}{n}{<->ssub * txsyc/bx/n}{}
\DeclareSymbolFont{symbolsC}{U}{txsyc}{m}{n}
\SetSymbolFont{symbolsC}{bold}{U}{txsyc}{bx}{n}
\DeclareFontSubstitution{U}{txsyc}{m}{n}
\DeclareMathSymbol{\df}{\mathrel}{symbolsC}{"42}
\DeclareMathSymbol{\fd}{\mathrel}{symbolsC}{"43}
\DeclareMathSymbol{\lJoin}{\mathrel}{symbolsC}{"58}
\DeclareMathSymbol{\rJoin}{\mathrel}{symbolsC}{"59}

\newcommand{\leqm}{\stackrel{(m)}{\leq}}
\newcommand{\f}[2]{\frac{#1}{#2}}
\newcommand{\rinf}{\rho_{\mathrm{inf}}}

\newcommand{\strict}{}
\newcommand{\strictly}{}
\newcommand{\rai}{r_{\mathrm{int}}}
\newcommand{\rao}{r_{\mathrm{out}}}
\newcommand{\ci}{c_{\mathrm{int}}}
\newcommand{\co}{c_{\mathrm{out}}}
\newcommand{\comment}[1]{}
\newcommand{\cC}{\mathcal{O}}
\newcommand{\wi}{\widetilde}

\newcommand{\aaa}{a} 
\newcommand{\aaaa}{b} 
\newcommand{\CPK}[1]{\textcolor{black}{#1}}


\begin{document}

\allowdisplaybreaks

\title{The stochastic renormalized curvature flow for planar convex sets}
\author[M. Arnaudon]{Marc Arnaudon} \address{Institut de Math\'ematiques de Bordeaux, UMR 5251, \hfill\break\indent 
Univ. Bordeaux, CNRS, Bordeaux INP
} \email{marc.arnaudon@math.u-bordeaux.fr}
\author[K. Coulibaly-Pasquier]{Kol\'eh\`e Coulibaly-Pasquier} \address{Institut \'Elie Cartan de Lorraine, UMR 7502,\hfill\break\indent Universit\'e de Lorraine and CNRS} \email{kolehe.coulibaly@univ-lorraine.fr}
\author[L. Miclo]{Laurent Miclo} \address{{
Toulouse School of Economics, UMR 5314,\hfill\break\indent
 CNRS and Universit\'e de Toulouse}} \email{{laurent.miclo@math.cnrs.fr}}
 
\date{\today\ \emph{ File: }\jobname.tex\\
LM acknowledges fundings from the grants ANR-17-EURE-0010 and AFOSR-22IOE016}
\maketitle

\begin{abstract}
We investigate renormalized curvature flow (RCF) and stochastic renormalized curvature flow (SRCF) for convex sets in the plane.
RCF is the gradient descent flow for logarithm of $\sigma/\lambda^2$ where $\sigma$ is the perimeter and $\lambda$ is the volume. 
SRCF is RCF perturbated by a Brownian noise and has the remarkable property that it can be intertwined with the Brownian motion, yielding a generalization of Pitman "$2M-X$" theorem.
We prove that along RCF,  entropy $\mathcal{E}_t$ for curvature as well as $h_t:=\sigma_t/\lambda_t$ are non-increasing. We deduce  infinite lifetime and convergence to a disk after normalization.
For SRCF the situation is more complicated. The process $(h_t)_t$ is always a supermartingale.  For $(\mathcal{E}_t)_t$ to be a supermartingale, we need that the starting set is invariant by the isometry  group $G_n$  generated by  the reflection with respect to the vertical line and the rotation of angle $2\pi/n$ with $n\ge 3$. But for proving infinite lifetime, we need invariance of the starting set by $G_n$ with $n\ge 7$. We provide the first SRCF with infinite lifetime which cannot be reduced to a finite dimensional flow. Gage inequality plays a major role in our study of the regularity of flows, as well as a careful investigation of morphological skeletons. We characterize symmetric convex sets with star shaped skeletons in terms of properties of their Gauss map. Finally, we establish a new isoperimetric estimate for these sets, of order $1/n^4$ where $n$ is the number of branches of the skeleton.

\noindent MSC2020 primary: 60H15, secondary: 53E10, 35K93, 60J60
\end{abstract}

\section{Introduction} 

The evolution of simple closed surfaces in Euclidean spaces by mean curvature flow has been investigated for a long time, originally motivated by Physics. It is a kind of nonlinear geometrical heat equation.
Here we are interested in the two-dimensional case, known as the curve shortening flow, since it 
can be described as the gradient descent flow for the perimeter.  We will call it the \textbf{curvature flow} (CF), since we have to perturbe it by deterministic and stochastic terms breaking the shortening interpretation.  In 1986, Gage and Hamilton \cite{Gage_Hamilton} proved that starting from any convex smooth simple closed curve, the curvature flow converges in finite time to one point, and the form of the curve becomes circular. In 1987, Grayson \cite{Grayson:87} generalized this result to non necessarily convex starting curve. It is a remarkable fact that no self-intersection occurs during the evolution of the flow.

\subsection{Motivations for renormalized and stochastic curvature flows and main results}

The renormalized curvature flow (RCF) can roughly be defined as the solution to the evolution equation for curves by curvature, to which we add a constant normal field to prevent  implosion.  More precisely we will prove in Lemma \ref{gradfow} that RCF is the gradient descent flow for logarithm of ${\sigma(\partial D)}/{\lambda(D)^2}$ where the considered curve is the boundary $\partial D$ of a bounded domain $D$, $\lambda(D)$ is the volume of the domain and $\sigma(\partial D)$ is the perimeter of the curve. For this flow, self-intersection can occur when the starting curve is not convex. But when the starting curve is convex, we will prove  in Theorem~\ref{TH_section2} that the lifetime of the flow $(\partial D_t)_{t\ge 0}$ is infinite and the curve converges to a circle. Two quantities will be investigated for the convergence: the ratio $ h_t\df{\sigma(\partial D_t)}/{\lambda(D_t)} $ and the entropy $\displaystyle {\rm Ent}_t\df\int_{\partial D_t}\rho_t\log \rho_t $, $\rho_t$ being the curvature at each point of $\partial D_t$. We will prove that these two quantities are non-increasing along the flow (Lemmas~\ref{eq_geo} and~\ref{ent_est}).

One of the main goals of this paper is the investigation of a stochastic renormalized curvature flow (SRCF) in $\RR^2$, where  a one-dimensional normal Brownian noise is added to the evolution of the RCF.  The intensity of the noise is chosen so that the generator of the flow is intertwined with that of the Brownian motion, via a Markov kernel, see \cite{zbMATH07470497} and \cite{arnaudon:hal-03037469}, leading to  nice connections with Bessel-3 processes. When the intertwining is realized through a coupling of the domain-valued process $(D_t)_t$ with  a Brownian motion $(X_t)_{t}$ such that at any time $t\geq 0$, $X_t$ is uniformly distributed
inside $D_t$ conditionally to $(D_s)_{0\le s\le t}$, the construction is a generalization of the famous Pitman "$2M-X$" theorem. An important object in the construction of the coupling $(X_t,D_t)$ is the inner skeleton $S_t$ of $D_t$, which is the singularity set of distance to boundary, inside $D_t$: the evolution equation for $(\partial D_t)_t$ has a component of the drift which is proportional to the local time of $X_t$ at $S_t$, cf.\ \cite{arnaudon:hal-03037469}. A  remarkable fact about the skeleton is that although $(\partial D_t)_t$ has a Brownian noise, $(S_t)_t$ has finite variation. { As we will see in the present paper, the inner skeleton process $(S_t)_t$ also plays a role in the lifetime of $(D_t)_t$. We will prove that starting with a convex subset $D_0$ of $\RR^2$, explosion occurs only when~$\partial D_t$ meets $S_t$ (Theorem~\ref{th_cores}).} We will also prove that similarly to the deterministic situation, the process $h_t$ is a supermartingale (Lemma~\ref{prop-h-sm}). For the entropy being a supermartingale, we will need that $D_0$ is invariant by the linear group $G_n$ generated by the rotation of angle $2\pi/n$ with $n\ge 3$, and the symmetry with respect to an axis (we will choose the vertical one, see Proposition~\ref{Ent}). $G_n$-invariance for any fixed $n\ge 2$  will be proved to be preserved by the flow.  Finally we will prove that $G_n$-invariance of $D_0$ with some $n\ge 7$ implies infinite lifetime for the stochastic renormalized curvature flow (Theorem~\ref{infinite-lifetime}).

In Section~\ref{conv-sym} we investigate some class of convex sets in $\R^2$, which are symmetric with respect to $G_n$ and have star-shaped skeletons. We prove (Proposition~\ref{prop_S}) that they are preserved by all our flows. The last section is devoted to the proof of a new isoperimetric inequality for these classes of convex sets (Proposition~\ref{isoperimetric}). A bound of order $1/n^4$ is obtained. 
\par

\subsection{Parametrization of convex curves and notations}\label{1.2}

We are mainly interested in curves satisfying the following property.\par
\begin{definition} \label{strictconv}
 A simple closed curve is said to be strictly convex when its geodesic curvatures are positive.
\end{definition}
Note that the  inside domain of such a curve is strictly convex in the usual sense i.e.\ it is  strictly contained in one side of any tangent line, except for the contact point, but the converse is not necessarily true, as the curvature may vanish at isolated points.
\par
It is possible to parametrize a simple strictly convex closed curve in $ \mathbb{R}^2$ using the angle $\theta$ between
the tangent vector $T \df (\cos(\theta), \sin(\theta))$ and the oriented $ x $ axis.
The  coordinate $\theta$ will make the equations of our flows simple to analyze, in particular since operators $\partial_{\te} $ and $\partial_t $ will commute, contrary to derivatives with respect to curvilign abscissa $\partial_{s} $ and $\partial_t $, as shown in \eqref{eq_courbure}. 
We will essentially use the one-to-one correspondence in $ \mathbb{R}^2$ between simple strictly convex closed curves (up to translation) and  \strict positive functions $\rho $ that satisfy 
\begin{equation} \label{cores}
 \int_0 ^{2\pi} \frac{\cos(\te)}{\rho(\theta)} \,d\te = \int_0 ^{2\pi} \frac{\sin(\te)}{\rho(\theta)} \,d\te = 0 
 \end{equation}
  as in Lemma 4.1.1 of Gage and Hamilton \cite{Gage_Hamilton}. The function $ \rho$ turns out to be  the curvature of the curve, see also Section \ref{conv-sym}. 

We will derive the evolution equation for the curvature under 
stochastic evolutions of curves, such as stochastic curvature flow (SCF) \eqref{eq_sto_CF}  and SRCF \eqref{eq_sto}.
The positivity of the  curvature as well as Equation~\eqref{cores} are preserved along these equations.
It leads to an alternative definition of the stochastic evolution of a convex curve in terms of the solution of some stochastic partial differential equation, see in particular Theorem~\ref{th_cores}. 
\par\medskip
To fix some notations used throughout the paper, let us recall some notions  associated to a simple  $\rC^2$ closed curve
$C\st \TT\ni u\mapsto C(u)\in\mathbb{R}^2 $, where $\TT\df\RR/(2\pi\ZZ)$.
In this paper, all curves will be closed and immersed. 
\par
The bounded domain whose boundary is $C$ is denoted by $D$. The quantities $\ld (D) $ and $ \sg(C)$ respectively stand for   the volume of $ D$
and the perimeter of $C=\pa D$. We designate by $h(D)$ the isoperimetric ratio $\sigma(\pa D)/\lambda(D)$, not to be confounded with the planar isoperimetric ratio
$\sigma(\pa D)^2/\lambda(D)$.
For any $x\in C$, $\nu_C(x)$ is the outer unit normal vector of the curve $C$ at the point $x$ and $\rho_{C}(x)$ is the corresponding  curvature.\par
When the domain $D(t)$ and its boundary $C_t\df C(t,\cdot)$ depend on time $t\geq 0$, we will  sometimes drop the parameter $D(t)$ or $C_t$ from the notations and even write shortcuts such as  $h(t)$ instead of $h(D(t))$.

\subsection{Alternative approaches}

According to the previous subsection, the shape of a strictly convex curve is given by its curvature function, for instance defined on~$\TT$.
Thus  an evolution of curves, either deterministic or stochastic, can be described by the temporal evolution of its curvature function, which either takes the form of a partial differential equation or  a 
stochastic partial differential equation.
See for instance \eqref{eqcurvtheta} or \eqref{eq_rho} for such evolution equations.
We could then resort to the huge literature on the subject. For instance Lions, Souganidis and their co-authors have a long series of articles on non-linear first or second order stochastic partial differential equations.
But the general equation (1.1) of their latter paper \cite{gassiat2022longtime} does not cover our evolution, due to the fact that they only consider coefficients using the derivatives up to order two of the evolving function, but not the function itself.
\CPK{Furthermore, \cite{gassiat2022longtime} does not consider  non-local terms, such as  $ h(t)$ in Equations \eqref{eq_sto} and \eqref{eq_rho} below.}\par
Another point of view from partial differential equations on curvature type flows consists in interpreting a planar curve $C$ as the level set of a function $u$ defined on~$\RR^2$, say $C=\{(x,y)\in\RR^2\st u(x,y)=0\}$.
When the function $u$ is evolving with time, we get a corresponding evolution of the curve through $C_t\df\{(x,y)\in\RR^2\st u_t(x,y)=0\}$. Curvature flows, usual, renormalized or stochastic, can be represented in this way, with $u$ satisfying a partial differential equation or  a 
stochastic partial differential equation. Indeed, the equations are then homogeneous in space and satisfy the assumptions of Section 2.1 of \cite{gassiat2022longtime}, where $\TT^2$ should be replaced by $\RR^2$. We get a solution $(C_t)_{t}$ defined for all times, but it is not clear if it remains 
non-empty, connected or even
a curve. Furthermore their main asymptotic result  in this setting, Theorem 2.1, does not provide any clue about the stronger asymptotic behaviors we are looking for (spherical shapes), nor about the regularity of the curves or their skeletons.\par
\smallskip
For these reasons, we preferred to use geometric and stochastic methods.

\section{The renormalized curvature flow (RCF)} \label{S2}

Let us  introduce the renormalized evolution  we are interested in.\par
\begin{definition}
 Let  $ C_0\st \mathbb{T}\ni u\mapsto C_0(u) \in\RR^2 $ be a continuous simple and closed planar curve 
 and
 $ C \st [0,T_c) \times \mathbb{T} \to \mathbb{R}^2 $ be a continuous family of simple closed curves indexed by $[0,T_c)$, with $T_c>0$. We say that $C $  starts from $C_0$ and evolves under the renormalized curvature flow (RCF), when it satisfies the following equation
\begin{equation} \left \{  \begin{array}{lcl}\label{eq_det}
 \partial_t C(t,u) &\!\!\!=\!\!\!& [-\rho(C(t,u)) + 2h(D(t)) ]\nu_{C_t}(C(t,u) ),\, \fo (t,u)\in (0,T_c) \times \mathbb{T}\\[2mm]
  C(0,u) &\!\!\!=\!\!\!& C_0(u),\, \fo u\in\TT \\
 \end{array}  \right. \end{equation}
 \end{definition} 
 \par
In the sequel, stronger assumptions than continuity will be made on the initial curve $C_0$ and we will  often refer to:
\begin{hypo} \label{hypoC0} 
 The initial curve $ C_0\st \mathbb{T}\ni u\mapsto C_0(u) \in\RR^2 $ is a simple $C^{2 + \alpha} $ closed and strictly convex planar curve, with $\alpha>0$.
 \end{hypo}
When more regularity is required,  it will be explicitly stated.\par
 \begin{Remark} Since the symbol of Equation \eqref{eq_det} is the same as that of the curvature flow,   short time existence and uniqueness of the solution to  \eqref{eq_det}  hold for simple initial  closed $\mathcal{C}^{\infty} $ curve, see for example \cite{Gage_Hamilton} or \cite{Gage_area}. 
 Existence and uniqueness  still hold, up to the lifetime, if the regularity is relaxed to $\mathcal{C}^{2+\alpha}$ with $\alpha>0$, regularity which is preserved by evolution through \eqref{eq_det}, explaining the above assumption on $C_0$, that will enable us to refer to \textrm{the} solution in the sequel.
 \\
 We need the simplicity of  the curve to avoid ambiguity for   $ h $ (mainly for the interior volume, see Figure \ref{dessin}a at the end of the paper) and to make sure the outer unit normal vector is well-defined.\\
An  alternative proof for existence, using quasi-linear equations, can be found in Chapter 4  of \cite{zbMATH07470497}, Theorem 40 with $B_t =0$ for any $t\geq 0$.  
 \end{Remark}
 \subsection{The main result of this section}
Our main purpose is to investigate the evolution of the curvature function through the RCF. In particular geometrical inequalities concerning planar convex  closed curves will play an important role, as they will provide a priori estimates on the solutions: 
 the Gage inequality \eqref{gage},  involving the non local term $h$, and the usual isoperimetric inequality. The principal result of this section is the following:

 \begin{thm}\label{TH_section2}
Under Hypothesis \ref{hypoC0}, the solution $(C_t)_t $ of equation \eqref{eq_det}  is defined for all $t \in [0,\infty)  $, it remains strictly convex and simple  for all times and is asymptotically circular, the isoperimetric ratio is \strictly decreasing (except for circular starting curves). After  renormalization and translation,  we have the convergence with respect to the Hausdorff metric
   $$ \frac{1}{\sqrt{6t}} [C_t - \ci(t)] \overset{d_{\mathcal{H}}}{\to} \mathcal{C}(0,1) ,$$
to the circle of center $0$ and  radius $ 1$,  where for all $t$, $ \ci(t)$ is the center of an inscribed circle of $C_t $ . 
 \end{thm}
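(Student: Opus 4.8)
The plan is to reduce the geometric evolution \eqref{eq_det} to a scalar parabolic equation for the curvature and then to combine maximum-principle estimates with the monotone quantities of this section; the renormalizing term $2h\nu$ is what converts the finite-time collapse of the curve shortening flow into an eternal, expanding solution. Passing to the angle parametrization of Section~\ref{1.2}, in which $\partial_\theta$ and $\partial_t$ commute, the outer normal speed $-\rho+2h$ of \eqref{eq_det} becomes the quasilinear equation
\begin{equation*}
\partial_t\rho=\rho^2(\partial_\theta^2\rho+\rho)-2h\,\rho^2,
\end{equation*}
with $h=h(t)$ spatially constant at each time. This equation is parabolic wherever $\rho>0$, so the short-time theory recalled after \eqref{eq_det} yields a maximal solution on some interval $[0,T_c)$ preserving the $C^{2+\alpha}$ regularity of Hypothesis~\ref{hypoC0} and the closing conditions \eqref{cores}.

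Next I would record the geometric identities driving the flow. Differentiating area and length under the normal speed gives
\begin{equation*}
\dot\lambda=-2\pi+2h\sigma,\qquad \dot\sigma=-\int_{C_t}\rho^2\,ds+4\pi h .
\end{equation*}
Inserting the isoperimetric inequality $\sigma^2\ge 4\pi\lambda$ into the first identity produces $\dot\lambda\ge 6\pi$, hence $\lambda_t\ge\lambda_0+6\pi t$: the area grows at least linearly, the curve cannot implode, and the length scale $\sqrt{6t}$ is fixed (an expanding round solution has radius $\sqrt{R_0^2+6t}$). Combining both identities with the Gage inequality \eqref{gage}, $\int_{C_t}\rho^2\,ds\ge\pi\sigma/\lambda$, and once more with the isoperimetric inequality shows that the scale-invariant ratio $I_t\df\sigma_t^2/\lambda_t$ obeys $\dot I_t\le 0$, with equality exactly at circles (equality forces both Gage and the isoperimetric inequality to be saturated). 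Together with the monotone functionals $h_t$ and $\Ent_t$ from Lemmas~\ref{eq_geo} and~\ref{ent_est}, this supplies the Lyapunov structure.

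For strict convexity and long-time existence I would use Hamilton's trick. At an interior minimum of $\rho$ one has $\partial_\theta^2\rho\ge 0$, so $\frac{d}{dt}\rho_{\min}\ge\rho_{\min}^2(\rho_{\min}-2h)$, whence $\frac{d}{dt}(1/\rho_{\min})\le 2h\le 2h_0$ and $\rho_{\min}(t)\ge(1/\rho_{\min}(0)+2h_0 t)^{-1}>0$ for every finite $t$; thus strict convexity, and therefore simplicity, persists with an explicit non-degenerating lower bound. The \emph{main obstacle} is the reverse estimate: at a maximum the same trick only gives $\frac{d}{dt}\rho_{\max}\le\rho_{\max}^2(\rho_{\max}-2h)$, which a priori allows finite-time blow-up. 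I expect to close this with Gage--Hamilton type integral estimates — controlling $\frac{d}{dt}\int_{C_t}\rho^2\,ds$ and an $\int_{\TT}(\partial_\theta\rho)^2$ energy by means of the Gage inequality and the monotone quantities above, then upgrading this integral control to a pointwise bound on $\rho_{\max}$ through a Sobolev / Gagliardo--Nirenberg inequality on $\TT$, the lower bound $\lambda_t\ge\lambda_0+6\pi t$ ruling out curvature concentration. Two-sided bounds on $\rho$ render the equation uniformly parabolic; Schauder bootstrapping then bounds all derivatives up to $T_c$, and maximality forces $T_c=\infty$.

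Finally, for the asymptotics: since $I_t$ is non-increasing and bounded below by $4\pi$ it converges to some $I_\infty\ge 4\pi$, and a LaSalle / $\omega$-limit argument on the area-normalized flow — where $I$ strictly decreases off circles and the curvature bounds give compactness — identifies $I_\infty=4\pi$, i.e. the isoperimetric deficit tends to $0$. A Bonnesen-type inequality then bounds $\rao(t)-\rai(t)$ by a power of this deficit, while $\lambda_t\sim 6\pi t$ forces $\rai(t),\rao(t)\sim\sqrt{6t}$. Trapping $C_t$ between the inscribed and circumscribed circles centered at $\ci(t)$ and rescaling by $1/\sqrt{6t}$ then yields the claimed convergence $\frac{1}{\sqrt{6t}}[C_t-\ci(t)]\overset{d_{\mathcal{H}}}{\to}\mathcal{C}(0,1)$.
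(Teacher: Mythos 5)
Your overall architecture matches the paper's: the angle parametrization turning \eqref{eq_det} into $\partial_t\rho=\rho^2\partial_\theta^2\rho+\rho^2(\rho-2h)$, the identities for $\dot\sigma,\dot\lambda$, monotonicity of $h$ and of $\sigma^2/\lambda$ via the Gage and isoperimetric inequalities, a maximum principle for the lower curvature bound, and Bonnesen plus $\lambda_t\sim 6\pi t$ for the final Hausdorff convergence. The asymptotic part and the lower bound on $\rho$ are essentially the paper's Lemma \ref{conv} and Corollary \ref{conv_plus}; note only that the paper also resolves the slight circularity between ``the curve stays simple, so $h$ is defined and non-increasing'' and ``$h\le h_0$ gives the convexity bound'' by a separate contradiction argument using the characterization of simple strictly convex curves by $\rho>0$ together with \eqref{cores} --- your ``and therefore simplicity'' deserves at least that one line.

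The genuine gap is exactly where you flag the ``main obstacle'': excluding finite-time blow-up of $\rho_{\max}$. The mechanism you propose --- bound $\int\rho^2\,ds$ and $\int_{\TT}(\partial_\theta\rho)^2\,d\theta$, then apply a Sobolev/Gagliardo--Nirenberg inequality --- does not close. The available gradient estimate (the paper's Proposition \ref{inv_Poin}) has the form $\int(\partial_\theta\rho)^2\,d\theta\le\int\rho^2\,d\theta+c_0$, so Gagliardo--Nirenberg only returns $\|\rho\|_\infty\lesssim\|\rho\|_{L^2(d\theta)}+1\lesssim\|\rho\|_\infty+1$, which is vacuous; there is no independent a priori bound on $\int\rho^2\,d\theta=\int\rho^3\,ds$ (its time derivative contains an uncontrolled positive $\int\rho^3\,d\theta$ term), and $\lambda_t\ge\lambda_0+6\pi t$ by itself does not forbid concentration of curvature on a short arc. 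What actually closes the argument --- and what you list among the monotone quantities but never use at this step --- is the entropy $\Ent(t)=\int_0^{2\pi}\log\rho_t\,d\theta$. The paper's Proposition \ref{bound-rho} combines the upper bound on $\Ent(t)$ from Lemma \ref{ent_est} (itself a nontrivial pseudo-median/Wirtinger argument in the style of Gage--Hamilton) with the gradient estimate rewritten as $|\rho(\theta_1)-\rho(\theta_2)|\le c_1M_t\sqrt{|\theta_1-\theta_2|}$, where $M_t=\sup\rho$: this shows $\rho\ge M_t/2$ on an interval of length $(2c_1)^{-2}$ \emph{independent of} $M_t$, whence $\Ent(t)\ge (2c_1^2)^{-1}\log(M_t/2)-C(T)$, and the entropy bound caps $M_t$ on any finite horizon. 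This logarithmic pigeonhole step, not a Sobolev embedding, is the missing idea; once $\rho$ is bounded above and below, the bootstrap on higher derivatives and the continuation argument you describe go through as in the paper.
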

 
 The rest of  Section \ref{S2} is devoted to the proof of Theorem~\ref{TH_section2}.
 
\subsection{Gradient descent flow formulation, and evolution of geometric quantities}  To a solution $(C_t)_{t\in [0,T_c)}$ of \eqref{eq_det}, associate 
 \bq
 \fo t\in [0, T_c),\,\fo u\in\TT,\qquad v(t,u) &\df& \vert \partial_u C(t,u) \vert\eq
  and $s$ the arc-length parametrization, $\partial_s\df  \frac{1}{v} \partial_u $ (equivalently  $ ds = v du $), started at $C(t,s)|_{s=0}=C(t,u)_{u=0}$. 
To prevent the dependence on $t$ of the domain of definition of~$s$, we define $s$ on $\R$  with $\sigma(C_t)$ as period.  Let $T\df  \partial_s C(t,s) $ be the tangent vector of the curve $C(t.)$ at the point $C(t,s) $. Let $\nu(t,s)$ be the unit vector obtained by a rotation of $T(t,s)$ by an angle of $-\pi/2$.  We will always assume that $\nu$ is the outer normal of the curve, up to a change of direction of the parametrization.\par
To 
reinterpret the RCF as a gradient descent flow, let us see the tangent space above a simple closed curve $C$ as the set of $\RR^2$-valued vector fields defined on $C$, and consider the scalar product of two such vector fields $X$ and $X'$ given by
\bq
\lan X, X'\ran_C&\df&\f{1}{\sigma(C)} \int_{C}\lan X,X'\ran_{C(s)}\, ds\eq\par
These definitions provide us with a kind of infinite-dimensional Riemannian structure.
\par
 \begin{lemma}\label{gradfow}
 Equation \eqref{eq_det} is the gradient descent flow of the functional 
  $$ \Psi : D \mapsto { \ln \frac{\s(\partial D)}{\lambda(D)^2}} $$
  relatively to the above structure.
 \end{lemma}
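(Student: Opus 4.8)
The plan is to exploit the defining property of a gradient descent flow: writing the RCF as a normal variation $\partial_t C = f\,\nu$ with scalar speed $f = -\rho + 2h$, I must show that $f\,\nu = -\nabla\Psi$, where $\nabla\Psi$ is the gradient of $\Psi$ for the Riemannian structure $\langle\cdot,\cdot\rangle_C$ introduced above. Since $\Psi$ is a purely geometric functional, its differential $d\Psi(X)$ depends only on the normal component of the variation field $X$ (tangential fields generate reparametrizations and leave both $\sigma$ and $\lambda$ unchanged), so it suffices to evaluate $d\Psi$ on normal variations and then solve the linear equation $\langle\nabla\Psi, X\rangle_C = d\Psi(X)$ for $\nabla\Psi$.

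First I would record the classical first variation formulas. For a normal variation $\partial_t C = f\,\nu$ with $\nu$ the outer normal and $\rho$ the curvature (positive for convex curves), one has
$$\frac{d}{dt}\sigma(\partial D) = \int_C \rho\, f\, ds, \qquad \frac{d}{dt}\lambda(D) = \int_C f\, ds,$$
which are checked at once on a round expanding circle to fix the signs. Differentiating $\Psi = \ln\sigma - 2\ln\lambda$ then gives
$$d\Psi(f\nu) = \frac{1}{\sigma}\int_C \rho\, f\, ds - \frac{2}{\lambda}\int_C f\, ds = \int_C\Big(\frac{\rho}{\sigma} - \frac{2}{\lambda}\Big) f\, ds.$$

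To extract the gradient, I would look for $\nabla\Psi = g\,\nu$ in normal form and impose $\langle g\nu, f\nu\rangle_C = d\Psi(f\nu)$ for every test speed $f$. By definition of the inner product the left-hand side equals $\frac{1}{\sigma}\int_C g\, f\, ds$, so matching integrands forces $\frac{1}{\sigma} g = \frac{\rho}{\sigma} - \frac{2}{\lambda}$, i.e. $g = \rho - 2\sigma/\lambda = \rho - 2h$. Hence $\nabla\Psi = (\rho - 2h)\nu$ and the gradient descent flow $\partial_t C = -\nabla\Psi = (-\rho + 2h)\nu$ is precisely \eqref{eq_det}. It is worth emphasizing that the normalization factor $1/\sigma(C)$ built into the metric is exactly what cancels the $\sigma$ in $\rho/\sigma$ and converts $2/\lambda$ into the non-local term $2h = 2\sigma/\lambda$; with the unnormalized $L^2$ inner product one would not recover \eqref{eq_det}.

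The computation itself is short; the points demanding genuine care are the sign and orientation conventions in the first variation formulas (outer versus inner normal, sign of $\rho$) and the justification that only the normal part of a variation contributes to $d\Psi$, so that seeking $\nabla\Psi$ in purely normal form is legitimate. I would also note that this is a formal infinite-dimensional Riemannian identity: the inner product $\langle\cdot,\cdot\rangle_C$ equips the tangent space of $\RR^2$-valued fields along $C$ with a weak metric, and the identity $d\Psi(X) = \langle\nabla\Psi, X\rangle_C$ is to be read in that formal sense, exactly as for the classical interpretation of curve shortening as the gradient flow of length.
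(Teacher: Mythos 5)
Your proof is correct and follows essentially the same route as the paper: the same first-variation formulas for $\sigma$ and $\lambda$, the same observation that the $1/\sigma$ normalization in the metric turns $2/\lambda$ into the non-local term $2h$, and the same identification of $\nabla\Psi=(\rho-2h)\nu$. The only cosmetic difference is that you characterize the gradient by solving $\langle \nabla\Psi,X\rangle_C=d\Psi(X)$ on normal variations, whereas the paper exhibits the maximizing unit field via Cauchy--Schwarz; these are equivalent.
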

 \begin{proof}
 Let  $ C\st  [0,T) \times \mathbb{T}  \to \mathbb{R}^2 $ be  a family of simple closed curves, such that  $ \partial_t C(t,u) = X(t,u) $ for some smooth $X : [0,T)\times \TT \to \mathbb{R}^2$.  Classical variational computations  show that at any time $t\in[0,T)$,
   $$\frac{d}{dt} \lambda(t) = \int_{C_t} \langle X_t , \nu \rangle  ds $$
and
  $$\frac{d}{dt} \s(t) = \int_{C_t} \langle X_t , \nu \rangle \rho ds, $$ 
  where we recall, in addition to the shortcuts mentioned at the end of Section \ref{1.2},  that $C_t$ is the curve at time $t$ and we denoted similarly $X_t\df X(t,\cdot)$ the vector field on $C_t$, seen as a vector above $C_t$
\par
It follows that for any given $t\in [0,T)$, we have
 \begin{equation*}   \begin{array}{lcl}
\frac{d}{dt} \frac{\s(t)}{\lambda(t)^2} &=& \frac{1}{\lambda(t)^2}  \left(
 \int_{C_t} \langle X_t , \nu \rangle \rho ds - \frac{2\s(t)}{\lambda(t)} \int_{C_t} \langle X_t , \nu \rangle  ds  \right)\\
 &=& \frac{1}{\lambda(t)^2}  
 \int_{C_t} \lt\langle X_t , \left(\rho  - \frac{2\s(t)}{\lambda(t)} \right)  \nu \rt\rangle  ds \\
  &=& \frac{\s(t)}{\lambda(t)^2}  \left(
  \int_{C_t} \lt\langle X_t , \left(\rho  - \frac{2\s(t)}{\lambda(t)} \right)  \nu \rt\rangle  \frac{ds}{\s(t)}  \right).
 \end{array}   \end{equation*} 
 namely
 \bq
 \frac{d}{dt} \Psi(C_t)&=&\lan X_t, \lt(\rho  - \frac{2\s(t)}{\lambda(t)} \rt)\nu\ran_{C_t}.
 \eq
 \par
 Denote $R_t$ the maximum of the r.h.s.\ above all $X_t$ satisfying  $\lan X_t,X_t\ran_{C_t}=1$ and let $\wi X_t$ be a corresponding maximizing vector field.
 The gradient vector field at $C_t$ for the functional $\Psi$ is given by $R_t\wi X_t$.
 \par
 Due to the Cauchy-Schwartz inequality, we get
 \bq
 \wi X_t&=&\f1{\sqrt{\f1{\sigma(t)}\int_{C_t} \lt(\rho_{C_t}(C_t(s))-2\f{\sigma(t)}{\lambda(t)}\rt)^2\, ds}}\lt(\rho  - \frac{2\s(t)}{\lambda(t)} \rt)\nu\\
 R_t&=&\sqrt{\f1{\sigma(t)}\int_{C_t} \lt(\rho_{C_t}(C_t(s))-2\f{\sigma(t)}{\lambda(t)}\rt)^2\, ds}
 \eq
 and it follows that 
 the gradient vector field at $C_t$ for the functional $\Psi$ is $\lt(\rho  - \frac{2\s(t)}{\lambda(t)} \rt)\nu$, i.e.\ the opposite of the vector field appearing in
 \eqref{eq_det}, as required by the gradient descent.
\end{proof} 

Let us start the investigation of the evolution induced by the RCF of some geometric objects:

\begin{proposition}
Under the RCF, we have  

\begin{equation} \label{eq_geo_det} 
\left \{ \begin{array}{lcl}
   \partial_t v &=& -\rho (\rho - 2h) v \\
    \partial_t \partial_s &=& \partial_s \partial_t + \rho( \rho -2h) \partial_s \\ 
        \partial_t  T &=&  -(\partial_s\rho) \nu \\
     \partial_t \nu &= & (\partial_s\rho) T. 
    \end{array} \right. \end{equation}
     
\end{proposition}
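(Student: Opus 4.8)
The plan is to read off the normal velocity $F \df -\rho + 2h$ from \eqref{eq_det} and to exploit two structural facts: the orientation conventions (with $\nu$ the outer normal obtained from $T$ by a $-\pi/2$ rotation) give the spatial Frenet relations $\partial_s T = -\rho\,\nu$ and $\partial_s \nu = \rho\, T$, and the non-local factor $h = h(D(t))$ depends only on time, so that $\partial_s F = -\partial_s\rho$. Starting from $v = \vert\partial_u C\vert$ and $\partial_t C = F\nu$, I would first compute $\partial_t v = \langle \partial_u C, \partial_t\partial_u C\rangle / v = \langle \partial_u C, \partial_u(F\nu)\rangle / v$, using that $\partial_t$ and $\partial_u$ commute. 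Writing $\partial_u C = vT$ and $\partial_u(F\nu) = (\partial_u F)\nu + F\,\partial_u\nu$ with $\partial_u\nu = v\rho\,T$, the $\nu$-component is killed by orthogonality and one is left with $\partial_t v = v\rho F = -\rho(\rho-2h)v$, the first line.

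Next I would differentiate the operator identity $\partial_s = v^{-1}\partial_u$ in time. Since $t$ and $u$ are independent coordinates, $\partial_t$ and $\partial_u$ commute, whence $\partial_t\partial_s = -(\partial_t v)\,v^{-2}\partial_u + v^{-1}\partial_u\partial_t = -(\partial_t v / v)\,\partial_s + \partial_s\partial_t$. Substituting $\partial_t v/v = -\rho(\rho-2h)$ from the previous step gives $\partial_t\partial_s = \partial_s\partial_t + \rho(\rho-2h)\,\partial_s$, the second line; this is exactly the failure of $\partial_s$ and $\partial_t$ to commute noted in Section~\ref{1.2}.

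For the frame, I would apply the operator identity just obtained to $C$ itself: since $T = \partial_s C$,
$$\partial_t T = \partial_t\partial_s C = \partial_s\partial_t C + \rho(\rho-2h)\,\partial_s C = \partial_s(F\nu) + \rho(\rho-2h)\,T.$$
Expanding $\partial_s(F\nu) = (\partial_s F)\nu + F\rho\,T = -(\partial_s\rho)\nu + (-\rho+2h)\rho\,T$ via $\partial_s F = -\partial_s\rho$ and $\partial_s\nu = \rho T$, the two tangential contributions cancel because $(-\rho+2h)\rho + \rho(\rho-2h) = 0$, leaving $\partial_t T = -(\partial_s\rho)\nu$, the third line. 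The fourth line then comes for free by differentiating $\langle T,\nu\rangle = 0$ and $\vert\nu\vert^2 = 1$ in $t$: these give $\langle\partial_t\nu,\nu\rangle = 0$ and $\langle\partial_t\nu,T\rangle = -\langle\partial_t T,\nu\rangle = \partial_s\rho$, hence $\partial_t\nu = (\partial_s\rho)\,T$.

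None of the steps is genuinely difficult once the conventions are fixed, so I do not expect a real obstacle, only two points demanding care. The first is the sign bookkeeping in the Frenet equations, which I would pin down at the outset to prevent errors from propagating through all four identities. The second, and the only place where the \emph{renormalized} velocity matters rather than an arbitrary normal speed, is that $h$ is spatially constant in $s$; this is precisely what yields the clean cancellation in the computation of $\partial_t T$, and I would flag it explicitly.
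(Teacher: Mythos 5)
Your proposal is correct and follows essentially the same route as the paper's proof: compute $\partial_t v$ from $\partial_t\vert\partial_u C\vert^2$ using the Frenet relation $\partial_u\nu = v\rho T$, derive the commutator $[\partial_t,\partial_s]$ from $\partial_s = v^{-1}\partial_u$, apply it to $C$ to get $\partial_t T$ (with the same tangential cancellation), and deduce $\partial_t\nu$ from the orthonormality of the frame. The only difference is presentational: you single out the spatial constancy of $h$ as the reason for the clean cancellation, which the paper leaves implicit.
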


\begin{proof}
We differentiate equation \eqref{eq_det} in $u$, and we get:
$$ \begin{aligned}
 \partial_t  \partial_u C &= \partial_u \partial_t C  = -(\partial_u \rho) \nu + (-\rho +2h)\partial_u \nu .\\
\end{aligned} $$
We deduce:
$$ \begin{aligned}
 2 v \partial_t v  &=\partial_t v^2 = \partial_t  \langle \partial_u C ,  \partial_u C  \rangle = 2 \langle \partial_t \partial_u C ,  \partial_u C  \rangle \\
  &= 2 \langle -(\partial_u \rho) \nu + (-\rho +2h)\partial_u \nu, \partial_u C \rangle \\
  &= 2 (-\rho +2h) \langle  \partial_u \nu, \partial_u C   \rangle  =  2 v^2 \rho(-\rho +2h)  .\\
\end{aligned} $$
So we get the first part by identification.
Also by the first computation  
$$ \begin{aligned}
 \partial_t  \partial_s &= \partial_t (\frac{1}{v} \partial_u) = \frac{\rho (\rho - 2h) v}{v^2} \partial_u  +\frac{1}{v} \partial_t \partial_u \\
  &= \rho (\rho - 2h)\partial_s + \partial_s \partial_t\\
\end{aligned},$$
and  
$$ \begin{aligned}
 \partial_t  T =\partial_t  \partial_s C &= \partial_s \partial_t C  + \rho (\rho - 2h)\partial_s C \\
  &= -(\partial_s \rho) \nu + (- \rho + 2h) \partial_s \nu   +  \rho (\rho - 2h)\partial_sC = -(\partial_s \rho) \nu\\
\end{aligned} $$
since $\partial_t \langle \nu , \nu \rangle = 0 $, $ \partial_t  \nu$ is tangential. Also $ \partial_t \langle T , \nu \rangle = 0 $, so we get  the last point from the previous one.  

\end{proof}

We deduce the evolution induced by the RCF of the curvature:

\begin{proposition}
Under the RCF, we have 
\begin{equation} \label{eq_courbure_det}  \partial_t \rho =\partial^2_s \rho + \rho^2 ( \rho -2h)   . 
\end{equation}
\end{proposition}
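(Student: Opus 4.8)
The goal is to derive the evolution equation $\partial_t \rho = \partial_s^2 \rho + \rho^2(\rho - 2h)$ from the already-established commutator relation and the evolution of the tangent and normal vectors in \eqref{eq_geo_det}. Let me think about how the curvature arises and how its time derivative unfolds.

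The curvature $\rho$ is defined through the Frenet relations $\partial_s T = \rho \nu$ (with the orientation convention fixed earlier), so the natural starting point is to differentiate this identity in time. I would write $\partial_t(\partial_s T) = \partial_t(\rho \nu) = (\partial_t \rho)\nu + \rho(\partial_t \nu)$, and from \eqref{eq_geo_det} we already know $\partial_t \nu = (\partial_s \rho) T$, so the right-hand side becomes $(\partial_t \rho)\nu + \rho(\partial_s \rho) T$.

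For the left-hand side I would use the commutator $\partial_t \partial_s = \partial_s \partial_t + \rho(\rho - 2h)\partial_s$ from \eqref{eq_geo_det}, applied to $T$. This gives $\partial_t \partial_s T = \partial_s(\partial_t T) + \rho(\rho - 2h)\partial_s T$. Substituting the known quantities $\partial_t T = -(\partial_s \rho)\nu$ and $\partial_s T = \rho \nu$, the second term is $\rho(\rho - 2h)\rho\nu = \rho^2(\rho - 2h)\nu$, while the first is $\partial_s(-(\partial_s \rho)\nu) = -(\partial_s^2 \rho)\nu - (\partial_s \rho)(\partial_s \nu)$. Since $\partial_s \nu = -\rho T$ under the chosen orientation, this contributes $-(\partial_s^2 \rho)\nu + \rho(\partial_s \rho) T$.

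Collecting everything, the left side is $-(\partial_s^2 \rho)\nu + \rho(\partial_s \rho) T + \rho^2(\rho - 2h)\nu$, and the right side is $(\partial_t \rho)\nu + \rho(\partial_s \rho) T$. The tangential components $\rho(\partial_s \rho) T$ cancel on both sides, and comparing the normal components $\nu$ yields $\partial_t \rho = \partial_s^2 \rho + \rho^2(\rho - 2h)$, as claimed. The only real subtlety — and the step I would double-check most carefully — is the sign convention relating $\partial_s \nu$ to $-\rho T$, which must be consistent with the convention that $\nu$ is obtained from $T$ by rotation of $-\pi/2$ and that $\partial_s T = \rho \nu$; getting these signs right is what makes the tangential terms cancel and produces the correct coefficient $\rho^2(\rho - 2h)$ rather than an incorrect sign or factor. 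No genuine obstacle is expected beyond this bookkeeping, since the heavy lifting (the commutator and the frame evolution) is already done in \eqref{eq_geo_det}.
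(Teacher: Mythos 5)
Your overall strategy --- differentiate a Frenet relation in time and identify the normal component using the commutator $[\partial_t,\partial_s]$ together with the frame evolution from \eqref{eq_geo_det} --- is sound, and it is essentially the mirror image of the paper's proof (which differentiates $\rho=\langle T,\partial_s\nu\rangle$ rather than $\partial_s T$). However, your Frenet signs are the opposite of the ones this paper uses, and here this is not harmless bookkeeping: with $\nu=R_{-\pi/2}T$ the \emph{outer} normal and $\rho>0$ on convex curves, one has $T=(\cos\theta,\sin\theta)$, $\nu=(\sin\theta,-\cos\theta)$, hence
\begin{equation*}
\partial_s T=-\rho\,\nu,\qquad \partial_s\nu=+\rho\,T,
\end{equation*}
which is exactly the convention of the paper (it defines $\rho=\langle T,\partial_s\nu\rangle$ and uses $\partial_u\nu=v\rho T$ in the proof of \eqref{eq_geo_det}). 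You instead take $\partial_s T=\rho\nu$ and $\partial_s\nu=-\rho T$, while still importing $\partial_t T=-(\partial_s\rho)\nu$, $\partial_t\nu=(\partial_s\rho)T$ and the commutator $\partial_t\partial_s=\partial_s\partial_t+\rho(\rho-2h)\partial_s$, all of which were derived under the paper's sign convention; mixing the two is inconsistent.

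The consequence is visible in your own displayed expressions: the normal component of your left-hand side is $-\partial_s^2\rho+\rho^2(\rho-2h)$ while that of your right-hand side is $\partial_t\rho$, so your computation actually yields $\partial_t\rho=-\partial_s^2\rho+\rho^2(\rho-2h)$ --- a backwards heat equation --- and the final line asserting the correct sign does not follow from what precedes it. The repair is simply to use the correct relations $\partial_s T=-\rho\nu$ and $\partial_s\nu=\rho T$ throughout: then $\partial_t(\partial_s T)=-(\partial_t\rho)\nu-\rho(\partial_s\rho)T$, while the commutator side gives $-(\partial_s^2\rho)\nu-\rho(\partial_s\rho)T-\rho^2(\rho-2h)\nu$; the tangential parts cancel and equating the normal parts gives $\partial_t\rho=\partial_s^2\rho+\rho^2(\rho-2h)$, as in \eqref{eq_courbure_det}.
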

\begin{proof}
It is a direct consequence of the previous proposition,
$$ \begin{aligned}
 \partial_t  \rho &= \partial_t  \langle T, \partial_s \nu \rangle =   \langle T, \partial_t \partial_s \nu \rangle= \langle T,  \partial_s \partial_t \nu + \rho( \rho -2h) \partial_s \nu \rangle \\ 
 &=\langle T,  \partial_s (\partial_s (\rho) T) + \rho^2( \rho -2h)T  \rangle \\
 &= \partial_s^2  \rho + \rho^2( \rho -2h).
\end{aligned} .$$
\end{proof}

\subsection{A priori estimate of geometric quantities} 
 
 We get the following evolution of geometrics quantities:
\begin{lemma}\label{eq_geo} Assume  the curves of the solution $(C_t)_{t\in[0,T_c)} $ to \eqref{eq_det} remain simple for all $ t \in[0,T_c)$. Then we have for all $ t \in[0,T_c)$,
\begin{enumerate}

\item $ \frac{d}{dt} \sigma(C_t)= - \int \rho^2 ds + \frac{4\pi \sigma(C_t)}{\lambda (D_t)}  ;  $ 
\item $ \frac{d}{dt} \lambda (D_t) = -2\pi + \frac{2 \sigma(C_t)^2}{\lambda (D_t)} ; $
\item $  \frac{d}{dt} h(D(t)) = \frac{d}{dt} \frac{ \sigma(C_t)}{\lambda (D_t)} \le \frac{-12\pi^2}{\sigma(C_t) \lambda (D_t)} \le 0 .$
\end{enumerate}
\end{lemma}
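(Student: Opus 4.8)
The plan is to read off (1) and (2) directly from the gradient-descent formulation, and then to derive the decay estimate (3) by a quotient-rule computation fed into two classical convex-geometric inequalities. For (1) and (2) I would note that along the RCF the velocity field is $X_t=(-\rho+2h)\nu$, so that $\langle X_t,\nu\rangle=-\rho+2h$. Substituting this into the two variational identities established in the proof of Lemma~\ref{gradfow}, namely $\frac{d}{dt}\lambda=\int_{C_t}\langle X_t,\nu\rangle\,ds$ and $\frac{d}{dt}\sigma=\int_{C_t}\langle X_t,\nu\rangle\rho\,ds$, the whole computation collapses to the two elementary facts $\int_{C_t}ds=\sigma(C_t)$ and $\int_{C_t}\rho\,ds=2\pi$, the latter being the total-curvature identity (Hopf Umlaufsatz) valid for any simple closed convex curve. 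Using $h=\sigma/\lambda$ this produces exactly (1) and (2).

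For (3) I would differentiate $h=\sigma/\lambda$ and insert (1) and (2); after clearing denominators the numerator $\dot\sigma\,\lambda-\sigma\,\dot\lambda$ equals $-\lambda\int_{C_t}\rho^2\,ds+6\pi\sigma-2\sigma^3/\lambda$, so that
\[
\frac{d}{dt}h=\frac{1}{\lambda^2}\left(-\lambda\int_{C_t}\rho^2\,ds+6\pi\sigma-\frac{2\sigma^3}{\lambda}\right).
\]
At this point the Gage inequality $\int_{C_t}\rho^2\,ds\ge \pi\sigma/\lambda$ controls the dissipative term via $-\lambda\int_{C_t}\rho^2\,ds\le-\pi\sigma$, leaving
\[
\frac{d}{dt}h\le\frac{1}{\lambda^2}\left(5\pi\sigma-\frac{2\sigma^3}{\lambda}\right).
\]
To match the stated bound it then suffices to prove $2\sigma^3/\lambda-5\pi\sigma-12\pi^2\lambda/\sigma\ge 0$; dividing by $\sigma$ and setting $x:=\sigma^2/\lambda$, this is precisely the scalar inequality $2x^2-5\pi x-12\pi^2\ge 0$.

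The crux is this last inequality, which I would settle by the factorization $2x^2-5\pi x-12\pi^2=(2x+3\pi)(x-4\pi)$: since $2x+3\pi>0$ and the isoperimetric inequality gives $x=\sigma^2/\lambda\ge 4\pi$, the product is nonnegative, which delivers both $\frac{d}{dt}h\le-12\pi^2/(\sigma\lambda)$ and $-12\pi^2/(\sigma\lambda)\le 0$. The factorization also locates the equality case at $x=4\pi$, i.e.\ for circles, where the Gage inequality is simultaneously tight; this is consistent with the ``strictly decreasing except for circular starting curves'' assertion of Theorem~\ref{TH_section2}. I expect the only genuinely delicate point to be pinning down the constant $12\pi^2$ exactly: it forces the use of the \emph{sharp} form of the Gage inequality rather than a crude lower bound on $\int_{C_t}\rho^2\,ds$, and the isoperimetric inequality must be invoked at exactly the right place so that the resulting quadratic in $x$ factors cleanly.
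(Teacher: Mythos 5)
Your computations for (1) and (2) coincide with the paper's (which works directly from $\partial_t v=-\rho(\rho-2h)v$ rather than the variational identities, but this is the same calculation), and your algebra for (3) is correct as far as it goes: the quotient-rule numerator, the reduction to $2x^2-5\pi x-12\pi^2\ge 0$ with $x=\sigma^2/\lambda$, and the factorization $(2x+3\pi)(x-4\pi)$ all check out.

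The genuine problem is your use of the Gage inequality $\int\rho^2\,ds\ge \pi\sigma/\lambda$ in part (3). That inequality holds for \emph{convex} curves, whereas the lemma assumes only that the curves remain \emph{simple} -- and this hypothesis gap is not cosmetic. The lemma is invoked inside the proof of Lemma \ref{conv} (``assuming for the time being that the curve remains simple, $h$ is non-increasing'') precisely as part of the bootstrap that \emph{establishes} preservation of convexity; if part (3) needed convexity as an input, that argument would be circular. The paper avoids this by bounding $\int\rho^2\,ds\ge\bigl(\int\rho\,ds\bigr)^2/\sigma=4\pi^2/\sigma$ via Cauchy--Schwarz and Gauss--Bonnet, which is valid for any simple closed curve. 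Your closing remark that the sharp Gage inequality is ``forced'' in order to pin down the constant $12\pi^2$ is therefore also incorrect: with the weaker Cauchy--Schwarz bound the numerator becomes $-4\pi^2\lambda^2+6\pi\sigma^2\lambda-2\sigma^4=-2(\sigma^2-\pi\lambda)(\sigma^2-2\pi\lambda)$, and the isoperimetric inequality $\sigma^2\ge 4\pi\lambda$ gives $(\sigma^2-\pi\lambda)(\sigma^2-2\pi\lambda)\ge 3\pi\lambda\cdot 2\pi\lambda=6\pi^2\lambda^2$, yielding exactly the same bound $-12\pi^2/(\sigma\lambda)$. Replacing Gage by Cauchy--Schwarz repairs your proof and restores the lemma under its stated hypotheses.
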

\begin{proof}
Using Gauss-Bonnet Theorem, i.e. for simple  closed curve   $ \int_{0}^{\sigma(C_t)} \rho ds = 2\pi  $, and \eqref{eq_courbure_det} we have: 
\begin{align*}
& \frac{d}{dt} \sigma(C_t)  =  \frac{d}{dt} \int_{0}^{2\pi} v(t,u)du =  \int_{0}^{2\pi} -\rho (\rho - 2h) v du =   \int_{0}^{\sigma(C_t)} -\rho (\rho - 2h) ds\\
& = - \int \rho^2 ds + \frac{4\pi \sigma(C_t)}{\lambda (D_t)}.  \\
\end{align*}
For the second point, we have
\begin{align*}
\frac{d}{dt} \lambda (D_t) = \int_{C_t} \langle \frac{d}{dt} C(t,s) , \nu \rangle ds = \int_{C_t}  - (\rho - 2h) ds = -2\pi + \frac{2 \sigma(C_t)^2}{\lambda (D_t)}. \\
\end{align*}

Let us write  $\sg_t =  \sigma(C_t)  $,  $\ld_t=\lambda (D_t) $ and denote by a dot the derivation with respect to $t$, 
\begin{align*}
\frac{d}{dt} \frac{\sg_t}{\ld_t} &= \frac{1}{\ld_t}(\dot{\sg_t} - \frac{\sg_t \dot{\ld_t}}{\ld_t} ) = \frac{1}{\ld_t}  \left( - \int \rho^2 ds + \frac{4 \pi \sg_t}{\ld_t} - \frac{\sg_t}{\ld_t} (-2\pi + \frac{2\sg_t^2}{\ld_t} )  \right) \\
&\le \frac{1}{\ld_t}   \left( -\frac{4\pi^2}{\sg_t} + \frac{4 \pi \sg_t}{\ld_t} - \frac{\sg_t}{\ld_t} (-2\pi + \frac{2\sg_t^2}{\ld_t} )  \right)\\
&= \frac{-4\pi^2 \ld_t^2 + 6\pi \sg_t^2 \ld_t - 2 \sg_t^4}{\ld_t^3 \sg_t} \\
&= \frac{-2(\sigma_t^2 - 2 \pi\lambda_t )(\sigma_t^2 -\pi \lambda_t )}{\sigma_t \lambda_t^3}\\
&\le \frac{-12\pi^2}{\ld_t \sg_t} \le 0,
\end{align*}
where we have used Cauchy-Schwartz inequality and Gauss-Bonnet Theorem in the second line, and the isoperimetric inequality in the last line.

\end{proof}
\begin{lemma}\label{isope}
Assume  the curves of the solution $(C_t)_{t\in[0,T_c)} $ to \eqref{eq_det} 
 remain convex for all $ t \in [0,T_c) $. Then the isoperimetric ratio is non-increasing, i.e.\ for all $ t \in [0,T_c) $,
$$ \frac{d}{dt} \frac{\sg_t^2}{4\pi \ld_t} \le 0. $$ 
\end{lemma}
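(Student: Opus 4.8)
The plan is to reduce the claimed monotonicity to a single pointwise-in-time geometric inequality, and then to establish that inequality from Lemma~\ref{eq_geo} together with two classical bounds for convex curves. First I would set $I_t\df \sg_t^2/(4\pi\ld_t)$ and, using that $\sg_t,\ld_t>0$, compute $\dot I_t=\frac{\sg_t}{4\pi\ld_t^2}\,Q_t$ with $Q_t\df 2\dot\sg_t\,\ld_t-\sg_t\,\dot\ld_t$, so that the sign of $\dot I_t$ is exactly the sign of $Q_t$. Substituting the expressions for $\dot\sg_t$ and $\dot\ld_t$ from points (1) and (2) of Lemma~\ref{eq_geo} yields $Q_t=-2\ld_t\int\rho^2\,ds+10\pi\sg_t-2\sg_t^3/\ld_t$, so that $\dot I_t\le0$ is equivalent to the inequality $\ld_t\int\rho^2\,ds+\sg_t^3/\ld_t\ge 5\pi\sg_t$.

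Both ingredients needed for this inequality are available because the curves are assumed convex on $[0,T_c)$. The Gage inequality $\int\rho^2\,ds\ge \pi\sg_t/\ld_t$ (equivalently $\int\rho^2\,ds\ge \pi h(t)$) gives $\ld_t\int\rho^2\,ds\ge \pi\sg_t$, while the planar isoperimetric inequality $\sg_t^2\ge 4\pi\ld_t$ gives $\sg_t^3/\ld_t=\sg_t\,(\sg_t^2/\ld_t)\ge 4\pi\sg_t$. Adding these two estimates produces exactly $\ld_t\int\rho^2\,ds+\sg_t^3/\ld_t\ge \pi\sg_t+4\pi\sg_t=5\pi\sg_t$, whence $Q_t\le0$ and $\dot I_t\le0$, which is the assertion.

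The step I expect to be the crux, rather than merely computational, is the choice of lower bound for $\int\rho^2\,ds$: the crude Cauchy--Schwarz/Gauss--Bonnet estimate $\int\rho^2\,ds\ge 4\pi^2/\sg_t$ already used in Lemma~\ref{eq_geo} would instead force me to prove $x+4\pi^2/x\ge5\pi$ for $x\df\sg_t^2/\ld_t$, which does hold because $x\mapsto x+4\pi^2/x$ is increasing on $[4\pi,\infty)$ and the isoperimetric inequality guarantees $x\ge 4\pi$, but this route hides the clean splitting into $\pi\sg_t$ and $4\pi\sg_t$ that the Gage inequality makes transparent. In either form the bound is sharp: on a circle one has $\int\rho^2\,ds=\pi\sg/\ld$ and $\sg^2=4\pi\ld$ simultaneously, so $Q_t=0$, consistent with the isoperimetric ratio being stationary exactly at circles (and with the strict decrease for non-circular starting curves asserted in Theorem~\ref{TH_section2}).
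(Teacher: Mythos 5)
Your proposal is correct and follows essentially the same route as the paper: both substitute the evolution equations for $\dot\sg_t$ and $\dot\ld_t$ from Lemma~\ref{eq_geo}, then control the $\int\rho^2\,ds$ term by the Gage inequality \eqref{gage} and the $\sg_t^3/\ld_t$ term by the isoperimetric inequality, with the two contributions cancelling exactly against $5\pi\sg_t$. Your side remark that the cruder Cauchy--Schwarz bound $\int\rho^2\,ds\ge 4\pi^2/\sg_t$ would also suffice (via $x+4\pi^2/x\ge 5\pi$ for $x\ge 4\pi$) is a nice observation but does not change the substance.
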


\begin{proof}
\begin{align*}
\frac{d}{dt} \frac{\sg_t^2}{4\pi \ld_t} & = \frac{\sg_t}{4\pi \ld_t} ( 2 \dot{\sg_t} - \frac{\sg_t \dot{\ld_t}}{\ld_t} )\\
&= \frac{\sg_t}{4\pi \ld_t}  \left( -2 \int \rho^2 ds +\frac{8\pi \sg_t}{\ld_t } - \frac{\sg_t}{\ld_t} (-2 \pi + \frac{2 \sg_t^2}{\ld_t})  \right)\\
&= \frac{\sg_t}{4\pi \ld_t}  \left( -2 \int \rho^2 ds +\frac{10 \pi \sg_t}{\ld_t } -  \frac{2 \sg_t^3}{\ld_t^2}  \right)\\
&\le \frac{\sg_t}{4\pi \ld_t}  \left( -2 \int \rho^2 ds +\frac{2 \pi \sg_t}{\ld_t }  \right)\\
\end{align*}
where we have use isoperimetric inequality in the last line.
Let us now recall the convex Gage inequality which is proven in \cite{Gage_conv}, and tells us that for convex $ C^2$  plane curves:
\begin{equation}\label{gage}
\frac{\pi \sg_t}{\ld_t} \le \int \rho^2 ds .
\end{equation}

Using this inequality in the above computation we get:
\begin{align*}
\frac{d}{dt} \frac{\sg_t^2}{4\pi \ld_t} \le \frac{\sg_t}{4\pi \ld_t}  \left( - \frac{2 \pi \sg_t}{\ld_t} +\frac{2 \pi \sg_t}{\ld_t }  \right) = 0\\
\end{align*}

\end{proof}
\begin{lemma}\label{def_conv}
Assume  the curves of the solution $(C_t)_{t\in[0,T_c)} $ to \eqref{eq_det} 
remain simple. Then  the deficit of isoperimetry is non-increasing, i.e.:
$$ \frac{d}{dt} (\sg_t^2 - 4\pi \ld_t ) \le 0 .$$
If moreover  the family of curves $C_t $  remain convex for all $ t \in [0,T_c)$ then for all $ t \in [0,T_c) $ we have:
\begin{enumerate}
\item $ \frac{d}{dt} (\sg_t^2 - 4\pi \ld_t ) \le \frac{-2\pi}{\ld_t} ( \sg_t^2 - 4\pi \ld_t), $
\item $ 0 \le (\sg_t^2 - 4\pi \ld_t )  \le (\sg_0^2 - 4\pi \ld_0 )  \left( \frac{ \left(-2\pi + \frac{2\sg_0^2}{\ld_0} \right)t  +\ld_0 }{\ld_0} \right)^{\frac{-2\pi}{-2\pi + \frac{2\sg_0^2}{\ld_0}  }} .$
\end{enumerate}

\end{lemma}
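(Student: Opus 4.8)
The plan is to differentiate the deficit $\sg_t^2-4\pi\ld_t$ directly and substitute the two evolution formulas already established in Lemma~\ref{eq_geo}. Writing $\dot\sg_t=-\int\rho^2\,ds+4\pi\sg_t/\ld_t$ and $\dot\ld_t=-2\pi+2\sg_t^2/\ld_t$, the terms in $\sg_t^2/\ld_t$ cancel and one obtains
\begin{equation*}
\frac{d}{dt}\left(\sg_t^2-4\pi\ld_t\right)=2\sg_t\dot\sg_t-4\pi\dot\ld_t=-2\sg_t\int\rho^2\,ds+8\pi^2.
\end{equation*}
For the first assertion I would use only the crude lower bound on $\int\rho^2\,ds$ coming from Cauchy--Schwarz together with Gauss--Bonnet, namely $(2\pi)^2=\left(\int\rho\,ds\right)^2\le\sg_t\int\rho^2\,ds$, so that $\int\rho^2\,ds\ge 4\pi^2/\sg_t$; substituting gives $-2\sg_t\cdot 4\pi^2/\sg_t+8\pi^2=0$, which is exactly the monotonicity claimed for merely simple curves.

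For the convex case and part~(1) I would instead insert the sharper Gage inequality \eqref{gage}, $\int\rho^2\,ds\ge\pi\sg_t/\ld_t$, into the same identity, producing
\begin{equation*}
\frac{d}{dt}\left(\sg_t^2-4\pi\ld_t\right)\le-\frac{2\pi\sg_t^2}{\ld_t}+8\pi^2=-\frac{2\pi}{\ld_t}\left(\sg_t^2-4\pi\ld_t\right),
\end{equation*}
which is precisely the linear differential inequality of~(1).

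For part~(2) the lower bound $\sg_t^2-4\pi\ld_t\ge 0$ is just the isoperimetric inequality. The upper bound is obtained by integrating~(1) via Gr\"onwall: setting $f_t\df\sg_t^2-4\pi\ld_t$, one has $f_t\le f_0\exp\left(-\int_0^t 2\pi/\ld_s\,ds\right)$, so it remains to bound $\ld_s$ from above. The key observation, which I expect to be the main obstacle, is that $\ld_t$ is \emph{concave}: differentiating $\dot\ld_t=-2\pi+2\sg_t^2/\ld_t$ once more gives $\ddot\ld_t=\frac{2\sg_t}{\ld_t^2}\left(2\dot\sg_t\ld_t-\sg_t\dot\ld_t\right)$, and substituting $\dot\sg_t,\dot\ld_t$ reduces the bracket to $-2\ld_t\int\rho^2\,ds+10\pi\sg_t-2\sg_t^3/\ld_t$; a second application of the Gage inequality followed by the isoperimetric inequality collapses this to $-\tfrac{2\sg_t}{\ld_t}\left(\sg_t^2-4\pi\ld_t\right)\le 0$, whence $\ddot\ld_t\le 0$.

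Concavity then places $\ld_t$ below its tangent at the origin, $\ld_t\le\ld_0+\dot\ld_0\,t$, where $\dot\ld_0=-2\pi+2\sg_0^2/\ld_0$, which I abbreviate by $a$ and which satisfies $a\ge 6\pi>0$ by the isoperimetric inequality, so that $\ld_0+at>0$ for all $t\ge 0$. Using $1/\ld_s\ge 1/(\ld_0+as)$ in the Gr\"onwall estimate and computing $\int_0^t 2\pi/(\ld_0+as)\,ds=\frac{2\pi}{a}\ln\frac{\ld_0+at}{\ld_0}$ yields $f_t\le f_0\big((\ld_0+at)/\ld_0\big)^{-2\pi/a}$, which is exactly the stated bound. (If $f_0=0$ the curve is a circle and $f_t\equiv 0$, so the estimate holds trivially; otherwise the Gr\"onwall argument in differential-inequality form needs no division and applies verbatim.)
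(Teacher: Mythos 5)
Your proof is correct and follows essentially the same route as the paper's: the same derivative computation for the deficit, Cauchy--Schwarz with Gauss--Bonnet for the merely simple case, Gage's inequality for part (1), and integration of the resulting linear differential inequality using the bound $\ld_t\le\ld_0+\bigl(-2\pi+2\sg_0^2/\ld_0\bigr)t$. The only cosmetic difference is that you obtain this last bound by showing $\ddot\ld_t\le 0$ directly, whereas the paper cites Lemma~\ref{isope} (monotonicity of the isoperimetric ratio) to get $\dot\ld_t\le\dot\ld_0$ --- the identical fact, since $\ddot\ld_t=8\pi\,\frac{d}{dt}\bigl(\sg_t^2/(4\pi\ld_t)\bigr)$.
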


\begin{proof}
By direct computation, and after using Lemma \ref{eq_geo} and similar computation, we have:
\begin{align*}
\frac{d}{dt} (\sg_t^2 - 4\pi \ld_t ) &= 2 \sg_t \dot{\sg_t} - 4\pi \dot{\ld_t} 
= 2 \sg_t \left(- \int \rho^2 ds + \frac{4\pi \sigma_t}{\lambda_t}  \right)-4\pi \left( -2\pi + \frac{2 \sigma_t^2}{\lambda_t} \right) \\
&\le 2 \sg_t \left(- \frac{4\pi^2}{\sg_t} + \frac{4\pi \sigma_t}{\lambda_t}  \right)-4\pi \left( -2\pi + \frac{2 \sigma_t^2}{\lambda_t} \right) \\
&\le 0 .\\
\end{align*}
If moreover  the family of curves $C_t $  remain convex, in the second line of the above computation, we can improve the inequality using \eqref{gage} instead of Gauss-Bonnet Theorem, and we get  for all $ t \in [0,T_c) $:
 \begin{align*}
\frac{d}{dt} \left(\sg_t^2 - 4\pi \ld_t \right)  & \le 2 \sg_t \left(- \frac{\pi \sg_t}{\ld_t} + \frac{4\pi \sigma_t}{\lambda_t}  \right)-4\pi \left( -2\pi + \frac{2 \sigma_t^2}{\lambda_t} \right) \\
&\le \frac{-2\pi}{\ld_t} \left( \sg_t^2 - 4\pi \ld_t\right) .\\
\end{align*}

Using Lemmas \ref{isope} , \ref{eq_geo} and isoperimetric inequality  we deduce that 
$$   6\pi\le -2\pi + \frac{2\sg_t^2}{\ld_t}  \le \dot{\ld_t} \le -2\pi + \frac{2\sg_t^2}{\ld_t} \le  -2\pi + \frac{2\sg_0^2}{\ld_0}, $$
so  for all  $ t \in [0,T_c) $  
$$ 6\pi t + \ld_0 \le \ld_t \le  \left(-2\pi + \frac{2\sg_0^2}{\ld_0} \right)t + \ld_0 .$$  
Hence we get:
$$\frac{d}{dt} (\sg_t^2 - 4\pi \ld_t ) \le \frac{-2\pi}{ \left(-2\pi + \frac{2\sg_0^2}{\ld_0} \right)t + \ld_0} \left( \sg_t^2 - 4\pi \ld_t\right). $$
 
After integration we obtain for all  $ t \in [0,T_c) $:
$$ 0 \le (\sg_t^2 - 4\pi \ld_t )  \le (\sg_0^2 - 4\pi \ld_0 )  \left( \frac{ \left(-2\pi + \frac{2\sg_0^2}{\ld_0} \right)t  +\ld_0 }{\ld_0} \right)^{\frac{-2\pi}{-2\pi + \frac{2\sg_0^2}{\ld_0}  }} .$$

\end{proof}
We deduce the asymptotical shape of the curve $C_t$ as~$t$ goes to infinity.  
\begin{corollary} \label{become_circle}
Assume  the solution   $(C_t)_{t\in[0,+\iy)} $ to \eqref{eq_det} 
is defined for all times and that its curves remain convex. Then we have
$$ \lim_{t \to \infty } \frac{\sg_t^2}{\ld_t} = 4\pi .$$

After  renormalization, the curve $\frac{1}{\sqrt{6t}} [C_t- \ci(t)]$ converges to the circle of center~$0$ and radius~$1$ for the Hausdorff metric, where $\ci(t)$ is the center of an inscribed circle of $C_t $ . 

\end{corollary}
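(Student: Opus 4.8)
The plan is to treat the two assertions in turn, deriving the limit of the isoperimetric ratio first and then bootstrapping it into the Hausdorff statement. I would begin by extracting from Lemma~\ref{def_conv}(2) that the deficit $\sg_t^2-4\pi\ld_t$ tends to $0$: the isoperimetric inequality gives $-2\pi+2\sg_0^2/\ld_0\ge 6\pi>0$, so in that bound the base grows linearly in $t$ while the exponent $-2\pi/(-2\pi+2\sg_0^2/\ld_0)$ is strictly negative, whence the right-hand side vanishes. Since the proof of Lemma~\ref{def_conv} also yields $\ld_t\ge 6\pi t+\ld_0\to\infty$, I obtain
\[
\frac{\sg_t^2}{\ld_t}=4\pi+\frac{\sg_t^2-4\pi\ld_t}{\ld_t}\longrightarrow 4\pi,
\]
which is the first claim. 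Inserting this into the second identity of Lemma~\ref{eq_geo}, namely $\dot\ld_t=-2\pi+2\sg_t^2/\ld_t$, gives $\dot\ld_t\to 6\pi$, and a Ces\`aro--Stolz argument then upgrades the linear lower bound to the sharp asymptotics $\ld_t\sim 6\pi t$, which fixes the correct normalizing factor $\sqrt{6t}$.

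Next I would bring in the inradius $\rai(t)$, the circumradius $\rao(t)$ and their centers $\ci(t),\co(t)$. The crucial external input is Bonnesen's inequality for plane convex bodies,
\[
\sg_t^2-4\pi\ld_t\ \ge\ \pi^2\bigl(\rao(t)-\rai(t)\bigr)^2,
\]
which combined with the first step forces $\rao(t)-\rai(t)\to 0$. From the nested inclusions $B(\ci(t),\rai(t))\subseteq D_t\subseteq B(\co(t),\rao(t))$ and the elementary fact that one disk contains another only when the distance between the centers is at most the difference of the radii, I get $|\co(t)-\ci(t)|\le \rao(t)-\rai(t)\to 0$. The area bounds $\pi\rai(t)^2\le\ld_t\le\pi\rao(t)^2$ together with $\ld_t\sim 6\pi t$ and $\rao-\rai\to 0$ then give $\rai(t)\sim\rao(t)\sim\sqrt{6t}$.

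Finally I would sandwich the translated curve: from $B(0,\rai(t))\subseteq D_t-\ci(t)\subseteq B(\co(t)-\ci(t),\rao(t))\subseteq B\bigl(0,2\rao(t)-\rai(t)\bigr)$, every point $x$ of $C_t-\ci(t)$ satisfies $\rai(t)\le|x|\le \rai(t)+2(\rao(t)-\rai(t))$. Dividing by $\sqrt{6t}$ places $\tfrac{1}{\sqrt{6t}}[C_t-\ci(t)]$ in an annulus $\{a_t\le|x|\le b_t\}$ with $a_t,b_t\to 1$. Hausdorff convergence to $\mathcal{C}(0,1)$ then holds in both directions: each point of the renormalized curve is within $\max(|a_t-1|,|b_t-1|)\to 0$ of the unit circle, while conversely, since the curve bounds a convex region containing the origin in its interior, the ray from the origin through any point of $\mathcal{C}(0,1)$ meets it in a single point of the same direction, again at distance at most $\max(|a_t-1|,|b_t-1|)$.

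The hard part will not be any single computation but the bookkeeping of the centers: the inscribed and circumscribed disks are in general not concentric, so the inclusion bound $|\co-\ci|\le\rao-\rai$ is exactly what guarantees that recentring at $\ci(t)$---the point named in the statement---still confines the curve to a shrinking annulus about the origin. Quantitatively, Bonnesen's inequality is the decisive ingredient; all the remaining estimates are consequences of the a priori bounds already obtained in Lemmas~\ref{eq_geo} and~\ref{def_conv}.
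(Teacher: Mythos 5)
Your proposal is correct and follows essentially the same route as the paper: Lemma~\ref{def_conv} for the vanishing of the isoperimetric deficit, the asymptotics $\ld_t\sim 6\pi t$, Bonnesen's inequality to force $\rao(t)-\rai(t)\to 0$, the disk-inclusion bound $|\co(t)-\ci(t)|\le\rao(t)-\rai(t)$, and a sandwich between nearly concentric disks centered at $\ci(t)$. The only (welcome) difference is that you spell out the passage from convergence of the domains to convergence of the boundary curves via the radial-ray argument, a step the paper dispatches with ``similarly.''
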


\begin{proof}
Using Lemma \ref{eq_geo} and the isoperimetric inequality, we have $ \dot{\ld_t} \ge 6\pi $, so 
$$ \ld_t \ge 6\pi t + \ld_0 . $$
Using the above Lemma \ref{def_conv} we get 
$$ 0 \le \frac{\sg_t^2 - 4\pi \ld_t }{\ld_t} \le \frac{(\sg_0^2 - 4\pi \ld_0 )  \left( \frac{ \left(-2\pi + \frac{2\sg_0^2}{\ld_0} \right)t  +\ld_0 }{\ld_0} \right)^{\frac{-2\pi}{-2\pi + \frac{2\sg_0^2}{\ld_0}  }}}{6\pi t + \ld_0 } ,$$
and the right hand side goes to $0$ as $t$ goes to infinity.
For the second point, use again  \ref{eq_geo} and the computation above, to deduce that
$$ \dot{\ld}_t  \underset{t \sim \infty }{ \sim} 6\pi,  $$
so $$\ld_t \underset{t \sim \infty }{ \sim} 6\pi t .$$
Since  $  \lim_{t \to \infty } \sg_t^2 - 4\pi \ld_t = 0  $, and using Bonnesen inequality, i.e.
\bqn{Bonne} \frac{\pi^2}{\ld_t} (\rao(t) - \rai(t) )^2 &\le&\left( \frac{\sg_t ^2 }{\ld_t} - 4\pi \right)\eqn
where $ \rao(t) , \rai(t)$ are respectively the outer and the inner radius of the curve $C_t $,
we get  that $$ (\rao(t) - \rai(t) )^2 \le  \frac{\ld_t}{\pi^2} \left( \frac{\sg_t ^2 }{\ld_t} - 4\pi \right) . $$
Let $\ci(t)$ be the center of an inscribed circle of $ C_t$, and $\co(t)$ be a center of a circumscribed circle  of $ C_t$.
Then since $B(\ci(t),\rai(t)) \subset D(t) \subset B(\co(t),\rao(t))  $ we have by Lemma \ref{def_conv}, and isoperimetric inequality that  there exist two positive constants $C>0$ and $\gamma \in\left]0, \frac{1}{6}\right]$ such that ,  
$$ \vert  \ci(t) - \co(t) \vert \le  \rao(t) - \rai(t) \le Ct^{-\gamma} $$
and
$$\rai(t) \le \sqrt{\frac{\ld_t}{\pi}} \le \rao(t) .$$
For two compact sets $ A,B \subset \mathbb{R}^2$ and $ \epsilon \ge 0$  we define $$A_{\epsilon}\df  \{x \in \mathbb{R}^2, d(x,A) \le \epsilon \} $$
and the Hausdorff distance between $ A$ and $B$ by
$$d_{\mathcal{H}}(A,B)\df  \inf \{ r > 0, A \subset B_r \quad and \quad B \subset A_r  \} .$$
Since $B(\co(t), \rao(t)) \subset B(\ci(t), \rai(t))_{2(\rao(t) -\rai(t))}  $ we easily derive that $$ d_{\mathcal{H}}(B(\ci(t), \rai(t)),B(\co(t), \rao(t))) \le 2 ( \rao(t) - \rai(t) ) \le 2Ct^{-\gamma}$$ and $$d_{\mathcal{H}}(D(t),B(\ci(t), \rai(t))) \le 2 ( \rao(t) - \rai(t) ) \le 2Ct^{-\gamma} .$$

So  by Lemma \ref{def_conv}, 
\begin{align*}
&d_{\mathcal{H}}\left(\frac{D(t)- \ci(t)}{\sqrt{6t}},B(0, 1)\right) = \frac{1}{\sqrt{6t}} d_{\mathcal{H}}(D(t),B(\ci(t), \sqrt{6t}))  \\
&\le  \frac{1}{\sqrt{6t}}    \left(d_{\mathcal{H}}(D(t),B(\ci(t), \rai(t))) + d_{\mathcal{H}}( B(\ci(t), \rai(t)),B(\ci(t), \sqrt{6t}))   \right) \\
&\le \frac{1}{\sqrt{6t}}    \left(2 Ct^{-\gamma} + \vert \rai - \sqrt{6t} \vert    \right)  \underset{t \to \infty}{\to}  0 \\
\end{align*}

Similarly  we have that $\frac{1}{\sqrt{6t}} [C_t- \ci(t)]$ converge to the circle of radius $ 1$  and center $0$ for the Hausdorff metric.
\end{proof}
\subsection{Preserving the convexity and lower bound on the curvature}\label{2.4}
We consider here the flow \eqref{eq_det} when the initial curve is strictly convex and simple.
Our purpose is twofold. First 
to show that  strict convexity and simplicity is preserved over its entire lifetime.
Second to prove that the lifetime is infinite, by using intensively some ideas developed in \cite{Gage_Hamilton}.

Let us come back to the angular parametrization recalled in Section \ref{1.2}. 
Usually, the angle $\theta$ depends on $ u$ and $t$. Following \cite{Gage_area} and \cite{Gage_Hamilton}, after adding to the flow \eqref{eq_det}  a tangential perturbation,   the shape of  the curve remains the same, and it is possible    
to find a tangential intensity so that the parameter $\theta$ does not depend on the time.
This  change of coordinate will make the equation simpler to investigate, since the operators $\partial_{\te} $ and $\partial_t $ will commute, contrary to $\partial_{s} $ and $\partial_t $ as shown in \eqref{eq_courbure}. 

Let us quickly recall how to  find the appropriate tangential intensity.
Consider the evolution
\begin{equation}\label{eq_det_pert}
\partial_t C(t,u) = \left(-\rho_t(C(t,u)) + 2h(D(t,.)) \right))\nu + \aaa(u,t) T.
\end{equation}
whose curves $C_t$ have the same shape as those of \eqref{eq_det}, only the parametrizations with respect to $u$ change.
We are looking for a function $\aaa$ making $\theta$ and $t$ independent.
\CPK{Since the mean curvature flow has the regularizing property, see for instance Remark 1.5.3.\ of \cite{MR2815949}, the solutions of Equations \eqref{eq_det} and \eqref{eq_det_pert} are smooth for positive time.}
Differentiating \eqref{eq_det_pert} with respect to $u$ and using $\partial_uT=-v\rho\nu$ together with $\partial_u\nu=v\rho T$  we get 
$$ \partial_t T = \left(- \frac{\partial_u \rho}{v} - \aaa (u,t) \rho \right)\nu $$
and 
$$ \partial_t v = -v\rho^2 + 2h v \rho +  \partial_u \aaa .$$
To make $ \theta$, i.e.\ $T $, independent of $t$ we take 
$\aaa = - \frac{\partial_u \rho}{v\rho} . $\par
Differentiating $\partial_t T  $ with respect to $ u$, we get that $ \partial_t (v\rho) = 0. $
Since $ \partial_{\theta} T = -\nu $,  the chain rule implies $\frac{\partial u}{\partial \theta } \frac{\partial T}{\partial u } = - \nu  $, hence $ \frac{\partial u}{\partial \theta } = \frac{1}{v\rho} $, and we deduce that
$$\frac{\partial }{\partial \theta } = \frac{1}{v\rho} \frac{\partial}{\partial u},  $$
and so $d \theta = \rho v du . $

Since $ \partial_t (v\rho) = 0 $ we have:

\begin{align*}
\partial_t \rho &= - \frac{ (\partial_t v) \rho}{v } = \rho^3 -2h \rho^2 + \frac{\rho}{v} \partial_u (\frac{\partial_u \rho }{v\rho})= \rho^3 -2h \rho^2 + \frac{\rho}{v} \partial_u (\partial_{\theta} \rho )\\
 &= \rho^2 \partial_{\theta} (\partial_{\theta} \rho ) + \rho^2( \rho -2h ).
\end{align*}
{
We record the result in the next lemma.
\begin{lemma}\label{eqrmcf}
When the curvature remains positive, Equation~\eqref{eq_det_pert} for the RCF yields the curvature evolution equation 
\begin{equation}\label{eqcurvtheta}
\partial_t \rho=\rho^2 \partial^2_{\theta} \rho + \rho^2( \rho -2h ).
\end{equation}
\end{lemma}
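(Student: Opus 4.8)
The plan is to obtain \eqref{eqcurvtheta} not from the arc-length evolution directly, but after passing to the angular coordinate $\theta$, in which $\partial_\theta$ and $\partial_t$ commute. The key device is to replace the purely normal flow \eqref{eq_det} by the reparametrized flow \eqref{eq_det_pert}, in which a tangential field $a(u,t)T$ is added. Since a tangential perturbation moves points along the curve without displacing the curve as a subset of $\RR^2$, the images $C_t$ --- and hence all the geometric quantities $\rho$, $\sigma$, $\lambda$, $h$ --- are unchanged; only the correspondence $u \mapsto C(t,u)$ is altered. I would then choose $a$ so that the tangent vector $T$, equivalently the turning angle $\theta$, becomes independent of $t$.

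First I would differentiate \eqref{eq_det_pert} in $u$, using the Frenet relations $\partial_u T = -v\rho\,\nu$ and $\partial_u\nu = v\rho\,T$, to read off
\begin{equation*}
\partial_t T = \Bigl(-\tfrac{\partial_u\rho}{v} - a\rho\Bigr)\nu, \qquad \partial_t v = -v\rho^2 + 2hv\rho + \partial_u a.
\end{equation*}
Requiring $\partial_t T = 0$ forces the choice $a = -\partial_u\rho/(v\rho)$, which is where positivity of $\rho$ first enters. Differentiating the (now vanishing) expression for $\partial_t T$ once more in $u$ yields the crucial conservation law $\partial_t(v\rho) = 0$: the product $v\rho$, which is the Jacobian $d\theta/du$ of the angle parametrization, is frozen in time.

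Next I would set up the change of variables. From $\partial_\theta T = -\nu$ and the chain rule one gets $\partial_\theta = (v\rho)^{-1}\partial_u$, i.e.\ $d\theta = \rho v\,du$. Because $v\rho$ is time-independent, $\partial_\theta$ and $\partial_t$ commute, and I can compute $\partial_t\rho = -(\partial_t v)\rho/v$ directly from the conservation law. Substituting the expression for $\partial_t v$ with the chosen $a$, and rewriting every $v^{-1}\partial_u$ as $\rho\,\partial_\theta$, the terms reorganize into $\rho^2\partial^2_\theta\rho + \rho^2(\rho-2h)$, which is \eqref{eqcurvtheta}.

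The computation itself is short; the main thing to get right is the conceptual justification rather than any hard analysis. The delicate point is that the angular coordinate is only a bona fide global chart when $\theta \mapsto C_t$ is a diffeomorphism of $\TT$, which needs $\rho > 0$ everywhere --- exactly the standing hypothesis ``when the curvature remains positive.'' I would also take care to verify that the tangential reparametrization is admissible throughout the lifetime (no blow-up of $a$), again guaranteed by $\rho$ bounded away from $0$ on compact time intervals, and to confirm that $\partial_t(v\rho)=0$ indeed makes the two operators commute, so that $\partial^2_\theta$ can be pulled through the time derivative without spurious drift terms.
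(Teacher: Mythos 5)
Your proposal follows exactly the paper's derivation: adding the tangential field $a=-\partial_u\rho/(v\rho)$ to freeze $T$, deducing $\partial_t(v\rho)=0$, identifying $\partial_\theta=(v\rho)^{-1}\partial_u$, and computing $\partial_t\rho=-(\partial_t v)\rho/v$ to reorganize into \eqref{eqcurvtheta}. The argument is correct and coincides with the one given in Section~\ref{2.4}.
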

}
\begin{lemma}\label{conv}
Under Hypothesis \ref{hypoC0},
the solution to  \eqref{eq_det_pert} remains strictly convex and simple up to its lifetime. Moreover, we have
$$ \rho( \te, t) \ge \rho_{\inf} ( 0) e^{- h_0^2 t} $$
where $\rho_{\inf}(0)$ is the minimal curvature of $C_0$.

\end{lemma}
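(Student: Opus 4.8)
The plan is to run a minimum principle on the curvature equation \eqref{eqcurvtheta} of Lemma~\ref{eqrmcf}. Since strict convexity is precisely the positivity of $\rho$, and since a closed curve reconstructed from a positive curvature function satisfying \eqref{cores} is automatically embedded, it suffices to produce a strictly positive lower bound for $\rho$ up to the lifetime: strict convexity and simplicity will then persist. By the regularizing property of the flow recalled above, $\rho(\cdot,t)$ is smooth and $2\pi$-periodic in $\theta$ for positive time, so $m(t)\df\min_{\theta}\rho(\theta,t)=\rho_{\inf}(t)$ is well defined and, by Hamilton's trick, locally Lipschitz in $t$.

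First I would evaluate \eqref{eqcurvtheta} at a point $\theta_t$ realizing the spatial minimum. There one has $\partial_\theta\rho=0$ and $\partial^2_\theta\rho\ge 0$, so the diffusion term $\rho^2\partial^2_\theta\rho$ is nonnegative and, along the minimizing point,
$$\frac{d}{dt}m(t)\ \ge\ m(t)^2\big(m(t)-2h(t)\big)$$
for almost every $t$ (interpreting the left-hand side as an upper Dini derivative). As long as $m(t)>0$, dividing by $m(t)$ gives
$$\frac{d}{dt}\log m(t)\ \ge\ m(t)\big(m(t)-2h(t)\big).$$

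The next step is to complete the square: $m(m-2h)=(m-h)^2-h^2\ge -h^2$. Here I would invoke Lemma~\ref{eq_geo}(3), which shows that $t\mapsto h(t)$ is non-increasing, whence $h(t)\le h_0$ and therefore $\tfrac{d}{dt}\log m(t)\ge -h_0^2$. Integrating from $0$ to $t$ yields $\log m(t)\ge \log m(0)-h_0^2 t$, that is
$$\rho(\te,t)\ \ge\ m(t)\ \ge\ \rho_{\inf}(0)\,e^{-h_0^2 t},$$
which is exactly the claimed bound and in particular remains strictly positive on every finite time interval.

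I expect the main obstacle to be the rigorous justification of the differential inequality for $m(t)$: one must legitimately differentiate the minimum of a smooth family (Hamilton's trick), which relies on smoothness for positive time and requires care near $t=0$, where only $\mathcal{C}^{2+\alpha}$ regularity is assumed. A secondary point deserving attention is the passage from the positive lower bound on $\rho$ to the preservation of simplicity, namely checking that the curve reconstructed via \eqref{cores} stays embedded for as long as $\rho>0$.
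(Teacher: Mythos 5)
Your argument is correct and is essentially the paper's: both run a minimum principle on the curvature equation in the $\theta$-parametrization and bound the reaction term via $\rho(\rho-2h)\ge -h^2\ge -h_0^2$ using the monotonicity of $h$ (the paper packages this as a negative-discriminant condition on $\mathcal{Q}=\rho e^{\mu t}$ with $\mu>h_0^2$ and a first-time-of-failure contradiction, which is the same computation as your completed square). The one point you should close rather than merely flag is the circularity: Lemma~\ref{eq_geo}(3), which gives $h\le h_0$, is stated under the assumption that the curves remain simple, so you cannot invoke it unconditionally; the paper resolves this by assuming simplicity provisionally, and then arguing by contradiction that at a hypothetical first non-simple time $T_s$ the curve has a smooth limit with strictly positive curvature satisfying \eqref{cores}, hence is simple by the Gage--Hamilton characterization (Lemma 4.1.1 and Theorem 4.1.4 of \cite{Gage_Hamilton}), a contradiction.
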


\begin{proof}
Let $ \mathcal{Q}(\te, t) = \rho( \te,t) e^{\mu t} $ for a constant $ \mu  $ that will be chosen later, then $ \mathcal{Q}$ will satisfy the following equation:
\begin{equation}\label{eq_W}
 \partial_t \mathcal{Q} = \rho^2 \frac{\partial^2}{\partial^2 \te} \mathcal{Q} + \mathcal{Q} (\rho^2 - 2h \rho + \mu).
 \end{equation}
The reaction term in the above equation is quadratic in $\rho $, and the discriminant is 
$4 (h^2 - \mu)$.  
Note that the quantity $ h $ in this equation is the same as in \eqref{eq_det}, since the geometric quantities are the same for this equation and \eqref{eq_det_pert}.
Also by Lemma \ref{eq_geo}, assuming for the time being that the curve remains simple,  $h$ is non-increasing, and
$$4 (h^2 - \mu) \le 4(h_0^2 - \mu ) .$$
So choosing $\mu > h_0^2   $ such that this discriminant is negative, the coefficient of $ \mathcal{Q}$ remains \strictly positive. We will apply the maximum principle for this equation. Let
$\mathcal{Q}_{\inf}(t) :=\inf \{ \mathcal{Q}(\te,t ), \, 0\le \te \le 2\pi\} $. The proof is by contradiction,  suppose that there exist $ 0 < \eta <\mathcal{Q}_{\inf}(0) $ and $ t > 0$ such that $ \mathcal{Q}_{\inf}(t)= \eta$, let $ t_0$ be the first time such that $\mathcal{Q}_{\inf}(t_0)= \eta .$  This minimum is achieved at some point $ \te_0$, and at this point:\\
 $\partial_t \mathcal{Q}(\te_0,t_0) \le 0$, $\frac{\partial^2}{\partial^2 \te} \mathcal{Q} (\te_0,t_0) \ge 0 $, and $\mathcal{Q}(\te_0,t_0)= \eta.$
 This is in contradiction with  Equation  \eqref{eq_W}.
 Hence for all $ 0<t $, $\mathcal{Q}_{\inf}(t) \ge \mathcal{Q}_{\inf}(0)$ and 
 $$ \rho_{\inf}(  t) \ge \rho_{\inf} ( 0) e^{-\mu t},$$ where $\rho_{\inf} (t) = \inf \{ \rho(\te,t ), \, 0\le \te \le 2\pi\}  $
 so $\rho( \te, t) \ge \rho_{\inf} ( 0) e^{-\mu t} $ for all $\mu > h_0^2   $. Hence if $C_0 $ is strictly convex then $C_t $ remains strictly convex at any time~$t$ up to which it is defined.
  
  Above, while resorting to   Lemma \ref{eq_geo},  we assumed  that the curves solution to \eqref{eq_det_pert} remain simple. 
  Let us show it is true, through the same
 argument  by contradiction. Let $T_s $ be the first time   the curve stops to be simple.
 If $T_s$
  occurs strictly before the maximal lifetime of  \eqref{eq_det_pert}, i.e. $ 0< T_{s} \le t* < T_c$,  the same computation as above shows the curvature $\rho $ is \strictly positive until $T_{s}$, and $\rho$ and all its derivatives are bounded in $[0,t*] $. So there exist a limiting curve as $t$ goes to $ T_s$, that is smooth and has \strictly positive curvature, due to Arzel\`a-Ascoli theorem. Taking into account  Lemma~4.1.1 and Theorem 4.1.4 in \cite{Gage_Hamilton}, we know that  positive curvature and  
  \eqref{cores}
 characterizes  simple close strictly convex curves. So the limiting curve  is simple and strictly convex, contradicting the non-simplicity at time $T_s$.
   \end{proof}
 \begin{corollary}\label{conv_plus}
Still under Hypothesis \ref{hypoC0}, the bound of Lemma \ref{conv} can be improved into
$$ \rho( \te, t) \ge \frac{1}{\frac{1}{\rho_{\inf}(0)} + \frac{\sg_0^2}{6\pi^2 \ld_0} \sqrt{24 \pi^2 t + 4\pi\ld_0 }} .$$
\end{corollary}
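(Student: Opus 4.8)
The plan is to control the spatial minimum $\rho_{\inf}(t)\df\min_\theta\rho(\theta,t)$ of the curvature directly from the evolution equation \eqref{eqcurvtheta}, and then integrate the resulting scalar differential inequality. First I would apply Hamilton's trick to $t\mapsto\rho_{\inf}(t)$: this map is locally Lipschitz, and for a.e.\ $t$, choosing a point $\theta_0$ realizing the minimum one has $\partial_\theta^2\rho(\theta_0,t)\ge 0$, so that
\[
\frac{d}{dt}\rho_{\inf}(t)=\partial_t\rho(\theta_0,t)=\big[\rho^2\partial_\theta^2\rho+\rho^2(\rho-2h)\big]_{\theta_0}\ge \rho_{\inf}(t)^3-2h(t)\,\rho_{\inf}(t)^2\ge -2h(t)\,\rho_{\inf}(t)^2,
\]
using $\rho_{\inf}^3\ge 0$. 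Setting $y(t)\df 1/\rho_{\inf}(t)$ this reads $\dot y(t)\le 2h(t)$, whence after integration $\tfrac{1}{\rho_{\inf}(t)}\le\tfrac{1}{\rho_{\inf}(0)}+2\int_0^t h(s)\,ds$. Taking reciprocals and recalling $\rho(\theta,t)\ge\rho_{\inf}(t)$, the corollary reduces to the estimate $2\int_0^t h(s)\,ds\le \frac{\sigma_0^2}{6\pi^2\lambda_0}\sqrt{24\pi^2 t+4\pi\lambda_0}$.

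The remaining, and main, task is therefore to bound $\int_0^t h_s\,ds$ by a quantity of order $\sqrt t$, for which the pointwise bound $h_s\le h_0$ from point (3) of Lemma~\ref{eq_geo} is too crude. Here I would combine two a priori facts already established in Section~\ref{2.4}: from Lemma~\ref{eq_geo} and the isoperimetric inequality, $\dot\lambda_s=-2\pi+2\sigma_s^2/\lambda_s\ge 6\pi$, so $\lambda$ is increasing and $\lambda_s\ge\lambda_0+6\pi s$; and from Lemma~\ref{def_conv} the deficit $D_s\df\sigma_s^2-4\pi\lambda_s$ is non-increasing, i.e.\ $0\le D_s\le D_0$. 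Writing $\sigma_s^2=4\pi\lambda_s+D_s$ and using $D_s\le D_0=\tfrac{D_0}{\lambda_0}\lambda_0\le\tfrac{D_0}{\lambda_0}\lambda_s$ (valid since $\lambda_s\ge\lambda_0$), I obtain the clean comparison $\sigma_s^2\le\tfrac{\sigma_0^2}{\lambda_0}\lambda_s$, hence $h_s=\sigma_s/\lambda_s\le\tfrac{\sigma_0}{\sqrt{\lambda_0\lambda_s}}$. Integrating against $\lambda_s\ge\lambda_0+6\pi s$ gives
\[
\int_0^t h_s\,ds\le\frac{\sigma_0}{\sqrt{\lambda_0}}\int_0^t\frac{ds}{\sqrt{\lambda_0+6\pi s}}=\frac{\sigma_0}{3\pi\sqrt{\lambda_0}}\Big(\sqrt{\lambda_0+6\pi t}-\sqrt{\lambda_0}\Big)\le\frac{\sigma_0}{3\pi\sqrt{\lambda_0}}\sqrt{\lambda_0+6\pi t}.
\]

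Finally I would compare this with the target. Since $\sqrt{24\pi^2 t+4\pi\lambda_0}=2\sqrt\pi\,\sqrt{6\pi t+\lambda_0}$, the claimed upper bound for $2\int_0^t h_s\,ds$ equals $\frac{\sigma_0^2}{3\pi^{3/2}\lambda_0}\sqrt{6\pi t+\lambda_0}$, while what I obtained is $\frac{2\sigma_0}{3\pi\sqrt{\lambda_0}}\sqrt{\lambda_0+6\pi t}$; their ratio is $\frac{2\sqrt{\pi\lambda_0}}{\sigma_0}$, which is $\le 1$ precisely by the isoperimetric inequality $\sigma_0^2\ge 4\pi\lambda_0$. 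Thus my bound is in fact slightly stronger than the stated one, which completes the argument. I expect the only delicate point to be the rigorous justification of the differentiation of $t\mapsto\rho_{\inf}(t)$, via Hamilton's trick or the comparison-with-ODE maximum principle already invoked in the proof of Lemma~\ref{conv}; everything else is bookkeeping of the a priori estimates of Section~\ref{2.4}.
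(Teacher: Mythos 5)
Your proof is correct and follows essentially the same route as the paper's: both reduce the problem to the inequality $1/\rho_{\inf}(t)\le 1/\rho_{\inf}(0)+2\int_0^t h_s\,ds$ via a maximum-principle argument at the spatial minimum (the paper runs it on $W=e^{-1/\rho+\int_0^t 2h_s ds}$, you via Hamilton's trick, which is equivalent), and then bound $\int_0^t h_s\,ds$ by order $\sqrt t$ using the monotonicity of the isoperimetric ratio together with $\lambda_t\ge\lambda_0+6\pi t$. Your bookkeeping of the $h$-bound is marginally sharper (the paper uses $\sigma_t\ge\sqrt{4\pi\lambda_t}$ where you keep $\lambda_t$ directly), which is why you need one final application of the isoperimetric inequality to recover the stated constant; this is a cosmetic difference only.
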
  
 \begin{proof}
 Using Lemma \ref{conv}, we have that $\rho > 0 $ so we can define $W\df e^{- \frac{1}{\rho} + \int_0^t 2h_s \, ds} $.
We compute
 \begin{align*}
\partial_t W &= (\partial_t e^{- \frac{1}{\rho}} ) e^{ \int_0^t 2h_s \, ds} + 2h W\\
  &=  ( \frac{\partial_t \rho}{\rho^2} + 2h) W\\
  &=  (\partial^2_{\te} \rho + \rho )W.
 \end{align*}
 We will apply the maximum principle for this equation. Define
\bq W_{\inf}(t) &\df&\inf \{ W(\te,t ), \, 0\le \te \le 2\pi\}. \eq The proof is by contradiction,  suppose that there exists $ 0 < \eta <W_{\inf}(0) $ and $ t > 0$ such that $ W_{\inf}(t)= \eta$. Let $ t_0 >0$ be the first time such that $W_{\inf}(t_0)= \eta .$  This minimum is achieved at some point $ \te_0$, and at this point (since $W$ is a non-decreasing function in $ \rho$):\\
 $\partial_t W(\te_0,t_0) \le 0$, $\frac{\partial^2}{\partial^2 \te} W (\te_0,t_0) \ge 0 $, and $W(\te_0,t_0)= \eta$, so $\frac{\partial^2}{\partial^2 \te} \rho (\te_0,t_0) \ge 0 $.
 This is in contradiction with the equation satisfied by $W$.
 Hence for all $ 0<t $, $W_{\inf}(t) \ge W_{\inf}(0)$, so 
 $$  e^{- \frac{1}{\rho} + \int_0^t 2h_s \, ds}  \ge e^{- \frac{1}{\rho_{\inf}(0)}}. $$ 
 By Lemma \ref{isope}, we have $$h(t)\df \frac{\sg_t}{\ld_t} \le \frac{\frac{\sg_0^2}{ \ld_0}}{\sg_t}. $$  
 By isoperimetric inequality we have $\sqrt{4\pi \ld_t} \le \sg_t $ hence by Lemma  \ref{eq_geo} we have $\sqrt{4\pi (6\pi t + \ld_0 )} \le \sg_t $ and 
$$h(t)  \le \frac{\frac{\sg_0^2}{ \ld_0}}{\sqrt{4\pi (6\pi t + \ld_0 )}} .$$ 
This yield 
$$ \int_0^t 2h(s) \, ds \le \frac{\sg_0^2}{6\pi^2 \ld_0} [\sqrt{24 \pi^2 t + 4 \pi\ld_0 } -\sqrt{ 4\pi\ld_0 } ] ,$$
and $$ \rho(t) \ge \frac{1}{\frac{1}{\rho_{\inf}(0)} + \frac{\sg_0^2}{6\pi^2 \ld_0} \sqrt{24 \pi^2 t + 4\pi\ld_0 }}.$$ 
 \end{proof} 
\subsection{Upper bound on the curvature and long time existence of the flow for strictly  convex simple initial curves}
We will now show that under Hypothesis~\ref{hypoC0}, the solution of \eqref{eq_det} can be defined for all times, by first establishing a uniform bound (depending on time horizon) of the maximum of curvature.
Similarly to \cite{Gage_Hamilton} we define the pseudo-median of the curvature:
$$ \rho^{*}(t) = \sup \{ \beta, \rho(\te,t )> \beta \text{ on some interval of length } \pi \}. $$
\begin{lemma}\cite{Gage_Hamilton}\label{geo}
If a planar convex closed curve encloses an area $ \ld$ and has length $\sg $ then $ \rho^{*}  \le \frac{\sg}{\ld}. $
\end{lemma}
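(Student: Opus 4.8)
The plan is to control $\rho^*$ through a suitable width of the convex body $D$ and then to compare that width with $\sigma/\lambda$. By definition of $\rho^*$, for each $\beta<\rho^*$ there is an interval of tangent angles of length $\pi$ on which $\rho>\beta$; rotating the curve in the plane shifts the angular parametrization while preserving $\sigma$, $\lambda$ and $\rho^*$, so I may assume this interval is $(0,\pi)$. With the outer normal $\nu(\theta)=(\sin\theta,-\cos\theta)$, the endpoints $P(0)$ and $P(\pi)$ of the arc over $(0,\pi)$ carry the downward and upward normals respectively, hence are the lowest and highest points of the strictly convex curve. Their vertical separation is therefore the full vertical width $w_v$ of $D$.

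Using $P'(\theta)=\rho(\theta)^{-1}(\cos\theta,\sin\theta)$ I would then compute
\[
w_v=\int_0^{\pi}\frac{\sin\theta}{\rho(\theta)}\,d\theta<\frac1\beta\int_0^{\pi}\sin\theta\,d\theta=\frac{2}{\beta},
\]
since $\rho>\beta$ on $(0,\pi)$; letting $\beta\uparrow\rho^*$ gives $w_v\le 2/\rho^*$. It then remains to establish the purely convex-geometric inequality $\lambda\le\tfrac12\,\sigma\,w_v$. I would bound the area by that of the enclosing axis-parallel rectangle, $\lambda\le w_v\,w_h$ with $w_h$ the horizontal width, and observe that for a convex curve the two arcs joining the leftmost and rightmost points each have length at least $w_h$, so $\sigma\ge 2w_h$. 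Combining $\lambda\le w_v w_h\le\tfrac12\sigma w_v$ with $w_v\le 2/\rho^*$ yields $2\lambda/\sigma\le w_v\le 2/\rho^*$, that is $\rho^*\le\sigma/\lambda$, as required.

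The argument is elementary once the geometric reductions are justified, so I expect the only delicate points to be verifying that $P(0)$ and $P(\pi)$ are genuinely the extreme points of the whole domain (which uses strict convexity and the uniqueness of the normal direction, precisely the regularity guaranteed by Lemma~\ref{conv}) and the estimate $\sigma\ge 2w_h$; neither presents a real analytic obstacle. A sanity check on a long thin stadium, where the bound is nearly attained, confirms that the correct comparison quantity is indeed $\sigma/\lambda$ rather than, say, the diameter.
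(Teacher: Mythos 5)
Your proof is correct. The paper gives no proof of this lemma (it is quoted from Gage--Hamilton), and your argument is essentially the classical one: the arc whose tangent angle sweeps an interval of length $\pi$ on which $\rho>\beta$ joins two points with antiparallel normals, so the whole convex curve is trapped in a strip of width $w\le 2/\beta$, and the width--perimeter--area inequality $\lambda\le \tfrac12\sigma w$ (which you obtain by the elementary and valid bounding-box route $\lambda\le w_v w_h$, $\sigma\ge 2w_h$) then gives $\beta\le\sigma/\lambda$ for every $\beta<\rho^*$. The only cosmetic point is that the rotation normalizing the interval to $(0,\pi)$ depends on $\beta$, so the argument should be read as being applied separately for each $\beta<\rho^*$ before taking the supremum, which is exactly what your final limit does.
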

Following the above lemma we have:
\begin{corolary}
Under Hypothesis~\ref{hypoC0}, consider a solution to Equation \eqref{eq_det} defined on $[0, T ) $. Then for all $  t \in[0, T) $, we have
 $$ \rho^{*} (t) \le h \le h_0. $$
\end{corolary}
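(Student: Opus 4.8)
The plan is to combine the static geometric estimate of Lemma~\ref{geo} with the monotonicity of $h$ along the flow. The statement is a pointwise-in-time consequence of results already established, so no genuinely new analysis is needed; the only thing to check carefully is that the hypotheses of Lemma~\ref{geo} are met at every time $t\in[0,T)$.

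First I would fix $t\in[0,T)$ and observe that, by Lemma~\ref{conv}, under Hypothesis~\ref{hypoC0} the curve $C_t$ remains simple and strictly convex up to the lifetime. Hence at each such $t$ we may view $C_t$ as a planar convex closed curve bounding the domain $D_t$, of area $\lambda(D_t)$ and perimeter $\sigma(C_t)$, so that Lemma~\ref{geo} applies verbatim and yields
$$\rho^{*}(t)\le \frac{\sigma(C_t)}{\lambda(D_t)}=h(D(t))=h(t).$$
For the second inequality I would invoke Lemma~\ref{eq_geo}(3), which gives $\frac{d}{dt}h(D(t))\le 0$; thus $h$ is non-increasing along the flow, and integrating in time gives $h(t)\le h(0)=h_0$ for all $t\in[0,T)$. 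Chaining the two bounds produces $\rho^{*}(t)\le h(t)\le h_0$, as claimed.

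The argument has no real obstacle, since the substance lives entirely in previously proved results: the Gage--Hamilton geometric bound of Lemma~\ref{geo}, the monotonicity of $h$ from Lemma~\ref{eq_geo}, and the preservation of convexity and simplicity from Lemma~\ref{conv}. The single point requiring attention is that Lemma~\ref{geo} presupposes convexity, and this is precisely what Lemma~\ref{conv} secures for all $t\in[0,T)$; were convexity to fail at some time the pseudo-median bound could degrade, so the appeal to Lemma~\ref{conv} is the logically load-bearing ingredient even though it is entirely routine to cite here.
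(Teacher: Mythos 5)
Your proof is correct and follows exactly the same route as the paper: Lemma~\ref{conv} for preservation of convexity and simplicity, Lemma~\ref{geo} for the bound $\rho^{*}(t)\le h(t)$, and Lemma~\ref{eq_geo} for the monotonicity $h(t)\le h_0$. The paper's proof is a one-line citation of precisely these three ingredients, so nothing further is needed.
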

\begin{proof}
We use Lemma \ref{conv} to get the convexity until $ T$, { Lemma~\ref{eq_geo} for $h$ non-increasing,  } and  Lemma \ref{geo} to conclude.
\end{proof}

\begin{lemma}[Entropic estimate]\label{ent_est}
Under Hypothesis~\ref{hypoC0}, consider a solution to Equation \eqref{eq_det} defined on $[0, T ) $. Then 
 \bqn{Ent2} \Ent(t)&\df&\int_0^{2\pi} \log(\rho (\te,t) ) \,  d\te \eqn
 is non-increasing on $[0,T)$.\\
 Moreover we have the following estimates for $t\in[0,T)$:

\bq
2\pi \left[ \log{\rho_{\inf}(0) - h_0^2 t}\right] &\le & \Ent(t)\ \le\  \Ent(0)+  \frac{\pi\rho_{\inf}^2(0)}{2h_0^2} \left[ e^{-2h_0^2 t} -1\right]  \eq
and there exists five explicit constants $c_0, c_1, \tilde{c}_0,\tilde{c}_1,\tilde{c}_2 $,  only depending on the geometry of the initial curve, such that

\bq-2\pi \log\left( \tilde{c}_0 + \sqrt{ \tilde{c}_1t +\tilde{c}_2 }\right)&\le & \Ent(t)\ \le\   \Ent(0)  -\frac{2\pi}{c_1} \log \left(\frac{c_1t+c_0}{c_0} \right) . \eq

\end{lemma}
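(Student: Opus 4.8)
\medskip
\noindent\emph{Proof sketch.} The plan is to differentiate $\Ent$ along the flow, reduce the whole statement to a single quantitative dissipation inequality, and then integrate that inequality against the pointwise lower bounds on the curvature already established.

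Since the angular parameter $\theta$ does not depend on time, I would differentiate under the integral and insert the curvature equation \eqref{eqcurvtheta}, obtaining
\begin{equation*}
\frac{d}{dt}\Ent(t)=\int_0^{2\pi}\frac{\partial_t\rho}{\rho}\,d\theta=\int_0^{2\pi}\bigl(\rho\,\partial_\theta^2\rho+\rho^2-2h\rho\bigr)\,d\theta .
\end{equation*}
The curve being closed, an integration by parts removes the boundary terms and gives
\begin{equation*}
\frac{d}{dt}\Ent(t)=-\int_0^{2\pi}(\partial_\theta\rho)^2\,d\theta+\int_0^{2\pi}\rho^2\,d\theta-2h\int_0^{2\pi}\rho\,d\theta .
\end{equation*}

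The core of the argument is to upgrade this identity to the dissipation estimate
\begin{equation*}
\frac{d}{dt}\Ent(t)\ \le\ -\pi\,\rho_{\inf}^2(t),\qquad \rho_{\inf}(t):=\min_{\theta}\rho(\theta,t). \tag{$\star$}
\end{equation*}
Using $d\theta=\rho\,ds$ we have $\int_0^{2\pi}\rho\,d\theta=\int\rho^2\,ds$, so Gage's inequality \eqref{gage} bounds the last term by $-2\pi h^2$, and ($\star$) reduces to the purely geometric curvature inequality
\begin{equation*}
\int_0^{2\pi}\rho^2\,d\theta-\int_0^{2\pi}(\partial_\theta\rho)^2\,d\theta\ \le\ 2\pi h^2-\pi\rho_{\inf}^2 .
\end{equation*}
This is the main obstacle. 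The naive Wirtinger inequality, which only yields $\int\rho^2-\int(\partial_\theta\rho)^2\le 2\pi\bar\rho^2$ with $\bar\rho$ the mean of $\rho$, is too lossy, since $\bar\rho$ may far exceed $2h$ when $\rho$ has a sharp peak. To close the estimate I would exploit the convex structure rather than $\rho>0$ alone: the closure condition \eqref{cores} means that $1/\rho$ has vanishing first Fourier modes, i.e.\ it is a genuine radius-of-curvature function, which forces a peak of height $M$ in $\rho$ to have $\theta$-width of order $M^{-1/2}$; on such a high-curvature arc the Dirichlet term $\int(\partial_\theta\rho)^2\sim M^{5/2}$ then dominates $\int\rho^2\sim M^{3/2}$, whereas on the complementary low-curvature arc the pseudo-median bound $\rho^\ast\le h$ (the corollary following Lemma~\ref{geo}) lets a Poincar\'e inequality anchored at the value $h$ absorb the remaining terms. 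Splitting the circle into these two arcs in the spirit of Gage--Hamilton is the delicate point.

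Granting ($\star$), the rest is routine. For the lower bounds I would take logarithms and integrate over $\theta$: Lemma~\ref{conv} gives $\log\rho(\theta,t)\ge\log\rho_{\inf}(0)-h_0^2t$, hence $\Ent(t)\ge 2\pi[\log\rho_{\inf}(0)-h_0^2t]$, while Corollary~\ref{conv_plus} yields the lower bound $-2\pi\log(\tilde c_0+\sqrt{\tilde c_1t+\tilde c_2})$ with $\tilde c_0=1/\rho_{\inf}(0)$ and $\tilde c_1,\tilde c_2$ read off from the square root there. For the upper bounds I would integrate ($\star$). Inserting $\rho_{\inf}^2(t)\ge\rho_{\inf}^2(0)e^{-2h_0^2t}$ from Lemma~\ref{conv} gives $\frac{d}{dt}\Ent\le-\pi\rho_{\inf}^2(0)e^{-2h_0^2t}$, whose integral from $0$ to $t$ is exactly $\frac{\pi\rho_{\inf}^2(0)}{2h_0^2}[e^{-2h_0^2t}-1]$ (this also establishes monotonicity). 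Finally, Corollary~\ref{conv_plus} together with $(a+b)^2\le 2a^2+2b^2$ gives $\rho_{\inf}^2(t)\ge(2\tilde c_0^2+2\tilde c_2+2\tilde c_1t)^{-1}$, so ($\star$) becomes $\frac{d}{dt}\Ent\le-2\pi/(c_1t+c_0)$ with $c_1=4\tilde c_1$ and $c_0=4(\tilde c_0^2+\tilde c_2)$, and integration produces the claimed bound $\Ent(0)-\frac{2\pi}{c_1}\log\frac{c_1t+c_0}{c_0}$.
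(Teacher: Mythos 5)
Your differentiation, integration by parts, and the entire second half of the argument (integrating the dissipation inequality against Lemma~\ref{conv} and Corollary~\ref{conv_plus} to produce the four explicit bounds) are correct and match what the paper does. But the heart of the lemma is the dissipation inequality you label $(\star)$, and you do not prove it: you explicitly defer "the delicate point," and the heuristic you offer in its place (a peak of height $M$ forced by \eqref{cores} to have width $M^{-1/2}$, so that $\int(\partial_\theta\rho)^2\sim M^{5/2}$ dominates $\int\rho^2\sim M^{3/2}$) is not the mechanism that actually closes the estimate, nor is it clear it can be made rigorous. That is a genuine gap.

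Moreover, your preliminary reduction makes the remaining step \emph{harder} than it needs to be. By applying Gage's inequality at the outset to replace $-2h\int_0^{2\pi}\rho\,d\theta$ by $-2\pi h^2$, you throw away exactly the term the Gage--Hamilton argument relies on. The paper's proof keeps that term intact: it sets $U=\{\theta:\rho(\theta,t)>\rho^*(t)\}$ with $\rho^*$ the pseudo-median, decomposes $U$ into intervals $I_i$ of length at most $\pi$ (this is where the definition of $\rho^*$ enters, not the closure condition), applies Wirtinger on each $I_i$ to the function $\rho-\rho^*$ (which vanishes at $\partial I_i$) to get $\int_{I_i}(\rho-\rho^*)^2\le\int_{I_i}(\partial_\theta\rho)^2$, and then the expansion $-(\rho-\rho^*)^2+\rho^2=2\rho\rho^*-(\rho^*)^2$ produces the cross term $(2\rho^*-2h)\int\rho\,d\theta$, which is nonpositive precisely because $\rho^*\le h$ (Lemma~\ref{geo}); the leftover $-(\rho^*)^2|U|-\rho^*\int_{U^c}\rho\,d\theta$ gives $-2\pi\rho_{\inf}^2$. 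Once you have degraded $-2h\int\rho\,d\theta$ to $-2\pi h^2$, this cancellation is no longer available, and you are left needing the inequality $\int\rho^2\,d\theta-\int(\partial_\theta\rho)^2\,d\theta\le 2\pi h^2-\pi\rho_{\inf}^2$, which is not implied by the paper's intermediate estimates (since $\int\rho\,d\theta=\int\rho^2\,ds$ can be much larger than $\pi h$) and would require a separate argument you do not supply. To repair the proof, drop the early application of Gage, introduce the pseudo-median, and run the two-region Wirtinger argument as above.
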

\begin{proof}
The proof is an adaptation of the proof in \cite{Gage_Hamilton}. We will just point the differences.
After an integration by part we have:
$$ \frac{d}{dt} \int_0^{2\pi} \log(\rho (\te,t) ) \,  d\te = \int_0^{2\pi} -\left(\frac{\partial}{\partial \te} \rho \right)^2  + \rho( \rho -2h )   \, d\te .$$
Let write the open set $ U =\{\te, \rho(\te,t) > \rho^*(t) \}$ as an union of disjoint interval $ I_i$, by definition of the median the length of all $ I_i$ is smaller than $\pi $, and 
\begin{align*}
\int_{I_i}  -\left(\frac{\partial}{\partial \te} \rho \right)^2  + \rho( \rho -2h )   \, d\te & = 
\int_{I_i}  -\left(\frac{\partial}{\partial \te} (\rho - \rho^*) \right)^2  + \rho^2 \, d\te  -2h  \int_{I_i}  \rho  \, d\te \\
& \le \int_{I_i}  - (\rho - \rho^*) ^2  + \rho^2 \, d\te  -2h  \int_{I_i}  \rho  \, d\te \\
&= \int_{I_i}  2\rho\rho^* - (\rho^*)^2\, d\te  -2h  \int_{I_i}  \rho  \, d\te \\
&\le (2\rho^* -2h) \int_{I_i}  \rho d\te - (\rho^*)^2 \int_{I_i}   \, d\te, \\
\end{align*}
where in the second line we used Wirtinger  \cite{Gage_Hamilton} inequality (recall that on the boundary of $ I_i$ , $ \rho = \rho^*$).

On the complement of $ U $ we have:
\begin{align*}
\int_{U^c}  -\left(\frac{\partial}{\partial \te} \rho \right)^2  + \rho( \rho -2h )   \, d\te & \le (\rho^* -2h) \int_{U^c}  \rho d\te \\
 &\le   (2\rho^* -2h) \int_{U^c}  \rho d\te - \rho^*  \int_{U^c}  \rho d\te .\\
\end{align*}

Hence using Lemma \ref{geo}, and  $ \rho^*(t) \ge \rho_{\inf}(t) $ we get
\begin{align*}
 \frac{d}{dt} \int_0^{2\pi} \log(\rho (\te,t) ) \,  d\te  &\le  (2\rho^* -2h) \int_{0} ^{2\pi} \rho d\te - \rho^*  \int_{U^c}  \rho d\te - (\rho^*)^2 \int_{U}   \, d\te   \\ 
 &\le -2\pi \rho_{\inf}^2(t). \\
   \end{align*}
So the first part of the lemma is proved. Lemma \ref{conv} yields after integration:
$$\int_0^{2\pi} \log(\rho (\te,t) ) \,  d\te  \le \int_0^{2\pi} \log(\rho (\te,0) ) \,  d\te     + \frac{\pi\rho_{\inf}^2(0)}{h_0^2} \left[ e^{-2h_0^2 t} -1\right],$$ and Corollary \ref{conv_plus} yields the second estimate.

For the lower bound use the bounds of Lemma \ref{conv} and \ref{conv_plus}.
\end{proof}
\begin{proposition}\label{inv_Poin}
Under Hypothesis~\ref{hypoC0}, consider a solution to Equation \eqref{eq_det} defined on $[0, T ) $. Then 
 there exists a constant $c_0$ that depends only on the initial curve such that:
$$
\int_0^{2\pi} \left(\frac{\partial } {\partial \te} \rho \right)^2 \,d\te \le \int_0^{2\pi} \rho^2\,d\te -4h_t \int_0^{2\pi} \rho \,d\te + c_0 .
$$
\end{proposition}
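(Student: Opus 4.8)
The plan is to prove the stronger statement that the functional
\begin{equation*}
\mathcal F(t)\df\int_0^{2\pi}(\partial_\theta\rho)^2\,d\theta-\int_0^{2\pi}\rho^2\,d\theta+4h_t\int_0^{2\pi}\rho\,d\theta
\end{equation*}
is non-increasing along the flow. The asserted inequality is precisely $\mathcal F(t)\le c_0$, so it then suffices to take $c_0\df\mathcal F(0)$, which depends only on the initial curve. I would work in the angular parametrization, where $\theta$ is time-independent and the curvature obeys \eqref{eqcurvtheta}, that is $\partial_t\rho=\rho^2\partial_\theta^2\rho+\rho^2(\rho-2h)$; this lets $\tfrac{d}{dt}$ pass under every $\int_0^{2\pi}(\cdot)\,d\theta$.

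First I would differentiate each of the three pieces of $\mathcal F$ and integrate by parts on $\TT$, all boundary terms vanishing by periodicity. Writing $h=h_t$ (constant in $\theta$), one gets $\frac{d}{dt}\int\rho_\theta^2=-2\int\rho_{\theta\theta}\,\partial_t\rho=-2\int\rho^2\rho_{\theta\theta}^2+6\int\rho^2\rho_\theta^2-8h\int\rho\rho_\theta^2$, then $-\frac{d}{dt}\int\rho^2=6\int\rho^2\rho_\theta^2-2\int\rho^4+4h\int\rho^3$, and finally $\frac{d}{dt}\big(4h\int\rho\big)=4\dot h\int\rho-8h\int\rho\rho_\theta^2+4h\int\rho^3-8h^2\int\rho^2$. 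Summing, the pure powers of $\rho$ collapse to $-2\int\rho^2(\rho-2h)^2$, the gradient terms total $4\int\rho(3\rho-4h)\rho_\theta^2$, and there remains $4\dot h\int\rho$.

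The decisive step is to absorb the sign-indefinite gradient term into a square. Setting $g\df\rho^2(\rho-2h)$, so that $g_\theta=(3\rho^2-4h\rho)\rho_\theta$, a further integration by parts gives
\begin{equation*}
4\int_0^{2\pi}\rho(3\rho-4h)\rho_\theta^2\,d\theta=4\int_0^{2\pi}g_\theta\,\rho_\theta\,d\theta=-4\int_0^{2\pi}g\,\rho_{\theta\theta}\,d\theta=-4\int_0^{2\pi}\rho^2(\rho-2h)\rho_{\theta\theta}\,d\theta.
\end{equation*}
Combined with $-2\int\rho^2\rho_{\theta\theta}^2$ and $-2\int\rho^2(\rho-2h)^2$, these three contributions assemble into $-2\int\rho^2\big(\partial_\theta^2\rho+\rho-2h\big)^2$, yielding the clean identity
\begin{equation*}
\frac{d}{dt}\mathcal F(t)=-2\int_0^{2\pi}\rho^2\big(\partial_\theta^2\rho+\rho-2h\big)^2\,d\theta+4\dot h_t\int_0^{2\pi}\rho\,d\theta.
\end{equation*}
Both terms are non-positive: the first by inspection, and the second because $\dot h_t\le0$ by item (3) of Lemma~\ref{eq_geo} while $\int_0^{2\pi}\rho\,d\theta>0$. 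Hence $\mathcal F$ is non-increasing and the proposition follows. One may note that $\partial_\theta^2\rho+\rho-2h=\partial_t\rho/\rho^2$, so the dissipation is just $-2\int(\partial_t\rho)^2/\rho^2\,d\theta$.

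I expect the main obstacle to be locating this completion of the square rather than any estimation. A priori $\frac{d}{dt}\mathcal F$ carries the term $4\int\rho(3\rho-4h)\rho_\theta^2$, which is positive wherever $\rho>\tfrac43h$, and one cannot bound $\int\rho_\theta^2$ from above pointwise; indeed the inequality is \emph{false} as a purely geometric statement, since high-frequency oscillations of the curvature, compatible with convexity and with \eqref{cores}, violate it for any fixed constant. It is precisely the exact pairing of this gradient term with the diffusion term $-2\int\rho^2\rho_{\theta\theta}^2$ and the reaction term $-2\int\rho^2(\rho-2h)^2$—all three visible only after the correct integrations by parts—that produces monotonicity, so the flow (through this extra integration by parts and through $\dot h\le0$) is used in an essential way.
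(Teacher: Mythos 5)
Your proof is correct and is essentially the paper's own argument: the paper likewise shows that $\int_0^{2\pi}\bigl(\rho^2-(\partial_\theta\rho)^2-4h\rho\bigr)\,d\theta$ is non-decreasing by computing its derivative as $2\int(\partial_t\rho/\rho)^2\,d\theta-4\dot h\int\rho\,d\theta$ and invoking $\dot h\le 0$ from Lemma~\ref{eq_geo}. The only difference is presentational — you expand all terms and recombine by completing the square, whereas the paper integrates by parts once and substitutes the evolution equation directly to reach the same identity.
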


 \begin{proof}
 By Lemma \ref{conv} we know that the curvature $\rho $ remains \strictly positive during the existence of the flow, so we can compute: 
\begin{align*}
&\frac{d}{dt} \int_0^{2\pi}   \left(\rho^2 - (\frac{\partial } {\partial \te} \rho )^2 -4h \rho  \right) \, d\te \\
&=2  \int_0^{2\pi} \frac{d  \rho}{dt}   \left( \rho +( \partial^2_{\theta} \rho) -2 h   \right) \, d\te - 4\frac{d h}{dt}  \int_0^{2\pi} \rho  \, d \te \\ 
&= 2 \int_0^{2\pi} \left( \frac{\partial_t \rho}{ \rho} \right)^2   \, d\te - 4\frac{d h}{dt}  \int_0^{2\pi} \rho  \, d \te \\ 
&\ge - 4\frac{d h}{dt}  \int_0^{2\pi} \rho  \, d \te \\ 
&\ge  \frac{48 \pi^2}{\ld \sg} \int_0^{2\pi}  \rho  d\te  > 0 ,\\
\end{align*}

where we have used an integration by part on the second line, the equation of the curvature at the third line, and Lemma \ref{eq_geo} at the last line.

Integrating the last inequality we get for all $t \in [0,T)  $,
\begin{align*}
 \int_0^{2\pi}   \left(\rho^2 - (\frac{\partial } {\partial \te} \rho )^2 -4h \rho  \right)_{ \vert_{t}} \, d\te \ge   \left( \int_0^{2\pi}   \left(\rho^2  - (\frac{\partial } {\partial \te} \rho )^2 -4h \rho  \right) \, d\te  \right)_{ \vert_{0}} = -c_0\\
\end{align*}
We obtained the last inequality using \eqref{gage} and the upper bound of the volume during the flow (Lemma \ref{eq_geo}).
\end{proof}

\begin{proposition}\label{bound-rho}
Under Hypothesis~\ref{hypoC0}, consider a solution to Equation \eqref{eq_det} defined on $[0, T ) $. If $T < \infty$,
$$  M = \sup \{ \rho(\te, t),(\te, t) \in\TT  \times [0,T)  \} $$
 is bounded .
\end{proposition}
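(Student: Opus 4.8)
The plan is to follow the strategy of Gage and Hamilton \cite{Gage_Hamilton}, reducing the pointwise control of $\rho$ to the control of a Dirichlet-type energy. By Lemma \ref{conv} the curvature stays bounded below by a positive constant on $[0,T)$, so all the quantities below are well defined. First I would reduce the bound on $M$ to a bound on $E(t)\df\int_0^{2\pi}(\partial_\te\rho)^2\,d\te$. Indeed, by Lemma \ref{geo} the pseudo-median satisfies $\rho^*(t)\le \sg_t/\ld_t=h_t$, and $h_t\le h_0$ by Lemma \ref{eq_geo}. Moreover, by the definition of $\rho^*$ the super-level set $\{\rho>\rho^*\}$ contains no closed interval of length $\pi$; hence every closed interval of length $\pi$ meets $\{\rho\le\rho^*\}$, so for each $\te_0$ there is $\te_1$ with $|\te_0-\te_1|\le \pi/2$ and $\rho(\te_1)\le \rho^*$. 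A one-dimensional Cauchy--Schwarz estimate then gives $\rho(\te_0)=\rho(\te_1)+\int_{\te_1}^{\te_0}\partial_\te\rho\,d\te\le \rho^*+\sqrt{\pi/2}\,\sqrt{E(t)}$, so that $M\le h_0+\sqrt{\pi E(t)/2}$. It therefore suffices to bound $E(t)$ on $[0,T)$.

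Second, by Proposition \ref{inv_Poin} we have $E(t)\le \int_0^{2\pi}\rho^2\,d\te-4h_t\int_0^{2\pi}\rho\,d\te+c_0\le \int_0^{2\pi}\rho^2\,d\te+c_0$, so it is enough to bound $G(t)\df\int_0^{2\pi}\rho^2\,d\te$. Differentiating $G$ along \eqref{eqcurvtheta} and integrating by parts yields $\dot G=-6\int_0^{2\pi}\rho^2(\partial_\te\rho)^2\,d\te+2\int_0^{2\pi}\rho^4\,d\te-4h_t\int_0^{2\pi}\rho^3\,d\te$. The dangerous term is $\int\rho^4\,d\te$; to tame it I would split $\TT$ into the complement of $U\df\{\rho>\rho^*\}$, where $\rho\le h_0$ and the quartic contribution is $\le h_0^2\,G$, and $U$ itself, a disjoint union of intervals of length at most $\pi$ on each of which $\rho-\rho^*$ vanishes at the endpoints. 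On those intervals Wirtinger's inequality and the degenerate dissipation $-6\int\rho^2(\partial_\te\rho)^2$ absorb the quartic part up to lower order, producing a differential inequality of the form $\dot G\le a(t)\,G+b(t)$ with coefficients built from $h_t$ and $\int_0^{2\pi}\rho\,d\te$.

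Finally, to close the argument by Gronwall I would check that $a$ and $b$ are integrable on $[0,T)$, and this is where the renormalization is decisive. The term $h_t$ is bounded by $h_0$, and from Lemma \ref{eq_geo} the perimeter grows at most linearly; integrating the perimeter evolution $\dot\sg_t=-\int_C\rho^2\,ds+4\pi h_t$ gives $\int_0^T\!\int_0^{2\pi}\rho\,d\te\,dt=\int_0^T\!\int_C\rho^2\,ds\,dt=4\pi\int_0^T h_t\,dt+\sg_0-\sg_T<\infty$, since $T<\infty$. This time-integrability of $\int\rho\,d\te$ controls the Gronwall coefficients, so $G$ is bounded on $[0,T)$, whence $E$ is bounded by the previous step and $M\le h_0+\sqrt{\pi E/2}$ is bounded as claimed. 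The main obstacle is the second step: absorbing the positive quartic reaction term by the \emph{degenerate} dissipation $\rho^2(\partial_\te\rho)^2$, which is exactly where the pseudo-median and Wirtinger machinery of \cite{Gage_Hamilton} is needed; the non-local term $h_t$, being a priori bounded, only contributes harmless lower-order corrections and does not affect the mechanism.
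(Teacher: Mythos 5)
Your reductions in steps 1 and 2 are correct: the pseudo-median argument gives $M\le h_0+\sqrt{\pi E(t)/2}$, and Proposition \ref{inv_Poin} gives $E(t)\le G(t)+c_0$ with $G(t)\df\int_0^{2\pi}\rho^2\,d\te$. But since trivially $G\le 2\pi M^2$, bounding $G$ is \emph{equivalent} to the statement being proved, so all the content sits in your step 3 — and that step does not close. In the identity $\dot G=-6\int\rho^2(\partial_\te\rho)^2\,d\te+2\int\rho^4\,d\te-4h\int\rho^3\,d\te$, the only function to which Wirtinger can be applied so as to see the degenerate dissipation is $f\df\rho^2-(\rho^*)^2$, for which $(f')^2=4\rho^2(\partial_\te\rho)^2$; on each component $I_i$ of $\{\rho>\rho^*\}$ (length at most $\pi$, hence Wirtinger constant $1$, and this is sharp since the components can have length arbitrarily close to $\pi$) one gets at best
$$\int_{I_i}\rho^4\,d\te\le 4\int_{I_i}\rho^2(\partial_\te\rho)^2\,d\te+2(\rho^*)^2\int_{I_i}\rho^2\,d\te+(\rho^*)^4\vert I_i\vert .$$
Summing and multiplying by $2$ yields $2\int\rho^4\le 8\int\rho^2(\partial_\te\rho)^2+O(G)+C$, and since $8>6$ you are left with $\dot G\le 2\int\rho^2(\partial_\te\rho)^2\,d\te+O(G)+C$: an uncontrolled positive term of the \emph{same order} as the dissipation, not a lower-order one. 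Changing the power of $\rho$ in the Wirtinger substitution does not repair the constant, shrinking the interval length in the definition of the pseudo-median destroys the geometric bound $\rho^*\le h$ of Lemma \ref{geo}, the term $-4h\int\rho^3$ has the wrong sign structure to absorb $\int\rho^4$ without already knowing $M\lesssim h$, and the crude bound $\int\rho^4\le M^2G\lesssim G^2$ only gives $\dot G\lesssim G^2$, which is consistent with finite-time blow-up. Your time-integrability remark concerns $\int_0^{2\pi}\rho\,d\te=\int_C\rho^2\,ds$ and is correct, but it plays no role in controlling the offending term.

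The functional for which the median/Wirtinger balance genuinely closes is the entropy $\Ent_t=\int_0^{2\pi}\log\rho\,d\te$, whose derivative $\int(-(\partial_\te\rho)^2+\rho^2-2h\rho)\,d\te$ carries dissipation and reaction with the \emph{same} coefficient $1$; this is exactly Lemma \ref{ent_est}, which shows $\Ent_t$ is non-increasing. The paper then converts this into the sup bound as follows: by Proposition \ref{inv_Poin}, $\int(\partial_\te\rho)^2\le 2\pi M_t^2+c_0\le c_1^2M_t^2$, so around a point where $\rho=M_t$ one has $\rho\ge M_t-c_1M_t\sqrt{\vert\te-\te_1\vert}\ge M_t/2$ on an interval of length $(2c_1)^{-2}$ \emph{independent of} $M_t$; combined with the uniform lower bound $\rho\ge\rho_{\inf}(0)e^{-h_0^2T}$ of Lemma \ref{conv} on the rest of $\TT$, a large $M_t$ would force $\Ent_{t_1}$ above $\Ent_0$, a contradiction. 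You should replace your step 3 by this entropy comparison; once you do, your steps 1 and 2 become superfluous, since the argument bounds $\log M_t$ directly.
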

\begin{proof}
For $ t < T$ let $$M_t = \sup \left\{ \rho(\te, s),(\te, s) \in\TT  \times [0,t]  \right\}  .$$
There exists $ (\te_1 , t_1)  \in\TT \times [0,t]   $ such that $M_t = \rho(\te_1, t_1) $. 
Then for all $\te_2 \in\TT  $ we have:
\begin{align*}
\vert \rho(\te_1, t_1) - \rho(\te_2, t_1)\vert &= \left\vert\int_{\te_2}^{\te_1} \frac{\partial } {\partial \te} \rho    \, d\te \right\vert \\
&\le  \sqrt{\int_0^{2\pi} (\frac{\partial } {\partial \te} \rho )^2 \,d\te } \sqrt{\vert \te_1 - \te_2 \vert} \\
&\le \sqrt{2 \pi M_t^2 + c_0 }  \sqrt{\vert \te_1 - \te_2 \vert} \\
&\le M_t \sqrt{2 \pi + \frac{\vert c_0 \vert}{ M_t^2} }  \sqrt{\vert \te_1 - \te_2 \vert} ,\\
\end{align*}
where we have used Proposition \ref{inv_Poin} at the third line, and $\vert \te_1 - \te_2 \vert$ is the usual distance in $\TT  $.
We also have,
$$ M_t \ge  \rho_{sup} ( 0)  . $$
So $$ \rho(\te_1, t_1) - \rho(\te_2, t_1)   \le c_1 M_t \sqrt{\vert \te_1 - \te_2 \vert} , \\ $$

where $ c_1\df  \sqrt{2 \pi + \frac{\vert c_0 \vert}{  \rho^2_{sup} ( 0) }} .$

It follows that for all  $\te_2 $:
$$  \rho(\te_2, t_1)  \ge M_t -c_1 M_t \sqrt{\vert \te_1 - \te_2 \vert} $$ 
and 
\begin{align*}
& \int_0^{2\pi} \log(\rho (\te,t_1) ) \,  d\te \\
 &= \int_{ \vert \te_1- \te \vert \le (\frac{1}{2c_1} )^2} \log(\rho (\te,t_1) ) \,  d\te  +  \int_{ \vert \te_1- \te \vert \ge (\frac{1}{2c_1} )^2} \log(\rho (\te,t_1) ) \,  d\te  \\
& \ge \log \left(\frac{M_t}{2} \right)\frac{1}{2c_1^2}  + \left(2 \pi - \frac{1}{2c_1^2} \right) \log  \left( \rho_{\inf} (0) e^{- h_0^2 T}   \right)\\
\end{align*}
Use Lemma \ref{ent_est} we obtain for all  $ t \in [0,T)$
\begin{align*}
\log (M_t) \le c_2(T)
\end{align*} 
where $$c_2(T)=2c_1^2  \left(  \int_0^{2\pi} \log(\rho (\te,0) ) \,  d\te \ - \left(2 \pi - \frac{1}{2c_1^2} \right) (\log  \left( \rho_{\inf} (0)) -  h_0^2 T   \right)  \right) $$
is a function that only depends on the final time and the initial curve.
So $  M = \sup \left\{ \rho(\te, t),(\te, t) \in\TT \times [0,T)  \right\} $ is bounded for $ T < \infty$.
\end{proof}

\begin{proposition}\label{bound_derivative}
Under Hypothesis~\ref{hypoC0}, consider a solution to Equation \eqref{eq_det} defined on $[0, T ) $. If $T < \infty$, 
 for all $ n \in \mathbb{N}$, we have
$$  M^{(n)} = \sup \left\{ \frac{\partial^n \rho}{\partial^n \te} (\te, t),(\te, t) \in\TT \times [0,T)  \right\} $$
 is bounded.
\end{proposition}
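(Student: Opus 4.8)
The plan is to prove the bound by induction on $n$, using that in the $\theta$-parametrization $\partial_\theta$ and $\partial_t$ commute, so that differentiating the curvature equation \eqref{eqcurvtheta} $n$ times produces a parabolic equation for $\rho_n\df\partial_\theta^n\rho$. Throughout I rely on the a priori bounds already in hand: by Corollary~\ref{conv_plus} and the finiteness of $T$ the curvature is bounded below, $\rho\ge\rho_{\min}>0$ on $\TT\times[0,T)$, by Proposition~\ref{bound-rho} it is bounded above by $M<\infty$, and by Lemma~\ref{eq_geo} one has $h\le h_0$; moreover Proposition~\ref{inv_Poin} bounds $E_1\df\int_0^{2\pi}\rho_1^2\,d\theta$. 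In particular the leading coefficient $\rho^2$ lies in $[\rho_{\min}^2,M^2]$, so all equations below are uniformly parabolic with bounded coefficients. I work on $[\tau,T)$ for arbitrary $\tau\in(0,T)$, where the solution is smooth by the regularizing property of the flow; this is exactly what is needed to rule out finite-time blow-up, a neighbourhood of $t=0$ being covered by the initial regularity.

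For $n=1$, differentiating \eqref{eqcurvtheta} once gives
\begin{equation*}
\partial_t\rho_1=\rho^2\,\partial_\theta^2\rho_1+2\rho\rho_1\,\partial_\theta\rho_1+(3\rho^2-4h\rho)\rho_1 .
\end{equation*}
The only non-dissipative, non-zeroth-order term is the transport term $2\rho\rho_1\,\partial_\theta\rho_1$, which vanishes at a spatial extremum of $\rho_1$, while there the second-order term has favourable sign. Applying Hamilton's maximum-principle argument to $f_1^{\pm}(t)\df\max_\theta(\pm\rho_1)$ yields $\tfrac{d}{dt}f_1^{\pm}\le C f_1^{\pm}$ with $C\df\sup|3\rho^2-4h\rho|<\infty$, so Gronwall bounds $\|\partial_\theta\rho\|_{L^\infty}$ uniformly on $[0,T)$.

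The naive maximum principle breaks down at the next order: differentiating twice produces the Riccati-type term $2\rho(\partial_\theta^2\rho)^2$ in $\partial_t\rho_2$, which is quadratic in the top derivative and positive at a maximum, so $f_2^{+}$ would satisfy only a Riccati inequality that may blow up in finite time. This is the main obstacle. To bypass it I pass to $L^2$ energy estimates: setting $E_n\df\int_0^{2\pi}\rho_n^2\,d\theta$, I multiply the equation for $\rho_n$ by $\rho_n$ and integrate. The leading term integrates by parts into a coercive dissipation $-\int\rho^2(\partial_\theta\rho_n)^2\,d\theta\le-\rho_{\min}^2E_{n+1}$. The dangerous nonlinear terms -- notably the cubic $2\int\rho\rho_2^3\,d\theta$ at $n=2$ -- are lowered in order by a further integration by parts (e.g.\ $2\int\rho\rho_2^3=-2\int\rho_1^2\rho_2^2-4\int\rho\rho_1\rho_2\rho_3$) and then estimated by one-dimensional Gagliardo--Nirenberg interpolation together with Young's inequality, so that the genuinely top-order part is absorbed into the dissipation $-\rho_{\min}^2E_{n+1}$ and the remainder is bounded by $C(1+E_n)$, with $C$ depending only on $\|\rho\|_{L^\infty}$, $\|\rho_1\|_{L^\infty}$ and the lower energies $E_2,\dots,E_{n-1}$ (any intermediate sup-norm being controlled internally by Gagliardo--Nirenberg and absorbed). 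Gronwall on the finite interval $[0,T)$ then gives $\sup_{[0,T)}E_n<\infty$.

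Finally, the $L^2$ bounds are upgraded to the stated $L^\infty$ bounds by the Sobolev--Agmon inequality on the circle, $\|\rho_n\|_{L^\infty}\le C\big(E_n^{1/2}+E_n^{1/4}E_{n+1}^{1/4}\big)$, which uses the next energy $E_{n+1}$; this is why the induction is carried on the whole scale of energies, each energy estimate furnishing control of $E_{n+1}$ through its dissipation term. Running the induction through all $n$ yields $M^{(n)}=\|\partial_\theta^n\rho\|_{L^\infty}<\infty$ for every $n$, as claimed. I expect the delicate point to be precisely the interplay at $n=2$ between the Riccati term and the dissipation, and the verification that the interpolation exponents let one absorb all superlinear contributions; as an alternative one may avoid these computations entirely by parabolic regularity, using Krylov--Safonov to obtain interior $C^\alpha$ control of $\rho$ and then bootstrapping the Schauder estimates for $\partial_t\rho=\rho^2\partial_\theta^2\rho+\rho^2(\rho-2h)$ to $C^{k+\alpha}$ for every $k$ on $[\tau,T)$.
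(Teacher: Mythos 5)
Your proposal is correct in outline and its first step (the maximum principle applied to $\partial_\te\rho$, whose equation has the bounded zeroth-order coefficient $3\rho^2-4h\rho$ and a transport term that vanishes at extrema) is exactly the paper's first step; you also identify the same obstruction at second order, namely the quadratic term in $\partial^2_\te\rho$ that defeats a naive maximum principle. Where you diverge is in how the higher derivatives are handled. The paper, following Gage--Hamilton, bounds $\int(\partial^2_\te\rho)^4\,d\te$ and then $\int(\partial^3_\te\rho)^2\,d\te$ by integration by parts and Young's inequality, deduces $\|\partial^2_\te\rho\|_\infty<\infty$ by Sobolev embedding on $\mathbb{S}^1$, and then \emph{returns to the maximum principle} for every $n\ge 3$: once $\rho,\rho',\rho''$ and $h$ are bounded, the equation for $\rho^{(n)}$ is linear in its top-order derivatives with bounded coefficients plus a polynomial in lower derivatives, so an exponential bound follows by induction with no further interpolation. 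You instead run a uniform $L^2$ energy hierarchy $E_n=\int\rho_n^2$ at every order, absorbing superlinear terms into the dissipation via Gagliardo--Nirenberg and Young, and upgrade to $L^\infty$ by Agmon's inequality. Both routes work (I checked that your integration-by-parts identity for the cubic term at $n=2$ and the absorption of $-4\int\rho\rho_1\rho_2\rho_3$ into $-\rho_{\min}^2E_3$ go through, and that the potential circularity between $\|\rho_n\|_\infty$ and $E_{n+1}$ is resolved by interpolation as you indicate); your scheme is more systematic but requires verifying exponents at every order, whereas the paper's reversion to the maximum principle for $n\ge3$ is more economical. Your closing alternative via Krylov--Safonov and Schauder bootstrapping is also legitimate and is not the route the paper takes.
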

 \begin{proof}
 Following \cite{Gage_Hamilton} section 4.4, we will first prove that $\frac{\partial\rho}{\partial \te}  $ is bounded along the flow.
 By direct computation,  $\frac{\partial \rho}{\partial \te}  $ satisfies:
 \begin{align*}
\frac{\partial}{\partial t}    \frac{\partial \rho}{\partial \te} = \rho^2 \partial^2_{\te} \left(  \frac{\partial \rho  }{\partial \te} \right) + 2\rho \left(\frac{\partial\rho }{\partial \te}  \partial_{\te} \left(\frac{\partial \rho}{\partial \te} \right) \right) +\left[ 3\rho^2 -4h\rho \right]\frac{\partial \rho}{\partial \te} \\
 \end{align*}
 By Lemma \ref{eq_geo} and Proposition \ref{bound-rho}, $[ 3\rho^2 -4h\rho ] $ is bounded so by the maximum principle $\frac{\partial \rho}{\partial \te }$ is bounded for all $ t \in [0,T) $. (this is easier than in the proof of \ref{bound-rho}).
 
 Since the equation for  $\frac{\partial^2 \rho}{\partial^2 \te }$  contains a quadratic term it seems not clear to directly use the maximum principle. To control it we will  proceed as follows.

 With the same computation as in \cite{Gage_Hamilton} Lemma 4.4.2 we see that modulo a additional term that comes from the non local term  $ h $, which is bounded, we show that $\displaystyle \int \left(\frac{\partial^2 \rho}{\partial^2 \te }\right)^4\, d\te$ is bounded on finite intervals of time (i.e. when $T < \infty$).
 Let us prove this property. To present the difference with the computation  in \cite{Gage_Hamilton}, we integrate by part. We get:
 \begin{align*}
 & \frac{\partial}{\partial t}  \int_0^{2\pi} \left(\frac{\partial^2 \rho}{\partial^2 \te }\right)^4\, d\te = 4 \int_0^{2\pi} \partial^2_{\te}  \left(\rho^2 \partial^2_{\te} \rho + \rho^2 (\rho - 2h)  \right)\left( \partial^2_{\te} \rho \right)^3 \, d\te \\
& = -12 \int_0^{2\pi}    \left( \rho^2\partial^3_{\te} \rho + 2\rho \partial_{\te} \rho      \partial^2_{\te} \rho+ (3\rho^2 - 4h\rho)\partial_{\te} \rho  \right) ( \partial^3_{\te} \rho ) ( \partial^2_{\te} \rho )^2 \, d\te. \\
   \end{align*} 
 To simplify notations let us use $\rho '\df \partial_{\te}\rho $, in the above computation:
 \begin{align*}
  \frac{\partial}{\partial t}  \int_0^{2\pi} ( \rho''  )^4\, d\te  &= -12 \int_0^{2\pi}  
  \Big( \rho^2 (\rho'')^2(\rho''')^2 + 2\rho \rho' (\rho'' )^3  \rho'''\\
 & + 3\rho^2 \rho' (\rho'')^2 \rho''' - 4h \rho \rho' (\rho'')^2 \rho'''  \Big) \, d\te. 
  \end{align*} 
 We use the inequality $ab \le \frac{1}{4\epsilon}a^2 + \epsilon b^2 $ to bound the three last terms by the first one and some additional terms, after choosing $\epsilon$  to control the sign of the first term. We obtain that there exist $c_1,c_2,c_3 $ such that:
  \begin{align*}
  \frac{\partial}{\partial t}  \int_0^{2\pi} \left( \rho''  \right)^4\, d\te  &\le c_1 \int_0^{2\pi}(\rho')^2 (\rho'')^4 \, d\te  +  c_2 \int_0^{2\pi} (\rho)^2  (\rho')^2 (\rho'')^4 \, d\te \\
  &  + c_3 \int_0^{2\pi}   (\rho')^2 (\rho'')^2 \, d\te  \\
  \end{align*} 
  where the constant $ c_3$ depends on $h(0)$ which is the maximum of $h $ by Lemma~\ref{eq_geo}. Since $\rho $ is bounded by proposition \ref{bound-rho} and $\rho' $ is bounded we deduce   from the above inequality and   Cauchy-Schwarz inequality that
  $  \int_0^{2\pi} \left( \rho''  \right)^4\, d\te$ remains bounded on $[0,T) $.

 By the same kind of computation, we will show that   $ \int \left(\frac{\partial^3 \rho}{\partial^3 \te }\right)^2\, d\te$ remains bounded for  $ t \in [0,T)$ (when $T < \infty$). After a integration by part, we have:
 \begin{align*}
& \frac{d}{dt} \int (\rho''')^2\, d\te = 2  \int (\rho''') \left( \rho^2 \rho'' + \rho^2 ( \rho -2h)  \right)'''    \, d\te  \\
 &= -2  \int (\rho'''') \left( \rho^2 \rho'' + \rho^2 ( \rho -2h)  \right)''    \, d\te  \\
 &  = -2 \int (\rho'''')  \Big(\rho^2 \rho'''' +2 \rho\rho' \rho'''+2\rho(\rho'')^2 + 2(\rho')^2\rho'' + 3 \rho^2 \rho'' \\
 &\quad \quad \quad \quad \quad+ 6 \rho(\rho')^2 - 4h (\rho')^2 -4h \rho(\rho'')  \Big)    \, d\te . \\ 
 \end{align*}  
Using again the inequality $ab \le \frac{1}{4\epsilon}a^2 + \epsilon \b^2  $ for a well choosed $\epsilon $  to bound the seven last terms by the first one and some additional terms, we get that there exist $c_1,c_2,c_3,..c_6,c_7 $ (that all depend on $\epsilon$, and $c_6,c_7$ depend also on $h_0 $, the upper bound of $h$ by Lemma \ref{eq_geo}) such that:
 \begin{align*}
& \frac{d}{dt} \int (\rho''')^2\, d\te\\ &\le c_1 \int (\rho \rho''')^2\, d\te +  c_2 \int (\rho'')^4\, d\te +  c_3 \int \left(\frac{(\rho')^2 \rho''}{\rho}\right)^2\, d\te \\
&   +  c_4 \int (\rho \rho'')^2\, d\te +  c_5 \int (\rho')^4\, d\te+ c_6 \int \left(\frac{(\rho')^2}{\rho}\right)^2 + c_7 \int (\rho'')^2 \, d\te. \\
  \end{align*} 
Since on finite intervals of time, $\rho $ is bounded by Lemma \ref{bound-rho} and by the computation above $\rho' $ and $\int (\rho'')^4 \,d \te $ are bounded, and the lower bound $\rho \ge \rho_{\inf}(0) e^{-h_0^2t} $ (Lemma \ref{conv}), using Cauchy-Swartz inequality we get for other constants (that depend on the time horizon $T$):

$$ \frac{d}{dt} \int (\rho''')^2\, d\te \le c_0 \int (\rho''')^2\, d\te + c_2 .$$

 Hence $ \int \left(\frac{\partial^3 \rho}{\partial^3 \te }\right)^2\, d\te$ remains bounded for  $ t \in [0,T)$, and so $ \frac{\partial^2 \rho}{\partial^2 \te }$ is bounded, using fundamental calculus theorem, or Sobolev inequality in $\mathbb{S}^1 $.

  For all $ n\ge 3 $ the equation for $\frac{\partial^n \rho}{\partial^n \te  }=\fd \rho^{(n)}$ writes 
  \begin{align*}
  \frac{d}{dt} \rho^{(n)} &= \rho^2 \rho^{(n+2)} + 2n\rho\rho'\rho^{(n+1)} + \bigr[2\rho\rho'' +3\rho^2-4h\rho \\
  &+ (n)(n-1) (\rho'+ \rho\rho'' )\bigl]\rho^{(n)} + P\left(h,\rho,\rho',...,\rho^{(n-1)}\right) \\
  \end{align*}
  where $P\left(h,\rho,\rho',...,\rho^{(n-1)}\right) $ is a  polynomial in $\left(h,\rho,\rho',...,\rho^{(n-1)}\right). $
Since $ \rho, \rho', \rho''$ are bounded by the computation above and $ h$ is bounded by Lemma \ref{eq_geo}, we can apply the maximum principle to get an exponential bound for $\rho''' $, so $\rho'''  $ is bounded (when $T$ is finite). Using the above equation for $\rho^{(n)}$, we get by induction and maximum principle that { for all $n$,  $\rho^{(n)} $ is uniformly bounded  on $  [0,T)$} when $ T < \infty$.

 \end{proof}
 
\subsection{Proof of  Theorem \ref{TH_section2}} 
 \leavevmode\par
 \medskip
 We  prove the long time existence of the flow. 
   Assume that  the starting curve $C_0 $ is simple and strictly convex, and the flow exists for all $t \in [0,T) $. 
 Then by Lemma \ref{conv} we know that the solution of \eqref{eq_det_pert} remains convex and simple during the flow. 
  Since the solution of  \eqref{eq_det_pert} has similar shape as the solution of \eqref{eq_det} (just the parametrisation changes) we know that the solution of \eqref{eq_det} remains convex  and simple,  so the  quantity $ h $ remains defined for all $t \in [0,T)$. Using Lemma \ref{eq_geo} we get that the quantity $h $ remains bounded as soon as the flow exists.
  By Propositions \ref{bound-rho}  and \ref{bound_derivative} we know that $\rho$ and all spacial derivatives of $\rho$ are bounded in $[0,T) $, if $T < \infty $, hence { the same} for time derivative of $ \rho$. So by Arzel\`a-Ascoli Theorem, $ \rho $ converges to a $C^{\infty} $ function $\rho(T,.) $ as $ t \to T $. Using equation \eqref{eq_det} there exists a limiting curve $C_T $ and this limiting curve is associated to $ \rho (T, .)$ (in the sense of Lemma 4.1.1 in \cite{Gage_Hamilton}), so  $C_T $ is  strictly convex and simple. By the small time existence if $T < \infty $, we can extend the time interval on which the solution is defined using the solution that starts at $C_T$. This proves that the solution of \eqref{eq_det} starting with at a simple strictly convex  curve exists for all time. 
  By Lemma  \ref{isope},  the isoperimetric ratio is non-increasing. It is in fact \strictly decreasing until the curve becomes a circle (take strict inequality in the isoperimetric inequality  in the proof of Lemma \ref{isope}), but if it would becomes a circle in finite time we could reverse the flow and get that the starting curve is a circle. So the isoperimetric ratio is \strictly decreasing   unless if the starting curve is circular.  
  Using Lemma \ref{def_conv} we get that the deficit of isoperimetry converges to $ 0$ polynomially, and Corollary \ref{become_circle} shows that the family of curves becomes more and more circular, and the isoperimetric ratio  decreases to $ 4\pi$. Also this corollary yields, with Bonnesen inequality, the convergence after normalization to a circle, i.e.
  $ \frac{1}{\sqrt{6t}} [C_t - \ci(t)] $ converges to circle of center $0$ and  radius $ 1$ for Hausdorff metric, where $ \ci(t)$ is an center of a inscribed circle of $C_t$. 
\par\hfill $\square$\par\medskip
\begin{Remark}
When the starting curve $C_0$ is convex and not simple, recall Figure~\ref{dessin}a at the end of the paper, the flow is not well defined. And when the starting curve is simple but non-convex, the existence in long time can be problematic, see Figure~\ref{dessin}b.
\end{Remark}

\section{The stochastic renormalized curvature flow (SRCF)~in~$\mathbb{\R}^2$}

\subsection{Equations of geometric quantities along the stochastic flow of curves in~$\mathbb{\R}^2$} 

Let us introduce a noisy extension of the RCF. We need a standard  Brownian motion $(B_t)_{t\geq 0}$ defined on a filtered probability space. All subsequent stopping times are with respect to the underlying filtration. Except when they are parametrized by the arc-length $s$, all the closed curves are assumed to be continuous and parametrized by a parameter belonging to $\TT$. 
\par
\begin{definition}
 Let 
 $ C \st [0,\tau) \ni t\mapsto C_t $ be a continuous family of simple closed curves indexed by $[0,\tau)$, where $\tau$ is a positive stopping time. We say that $C$ evolves under the renormalized stochastic curvature flow (SRCF), if it satisfies the following equation for any $ t\in[0,\tau)$,
 \bqn{eq_sto}
 d_t C(t,u) & = & \lt([-\rho_t(C(t,u)) + 2h_t ]dt + \sqrt{2}dB_t \rt) \nu_t(C(t,u))   \eqn
 (where  $h_t$, $\nu_t$ and $\rho_t$ are our usual shortcuts, cf.\ Section \ref{1.2}).
\end{definition} 

 When convenient without any possible confusion, the index $t\geq 0$ will  be omitted. 
An important goal of this paper is to show that the above equation  admits a solution in the whole temporal interval $\RR_+$ under some assumptions.
 Until it is proven, when we consider a time $t\geq 0$, it will be implicitly assumed that $t$ is smaller than the stopping time $\tau$, that will be called a \textbf{lifetime} for \eqref{eq_sto}. The curve $C_0$ will be referred to as the initial curve.
  \begin{Remark} For the short time existence and uniqueness up to $\tau$ of the solution to \eqref{eq_sto}, 
  we refer to Theorem~40  in \cite{zbMATH07470497}, where the authors used Doss-Sussman method, the theory of  quasi-linear equations, as well as the inverse function theorem.
  \par Concerning the regularity,  for any $0<\alpha < 1$,   the solution of \eqref{eq_sto} is  $\rC^{\alpha / 2, \infty}$ if the starting curve $C_0 $ is smooth. In fact it is enough  to consider that  the starting curve $C_0 $ are $\rC^{\alpha  + n}$ , for $ n \ge 2$ and $0<\alpha < 1$, to get the $\rC^{\alpha / 2,\alpha+ n}$ regularity of the solution of \eqref{eq_sto}  (cf.\ Chapter 8 of Lunardi \cite{MR3012216} and \ Chapter 4 in \cite{zbMATH07470497}). So, to justify the existence of all the derivatives one may need, it is sufficient to take $ C_0$  regular enough, but we will not insist on the regularity of $C_0$ in the rest of the paper. 
 \end{Remark}
 \par
 To a solution $(C_t)_{t\in[0,\tau)}$ of \eqref{eq_sto}, as in the deterministic situation, associate 
 \bq
 \fo t\in[0,\tau),\,\fo u\in\TT,\qquad v(t,u) &\df& \vert \partial_u C(t,u) \vert\eq
  and the arc-length parametrization  $\partial_s\df  \frac{1}{v} \partial_u $ (equivalently  $ ds = v du $).
 \par
 For any $t\in[0,\tau)$ and $u\in\TT$, 
 $T_t (u) $ will stand for the unit tangent vector of the curve $C_t$ at $u$ (i.e.\ in $\mathbb{R}^2$ we have $\mathcal{R} (T ) = \nu$,  where $\mathcal{R} $ is the rotation of angle $ - \frac{\pi}{2} $).
 \par

 The evolution of these objects is dictated by the following result, for $C_0$ regular enough, say $C^{4+ \alpha}$ for the last equation.

\begin{lemma} \label{eq_courbure_sto} Let $(C_t)_{t\geq 0}$ be a solution of \eqref{eq_sto}. The following equations hold in the $(t,s)$-domain of validity:
 \begin{equation} \label{eq_courbure} \left \{ \begin{array}{lcl}
   d_t v_t &=& v_t \rho_t  \left( (- \rho_t + 2h_t) dt + \sqrt{2} dB_t   \right)\\
    \big[ d_t , \partial_s    \big] &= &\rho_t  \left( ( 3 \rho_t - 2h_t) dt - \sqrt{2} dB_t   \right) \partial_s   -  \sqrt{2} \rho_t  dB_t  \partial_s d_t  \\
    d_t T_t &=&- \frac{1}{v_t}(\partial_u \rho_t) \nu_t dt\\
   d_t \rho_t(s) &= & (\partial^2_s \rho_t) dt + \rho_t^2( (3\rho_t - 2h_t)dt - \sqrt{2} dB_t)\\
    \end{array} \right. \end{equation}
 \end{lemma}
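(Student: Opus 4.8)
The plan is to follow the deterministic computation leading to \eqref{eq_geo_det} and \eqref{eq_courbure_det}, upgrading each step to It\^o calculus and tracking the quadratic-variation corrections produced by the $\sqrt2\,dB_t$ term in \eqref{eq_sto}. Two structural facts will be used throughout. First, the spatial derivative $\partial_u$ commutes with the stochastic time differential $d_t$, i.e.\ $d_t\partial_u C=\partial_u d_t C$; this interchange is where the regularity of $C_0$ enters, and it is the reason a $\rC^{4+\alpha}$ initial curve is required for the last equation (enough spatial derivatives must survive the computation). Second, the Frenet relations $\partial_u T=-v\rho\nu$ and $\partial_u\nu=v\rho T$ hold, together with $\nu=\mathcal{R} T$ for the fixed rotation $\mathcal{R}$ of angle $-\pi/2$, so that $\mathcal{R}\nu=-T$.

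First I would compute $d_t v$ from $v^2=\langle\partial_u C,\partial_u C\rangle$. Differentiating \eqref{eq_sto} in $u$ gives $\partial_u d_t C=-(\partial_u\rho)\nu\,dt+([-\rho+2h]dt+\sqrt2\,dB_t)\,v\rho T$, whose martingale part is $\sqrt2\,v\rho T\,dB_t$. It\^o's formula then yields
\[
d_t(v^2)=2\langle\partial_u C,\partial_u d_t C\rangle+d\langle\partial_u C,\partial_u C\rangle=2v^2\rho\big((-\rho+2h)dt+\sqrt2\,dB_t\big)+2v^2\rho^2\,dt,
\]
the last summand being the quadratic variation of the noise. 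Solving $d_t(v^2)=2v\,d_tv+d\langle v,v\rangle$ for $d_t v$ reproduces the first displayed equation. Applying It\^o once more to $1/v$ adds the extra drift $+2\rho\,dt$, giving $v\,d_t(1/v)=\rho\big((3\rho-2h)dt-\sqrt2\,dB_t\big)$; this is exactly the $(3\rho-2h)$ coefficient appearing in the commutator. Writing $\partial_s=v^{-1}\partial_u$ and using $[d_t,\partial_u]=0$, the identity $[d_t,\partial_s]f=(v\,d_t\tfrac1v)\partial_s f+d\langle\tfrac1v,\partial_u f\rangle$ produces the second equation, where the operator $-\sqrt2\rho\,dB_t\,\partial_s d_t$ is precisely the bookkeeping device for the cross-variation: applied to a process with $d_t f=\alpha\,dt+\beta\,dB_t$ it returns, under $(dB_t)^2=dt$, the term $-\sqrt2\rho(\partial_s\beta)\,dt=d\langle\tfrac1v,\partial_u f\rangle$.

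Next I would obtain $d_t T$ from $T=\partial_s C$ via $d_t T=[d_t,\partial_s]C+\partial_s(d_t C)$. Here $\partial_s(d_t C)=-(\partial_s\rho)\nu\,dt+([-\rho+2h]dt+\sqrt2\,dB_t)\rho T$, and the commutator contributes both a $dB_t$ term and a $-2\rho^2 T\,dt$ correction (from $-\sqrt2\rho\,dB_t$ hitting the $\sqrt2\rho T\,dB_t$ martingale part of $\partial_s(d_t C)$). The key cancellation is that the two $\sqrt2\rho T\,dB_t$ contributions are opposite and that all the $T$-drift terms sum to zero, leaving the finite-variation expression $d_t T=-\tfrac1v(\partial_u\rho)\nu\,dt$. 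Since $\nu=\mathcal{R} T$ with $\mathcal{R}$ constant, $d_t\nu=\mathcal{R}\,d_t T=(\partial_s\rho)T\,dt$ is finite variation as well. Finally, using $\rho=\langle T,\partial_s\nu\rangle$ and the It\^o product rule, the fact that $T$ and $\nu$ carry no martingale part kills all spurious covariations, so $d_t\rho=\langle T,d_t(\partial_s\nu)\rangle$; expanding $d_t(\partial_s\nu)=[d_t,\partial_s]\nu+\partial_s(d_t\nu)$ with $\partial_s\nu=\rho T$ and $\partial_s T=-\rho\nu$ gives the curvature equation $d_t\rho=(\partial_s^2\rho)dt+\rho^2\big((3\rho-2h)dt-\sqrt2\,dB_t\big)$.

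The main obstacle is not any single computation but the consistent accounting of It\^o corrections: the $+2v^2\rho^2\,dt$ in $d_t(v^2)$, the resulting shift from $(-\rho+2h)$ to $(3\rho-2h)$ in $d_t(1/v)$, and above all the correct interpretation and use of the operator $-\sqrt2\rho\,dB_t\,\partial_s d_t$, since it must reproduce the cross-variation whenever the commutator is applied to a process with a nontrivial martingale part (as for $C$), yet vanish when applied to a finite-variation process (as for $\nu$ in the curvature step). A secondary technical point is justifying the interchange $d_t\partial_u=\partial_u d_t$ up to the order of derivatives needed, which rests on the parabolic regularity of \eqref{eq_sto} recalled after its definition and motivates the $\rC^{4+\alpha}$ assumption.
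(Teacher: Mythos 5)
Your proposal is correct and follows essentially the same route as the paper: differentiate \eqref{eq_sto} in $u$, apply It\^o's formula to $v^2$ and $1/v$ with identification of the martingale and finite-variation parts, derive the commutator $[d_t,\partial_s]$ from the product rule on $\tfrac1v\partial_u$ (including the cross-variation term you correctly interpret as $-\sqrt2\rho\,dB_t\,\partial_s d_t$), observe the cancellations making $T$ and $\nu$ finite-variation, and then compute $d_t\rho$. The only cosmetic difference is that you differentiate $\rho=\langle T,\partial_s\nu\rangle$ while the paper uses $\rho=-\langle\partial_s T,\nu\rangle$; the two computations are mirror images and yield the same result.
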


\begin{proof}
Since $ C(t,u)$ satisfies $$ d_t C(t,u)  =  (-\rho_t(C(t,u)) + 2h(D(t)) )\nu_{C(t,u)}dt + \sqrt{2} \nu_{C(t,u)} dB_t $$ we have, after differentiation,
cf.\ remark 3.2,
and  shortening  the notation:
$$  d_t \partial_u C  =  ( - \partial_u \rho_t) \nu_t dt +  \left( (-\rho_t + 2h_t )dt + \sqrt{2}  dB_t  \right) \partial_u \nu_t, $$

Also we have $$ \partial_u \nu_t = v \partial_s \nu_t = v \rho_t T_t, $$
so that
$$  d_t \partial_u C  =  ( - \partial_u \rho_t) \nu_t dt + v_t \rho_t \left( (-\rho_t + 2h_t )dt + \sqrt{2}  dB_t  \right)  T_t. $$
Hence we have the following equation:
$$
\begin{aligned}
   d_t (v_t)^2 &= d_t  \vert \partial_u C(t,u) \vert^2 \\ &= 2 \langle d_t \partial_u C(t,u) , \partial_u C(t,u) \rangle +  \langle d_t \partial_u C(t,u),d_t \partial_u C(t,u) \rangle  \\
   & =  2 v_t^2 \rho_t  \left((-\rho_t + 2h_t )dt + \sqrt{2}  dB_t  \right) +  2 v_t^2 \rho_t^2 dt  \\
   &= 2 v_t^2 \rho_t  \left(( 2h_t )dt + \sqrt{2}  dB_t  \right) .\\
\end{aligned}$$

Also  $$d v_t^2 = 2v_t dv_t + dv_t dv_t, $$
where the semi-martingale bracket notation $\langle dv_t,dv_t\rangle$ has been simplified into $dv_t dv_t$. 

 Hence  $$  2v_t dv_t + dv_t dv_t  = 2 v_t^2 \rho_t  \left(2h_t dt + \sqrt{2} dB_t  \right) ,$$
so the Doob-Meyer decomposition of $ v_t$ is  $  dv_t  = \sqrt{2} v_t \rho_t dB_t + dA_t $  where $ A_t$ is a process with finite variation. After identification we find:
 $$ dA_t=  v_t\rho_t  (-\rho_t + 2h) dt $$ and so 
 $$ d_t v_t = v_t \rho_t  \left( (- \rho_t + 2h) dt + \sqrt{2} dB_t   \right) . $$

For the second equation let us observe that for a vector-valued process $X_t $:
$$ \begin{aligned}
d_t \partial_s X_t &= d_t \left(\frac{1}{v_t} \partial _u X_t \right) \\
&= d_t \left(\frac{1}{v_t} \right) \partial_u X_t +  \frac{1}{v_t} \partial_u d_t X_t +
d_t \left(\frac{1}{v_t}\right) d_t (\partial_u  X_t )\\
&=  \frac{\rho_t}{v_t}  \left( ( 3 \rho_t - 2h) dt - \sqrt{2} dB_t   \right) \partial_u X_t + \partial_s d_t X_t -  \sqrt{2} \frac{\rho_t}{v_t}  dB_t  \partial_u d_t X_t  \\
&=  \rho_t  \left( ( 3 \rho_t - 2h) dt - \sqrt{2} dB_t   \right) \partial_s X_t +\partial_s d_t X_t -  \sqrt{2} \rho_t  dB_t  \partial_s d_t X_t . \\
\end{aligned} $$
In other words, we have: 
  $$ \big[ d_t , \partial_s    \big] = \rho_t  \left( ( 3 \rho_t - 2h) dt - \sqrt{2} dB_t   \right) \partial_s   -  \sqrt{2} \rho_t  dB_t  \partial_s d_t  $$
For the third point, let us compute: 
$$
\begin{aligned}
d_t T_t  &= d_t  \left( \frac{1}{v_t}\partial_u C_t \right) = d_t (\frac{1}{v_t}) \partial_u C_t  + \frac{1}{v_t} d_t\partial_u C_t + d_t (\frac{1}{v_t}) d_t\partial_u C_t \\
&= - \frac{\rho_t}{v_t}  \left( (- 3 \rho_t + 2h) dt + \sqrt{2} dB_t   \right) v_t T_t \\
&+ \frac{1}{v_t}  \left(  ( - \partial_u \rho_t) \nu_t dt + v_t \rho_t \left( (-\rho_t + 2h_t )dt + \sqrt{2}  dB_t  \right)  T_t  \right)\\ 
&-2\rho_t^2T_t dt \\
&=- \frac{1}{v_t}(\partial_u \rho_t) \nu_t dt  \\
\end{aligned}
$$ 
that is $ d_t \partial_s C(t,s) = - (\partial_s \rho_t) \nu_t dt $. This equation is equivalent to $$d_t \nu_t = d_t \mathcal{R} (T_t) = (\partial_s \rho_t) T_t dt  .$$ So the processes $T_t $ and $\nu_t$ have finite variation.

For the last point, the curvature $ \rho_t $ satisfies:
$$
\begin{aligned}
 d_t \rho_t &= - d_t \langle \partial_s T_t ,  \nu_t \rangle \\
    &= -  \langle  d_t \partial_s T_t ,  \nu_t \rangle  -\langle \partial_s T_t ,    d_t \nu_t \rangle  \\
    &=  -  \langle  d_t \partial_s T_t ,  \nu_t \rangle . \\
\end{aligned}
$$
In the last line we used that $\partial_s T_t $ is in the normal direction.
Let us compute, using the commutation formula in the first term in the above bracket and the fact that $ \partial_sC_t $ has finite variation:
$$\begin{aligned}
d_t \partial_s T_t &=  d_t  \left( \partial_s  \partial_s C_t  \right)\\
&= \rho_t  \left( ( 3 \rho_t - 2h) dt - \sqrt{2} dB_t   \right) \partial_s  \partial_s C_t + \partial_s  d_t \partial_s C_t - \sqrt{2} \rho_t  dB_t  \partial_s d_t  \partial_s C_t \\
&= \rho_t  \left( ( 3 \rho_t - 2h) dt - \sqrt{2} dB_t   \right) \partial_s  \partial_s C_t+   \partial_s  d_t \partial_s C_t\\
&=\rho_t  \left( ( 3 \rho_t - 2h) dt - \sqrt{2} dB_t   \right) \partial_s  \partial_s C_t+ \partial_s  \left( - (\partial_s \rho_t) \nu_t dt  \right)\\
&= -\rho_t^2  \left( ( 3 \rho_t - 2h) dt - \sqrt{2} dB_t   \right)\nu_t +   \left( - (\partial^2_s \rho_t) \nu_t   - \rho_t (\partial_s \rho_t) T_t  \right)dt.\\
\end{aligned} 
$$

Hence 
\bqn{this}
\nonumber d_t \rho_t &= & -  \langle  -\rho_t^2  \left( ( 3 \rho_t - 2h) dt - \sqrt{2} dB_t   \right)\nu_t +   \left( - (\partial^2_s \rho_t) \nu_t   - \rho_t (\partial_s \rho_t) T_t  \right)dt   ,  \nu_t \rangle  \\
&= & \partial^2_s \rho_t dt + \rho_t^2  \left( ( 3 \rho_t - 2h) dt - \sqrt{2} dB_t  \right).
\eqn
\end{proof}

  \begin{Remark}
We want to stress that \eqref{this} is a SPDE with mobile barrier, since the parameter $s$ lives in the time dependent  interval $[0, \s_t] $.
\end{Remark}

\begin{Remark} \label{retrouve}
After integration, we recover the equation satisfied by $\s_t $ and $ \lambda_t$ obtained by a diffusion generator technique (respectively in Proposition 57 and Equation (106) in \cite{zbMATH07470497})   in the case of a simple curve: 
$$
\begin{aligned}
d_t \s_t &= d_t \int_{0}^{2\pi} v_t(u) \, du\\
           &=  \int_{0}^{2\pi} v_t \rho_t  \left( (- \rho_t + 2h) dt + \sqrt{2} dB_t   \right) \, du\\ 
           &= \lt( -\int \rho_t^2 ds + 2h_t \int \rho_t ds\rt) dt + \left(\int \rho_t \,ds\right)\sqrt{2} dB_t \\
            &= -\lt( \int \rho_t^2 ds\rt)dt + 4\pi h_t   dt + 2\sqrt{2} \pi dB_t, \\
\end{aligned}
$$

 Using, on one hand Stokes theorem, namely  $\int_D \dive(b)\, d\lambda=\int_{\partial D} \langle \nu, b\rangle\, ds$, here in dimension 2 and with $b$ the identity vector field, and on the other hand the computation  done in the proof of Lemma \ref{eq_courbure_sto}, we get:

$$
\begin{aligned}
&d_t \lambda_t\\ &= d_t \frac12 \int_{0}^{2\pi}  \langle C(t,u) , \mathcal{R} (\partial_u C(t,u)) \rangle \, du\\
           &= \frac12 \bigg( \int_{0}^{2\pi} v_t  \left( (- \rho_t + 2h) dt + \sqrt{2} dB_t   \right) \, du  +   \int_{0}^{2\pi}  \langle C(t,u) ,(\partial_u\rho_t(u) )T dt \rangle \, du \\
           &+  \int_{0}^{2\pi} v_t \rho_t \left( (- \rho_t + 2h) dt + \sqrt{2} dB_t   \right)\langle C(t,u) , \nu  \rangle \, du  + 2\int_{0}^{2\pi}  v_t \rho_t dt  \,du  \bigg).\\
\end{aligned}
$$
\par
For the second term in the right hand side we integrate by part and we use $ \partial_u T = -v_t\rho_t \nu_t$ to get:
$$  \int_{0}^{2\pi}  \langle C(t,u) ,(\partial_u\rho_t(u) )T dt \rangle \, du = - \int_0^{2\pi} v_t \rho_t dt \, du - \rho^2_t v_t  \langle C(t,u) , \nu_t \,dt \rangle \, du . $$
And then 
$$
\begin{aligned}
d_t \lambda_t &=  \frac12 \bigg( \int_{0}^{2\pi} v_t  (2h dt + \sqrt{2} dB_t  ) \, du  \\
           &+  \int_{0}^{2\pi} v_t \rho_t  (  2h dt + \sqrt{2} dB_t )  \langle C(t,u) , \nu \rangle \, du    \bigg).
\end{aligned}
$$
In the last term of the right hand side  we can integrate by part again to get 
$$
\begin{aligned} 
&\int_{0}^{2\pi} v_t \rho_t \langle C(t,u) , \nu \rangle \, du  = - \int_{0}^{2\pi}  \langle C(t,u) , \partial_u T \rangle \, du \\ 
&=  \int_{0}^{2\pi} (-\partial_u \langle C(t,u) ,  T \rangle   +v_t  )\, du =  \int_{0}^{2\pi} v_t  \, du.
\end{aligned} $$
Hence:
$$
\begin{aligned}
d_t \lambda_t &=    \int_{0}^{2\pi} v_t  (2h dt + \sqrt{2} dB_t  ) \, du  \\
            &= \frac{2 \s_t^2} {\ld_t} dt + \sqrt{2} \s_t dB_t.  
\end{aligned}
$$
\end{Remark}

Let us extend the observation made in the second paragraph of Section \ref{2.4} to the present stochastic setting.
 Consider the following tangential finite-variation perturbation of equation \eqref{eq_sto}: for any $t\in[0,\tau)$ and $u\in\TT$,
\bqn{eq_sto_mod}
 d_t { C}(t,u) =  \left( -{\rho_t}(u) + 2h({D_t}) )dt + \sqrt{2}  dB_t  \right){\nu}_t(u) + (\aaa_t(u) dt){T}_t (u)  
 \eqn
 for quantities $\aaa_t(u)$ that will determined later in \eqref{aaa}.
 
 Again the curves have the same shape as those of \eqref{eq_sto}, only the $u$-parametrisation changes, reason why
 we used the same notation $C(t,u)$.
In particular the lifetime $\tau$ of \eqref{eq_sto_mod} coincides with that of \eqref{eq_sto}. 
With computations similar to  those of the proof of Lemma \ref{eq_courbure_sto}, we get:

\begin{lemma} Letting $ (C_t)_{t\in[0,\tau)}$ be a solution of \eqref{eq_sto_mod},  we have:
\begin{enumerate}

\item[(i)] $ \begin{aligned} 
 d_t \partial_u C_t &=  \left( (-\partial_u \rho_t - \rho_t v_t \aaa_t ) dt  \right) \nu_t \\
 &+  \left( v_t\rho_t  [(-\rho_t + 2h )dt + \sqrt{2} dB_t ]    + \partial_u \aaa_t dt \right) T_t .
 \end{aligned}$

\item[(ii)] $ dv_t =  v_t\rho_t   [(-\rho_t + 2h )dt +\sqrt{2} dB_t ]  \  + \partial_u \aaa_t dt .$

\item[(iii)] $ d_t T_t = -\frac{1}{v} (\partial_u \rho_t + \rho_t v_t \aaa_t )dt \nu_t. $
\end{enumerate}

\end{lemma}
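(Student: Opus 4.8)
The plan is to mimic the computation in the proof of Lemma~\ref{eq_courbure_sto}, tracking the two extra contributions created by the tangential drift $\aaa_t T_t$, so that the only new feature compared to the unperturbed case is the appearance of $\aaa_t$ and $\partial_u\aaa_t$.

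For (i), I would differentiate the defining equation \eqref{eq_sto_mod} with respect to $u$. Since the non-local coefficient $h_t$ and the noise $dB_t$ do not depend on $u$, only $\rho_t$ and $\aaa_t$ survive under $\partial_u$. Using the two Frenet relations already recorded above, namely $\partial_u\nu_t=v_t\rho_t T_t$ and $\partial_u T_t=-v_t\rho_t\nu_t$, the summand $[(-\rho_t+2h_t)dt+\sqrt2\,dB_t]\nu_t$ produces a normal contribution $-(\partial_u\rho_t)dt\,\nu_t$ together with a tangential contribution $v_t\rho_t[(-\rho_t+2h_t)dt+\sqrt2\,dB_t]T_t$, while the tangential summand $(\aaa_t dt)T_t$ produces a normal contribution $-\rho_t v_t\aaa_t\,dt\,\nu_t$ and a tangential one $(\partial_u\aaa_t)dt\,T_t$. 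Collecting the $\nu_t$- and $T_t$-components yields (i).

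For (ii), I would compute $d_t(v_t^2)=d_t\langle\partial_uC_t,\partial_uC_t\rangle$ by It\^o's formula, exactly as in Lemma~\ref{eq_courbure_sto}. Writing $\partial_uC_t=v_tT_t$ and projecting the expression from (i) onto $T_t$ gives the drift and martingale parts of $2\langle d_t\partial_uC_t,\partial_uC_t\rangle$, while the bracket $\langle d_t\partial_uC_t,d_t\partial_uC_t\rangle$ contributes $2v_t^2\rho_t^2\,dt$, coming solely from the $\sqrt2\,dB_t$ factor carried by the $T_t$-component. The key algebraic simplification is the cancellation of the $-\rho_t^2$ drift against this $+\rho_t^2$ bracket term, as in the unperturbed case; the tangential drift merely adds a $2v_t\,\partial_u\aaa_t\,dt$ piece. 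Identifying the Doob--Meyer decomposition of $v_t$ through $d_t(v_t^2)=2v_t\,dv_t+d\langle v\rangle_t$ then fixes the martingale coefficient to be $\sqrt2\,v_t\rho_t$ and, after subtracting $d\langle v\rangle_t=2v_t^2\rho_t^2\,dt$, gives the drift, which is (ii).

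For (iii), I would write $T_t=(1/v_t)\partial_uC_t$ and apply It\^o's product rule, which now splits into $d_t(1/v_t)\,\partial_uC_t$, $(1/v_t)\,d_t\partial_uC_t$, and the covariation $d_t(1/v_t)\,d_t\partial_uC_t$. From (ii) one obtains $d_t(1/v_t)=\tfrac{\rho_t}{v_t}\big((3\rho_t-2h_t)dt-\sqrt2\,dB_t\big)-\tfrac{\partial_u\aaa_t}{v_t^2}\,dt$, the last summand being the only novelty with respect to Lemma~\ref{eq_courbure_sto}. I expect this to be the most delicate step, because it is a purely bookkeeping but error-prone verification that the entire tangential component of $d_tT_t$ vanishes: the $dB_t$ contributions from the first two pieces cancel, the two $\pm\partial_u\aaa_t/v_t$ terms cancel, and the remaining drift $\rho_t(3\rho_t-2h_t)+\rho_t(-\rho_t+2h_t)-2\rho_t^2=0$, where the $-2\rho_t^2\,dt$ is exactly the covariation contribution and must not be dropped. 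What survives is only the normal part $-\tfrac1{v_t}(\partial_u\rho_t+\rho_t v_t\aaa_t)dt\,\nu_t$, which is (iii) and shows in particular that $T_t$, hence $\nu_t$, still has finite variation despite the Brownian forcing.
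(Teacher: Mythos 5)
Your proposal is correct and follows essentially the same route as the paper: differentiate \eqref{eq_sto_mod} in $u$ using the Frenet relations for (i), identify the Doob--Meyer decomposition of $v_t$ from $d_t(v_t^2)$ for (ii), and apply It\^o's product rule to $T_t=v_t^{-1}\partial_u C_t$ for (iii), where the covariation term $-2\rho_t^2 T_t\,dt$ is precisely what makes the tangential drift cancel. All the cancellations you identify (the $dB_t$ terms, the $\pm\partial_u\aaa_t/v_t$ terms, and $\rho_t(3\rho_t-2h_t)+\rho_t(-\rho_t+2h_t)-2\rho_t^2=0$) match the paper's computation.
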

\begin{proof}
For the first point, we differentiate term by term and we use  that: 
  $$ \partial_u \nu_t = v_t \rho_t T_t $$ and  so $$  \partial_u T_t = - v_t \rho_t \nu_t .$$ 
  
For the second point:
$$
\begin{aligned}
 d_t (v_t)^2 &= d_t  \vert \partial_u C(t,u) \vert^2 = 2 \langle d_t \partial_u C(t,u) , \partial_u C(t,u) \rangle +  \langle d_t \partial_u C(t,u),d_t \partial_u C(t,u) \rangle  \\
   &=  2 v_t  \left( v_t \rho_t  \left( (-\rho_t + 2h_t )dt + \sqrt{2}  dB_t  \right) + \partial_u \aaa_t dt  \right) + 2  v^2_t\rho_t^2 dt.\\
\end{aligned}$$ 
Then we  write $ dv_t^2 = 2v_t dv_t + dv_tdv_t  $, and we identify the martingale part and the finite variation part of $v_t$ to get the conclusion.

For the last point we compute:

$$
\begin{aligned}
d_t T_t  &= d_t  \left( \frac{1}{v_t}\partial_u C_t \right) = d_t \left(\frac{1}{v_t}\right) \partial_u C_t  + \frac{1}{v_t} d_t\partial_u C_t + d_t \left(\frac{1}{v_t}\right) d_t\partial_u C_t \\
&=  \left( - \frac{\rho_t}{v_t}  \left( (- 3 \rho_t + 2h) dt + \sqrt{2} dB_t  \right) - \frac{\partial_u \aaa_t}{v_t^2} dt  \right) v_t T_t \\
&+ \frac{1}{v_t}  \left(  ( - \partial_u \rho_t -\rho_tv_t\aaa_t) \nu_t dt +  \left( v_t \rho_t ( (-\rho_t + 2h_t )dt + \sqrt{2}  dB_t ) + \partial_u\aaa_t dt  \right)T_t  \right)\\ 
&-2\rho_t^2T_t dt \\
&= - \frac{1}{v_t}(\partial_u \rho_t + \rho_tv_t\aaa_t)  dt \nu_t .
\end{aligned}
$$ 
\end{proof}

In the above lemma, if the curvature is positive and  if we take 
\bqn{aaa} \aaa_t &= &\frac{-\partial_u \rho_t}{v_t \rho_t}\eqn
we get that $T_t$ and $\nu_t $ become constant in time hence the angle $\theta $ becomes constant in time, as desired.
The following lemma  describes the evolution of the curvature in this system of coordinates. Let us first reinforce Assumption \ref{hypoC0}:
\begin{hypo} \label{hypoC02} 
\CPK{ The initial curve $ C_0\st \mathbb{T}\ni u\mapsto C_0(u) \in\RR^2 $ is  simple, closed, strictly convex  and $C^{4 + \alpha} $, for some $\alpha>0$.}
 \end{hypo}
\begin{lemma} \label{eq_rho_theta}  Assuming Hypothesis \ref{hypoC02}, the solution to 
$$ d_t\rho_t(\theta) = \rho^2_t(\theta) (\partial^2_{\theta} \rho_t(\theta)) dt + \rho^2_t(\theta)   \left( (3\rho_t(\theta) - 2h )dt - \sqrt{2} dB_t  \right), $$
is well-defined  for all 
 $ 0 \le t < \tau_0 \wedge  \tau $, where  $ \tau_0 = \inf \{t \ge 0\st \exists u \in [0 , 1] , \rho_t(u) = 0 \}$ and $\tau $ is the lifetime of \eqref{eq_sto_mod}. Due to  Hypothesis \ref{hypoC02}, all the quantities $ h, \rho, \partial_{\te}\rho, \partial^2_{\te}\rho $ are bounded until $\tau $.
 \end{lemma}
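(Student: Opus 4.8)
The plan is to carry over to the stochastic setting the deterministic computation that produced Lemma~\ref{eqrmcf}. The paragraph preceding the statement already records that, with the tangential intensity \eqref{aaa}, one has $\aaa_t=-\partial_\theta\rho_t$ and the fields $T_t,\nu_t$---hence the tangent angle $\theta$---are frozen in time. The first thing I would extract from this is that the product $v_t\rho_t$ is constant in time. Indeed, the geometric meaning of curvature gives the identity $d\theta=\rho\,ds=v_t\rho_t\,du$ already used in Section~\ref{2.4}, so $\partial_u\theta=v_t\rho_t$; since $\theta(\cdot,u)$ does not depend on $t$, neither does $\partial_u\theta$, whence $d_t(v_t\rho_t)=0$ and $v_t\rho_t\equiv v_0\rho_0$.

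With this in hand the curvature equation drops out of a single Itô computation. Writing $\rho_t=v_0\rho_0/v_t$ and reading off from item (ii) of the preceding lemma that the martingale part of $dv_t$ is $\sqrt2\,v_t\rho_t\,dB_t$ (so that $d\langle v\rangle_t=2v_t^2\rho_t^2\,dt$), Itô's formula yields
\[
d_t\frac1{v_t}=\frac{\rho_t}{v_t}\bigl((3\rho_t-2h)\,dt-\sqrt2\,dB_t\bigr)-\frac{\partial_u\aaa_t}{v_t^2}\,dt .
\]
I would then multiply by $v_0\rho_0=v_t\rho_t$ and use $\partial_u=v_t\rho_t\,\partial_\theta$ together with $\aaa_t=-\partial_\theta\rho_t$, which gives $\partial_u\aaa_t=-v_t\rho_t\,\partial^2_\theta\rho_t$; the last term then collapses to $\rho_t^2\,\partial^2_\theta\rho_t\,dt$ and the announced equation appears. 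The only point requiring care is that no spurious Itô correction enters $d_t(v_t\rho_t)$, which is legitimate precisely because $v_t\rho_t$ is constant, not merely a semimartingale.

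For the domain of validity, the entire change of coordinates rests on dividing by $\rho_t$: both \eqref{aaa} and the operator $\partial_\theta=\frac1{v\rho}\partial_u$ only make sense while $\rho_t>0$, i.e.\ for $t<\tau_0$, and the underlying flow \eqref{eq_sto_mod} must exist, i.e.\ $t<\tau$; this is the source of the restriction to $[0,\tau_0\wedge\tau)$. Finally, the boundedness of $h,\rho,\partial_\theta\rho,\partial^2_\theta\rho$ up to $\tau$ is where Hypothesis~\ref{hypoC02} is genuinely used: I would invoke the parabolic regularity recalled after the definition of the SRCF, by which $C_0\in\rC^{4+\alpha}$ forces the solution into $\rC^{\alpha/2,4+\alpha}$, so that $\rho$ and its first two $\theta$-derivatives are continuous on $[0,T]\times\TT$ and hence bounded for every $T<\tau$, while $h=\sg_t/\ld_t$ stays bounded because the curve remains a bounded convex body of positive area. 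The demand of $\rC^{4+\alpha}$ rather than $\rC^{2+\alpha}$ is dictated exactly by the need to control $\partial^2_\theta\rho$, the fourth-order quantity in the equation. The main obstacle is therefore not the formal derivation---a one-line Itô calculation---but this regularity input, which is borrowed from the quasi-linear SPDE theory of \cite{zbMATH07470497} and the maximal-regularity results of \cite{MR3012216} rather than obtained from the elementary a priori estimates of Section~\ref{S2}.
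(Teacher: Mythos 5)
Your proposal is correct and follows essentially the same route as the paper: establish $d_t(v_t\rho_t)=0$ from the time-independence of the angular parametrization, extract the evolution of $\rho_t$ by a single It\^o computation using item (ii) of the preceding lemma, and convert $\partial_u$ into $\partial_\theta$ via $\partial_\theta=\frac{1}{v\rho}\partial_u$ and $\aaa_t=-\partial_\theta\rho_t$. The only differences are cosmetic (the paper expands the product rule on $v_t\rho_t$ rather than applying It\^o to $1/v_t$ and multiplying by the constant $v_0\rho_0$) and your explicit justification of the boundedness claim via parabolic regularity, which the paper's proof leaves implicit.
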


\begin{proof}
By the above choice of $ \alpha$ we have:
$$
\begin{aligned}
 0 =\partial_u d_t T_t &= d_t (\partial_u T_t) \\
            &=d_t (-\rho_t v_t \nu_t ).
\end{aligned} $$ 
Since $ \nu_t $ is constant in time we get:
$$ d_t (v_t \rho_t )=0 ,$$
and  so
$$
\begin{aligned}
 0 =d_t (v_t \rho_t ) &= d_t (v_t) \rho_t + v_t d_t \rho_t + dv_t d \rho_t .\\
\end{aligned} $$ 

We get that $\rho_t $ satisfies the following stochastic differential equation:
$$
\begin{aligned}
  v_t d\rho_t &= - \rho_t d_t (v_t) -  dv_t d\rho_t \\
              &=  - \rho_t   \left( v_t\rho_t   [(-\rho_t + 2h )dt +\sqrt{2} dB_t ]  + \partial_u \aaa_t dt    \right) -   dv_t d\rho_t .  \\
\end{aligned} $$ 

After identification, the martingale part of $d\rho_t$ is $ - \sqrt{2} \rho^2_t dB_t,$
 hence by this choice of $\alpha $
$$
\begin{aligned}
   d\rho_t  &=  - \rho^2_t   \left(   (-\rho_t + 2h )dt + \sqrt{2} dB_t  \right)    -\frac{\rho_t}{v_t} \partial_u \aaa_t dt +2 \rho^3_t dt \\
   &=  - \rho^2_t   \left(   (-3\rho_t + 2h )dt + \sqrt{2} dB_t  \right)    -\frac{\rho_t}{v_t} \partial_u \aaa_t dt  \\
   &= -\frac{\rho_t}{v_t} \partial_u  \left(\frac{-\partial_u \rho_t}{v_t \rho_t}\right)  dt- \rho^2_t   \left(   (-3\rho_t + 2h )dt + \sqrt{2} dB_t  \right).
\end{aligned} $$
 
 Recall that $ T=(\cos (\theta),\sin (\theta) ) $ and so $ \partial_{\theta} T = - \nu $. So by the chain rule we have:
 $$ \begin{aligned}
- \nu = \frac{\partial T}{\partial \theta} &= \frac{\partial u}{\partial \theta}\frac{\partial T}{\partial u} 
= \frac{\partial u}{\partial \theta} (-v \rho ) \nu . 
 \end{aligned} $$
 Hence 
 \bqn{vrho}
 \frac{\partial u}{\partial \theta}\ =\ \frac{1}{v \rho} &\hbox{and}& \partial_{\theta}\ =\ \frac{1}{v\rho} \partial_u.\eqn\par The previous evolution equation of $ \rho_t $ becomes
$$
\begin{aligned}
   d\rho_t  &= 
   \frac{\rho_t}{v_t} \partial_u \partial_{\theta}   \rho_t  dt- \rho^2_t   \left(   (-3\rho_t + 2h )dt + \sqrt{2} dB_t  \right)\\
   &= \rho^2_t \partial^2_{\theta} \rho_t dt +  \rho^2_t   \left(   (3\rho_t - 2h )dt - \sqrt{2} dB_t  \right).
\end{aligned} $$ 
 \end{proof}
\begin{thm} \label{th_cores}
Assume Hypothesis \ref{hypoC02}, in particular $\rho_0>0$.

 Let $\rho_t(\theta) $ be a solution of the following elliptic partial stochastic differential equation:

\begin{equation} \left \{  \begin{array}{lcl}\label{eq_rho}
 d_t \rho_t(\theta) & = & \rho^2_t(\theta) (\partial^2_{\theta} \rho_t )dt + \rho^2_t(\theta)   \left( (3\rho_t(\theta) - 2h )dt - \sqrt{2} dB_t  \right) \\
  \rho_0(\theta) &= & \rho_0(\theta). \\
 \end{array}  \right.
\end{equation}
with lifetime $  \tau_2$,  namely the solution has to be regular up to order $2$ for at least all times smaller than $ \tau_2$.

Then $\tau_0 \wedge \tau =  \tau_2 $, and for all $ t<  \tau_2 $, we have $\rho_t(\theta)>0$ for all $\theta\in \TT$.  Moreover the solution to \eqref{eq_rho} is unique and it provides  the solution of \eqref{eq_sto}  through:
$${C}(t,\te)\df  \tilde{C}(t,\te) + \int _0^t (-\partial_{\te}\rho_u(0), \rho_u(0)-2h_u ) \,du - (0, \sqrt{2} B_t  ) $$
where
$$ \tilde{C}(t,\theta) =  \left( \int_0^{\theta} \frac{\cos(\theta_1)}{\rho_t(\theta_1)} \, d \theta_1 ,  \int_0^{\theta} \frac{\sin(\theta_1)}{\rho_t(\theta_1)} \, d \theta_1   \right). $$
\par

\end{thm}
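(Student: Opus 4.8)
The plan is to read Theorem~\ref{th_cores} as the inversion of Lemma~\ref{eq_rho_theta}: that lemma turns a geometric solution of \eqref{eq_sto_mod} into a solution of the scalar SPDE \eqref{eq_rho}, and here I want to run the correspondence backwards. Concretely I would (a) check that the reconstruction formula produces, for each fixed $t<\tau_2$, a genuine closed strictly convex curve; (b) verify by It\^o calculus that the reconstructed family $C(t,\cdot)$ solves the geometric equation; (c) establish the a priori positivity $\rho_t>0$ on $[0,\tau_2)$; and (d) assemble these into the lifetime identity and uniqueness. One inclusion for the lifetimes is already free: Lemma~\ref{eq_rho_theta} gives $\tau_2\ge\tau_0\wedge\tau$, and the reconstruction will supply the converse.

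For (a)--(b) the computational heart is the It\^o expansion of $1/\rho_t$ at a fixed $\theta$. Applying It\^o's formula to \eqref{eq_rho} and using that the quadratic variation of $\rho_t$ is $2\rho_t^4\,dt$, I expect the clean identity $d_t(1/\rho_t)=\big[-(\partial_\theta^2\rho_t+\rho_t)+2h_t\big]\,dt+\sqrt2\,dB_t$, whose martingale part is spatially constant. Integrating this against $\cos\theta$ and $\sin\theta$ and integrating by parts twice shows, using that $\cos$ and $\sin$ are annihilated by $\partial_\theta^2+1$, that $\int_0^{2\pi}\cos(\theta)/\rho_t\,d\theta$ and $\int_0^{2\pi}\sin(\theta)/\rho_t\,d\theta$ have vanishing differential; hence \eqref{cores} is preserved and $\tilde C(t,\cdot)$ closes up. Since $\partial_\theta\tilde C=(\cos\theta,\sin\theta)/\rho_t$ has length $1/\rho_t>0$, the parameter $\theta$ is the tangent angle and $\rho_t$ is the curvature, so positivity together with \eqref{cores} makes $C(t,\cdot)$ simple and strictly convex by Lemma~4.1.1 and Theorem~4.1.4 of \cite{Gage_Hamilton}. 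Differentiating the reconstruction formula in $t$ and integrating by parts in $\theta$, the spatially constant correction terms $\int_0^t(-\partial_\theta\rho_u(0),\rho_u(0)-2h_u)\,du-(0,\sqrt2 B_t)$ are exactly what is needed to cancel the basepoint contributions at $\theta=0$, leaving the normal part $[(-\rho_t+2h_t)\,dt+\sqrt2\,dB_t]\nu_t$ and a purely tangential drift $-(\partial_\theta\rho_t)\,T_t\,dt$. Recognizing the latter as the tangential intensity \eqref{aaa}, this shows $C(t,\cdot)$ solves \eqref{eq_sto_mod}, so its images solve \eqref{eq_sto}.

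The main obstacle is (c), the positivity of $\rho_t$ up to the full analytic lifetime $\tau_2$, since \eqref{eq_rho} carries multiplicative noise and a nonlocal coefficient $h_t$, so neither the stochastic maximum principle nor a naive comparison applies directly. I would exploit that the noise becomes additive and spatially constant after passing to $u_t:=1/\rho_t$: the Doss--Sussmann shift $\bar u_t:=u_t-\sqrt2\,B_t$ removes the stochastic integral and turns the equation into a pathwise quasilinear parabolic PDE for $\bar u$, with leading coefficient $\rho_t^2>0$. Evaluating its drift at a spatial maximum of $u_t$ (where $\partial_\theta u_t=0$ and $\partial_\theta^2 u_t\le0$) bounds it above by $-1/u_t+2h_t\le2h_t$, so the classical parabolic maximum principle applied $\omega$ by $\omega$ gives $\max_\theta u_t\le\max_\theta u_0+\int_0^t2h_s\,ds+\sqrt2\,B_t$. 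On every compact subinterval of $[0,\tau_2)$ the right-hand side is finite (the solution, hence $h_s=\sigma_s/\lambda_s$, is continuous there), so $\rho_t$ is bounded below by a strictly positive quantity; this yields $\rho_t>0$ for all $t<\tau_2$ and in particular $\tau_0\ge\tau_2$.

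Finally I would close the argument. The reconstructed flow exists and stays simple and strictly convex on $[0,\tau_2)$, giving $\tau_0\wedge\tau\ge\tau_2$, which combined with $\tau_2\ge\tau_0\wedge\tau$ from Lemma~\ref{eq_rho_theta} proves $\tau_0\wedge\tau=\tau_2$. For uniqueness there are two routes: either argue pathwise, since $\bar u$ solves a deterministic (nonlocal) parabolic problem whose solutions are unique by a Gronwall estimate on the difference of two solutions together with the Lipschitz dependence of $h_t$ on the curve, or transport two solutions of \eqref{eq_rho} through the reconstruction to two solutions of \eqref{eq_sto} and invoke the pathwise uniqueness of the geometric flow obtained via the Doss--Sussmann method in Theorem~40 of \cite{zbMATH07470497}. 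Either way the solution of \eqref{eq_rho} is unique.
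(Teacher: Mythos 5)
Your proposal follows essentially the same route as the paper's proof: It\^o on $1/\rho_t$ to get a spatially constant martingale part, preservation of \eqref{cores} by integrating against $\cos$ and $\sin$ and using $(\partial_\theta^2+1)\cos=(\partial_\theta^2+1)\sin=0$, reconstruction of the curve with the basepoint correction and verification that it solves \eqref{eq_sto_mod} (hence \eqref{eq_sto}), and uniqueness plus the lifetime identity by transporting solutions back and forth. The only step you do differently is positivity: the paper simply writes $\rho_t$ as $\rho_0$ times an explicit stochastic exponential, manifestly positive while the solution stays regular, whereas you run a pathwise maximum principle on $1/\rho_t$ after a Doss--Sussmann shift --- this is exactly the argument of the paper's Proposition \ref{min-rho} and is equally valid, just slightly heavier than needed at this point.
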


\begin{proof}
 By  lemma \ref{eq_rho_theta}, \eqref{eq_rho} admits a solution and $\tau_0 \wedge \tau \le  \tau_2 $.
Note that the quantity~$h_t$ could be expressed in terms of $ \s_t$ and $ \ld_t$ and these quantities also depend on the integral of $ \rho  $ as seen in Remark \ref{rm_sys}  below and so $h $ is bounded until~$ \tau_2 $. 
\\
From \eqref{eq_rho}, we get for all $ t <  \tau_2$,
$$ {\rho}_t(\te) = \rho_0(\te)\exp^{ \int_0^t -\sqrt{2} \rho_s (\te)dB_s +  \left(\rho_s (\te)\partial^2_{\te}\rho_s (\te) + 2 \rho_s (\te) (\rho_s (\te) - h_s)  \right) ds }$$
which is positive, yielding $\tau_2\le \tau_0$.\par

 Recall  Lemma 4.1.1 in \cite{Gage_Hamilton}, or see the beginning of Section \ref{conv-sym}, that says a $2 \pi $ periodic positive function $\rho $ represents the curvature of a simple closed strictly convex plane curve if and only if \eqref{cores} is satisfied.

 Here this equation is satisfied by $ \rho_0(\te) $.
 So we have to check that this relation is preserved over time for $ \rho_t(\te) $ solution of \eqref{eq_rho}.
 We will only verify this fact for the first coordinate, the computation will be the same for the second one.
 Using It\^o calculus we get for $0 \le t <  \tau_2 $:
\bqn{d1r}
\nonumber d_t \frac{1}{\rho_t} &=& -\frac{1}{\rho^2_t} d\rho_t +  \frac{1}{\rho_t^3} d\rho_td\rho_t \\
\nonumber   &=& -  (\partial^2_{\theta} \rho_t(\theta))dt -  \left( (3\rho_t(\theta) - 2h )dt - \sqrt{2} dB_t  \right) +2 \rho_t dt \\
  &=& -(\partial^2_{\theta} \rho_t(\theta))dt -  \left( (\rho_t(\theta) - 2h )dt - \sqrt{2} dB_t  \right) .
\eqn\par
 And so after integration by part we get for $0 \le t <  \tau_2 $:
 $$
 \begin{aligned}
 d_t \int_0^{2\pi} \frac{\cos(\te)}{\rho_t(\te)} \,d\te &= \int_0^{2\pi} d_t\frac{\cos(\te)}{\rho_t(\te)} \,d\te  \\
 &=\int_0^{2\pi} \cos(\te)  \left( -(\partial^2_{\theta} \rho_t(\theta))dt -  \left( (\rho_t(\theta) - 2h )dt - \sqrt{2} dB_t  \right)  \right)  \,d\te  \\
 &= -\bigg( \int_0^{2\pi} \cos(\te)  \left( \partial^2_{\theta} \rho_t(\theta) + \rho_t(\theta)     \right)  \,d\te \bigg)dt \\
 &=0 .
 \end{aligned}
 $$
 We get that, for all  $0 \le t <   \tau_2 $, $ \rho_t $ is the curvature of a  simple closed strictly convex plane curve.
 Let us write the curve as:   
 $$ \tilde{C}(t,\theta) =  \left( \int_0^{\theta} \frac{\cos(\theta_1)}{\rho_t(\theta_1)} \, d \theta_1 ,  \int_0^{\theta} \frac{\sin(\theta_1)}{\rho_t(\theta_1)} \, d \theta_1   \right) .$$
 
 We only have to check that $({C}(t,\theta))_\theta $ solves Equation \eqref{eq_sto} up to some tangential component.

 We have: 
 $$ \begin{aligned}
 d_t {C}(t,\te)  &= d_t  \left( \int_0^{\theta} \frac{\cos(\theta_1)}{\rho_t(\theta_1)} \, d \theta_1 ,  \int_0^{\theta} \frac{\sin(\theta_1)}{\rho_t(\theta_1)} \, d \theta_1   \right)  \\
 &+ (-\partial_{\te}\rho_t(0)dt, (\rho_t(0)- 2h_t)dt - \sqrt{2} dB_t  )\\
 &= \bigg( \int_0^{\te} \cos(\te_1)  \left( -(\partial^2_{\theta_1} \rho_t(\theta_1))dt -  \left( (\rho_t(\theta_1) - 2h )dt - \sqrt{2} dB_t  \right)  \right)  \,d\te_1 , \\
 & \int_0^{\te} \sin(\te_1)  \left( -(\partial^2_{\theta_1} \rho_t(\theta_1))dt -  \left( (\rho_t(\theta_1) - 2h )dt - \sqrt{2} dB_t  \right)  \right)  \,d\te_1  \bigg) \\
 & + (-\partial_{\te}\rho_t(0)dt, (\rho_t(0)- 2h_t)dt - \sqrt{2} dB_t  ).
 \end{aligned} $$

After two integrations by parts, we have  for the first term in the right hand side:
$$ \begin{aligned}
&\int_0^{\te} \cos(\te)  \left( -(\partial^2_{\theta_1} \rho_t(\theta_1))dt -  \left( (\rho_t(\theta_1) - 2h )dt - \sqrt{2} dB_t  \right)  \right)  \,d\te_1  \\
&=-\bigg\{ [\cos(\te_1) \partial_{\te_1} \rho_t ]_0^{\te} dt+ [\sin(\te_1)  \rho_t ]_0^{\te}dt + [\sin(\te)]  \left( -2h dt - \sqrt{2} dB_t \right) \bigg\} \\
&= - \cos(\te) \partial_{\te} \rho_t(\te)dt +  \partial_{\te} \rho_t(0)dt - \sin(\te)  \left( (\rho_t-2h) dt - \sqrt{2} dB_t \right) .
 \end{aligned}
 $$
  For the second term, we have:
$$ \begin{aligned}
&\int_0^{\te} \sin(\te)  \left( -(\partial^2_{\theta_1} \rho_t(\theta_1))dt -  \left( (\rho_t(\theta_1) - 2h )dt - \sqrt{2} dB_t  \right)  \right)  \,d\te_1  \\
&=-\bigg\{ [\sin(\te_1) \partial_{\te_1} \rho_t ]_0^{\te}dt - [\cos(\te_1)  \rho_t ]_0^{\te}dt - [\cos(\te)]_0^{\te}  \left( -2h dt - \sqrt{2} dB_t \right) \bigg\} \\
&= - \sin(\te) \partial_{\te} \rho_t(\te) dt + \cos(\te)  \left( (\rho_t-2h) dt - \sqrt{2} dB_t \right) \\
&-  \left( (\rho_t(0)-2h) dt - \sqrt{2} dB_t \right)  .
 \end{aligned}
 $$  
 Hence:
 $$ \begin{aligned}
 d_t {C}(t,\te)  &=  ((-\rho_t + 2h ) dt + \sqrt{2}  dB_t )\nu  - (\partial_{\te} \rho_t dt ) T\\
 \end{aligned} .$$

This is  \eqref{eq_sto_mod}  and so up a parametrization, this is a solution to \eqref{eq_sto}. 
Since a solution to \eqref{eq_rho} produces a solution to \eqref{eq_sto}, by uniqueness of solution to  \eqref{eq_sto}, we get the uniqueness of the solution to \eqref{eq_rho}, and 
$ \tau_2 \le \tau $. So we proved that $\tau_2=\tau\wedge\tau_0$.
 
 \end{proof}

 We will show that Equation \eqref{eq_sto} preserves the positivity of the curvature.

\begin{lemma}\label{conserve_convex}
Assume Hypothesis \ref{hypoC02}
and consider the solution to \eqref{eq_sto}. We have  $\rho_t > 0$ for all $ t < \tau$, where $\tau  $ is any lifetime of \eqref{eq_sto}, moreover $\tau =  \tau_2 $.

\end{lemma}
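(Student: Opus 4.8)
The plan is to transfer the positivity of the curvature, already established in the commuting-angle coordinates, back to the original parametrization of \eqref{eq_sto}. The key observation is that Lemma \ref{eq_rho_theta} and Theorem \ref{th_cores} together give us everything we need for the tangentially-perturbed equation \eqref{eq_sto_mod}: the solution $\rho_t(\theta)$ of \eqref{eq_rho} stays strictly positive for all $t<\tau_2$, and $\tau_2=\tau\wedge\tau_0$. Since \eqref{eq_sto} and \eqref{eq_sto_mod} differ only by a finite-variation tangential reparametrization in $u$, the geometric curve $C_t$ (hence its curvature as a geometric object) is the same for both equations; only the label of each point changes. Therefore the positivity of the curvature along \eqref{eq_sto} is equivalent to the positivity of $\rho_t(\theta)$ along \eqref{eq_rho}.

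First I would recall the explicit multiplicative representation of the solution obtained in the proof of Theorem \ref{th_cores},
\begin{equation*}
\rho_t(\te)=\rho_0(\te)\exp\!\left(\int_0^t -\sqrt{2}\,\rho_s(\te)\,dB_s+\left(\rho_s(\te)\partial^2_{\te}\rho_s(\te)+2\rho_s(\te)(\rho_s(\te)-h_s)\right)ds\right),
\end{equation*}
valid for $t<\tau_2$. Because $\rho_0>0$ by Hypothesis \ref{hypoC02} and the exponential factor is strictly positive whenever the integrand is finite, this shows $\rho_t(\te)>0$ for every $\te\in\TT$ and every $t<\tau_2$. By Lemma \ref{eq_rho_theta} the quantities $h,\rho,\partial_{\te}\rho,\partial^2_{\te}\rho$ remain bounded up to $\tau$, so the stochastic and Lebesgue integrals in the exponent are well defined and finite on $[0,\tau_2)$, which justifies the above.

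Next I would identify $\tau$ with $\tau_2$. Theorem \ref{th_cores} already established $\tau_2=\tau\wedge\tau_0$, so it suffices to argue that $\tau_0$, the first time the curvature vanishes, does not truncate the lifetime before $\tau$: the representation above forces $\rho_t>0$ on all of $[0,\tau\wedge\tau_0)=[0,\tau_2)$, whence $\tau_2\le\tau_0$, and combined with $\tau_2\le\tau$ from the same theorem we conclude $\tau_2=\tau\wedge\tau_0=\tau_2$ is consistent with $\tau=\tau_2$. Concretely, I would invoke the uniqueness half of Theorem \ref{th_cores}: since every solution of \eqref{eq_rho} yields a solution of \eqref{eq_sto} and conversely, the two lifetimes must coincide, giving $\tau=\tau_2$, and then positivity of $\rho_t$ for $t<\tau$ is immediate from the displayed formula.

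The main obstacle I anticipate is \emph{not} the positivity itself, which is essentially a consequence of the exponential form, but rather the careful bookkeeping needed to be sure that passing between \eqref{eq_sto} and \eqref{eq_sto_mod} does not alter the lifetime or the geometric curvature. One must check that the tangential drift \eqref{aaa} is well defined precisely as long as $\rho_t>0$, so that the change of coordinates is a genuine diffeomorphism in $u$ up to $\tau_0$, and that no explosion is introduced or removed by the reparametrization. This is where the equality $\tau=\tau_2$ genuinely does work: it rules out the a priori possibility that the original flow \eqref{eq_sto} could survive past the time the angular representation breaks down. Once this identification is in hand, the statement $\rho_t>0$ for all $t<\tau$ and $\tau=\tau_2$ follows directly.
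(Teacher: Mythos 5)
Your first step --- positivity of $\rho_t$ for $t<\tau_2$ via the exponential representation from Theorem \ref{th_cores} --- is exactly what the paper uses, and is fine. The gap is in the second step, the identification $\tau=\tau_2$. Since Theorem \ref{th_cores} only gives $\tau_2=\tau\wedge\tau_0$, the whole content of the lemma is to rule out $\tau_0<\tau$, and your argument never actually does this: the chain ``$\rho_t>0$ on $[0,\tau\wedge\tau_0)$, whence $\tau_2\le\tau_0$'' only reproves inequalities that are trivial consequences of $\tau_2=\tau\wedge\tau_0$, and your own sentence ``is consistent with $\tau=\tau_2$'' concedes that nothing has been established. The appeal to ``the uniqueness half of Theorem \ref{th_cores}: since every solution of \eqref{eq_rho} yields a solution of \eqref{eq_sto} and conversely, the two lifetimes must coincide'' is circular: the converse direction (passing from a solution of \eqref{eq_sto} to the angular equation \eqref{eq_rho}) requires $\rho_t>0$, so the correspondence is only available on $[0,\tau\wedge\tau_0)$ --- precisely the interval on which it tells you nothing new. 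You correctly name this obstacle in your last paragraph (``it rules out the a priori possibility that the original flow could survive past the time the angular representation breaks down'') but then write ``once this identification is in hand, the statement follows directly,'' which assumes the conclusion.

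The missing idea is a short contradiction argument \emph{at} the time $\tau_0$. Suppose $\tau_0<\tau$. Lemma \ref{eq_rho_theta} guarantees that $h$, $\rho$, $\partial_\te\rho$ and $\partial^2_\te\rho$ remain bounded up to $\tau$, hence in particular on the closed interval $[0,\tau_0]$. Consequently the stochastic and Lebesgue integrals in the exponent converge as $t\to\tau_0$, the representation
\begin{equation*}
\rho_{\tau_0}(\te)=\rho_0(\te)\exp\!\left(\int_0^{\tau_0}-\sqrt{2}\,\rho_s(\te)\,dB_s+\left(\rho_s(\te)\partial^2_{\te}\rho_s(\te)+2\rho_s(\te)(\rho_s(\te)-h_s)\right)ds\right)
\end{equation*}
holds at $t=\tau_0$ itself, and it is strictly positive at every $\te$ --- contradicting the definition of $\tau_0$ as the first time the curvature vanishes. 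Hence $\tau_0\ge\tau$, so $\tau_2=\tau\wedge\tau_0=\tau$, and positivity on all of $[0,\tau)$ follows from your first step. This is the argument the paper gives; without it your proof does not close.
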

\begin{proof}

 Suppose that  $ \tau_0  < \tau $, so $h_t, \rho_t(\te), \partial_{\te}\rho_t(\te) , \partial_{\te}^2\rho_t(\te)  $ are bounded for all  $ t \le \tau_0$, and
$$ \rho_{\tau_0}(\te) = \rho_0(\te)\exp^{ \int_0^{\tau_0} -\sqrt{2} \rho_s (\te)dB_s +  \left(\rho_s (\te)\partial^2_{\te}\rho_s (\te) + 2 \rho_s (\te) (\rho_s (\te) - h_s)  \right) ds },$$
and we get a contradiction.  By Theorem \ref{th_cores},  we get $  \tau =  \tau_2  .$
\end{proof}

\begin{Remark} \label{rm_sys}
Let us compute the equation satisfied by $h$ when we know the equation of $ \rho$. Resorting to \eqref{d1r} and recalling from \eqref{vrho}
that $1/(v\rho)=\partial u/\partial \theta=1$,
we get by Stokes Theorem:
$$\begin{aligned}
 d \s_t &= d  \int_0^{2\pi} \vert  \partial_{\te} C(t,\te)  \vert \, d\te \\
            &= d  \int_0^{2\pi} \frac{1}{\rho_t(\te)} \, d\te \\
        &= \int_0^{2\pi}  \left(  - \partial^2_{\te}\rho_t(\te) dt  - ( (\rho_t(\te) - 2h )dt - \sqrt{2} dB_t )  \right)   \, d\te \\
        &= \lt( -\int_0^{2\pi}   \rho_t(\te)   \, d\te \rt) dt  + 4 \pi h_t dt +2\sqrt{2} \pi dB_t \\
        &= \lt(-\int_0^{\s_t}   \rho_t^2(s) ds \rt) dt + 4 \pi h_t dt +2\sqrt{2} \pi dB_t .
\end{aligned} $$
\par
By a similar computation as above, we also have:

$$\begin{aligned}
 d \ld_t &= d \frac12  \int_0^{2\pi} \langle C(t,\te), \nu_t(\te)  \rangle \frac{d\te}{\rho_t(\te)} \\
 &= \frac12 \bigg\{ \int_0^{2\pi} \Bigg( \langle dC(t,\te), \nu_t(\te)  \rangle \frac{1}{\rho_t(\te)} +  \langle C(t,\te), \nu_t(\te)  \rangle d (\frac{1}{\rho_t(\te)} ) + \\
 &+  \langle d C(t,\te), \nu_t(\te)   \rangle d (\frac{1}{\rho_t(\te)} )\Bigg)  \,d\te \bigg\} \\
 &= \frac12 \bigg\{ \int_0^{2\pi} \Bigg(  \left(  (-\rho_t(\te) +2h )dt + \sqrt{2} dB_t       \right)  \frac{1}{\rho_t(\te)}     \\
  &+  \langle C(t,\te), \nu_t(\te)  \rangle  \left(  - \partial^2_{\te}\rho_t(\te)dt  - ( (\rho_t(\te) - 2h )dt - \sqrt{2} dB_t )  \right) +  2 dt  \Bigg) \,d\te \bigg\} 
\end{aligned}
$$ 
After integrating by part two times and using $\pa_\theta \nu=T$, we get:
$$ \begin{aligned}
&\int_0^{2\pi} - \langle C(t,\te), \nu_t(\te)  \rangle \partial^2_{\te}\rho_t(\te)  \, d\te  = \int_0^{2\pi} \partial_{\te} ( \langle C(t,\te), \nu_t(\te)  \rangle) \partial_{\te}\rho_t(\te)  \, d\te \\
&=  \int_0^{2\pi} \partial_{\te}\rho_t(\te)    \langle C(t,\te), T_t(\te) \rangle \, d\te \\
&  = - \int_0^{2\pi} \rho_t(\te) \lt( \frac{1}{\rho_t(\te)}  -  \langle C(t,\te), \nu_t(\te)\rangle \rt)   \, d\te . 
\end{aligned}
$$
\par
Taking into account that $\partial_{\te} T = -\nu $, we have
$$ \begin{aligned}
& \int_0^{2\pi}  \langle C(t,\te),  \nu_t(\te) \rangle \,d\te =  -\int_0^{2\pi} \langle C(t,\te), \partial_{\te}T_t(\te) \rangle  \,d\te \\
 &=  -\int_0^{2\pi} \partial_{\te} ( \langle C(t,\te), T_t(\te) \rangle ) - \frac{1}{\rho_t(\te)}  \,d\te = \int_0^{2\pi}  \frac{1}{\rho_t(\te)}  \,d\te .
\end{aligned} $$   
Putting the two computations above in the evolution equation of $ \ld_t$ we get:

$$ \begin{aligned}
d\ld_t  &=  \frac12 \bigg\{ \int_0^{2\pi} \Bigg(  \left( 2h dt + \sqrt{2} dB_t       \right)  \frac{1}{\rho_t(\te)}    \\&- \langle C(t,\te), \nu_t(\te)  \rangle  \left(   - 2h dt - \sqrt{2} dB_t )  \right)  \Bigg) \,d\te \bigg\} \\
 &=  \int_0^{2\pi}  \frac{1}{\rho_t(\te)}  \,d\te    \left( 2h dt + \sqrt{2} dB_t       \right)  =  d \ld_t = \frac{2 \s_t^2}{\ld_t} dt + \sqrt{2} \s_t dB_t .
\end{aligned} $$

 So we have to interpret \eqref{eq_rho} as a system where we have:
 
    \begin{equation}  \left \{ \begin{array}{lcl} \label{eq_geom}
  d \s_t &= & \left( -\int_0^{2\pi}   \rho_t(\te)   \, d\te  \right) dt  + 4 \pi  \frac{ \s_t}{\ld_t}  dt +2\sqrt{2} \pi dB_t \\
  d \ld_t &=& \frac{2 \s_t^2}{\ld_t} dt + \sqrt{2} \s_t dB_t \\
  h_t &=& \frac{\s_t}{\ld_t} 
 \end{array} \right.
\end{equation}
  
\end{Remark}

Using the above theorem and  Lemma  \ref{eq_rho_theta}, we get the following corollary:
\begin{corollary}\label{cor1} Assume Hypothesis \ref{hypoC02}. There is a one to one correspondence  between the solutions of \eqref{eq_sto},  \eqref{eq_rho} and \eqref{eq_sto_mod}.
\end{corollary}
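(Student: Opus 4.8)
The plan is to close the loop among the three equations by assembling the constructions already established, so that each solution determines the other two uniquely. There are three links: \eqref{eq_sto} is related to \eqref{eq_sto_mod} by a purely tangential reparametrization; the curvature of a solution of \eqref{eq_sto_mod} with the choice \eqref{aaa} solves \eqref{eq_rho} (Lemma~\ref{eq_rho_theta}); and a solution of \eqref{eq_rho} reconstructs a solution of \eqref{eq_sto}, equivalently of \eqref{eq_sto_mod}, through the explicit formula of Theorem~\ref{th_cores}. I would organize these as a cycle $\eqref{eq_sto}\to\eqref{eq_sto_mod}\to\eqref{eq_rho}\to\eqref{eq_sto}$ and then invoke uniqueness at each stage to conclude that every arrow is a bijection with a common lifetime.

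First I would make precise the equivalence between \eqref{eq_sto} and \eqref{eq_sto_mod}. Since the added field $\aaa_t T_t$ is tangent to the curve, it merely displaces points along $C_t$ without altering its image, so the two equations describe the same geometric family of curves and differ only through their $u$-parametrization. Concretely, given a solution $C$ of \eqref{eq_sto} one solves the finite-variation ODE on $\TT$ governing the reparametrization $u\mapsto\phi_t(u)$ driven by the tangential intensity $\aaa$ of \eqref{aaa}, and sets $\wi C(t,u)\df C(t,\phi_t(u))$; as long as the curvature stays positive, $v_t\rho_t>0$, the angle $\theta$ is a legitimate coordinate and $\phi_t$ is a time-dependent diffeomorphism of $\TT$, whose inverse undoes the reparametrization. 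This yields a bijection between solutions of \eqref{eq_sto} and \eqref{eq_sto_mod} sharing the same lifetime, as already noted below \eqref{eq_sto_mod}.

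Next I would assemble the curvature-level correspondence. Lemma~\ref{eq_rho_theta} shows that the curvature of a solution of \eqref{eq_sto_mod} with the choice \eqref{aaa} solves \eqref{eq_rho} up to $\tau_0\wedge\tau$, while Theorem~\ref{th_cores} produces, from any solution of \eqref{eq_rho}, the curve $\wi C(t,\theta)$ solving \eqref{eq_sto} (hence \eqref{eq_sto_mod}), and establishes both the uniqueness of \eqref{eq_rho} and the identity $\tau_2=\tau\wedge\tau_0$. Combined with Lemma~\ref{conserve_convex}, which gives $\tau=\tau_2$, all three lifetimes coincide. Uniqueness at each stage—for \eqref{eq_sto} from the short-time existence and uniqueness statement, and for \eqref{eq_rho} from Theorem~\ref{th_cores}—then forces the composition of the three maps around the cycle to be the identity, so each map is one-to-one.

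The hard part is not any new computation but the careful bookkeeping of the reparametrization in the stochastic setting: one must check that $\phi_t$ remains a diffeomorphism of $\TT$ over the entire common lifetime, which reduces to positivity of the curvature (Lemma~\ref{conserve_convex}), and that this pathwise, finite-variation reparametrization is adapted and compatible with the driving Brownian motion, so that it genuinely sends solution processes to solution processes. Once this is granted, the corollary follows by chaining the already-proven bijections and matching the lifetimes $\tau=\tau_2=\tau\wedge\tau_0$.
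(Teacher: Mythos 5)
Your proposal is correct and follows essentially the same route as the paper, whose proof simply invokes Theorem \ref{th_cores}, Lemma \ref{eq_rho_theta} and Lemma \ref{conserve_convex}; you merely spell out the chaining of these correspondences and the tangential-reparametrization link between \eqref{eq_sto} and \eqref{eq_sto_mod}, which the paper already records when introducing \eqref{eq_sto_mod}. No gap.
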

\begin{proof}
Use Theorem \ref{th_cores}, Lemma \ref{eq_rho_theta} and \ref{conserve_convex}. 

\end{proof}

Consider the following stochastic curvature flow,
  \begin{equation} \left \{  \begin{array}{lcl}\label{eq_sto_CF}
 d_t C(t,u) & = & (-\rho_t(C(t,u)) )\nu_{C(t,u)}dt + \sqrt{2} \nu_{C(t,u)} dB_t  \\
  C(0,u) &= & C_0(u) \\
 \end{array}  \right. \end{equation}

\begin{corollary}
Assume Hypothesis \ref{hypoC02} and
consider the solution to \eqref{eq_sto_CF}. We have  $\rho_t > 0$ for all $ t < \tau$, where $\tau  $ is any lifetime of \eqref{eq_sto_CF}.
\end{corollary}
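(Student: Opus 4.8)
The plan is to observe that the stochastic curvature flow \eqref{eq_sto_CF} is exactly the SRCF \eqref{eq_sto} with the non-local drift switched off, i.e.\ with $h_t$ replaced by $0$ in the normal velocity. Consequently the whole computation of Lemma \ref{eq_courbure_sto} applies verbatim, the only change being the disappearance of every occurrence of $2h_t$ that originates from the drift; the It\^o correction produced by the $\sqrt{2}\,dB_t$ term is untouched, since the noise coefficient is the same. This gives, in the arc-length picture,
\begin{equation*}
d_t\rho_t(s) = (\partial_s^2\rho_t)\,dt + \rho_t^2\bigl(3\rho_t\,dt - \sqrt{2}\,dB_t\bigr).
\end{equation*}

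First I would add the same tangential perturbation $\aaa_t = -\partial_u\rho_t/(v_t\rho_t)$ as in \eqref{aaa} to freeze the angular parameter $\theta$; repeating the proof of Lemma \ref{eq_rho_theta} word for word (again with $h\equiv 0$) yields the closed scalar SPDE
\begin{equation*}
d_t\rho_t(\theta) = \rho_t^2(\theta)\,\partial_\theta^2\rho_t(\theta)\,dt + \rho_t^2(\theta)\bigl(3\rho_t(\theta)\,dt - \sqrt{2}\,dB_t\bigr),
\end{equation*}
valid up to the first time $\tau_0$ the curvature touches $0$, intersected with the lifetime $\tau$. On this interval $\rho_t>0$, so I may apply It\^o's formula to $\log\rho_t(\theta)$; the bracket term contributes $-\rho_t^2\,dt$, reducing the $3\rho_t^2$ drift to $2\rho_t^2$, which gives
\begin{equation*}
\rho_t(\theta) = \rho_0(\theta)\exp\left(\int_0^t\bigl(\rho_s(\theta)\,\partial_\theta^2\rho_s(\theta) + 2\rho_s^2(\theta)\bigr)\,ds - \sqrt{2}\int_0^t \rho_s(\theta)\,dB_s\right),
\end{equation*}
exactly as in Theorem \ref{th_cores} and Lemma \ref{conserve_convex}, but with the $-h_s$ contribution absent.

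The positivity then follows by the same contradiction argument as in Lemma \ref{conserve_convex}. Since $\rho_0(\theta)>0$ under Hypothesis \ref{hypoC02} and the short-time regularity makes $\rho,\partial_\theta\rho,\partial_\theta^2\rho$ bounded on every interval $[0,t]$ with $t<\tau$, both the Lebesgue integral and the stochastic integral in the exponent are finite on $[0,\tau\wedge\tau_0)$. Hence the right-hand side is a strictly positive finite quantity, which prevents $\rho$ from reaching $0$ before $\tau$; this shows $\tau\le\tau_0$, i.e.\ $\rho_t>0$ for all $t<\tau$.

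The only genuine work is to confirm that the reparametrization and the a priori bounds of Lemma \ref{eq_rho_theta} transfer to the present setting. This is immediate, because removing the bounded finite-variation term $2h_t\,dt$ cannot break any of the boundedness arguments used there, and in fact simplifies them (there is no non-local quantity to control through the system \eqref{eq_geom}). I expect the only point to watch to be purely formal: verifying that the It\^o correction in the curvature equation, and thus the passage to the coefficient $3\rho_t$, is independent of the suppressed drift, so that the exponential representation --- and with it the strict positivity --- is preserved unchanged.
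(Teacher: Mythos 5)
Your proposal is correct and follows essentially the same route as the paper, which likewise derives the curvature SPDE \eqref{eq_sto_rho} (i.e.\ \eqref{eq_rho} with $h\equiv 0$) and then repeats the arguments of Lemma \ref{eq_rho_theta}, Theorem \ref{th_cores} and Lemma \ref{conserve_convex}, in particular the exponential representation of $\rho_t$ and the ensuing contradiction at $\tau_0$. You merely spell out the details that the paper leaves as ``similar computation.''
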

\begin{proof}
With similar computation as in the above lemma, and since $\rho_0 >0$, we have:

 \begin{equation} \left \{  \begin{array}{lcl}\label{eq_sto_rho}
  d_t\rho_t(\theta) = \rho^2_t(\theta) (\partial^2_{\theta} \rho_t(\theta)) dt + \rho^2_t(\theta)   \left( (3\rho_t(\theta))dt - \sqrt{2} dB_t  \right), \\
  \rho_0 = \rho_0
 \end{array}  \right. \end{equation}
 and the proof is similar to the proof of Lemma \ref{eq_rho_theta}, Theorem \ref{th_cores} and Lemma \ref{conserve_convex}.  
  
\end{proof}

\begin{corollary}\label{cor2} Assume Hypothesis \ref{hypoC02}, there is a one to one correspondence  between the solutions of \eqref{eq_sto_CF} and \eqref{eq_sto_rho}.
\end{corollary}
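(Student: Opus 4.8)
The plan is to mirror, in the non-renormalized setting, the exact chain of arguments that established Corollary~\ref{cor1}, specializing every formula to the case $h\equiv 0$. The key observation is that Equation~\eqref{eq_sto_rho} is nothing but \eqref{eq_rho} with the nonlocal term $h_t$ removed, and \eqref{eq_sto_CF} is \eqref{eq_sto} with the drift $2h_t\nu$ removed; since $h$ enters only additively in the drift of each equation, its suppression never interferes with any of the steps, so one may re-run the whole machinery of Lemma~\ref{eq_rho_theta}, Theorem~\ref{th_cores} and Lemma~\ref{conserve_convex} almost verbatim.

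First I would reproduce the derivation of Lemma~\ref{eq_rho_theta}: introducing the tangential perturbation with the same intensity $\aaa_t=-\partial_u\rho_t/(v_t\rho_t)$ renders $\theta$ time-independent, and the It\^o computation that produced \eqref{eq_rho} yields \eqref{eq_sto_rho} upon setting $h=0$. This shows that any solution of \eqref{eq_sto_CF} produces, through its curvature, a solution of \eqref{eq_sto_rho}, and that the quantities $\rho,\partial_\theta\rho,\partial^2_\theta\rho$ remain bounded up to the lifetime.

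Next I would run the reconstruction argument of Theorem~\ref{th_cores}. Given a solution $\rho_t$ of \eqref{eq_sto_rho}, the exponential representation obtained by integrating \eqref{eq_sto_rho} shows $\rho_t>0$, whence $\tau_2\le\tau_0$, and from \eqref{d1r} with $h=0$ one gets $d_t(1/\rho_t)=-(\partial^2_\theta\rho_t)\,dt-(\rho_t\,dt-\sqrt{2}\,dB_t)$. Integrating this against $\cos\theta\,d\theta$ and $\sin\theta\,d\theta$ over $\TT$ and integrating by parts twice gives $\int_0^{2\pi}\cos\theta(\partial^2_\theta\rho_t+\rho_t)\,d\theta=0$ (and likewise for $\sin$), so the closure constraint \eqref{cores} is preserved and $\rho_t$ is the curvature of a genuine simple strictly convex closed curve for every $t<\tau_2$. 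The curve is then recovered from $\tilde{C}(t,\theta)$ as in the theorem, up to the appropriate translation term (now without the $2h_u$ contribution), and a direct check shows it solves \eqref{eq_sto_CF} up to a tangential component.

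Finally, as in Lemma~\ref{conserve_convex}, positivity forces $\tau=\tau_2$, and uniqueness for the curve-valued SPDE transfers to uniqueness for \eqref{eq_sto_rho}, giving the bijection. The only point demanding care is precisely the verification that dropping $h$ leaves the preservation of \eqref{cores} and the positivity representation intact; since $h$ contributes a term proportional to $\int_0^{2\pi}\cos\theta\,d\theta=0$ (resp.\ $\int_0^{2\pi}\sin\theta\,d\theta=0$) in that computation, its removal is harmless, and in fact the absence of the nonlocal coupling makes the present argument strictly simpler than that of Corollary~\ref{cor1}, since there is no longer any need to control $h_t$ through the system \eqref{eq_geom}.
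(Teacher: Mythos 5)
Your proposal is correct and matches the paper's own argument, which states exactly that the proof is the same as that of Theorem~\ref{th_cores} with all occurrences of $h_t$ removed. Your additional observation that the $h$-term never contributed to the preservation of \eqref{cores} anyway (being proportional to $\int_0^{2\pi}\cos\theta\,d\theta=0$) is a fair elaboration of why the removal is harmless.
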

\begin{proof}
The proof is similar to that of Theorem \ref{th_cores}, just remove all the $h_t$.
\end{proof}

\section{Long time existence}
\subsection{Evolution of geometric quantities along the stochastic flow  \eqref{eq_sto}}


 \begin{proposition} \label{prop-ev}\CPK{Assume Hypothesis \ref{hypoC02}.}
 Let $(C_t)_{t\in[0,\tau)}$  be the solution of \eqref{eq_sto}. For any $t\in[0,\tau)$, denote $\lambda_t $ the volume of $D_t$ and  $\s_t $ the perimeter of $C_t$.  We have the following equations for  $t\in[0,\tau)$ (with our usual notational shortcuts):
\begin{enumerate}[i)]
 \item $\displaystyle  d_t ( \s^2_t -4\pi \ld_t ) \le -2 \pi  \left( \frac{\s^2_t - 4\pi \ld_t }{ \ld_t }  \right) dt ,$ \\
 \item $\displaystyle d \frac{1}{\rho_t(\theta)} = 
-\partial^2_{\theta} \rho_t(\theta)dt - (\rho_t(\theta) - 2h )dt + \sqrt{2} dB_t $, \\
\item $\displaystyle  d \int_0^{2 \pi}   \frac{1}{\rho^2_t}\, d\te =  - 2 \int_0^{2 \pi}  \left(  \partial_{\te} \log (\rho_t )  \right)^2 \,  d\te dt + 2 d \ld_t $ ,\\
\item  $$ \begin{aligned}
 d \int_0^{2\pi} \log (\rho_t(\te)) \, d\te
  =& -\int_0^{2\pi}  (\partial_{\theta} \rho_t )^2\, d\te dt + 2 \int_0^{2\pi}   \left(\rho_t(\theta) - \frac{h}{2} \right)^2 \, d\te dt \\
 &- \pi h^2 dt -  \sqrt{2} \int_0^{2\pi}  \rho_t(\theta) \, d\te  dB_t  .\\
\end{aligned} $$ 

\end{enumerate} 
 \end{proposition}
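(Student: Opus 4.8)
The plan is to derive all four identities by applying It\^o's formula to the curvature SDE \eqref{eq_rho}, combined with the evolution equations for $\s_t$ and $\ld_t$ recorded in the system \eqref{eq_geom} of Remark~\ref{rm_sys}, and then to close the estimate in (i) with the Gage inequality \eqref{gage}. Item (ii) needs no new work at all: it is exactly the computation \eqref{d1r}, obtained by It\^o's formula for $\rho\mapsto 1/\rho$ along \eqref{eq_rho}, using that the martingale part of $d\rho_t(\te)$ is $-\sqrt2\,\rho_t^2\,dB_t$ so that its bracket is $2\rho_t^4\,dt$.

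For (i), I would start from \eqref{eq_geom}, that is $d\s_t=\bigl(-\int_0^{\s_t}\rho_t^2\,ds\bigr)\,dt+4\pi h_t\,dt+2\sqrt2\,\pi\,dB_t$ and $d\ld_t=(2\s_t^2/\ld_t)\,dt+\sqrt2\,\s_t\,dB_t$. Applying It\^o to $\s_t^2$ (the bracket of $\s_t$ being $8\pi^2\,dt$) and subtracting $4\pi\,d\ld_t$, two cancellations occur: the Brownian terms cancel since $4\sqrt2\,\pi\s_t-4\sqrt2\,\pi\s_t=0$, and the drift contributions $8\pi h_t\s_t$ and $-8\pi\s_t^2/\ld_t$ cancel because $h_t=\s_t/\ld_t$. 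This leaves the deterministic expression $d(\s_t^2-4\pi\ld_t)=\bigl(-2\s_t\int\rho_t^2\,ds+8\pi^2\bigr)\,dt$. Gage's inequality $\pi\s_t/\ld_t\le\int\rho_t^2\,ds$ then bounds $-2\s_t\int\rho_t^2\,ds$ by $-2\pi\s_t^2/\ld_t$, giving $-2\pi(\s_t^2/\ld_t-4\pi)\,dt=-2\pi(\s_t^2-4\pi\ld_t)/\ld_t\,dt$, which is (i).

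For (iii) I would set $w_t\df 1/\rho_t$, whose martingale part is $\sqrt2\,dB_t$ and whose bracket is $2\,dt$ by (ii), so that It\^o gives $d(w_t^2)=2w_t\,dw_t+2\,dt$. Integrating over $\te\in[0,2\pi]$ and handling the three resulting terms separately: the $-2w_t\partial^2_\te\rho_t$ term becomes $-2\int(\partial_\te\log\rho_t)^2\,d\te$ after one integration by parts via $\partial_\te(1/\rho_t)=-\rho_t^{-2}\partial_\te\rho_t$; the $-2w_t(\rho_t-2h)$ term integrates to $-4\pi+4h\s_t=-4\pi+4\s_t^2/\ld_t$; and the $+2$ contributes $+4\pi$, so the two constants cancel. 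Since the stochastic part is $2\sqrt2\,\s_t\,dB_t$, the outcome is $-2\int(\partial_\te\log\rho_t)^2\,d\te\,dt+\bigl(4\s_t^2/\ld_t\,dt+2\sqrt2\,\s_t\,dB_t\bigr)$, and the parenthesis is precisely $2\,d\ld_t$ by \eqref{eq_geom}. For (iv) I would apply It\^o to $\rho\mapsto\log\rho$ along \eqref{eq_rho}: since $d\rho_t$ has bracket $2\rho_t^4\,dt$, the $-\tfrac12\rho_t^{-2}\,d\rho_t\,d\rho_t$ correction contributes $-\rho_t^2\,dt$ and one finds $d\log\rho_t=(\rho_t\partial^2_\te\rho_t+2\rho_t^2-2h\rho_t)\,dt-\sqrt2\,\rho_t\,dB_t$. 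Integrating in $\te$, one integration by parts turns $\int\rho_t\partial^2_\te\rho_t$ into $-\int(\partial_\te\rho_t)^2$, and completing the square $2\rho_t^2-2h\rho_t=2(\rho_t-h/2)^2-h^2/2$ (whose constant $\te$-integral is $\pi h^2$) yields the stated drift together with the martingale part $-\sqrt2\int\rho_t\,d\te\,dB_t$.

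No single step is genuinely difficult; the substance is careful bookkeeping. The one place demanding real care is (i), where the clean bound relies simultaneously on the exact cancellation of the Brownian terms and of the $h_t$-drift terms (so that the quadratic-variation contribution $8\pi^2$ survives in isolation) and on invoking Gage's inequality in the correct direction. Elsewhere the only subtlety is correctly tracking the It\^o bracket corrections and the periodic integrations by parts, for which the absence of boundary terms on $\TT$ is what makes the identities close exactly.
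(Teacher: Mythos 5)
Your proposal is correct and follows essentially the same route as the paper: It\^o's formula applied to the curvature equation \eqref{eq_rho} and the system \eqref{eq_geom} for $(\s_t,\ld_t)$, with the cancellation of the martingale and $h_t$-drift terms in (i) closed by Gage's inequality, item (ii) read off from \eqref{d1r}, and the periodic integrations by parts plus square completion for (iii) and (iv). All the It\^o brackets ($8\pi^2\,dt$ for $\s_t$, $2\,dt$ for $1/\rho_t$, $2\rho_t^4\,dt$ for $\rho_t$) are tracked correctly, so there is nothing to add.
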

\begin{proof}

For equation i): using equation  \eqref{eq_geom} and It\^o formula we have 

\begin{align*}
 d( \s^2_t -4\pi \ld_t  ) &= 2\s_t d\s_t +  d\s_t d\s_t -4\pi d\ld_t\\
 &= 2\s_t  \left( -\int_0^{2\pi}   \rho_t(\te)   \, d\te  dt  + 4 \pi  \frac{ \s_t}{\ld_t}  dt +2\sqrt{2} \pi dB_t    \right) + 8\pi^2 dt  \\
 & -  \frac{8\pi \s_t^2}{\ld_t} dt  - 4 \pi \sqrt{2} \s_t dB_t \\
 &= 2\s_t  \left( -\int_0^{2\pi}   \rho_t(\te)   \, d\te      \right) dt + 8\pi^2 dt  \\
 &\le \left( - 2\pi \frac{\s^2_t}{\ld_t} + 8\pi^2\right) dt\\
 & = -2 \pi  \left( \frac{\s^2_t - 4\pi \ld_t }{ \ld_t }  \right) dt \\
 \le 0 \\
\end{align*}
where we use  the preservation of the convexity along the flow (Lemma \ref{conserve_convex}) and Gage inequality  for convex curve \cite{Gage_conv}:  
\bqn{Gage} \pi h(D)& = &\pi \frac{\s (C)}{\ld(D)}\ \le\ \int_C \rho^2(s) ds\ =\ \int _0^{2\pi} \rho(\te)  d \te. \eqn Also in the last inequality we use the isoperimetric estimate.
So the isoperimetric deficit $ \s^2_t -4\pi \ld_t $ is non-increasing along the flow. One of the geometric meaning of the isoperimetric deficit is the following Bonnesen inequality \cite{Gage_curve}: 
\bq \pi^2 (\rao -\rai)^2& \le& \s^2(\partial D) -4\pi \ld(D) \eq
where $\rai,\rao$ are respectively the inradius and the circumradius of $D$.

For equation ii): it is done in the proof of Theorem \ref{th_cores}.

For equation iii): using It\^o formula in the point (ii) we get

 $$
 \begin{aligned}
 d_t \frac{1}{\rho^2_t} &=  \frac{2}{\rho_t}  \left( -(\partial^2_{\theta} \rho_t(\theta))dt - ( (\rho_t(\theta) - 2h )dt + \sqrt{2} dB_t  \right) + 2 dt \\
 &= - \frac{2}{\rho_t}\partial^2_{\theta} \rho_t(\theta)dt + \frac{4}{\rho_t}hdt + \frac{2 \sqrt{2}}{\rho_t} dB_t.\\
\end{aligned} 
 $$
 Integrating the above equality  we get (since $ \int_ 0^{2\pi} \frac{1}{\rho} \, d\te = \s_t$)
 
 $$
 \begin{aligned}
 d \int_0^{2 \pi}   \frac{1}{\rho^2_t}\, d\te &=  - 2 \int_0^{2 \pi}  \left( \frac{( \partial_{\te} \rho_t )}{\rho_t}  \right)^2 \,  d\te dt + 4\frac{\s^2_t}{\ld_t} dt +  2 \sqrt{2} \s_t dB_t \\
  &=  - 2 \int_0^{2 \pi}  \left(  \partial_{\te} \log (\rho_t )  \right)^2 \,  d\te dt + 2 d \ld_t .\\
 \end{aligned} $$

For equation iv) we use \eqref{eq_rho} and It\^o formula:

$$ \begin{aligned}
 d \log (\rho_t(\te)) &= \frac{1}{\rho_t(\te)} d \rho_t(\te) - \frac{1}{2 \rho^2_t(\te)}d \rho_t(\te)d \rho_t(\te)\\
 &= \rho_t(\theta) (\partial^2_{\theta} \rho_t )dt + \rho_t(\theta)   \left( (3\rho_t(\theta) - 2h )dt - \sqrt{2} dB_t  \right) - \rho^2_t(\te) dt \\
 &=\rho_t(\theta) (\partial^2_{\theta} \rho_t )dt + 2\rho_t(\theta)   (\rho_t(\theta) - h )dt - \sqrt{2}   \rho_t(\theta)   dB_t  .\\
\end{aligned} $$
Integrating the above equation, we get:

 $$ \begin{aligned}
 d \int_0^{2\pi} \log (\rho_t(\te)) \, d\te
 =& -\int_0^{2\pi}  (\partial_{\theta} \rho_t )^2\, d\te dt + 2 \int_0^{2\pi} \rho_t(\theta)   (\rho_t(\theta) - h ) \, d\te dt \\
 &-  \sqrt{2} \int_0^{2\pi}  \rho_t(\theta) \, d\te  dB_t  .\\
 =& -\int_0^{2\pi}  (\partial_{\theta} \rho_t )^2\, d\te dt + 2 \int_0^{2\pi}   \left(\rho_t(\theta) - \frac{h}{2} \right)^2 \, d\te dt \\
 &- \pi h^2 dt -  \sqrt{2} \int_0^{2\pi}  \rho_t(\theta) \, d\te  dB_t  .\\
\end{aligned} $$ 

 \end{proof}
 
 \begin{Remark}
 Note that   $\displaystyle \int_0^{2 \pi} \frac{1}{\rho^2_t}\, d\te - 2 \ld_t \ge  \frac{1}{2 \pi} \s^2_t - 2 \ld_t \ge 0 $ where the last bound is the isoperimetric inequality. Hence
 $$ 0 \le  \int_0^{2 \pi} \frac{1}{\rho^2_t}\, d\te - 2 \ld_t = - 2 \int_0^t \int_0^{2 \pi}  \left(  \partial_{\te} \log (\rho_s )  \right)^2 \,  d\te ds + A_0 $$
where $ A_0 = \int_0^{2 \pi} \frac{1}{\rho^2_0}\, d\te - 2 \ld_0 \ge 0$.
So if moreover  $C_0 $ is a curve in the set $\mathcal{S}_n $ of $n$-symmetric convex curves with star shaped skeleton for some $n\ge 2$ (see Section \ref{conv-sym} for the definition) using Proposition \ref{prop_S}, $C_t \in \mathcal{S}_n $ and $\te \mapsto \rho_t(\te) $ is non-decreasing and the above equation gives:
$$ 0 < 2 \ld_t \le \frac{1}{\rho_t(0)} \s_t \le \frac{2\pi}{\rho^2_t(0)} $$ 
so $ 0 < \rho_t(0) \le \sqrt{\frac{\pi}{\ld_t}} $ and $ 0 < \rho_t(0) \le \frac{h_t}{2} . $ On the other hand we have 
$$ 0 <  \int_0^{2 \pi} \frac{1}{\rho^2_t}\, d\te \le  A_0 + 2 \ld_t ,$$
and if $ C_0 \in \mathcal{S}_n $ then 
$$ 0 < \frac{2\pi}{\rho^2_t(\pi / 2)} \le \frac{\s_t}{\rho_t(\pi / 2)}  \le  A_0 + 2 \ld_t $$
so $ \sqrt{ \frac{2 \pi}{ A_0 + 2 \ld_t } } \le \rho_t(\pi / 2) $ and note also by the Gage inequality \eqref{Gage} we  have $ \frac{h_t} {2} \le \rho_t(\pi / 2)  . $
 \end{Remark} 
 
\begin{lemma}\label{prop-h-sm}
 $ (h_t)_{t\in[0,\tau)}$ is a positive super martingale, so it is almost surely bounded on $[0,\tau)$.
 \end{lemma}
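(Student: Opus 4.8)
The plan is to read the dynamics of the pair $(\s_t,\ld_t)$ off the closed system \eqref{eq_geom}, apply It\^o's formula to the quotient $h_t=\s_t/\ld_t$, and verify that every second-order correction cancels against the first-order drift, leaving a manifestly nonpositive drift. From \eqref{eq_geom} the martingale part of $\s_t$ is $2\sqrt2\,\pi\,dB_t$ and that of $\ld_t$ is $\sqrt2\,\s_t\,dB_t$; since both are driven by the \emph{same} Brownian motion, the brackets are
$$ d\lan\s,\ld\ran_t=4\pi\s_t\,dt,\qquad d\lan\ld\ran_t=2\s_t^2\,dt. $$

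First I would write It\^o's formula for $f(\s,\ld)=\s/\ld$, whose relevant partial derivatives are $f_\s=1/\ld$, $f_\ld=-\s/\ld^2$, $f_{\s\ld}=-1/\ld^2$ and $f_{\ld\ld}=2\s/\ld^3$, namely
$$ dh_t=\frac{1}{\ld_t}\,d\s_t-\frac{\s_t}{\ld_t^2}\,d\ld_t-\frac{1}{\ld_t^2}\,d\lan\s,\ld\ran_t+\frac{\s_t}{\ld_t^3}\,d\lan\ld\ran_t. $$
Substituting \eqref{eq_geom} together with the brackets above, the two terms $\pm 2\s_t^3/\ld_t^3$ arising from $-\frac{\s_t}{\ld_t^2}d\ld_t$ and from $\frac{\s_t}{\ld_t^3}d\lan\ld\ran_t$ cancel, and likewise the two terms $\pm 4\pi\s_t/\ld_t^2$ arising from the $4\pi\s_t/\ld_t$ part of $d\s_t$ and from $-\frac{1}{\ld_t^2}d\lan\s,\ld\ran_t$ cancel. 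What survives is
$$ dh_t=-\frac{1}{\ld_t}\left(\int_0^{2\pi}\rho_t(\te)\,d\te\right)dt+\frac{\sqrt2}{\ld_t^2}\bigl(2\pi\ld_t-\s_t^2\bigr)\,dB_t. $$

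The conclusion is then immediate. By Lemma~\ref{conserve_convex} the curvature stays strictly positive along the flow, so $\int_0^{2\pi}\rho_t(\te)\,d\te>0$, and since $\ld_t>0$ the drift of $h_t$ is nonpositive; in fact the Gage inequality \eqref{Gage} even bounds this drift above by $-\pi\,(h_t/\ld_t)\,dt$, though positivity alone is all that is needed here. Hence $(h_t)_{t\in[0,\tau)}$ is a positive local supermartingale. Being nonnegative, it is a true supermartingale by (conditional) Fatou applied along a localizing sequence, and the maximal inequality $\P\bigl(\sup_{t<\tau}h_t>\kappa\bigr)\le\E[h_0]/\kappa$ for nonnegative supermartingales shows that $h_t$ is almost surely bounded on $[0,\tau)$.

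I do not anticipate a genuine obstacle: the entire content is the bookkeeping of the It\^o corrections and the observation that they cancel exactly. The two points deserving care are that $\s_t$ and $\ld_t$ are driven by a \emph{single} Brownian motion, so that the cross-bracket $d\lan\s,\ld\ran_t$ is nonzero and produces precisely the cancelling $4\pi\s_t/\ld_t^2$ term, and that the sign of the surviving drift relies on the strict positivity of $\rho_t$ furnished by Lemma~\ref{conserve_convex}.
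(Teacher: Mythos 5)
Your computation is correct and follows essentially the same route as the paper: both apply It\^o's formula to $h_t=\s_t/\ld_t$ using the system \eqref{eq_geom}, arrive at the same drift $-\frac{1}{\ld_t}\bigl(\int_0^{2\pi}\rho_t\,d\te\bigr)dt$ and the same martingale part $\sqrt2\,(2\pi\ld_t-\s_t^2)\ld_t^{-2}\,dB_t$, and conclude from positivity of the curvature. Your added remarks on passing from a local to a true supermartingale via Fatou and on the maximal inequality merely make explicit what the paper leaves implicit.
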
 
 \begin{proof}
 Using equation  \eqref{eq_geom} and It\^o formula we have
 \begin{align*}
 d h_t &= d \left(\frac{\sigma_t}{\lambda_t}\right) \\
       &= \frac{1}{\lambda_t} d\sigma_t + \sigma_t d\left(\frac{1}{\lambda_t}\right) + d  \sigma_t d\left(\frac{1}{\lambda_t}\right) \\       
      &= \frac{1}{\lambda_t}   \left(  \left( -\int_0^{2\pi}   \rho_t(\te)   \, d\te \right)  dt  + 4 \pi  \frac{ \s_t}{\ld_t}  dt +2\sqrt{2} \pi dB_t  \right)  - \frac{\sqrt{2} \s_t^2}{\ld_t^2} dB_t
      - \frac{4\pi \s_t}{\ld_t^2}dt \\
      &=  -\frac{1}{\lambda_t}\left(\int_0^{2\pi}   \rho_t(\te)   \, d\te\right) dt + \sqrt{2} \left( \frac{2 \pi \ld_t - \s_t^2 }{\lambda_t^2}    \right)dB_t \\ 
      &\le -\frac{\pi h_t}{\lambda_t}dt + \sqrt{2} \left( \frac{2 \pi \ld_t - \s_t^2 }{\lambda_t^2}    \right)dB_t\\
 \end{align*}

 \end{proof}
 \par
 In the sequel we will encounter random constants, they will be denoted under the form $c(\omega)$, where $\omega$ stands for the randomness associated to the underlying Brownian motion. This is a generic notation and the exact value of $c(\omega)$ may change from line to line.
 \par
  \begin{proposition}\label{min-rho}
 \CPK{Assume Hypothesis \ref{hypoC02}.}
 Let $(C_t)_{t\in[0,\tau)}$  be the solution of \eqref{eq_sto}, where $\tau$ is any lifetime of \eqref{eq_sto}.    
 Then there exists a positive  random variable $c(\omega)< \infty$ such that for all  $ t < \tau(\omega) $ ,  $ h_t(\omega) \le  c(\omega)$ and 
 $$ \frac{1}{  \frac{1}{\inf \rho_0} +   \sqrt{2} \sup_{[0,t]} B_s + 2 c(\omega)t } \le \inf_{\te} \rho_t(\theta) . $$ 
 
 \end{proposition}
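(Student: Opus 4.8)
The plan is to adapt the deterministic maximum‑principle argument of Corollary~\ref{conv_plus} to the stochastic setting. The essential new observation is that the Brownian noise $\sqrt{2}\,dB_t$ appearing in the evolution of $1/\rho_t$ is \emph{spatially homogeneous} (it does not depend on $\te$), so it can be absorbed into a $\te$‑independent shift and the comparison becomes, for each frozen realization $\omega$, a purely deterministic parabolic comparison.

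First I would record the bound on $h$. By Lemma~\ref{prop-h-sm} the process $(h_t)$ is a positive supermartingale, hence by Doob's supermartingale maximal inequality it is almost surely bounded on $[0,\tau)$; set $c(\omega)\df \sup_{0\le s<\tau(\omega)} h_s(\omega)<\infty$, which already yields the asserted bound $h_t(\omega)\le c(\omega)$ and gives $\int_0^t 2h_s\,ds\le 2c(\omega)t$ for every $t<\tau$. Next I would start from the It\^o evolution of $1/\rho_t$ recorded in Proposition~\ref{prop-ev}(ii) (equivalently \eqref{d1r}) and introduce the auxiliary field
\[
Z_t(\te)\df -\frac{1}{\rho_t(\te)}+\sqrt{2}\,B_t+\int_0^t 2h_s\,ds .
\]
A direct computation cancels both the martingale part and the $2h_t\,dt$ drift, leaving $d_t Z_t(\te)=\bigl(\partial^2_\te\rho_t(\te)+\rho_t(\te)\bigr)\,dt$. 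Thus, for each fixed $\omega$, the map $t\mapsto Z_t(\te)$ is $C^1$ with a purely finite‑variation drift, while $\te\mapsto Z_t(\te)$ is $C^2$ and strictly increasing in $\rho_t(\te)$.

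I would then run the maximum principle by contradiction, exactly as in Corollary~\ref{conv_plus}, but now pathwise. Put $Z_{\inf}(t)\df\inf_\te Z_t(\te)$, and suppose $Z_{\inf}$ ever drops below $Z_{\inf}(0)=-1/\inf_\te\rho_0$; let $t_0$ be the first such time and $\te_0$ a minimizing angle. Since $Z_t(\cdot)$ and $\rho_t(\cdot)$ share their spatial extrema (the shift is $\te$‑independent and $-1/\rho$ is increasing in $\rho$), $\te_0$ is a spatial minimum of $\rho_{t_0}$, whence $\partial^2_\te\rho_{t_0}(\te_0)\ge 0$; this gives $d_t Z_{t_0}(\te_0)=\partial^2_\te\rho+\rho\ge\rho>0$, contradicting $\partial_t Z(\te_0,t_0)\le 0$. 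The boundedness and continuity of $\rho,\partial_\te\rho,\partial^2_\te\rho$ up to $\tau$ (Lemma~\ref{eq_rho_theta}) ensure the infimum is attained on the compact $\TT$ and that these derivatives exist, which is what makes the argument rigorous. Hence $Z_{\inf}(t)\ge -1/\inf_\te\rho_0$ for all $t<\tau$, i.e.
\[
\frac{1}{\rho_t(\te)}\le\frac{1}{\inf_\te\rho_0}+\sqrt{2}\,B_t+\int_0^t 2h_s\,ds .
\]
I would finish by bounding $B_t\le\sup_{[0,t]}B_s$ and $\int_0^t 2h_s\,ds\le 2c(\omega)t$, noting that the right‑hand side is positive, and inverting to obtain the claimed lower bound on $\inf_\te\rho_t(\te)$.

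The main obstacle is legitimizing the maximum principle despite the stochastic nature of the equation; this is resolved precisely by the spatial homogeneity of the noise, which lets one freeze a single realization of $B$ and $h$ into the $\te$‑independent part of $Z$ and reduce to a classical parabolic comparison. A secondary point requiring care is the a priori regularity needed to attain and differentiate at the spatial minimum, which is supplied by Hypothesis~\ref{hypoC02} through Lemma~\ref{eq_rho_theta}.
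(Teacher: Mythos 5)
Your proof is correct and follows essentially the same route as the paper: the paper sets $J_t=\frac{1}{\rho_t}-\sqrt{2}B_t-2\int_0^t h_s\,ds-\frac{1}{\inf\rho_0}$ (your $Z$ up to sign and a constant), runs the same pathwise maximum principle on this noise-absorbing quantity, and then invokes the supermartingale bound on $h$ from Lemma~\ref{prop-h-sm}. Your variant is marginally cleaner in that you get the strict inequality at the spatial extremum directly from $\rho>0$, whereas the paper passes through the auxiliary weight $W_t=e^{-t}J_t$.
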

 \begin{proof}
 Let $ J_t = \frac{1}{\rho_t(\theta)} -  \sqrt{2} B_t -2\int_0^t h(s) \,ds - \frac{1}{\inf \rho_0} $. By Lemma \ref{conserve_convex} this quantity is well defined, and by Proposition \ref{prop-ev} we have 
 \begin{align*}
 d J_t (\te) &= -\partial^2_{\theta} \rho_t(\theta)dt - \rho_t(\theta)dt \\
 &= \lt(\rho^2_t(\theta)\partial^2_{\theta} \left( \frac{1}{\rho_t(\theta)} \right) - 2\frac{(\partial_{\theta}\rho_t(\theta))^2 }{\rho_t(\theta)} - \rho_t(\theta)\rt)dt\\ 
 &\le \lt( \frac{1}{ J_t(\te) +  \sqrt{2} B_t  + 2\int_0^t h(s) \,ds + \frac{1}{\inf \rho_0} }   \rt)^2 \partial^2_{\theta} J_t dt.\\  
 \end{align*}
 Using the maximum principle, we will show that $J_t \le 0 $ for all $t \in [0,\tau)$.
 Suppose that  there exists $ t_0 \in [0,\tau) $ and $\te_0 $ such that $\aaaa\df J_{t_0} (\te_0)  > 0 $.
 Let $W_t\df  e^{-t}J_t $, then $W_{t_0}(\te_0) = e^{-t_0} \aaaa >0  $ and $\sup_{\te} W_{t_0}  \ge  e^{-t_0} \aaaa >0  $. Consider the time $t_* = \inf\{t \in [0, t_0], s.t. \quad \sup_{\te} W_t  = W_{t_0}(\te_0) \}   $, and let $\te_*$ such that $W_{t_*}(\te_*) = \sup_{\te} W_{t_*}  $.
 We have  $ t_* >0$ and
 $$  \partial_t W_t \le \lt( \frac{1}{ e^t W_t(\te) +  \sqrt{2} B_t  + 2\int_0^t h(s) \,ds + \frac{1}{\inf \rho_0} }   \rt)^2 \partial^2_{\te} W_t  - W_t   . $$
 Note that since $ 0 \le \partial_t W_t(\te_*)_{\vert t_*}$,  $\partial^2_{\te} W_{t_*}(\te)_{\vert \te_*} \le 0 $ and $W_{t_*}(\te_*) = e^{-t_0} \aaaa >0 $ we get a contradiction.
 Hence for all $t\in [0,\tau)$ we have 
 $$ \frac{1}{\rho_t(\theta)} \le  \frac{1}{\inf \rho_0} +   \sqrt{2} B_t + 2\int_0^t h(s) \,ds . $$ Since $ h_t$ is a positive super martingale by Lemma \ref{prop-h-sm}, it  is almost surely bounded in $ [0,\tau)$, so there exists a positive random variable $c(\omega) < \infty $ such that  $ h_t(\omega) \le  c(\omega)$ and 
 $$ \frac{1}{  \frac{1}{\inf \rho_0} +   \sqrt{2} \sup_{[0,t]} B_s + 2 c(\omega)t } \le \inf_{\te} \rho_t(\theta). $$

 \end{proof}

 \subsection{When there is a sufficient number of symmetries}
 
The goal of this section is to find a necessary condition on the strictly  convex domain to guarantee  the existence of the solution of \eqref{eq_sto} for all times. We will see that the entropy will be a supermartingale if the initial domain has enough symmetries. 
From Lemma \ref{eq_rho_theta}, we deduce  the evolution of the entropy  (defined in \eqref{Ent2}, it also coincides with  the relative entropy of the curvature density with respect to the arc length Lebesgue measure, up to normalizations in terms  of the length of the curve): 
 \bq
  d\Ent_t
&=& d \int_0^{2\pi} \log (\rho_t(\te)) \, d\te\\
& =& -\int_0^{2\pi}  (\partial_{\theta} \rho_t )^2\, d\te dt + 2 \int_0^{2\pi} \rho_t(\theta)   (\rho_t(\theta) - h ) \, d\te dt \\
 &&-  \sqrt{2} \int_0^{2\pi}  \rho_t(\theta) \, d\te  dB_t  .\\
\eq

 \begin{proposition}\label{prop_rho}
 If the boundary of the domain is strictly convex (recall Definition~\ref{strictconv}) then we have the following estimate
    $$ 2 \rho_{\mathrm{inf}} \le h \le 2 \rho_{\mathrm{sup}} $$
 \end{proposition}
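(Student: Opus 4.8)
The plan is to express both the perimeter $\sigma$ and the area $\lambda$ as integrals over the tangent angle $\theta$ weighted against the support function $p(\theta):=\langle C(\theta),\nu(\theta)\rangle$, and then to read off the two inequalities by comparing the common weight $1/\rho$ against its extreme values $1/\rho_{\mathrm{sup}}$ and $1/\rho_{\mathrm{inf}}$. Everything reduces to the two standard geometric integral formulas already appearing in Remark~\ref{rm_sys}, combined with the monotonicity of the scalar weight.

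First I would reduce to a convenient origin. Since $\sigma$, $\lambda$, and hence $h=\sigma/\lambda$, are invariant under translation, I may assume the origin lies in the interior of $D$ (say the centre of an inscribed circle). With this choice the support function $p(\theta)=\langle C(\theta),\nu(\theta)\rangle$, which is the signed distance from the origin to the tangent line with outer normal $\nu(\theta)$, is strictly positive for every $\theta$, because $\nu$ is the outer normal and the origin is interior.

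Next I would record the two representations in the angular parametrization, recalling that $ds=d\theta/\rho$. The area formula (equivalently the divergence theorem $\oint\langle C,\nu\rangle\,ds=\int_D\operatorname{div}(C)\,dA=2\lambda$) gives
$$\lambda=\frac12\int_0^{2\pi}\frac{p(\theta)}{\rho(\theta)}\,d\theta.$$
For the perimeter I would establish Cauchy's identity $\sigma=\int_0^{2\pi}p(\theta)\,d\theta$ by a one-line integration by parts: differentiating $\langle C,T\rangle$ in $\theta$ and using $\partial_\theta C=T/\rho$ together with $\partial_\theta T=-\nu$ yields $\partial_\theta\langle C,T\rangle=1/\rho-p$, and integrating over the full period kills the left-hand side, so that $\int_0^{2\pi}p\,d\theta=\int_0^{2\pi}d\theta/\rho=\sigma$.

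Finally I would combine these. Since $1/\rho_{\mathrm{sup}}\le 1/\rho(\theta)\le 1/\rho_{\mathrm{inf}}$ pointwise and $p>0$, multiplying by $p$ and integrating gives
$$\frac{1}{2\rho_{\mathrm{sup}}}\int_0^{2\pi}p\,d\theta\ \le\ \lambda\ \le\ \frac{1}{2\rho_{\mathrm{inf}}}\int_0^{2\pi}p\,d\theta,$$
that is $\sigma/(2\rho_{\mathrm{sup}})\le\lambda\le\sigma/(2\rho_{\mathrm{inf}})$, which rearranges at once to $2\rho_{\mathrm{inf}}\le h\le 2\rho_{\mathrm{sup}}$. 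The only genuinely delicate point is the positivity of $p$, which is exactly why translating the origin into the interior is needed; the rest is the two integral identities plus monotonicity of the weight. As a consistency check, equality holds throughout for a circle, where $\rho$ is constant and $h=2\rho$.
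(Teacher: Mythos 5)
Your proof is correct. For the lower bound $2\rho_{\mathrm{inf}}\le h$ you follow exactly the paper's route: translate the origin inside $D$ so that the support function $p$ is positive, write $\lambda=\frac12\int_0^{2\pi}p/\rho\,d\te$ and $\sigma=\int_0^{2\pi}p\,d\te$, and bound the weight $1/\rho$ from above. For the upper bound $h\le 2\rho_{\mathrm{sup}}$ you diverge: the paper simply invokes Gage's inequality $\pi h\le\int\rho\,d\te\le 2\pi\rho_{\mathrm{sup}}$, whereas you reuse the same support-function representation with the pointwise bound $1/\rho\ge 1/\rho_{\mathrm{sup}}$. Your version is more elementary and symmetric, treating both inequalities by one mechanism and avoiding any appeal to \eqref{gage}; the paper's version is shorter given that Gage's inequality is already a standing tool in the article. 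Both are valid, and your identification of the positivity of $p$ (hence the need to translate the origin into the interior) as the only delicate point matches the paper's own remark.
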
 
 
 \begin{proof}
 Let $p$ be  the support function, namely  $ p(s) = \langle x(s) , \nu (s) \rangle $.
Green Theorem asserts $ \lambda(D) = \frac12 \int_{\gamma} p(s) ds $
  and we have $\sigma(\partial D) =\int_{\gamma} p(s)\rho(s) ds$. Indeed,
  we compute
  \bq
  \int_{\gamma} p(s)\rho(s) ds&=&
\int_{\gamma} \langle x(s) , \rho(s) \nu (s) \rangle  ds\\ & =& -\int_{\gamma} \langle x(s) , x(s)'' \rangle  ds   \eq
 and it remains to integrate  by part to recognize $\sigma(\partial D)$.\par
  Remark also that we can suppose that the origin is contained in the domain (else translate and all the quantities are invariant under translation). By convexity of the domain we have that $p(\te) > 0$.
Recalling that $ d\te = \rho ds $, we have $\sigma(\partial D) =\int_{\gamma} p(\te)  d\te $, so that
   $$ \begin{aligned}
            \lambda(D) & = \frac12 \int_{\mathbb{T}}  \frac{p(\te)}{\rho(\te)}  d\te  \le \frac{1}{ 2 \rho_{\mathrm{inf}}} \sigma(\partial D)\\ 
   \end{aligned} .$$ 
   Hence $ 2 \rho_{\mathrm{inf}} \le h(D)$. The other inequality is a direct consequence of Gage inequality.
 \end{proof}
 \begin{proposition}\label{prop-wirt}
 For any $\mathcal{C}^1$ function $f\st [0,L]\ri\RR$ satisfying
  $f(0)= f(L)= 0$, we have
 $$\int_{0}^{L}  f^2 d\te \le \lt(\frac{L}{\pi}\rt)^2  \int_{0}^{L}  f'^2 d\te  $$
 \end{proposition}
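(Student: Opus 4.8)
The plan is to recognize the statement as the classical Wirtinger (Poincar\'e) inequality on an interval with Dirichlet boundary conditions, and to prove it by a Fourier sine expansion. After the affine substitution $\theta = Lx/\pi$ it would suffice to treat $L=\pi$, where the claim reads $\int_0^\pi f^2\,dx \le \int_0^\pi f'^2\,dx$; I will however work directly on $[0,L]$ so as to keep the constant $(L/\pi)^2$ explicit throughout.

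First I would introduce the orthonormal family $e_n(\theta)\df \sqrt{2/L}\,\sin(n\pi\theta/L)$, $n\ge 1$, which is a complete orthonormal system in $L^2([0,L])$. Since $f$ is continuous, hence in $L^2$, I write $f=\sum_{n\ge1} b_n e_n$ with $b_n\df \int_0^L f\,e_n\,d\theta$, and Parseval gives $\int_0^L f^2\,d\theta=\sum_{n\ge1}b_n^2$. The key step is then to relate the coefficients of $f'$ to those of $f$: expanding $f'\in L^2$ in the cosine basis $c_0=1/\sqrt{L}$, $c_n=\sqrt{2/L}\,\cos(n\pi\theta/L)$ ($n\ge1$), I compute $a_n\df\int_0^L f'\,c_n\,d\theta$ by integration by parts. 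The boundary contribution is $[\,f\,c_n\,]_0^L$, which vanishes \emph{exactly} because $f(0)=f(L)=0$; this is the place where the hypothesis is used. One obtains $a_0=0$ and $a_n=(n\pi/L)\,b_n$ for $n\ge1$, so Parseval applied to $f'$ yields $\int_0^L f'^2\,d\theta=\sum_{n\ge1}(n\pi/L)^2 b_n^2$.

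The conclusion is then immediate by comparing frequencies: since $n\ge1$ forces $(n\pi/L)^2\ge(\pi/L)^2$,
$$\int_0^L f'^2\,d\theta=\sum_{n\ge1}\left(\frac{n\pi}{L}\right)^2 b_n^2\ \ge\ \left(\frac{\pi}{L}\right)^2\sum_{n\ge1} b_n^2=\left(\frac{\pi}{L}\right)^2\int_0^L f^2\,d\theta,$$
which rearranges to the stated inequality.

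The main obstacle is one of rigor rather than of idea: one must check that the integration-by-parts identity $a_n=(n\pi/L)\,b_n$ is legitimate and that Parseval applies to both $f$ and $f'$. Both hold because $f\in\mathcal{C}^1([0,L])$ gives $f,f'\in L^2$, the endpoint term cancels by the boundary condition, and the completeness of the sine and cosine systems on $[0,L]$ is standard. I would also mention an entirely elementary route that avoids series altogether: the substitution $f=u\,\sin(\pi\theta/L)$ (admissible precisely because $f$ vanishes at the endpoints), after which an integration by parts rewrites $\int_0^L f'^2\,d\theta-(\pi/L)^2\int_0^L f^2\,d\theta$ as $\int_0^L u'^2\,\sin^2(\pi\theta/L)\,d\theta\ge 0$. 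This variant has the advantage of exhibiting the equality case, namely $u$ constant, i.e. $f$ proportional to $\sin(\pi\theta/L)$.
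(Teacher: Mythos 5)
Your proof is correct and is exactly the argument the paper has in mind: its entire proof is the one-line remark that this is Wirtinger's inequality, ``which can be proved by Fourier series,'' and your sine/cosine expansion with Parseval and the integration by parts at the boundary is the standard way to carry that out. The elementary variant via $f=u\sin(\pi\theta/L)$ is a nice bonus but not needed.
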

 \begin{proof}
 This is the Wirtinger inequality which can be proved by  Fourier series. 
 \end{proof}
 
\begin{definition}
We will say that a domain $D$ has  $ n $ axes of symmetries, if  up  to a translation there exists a linear straight line $\Delta$ such that $D$ is symmetric with respect to $\Delta,R_{\pi/n}(\Delta)...,R_{(n-1)\pi/n}(\Delta) $, where $ R_{\te}$ is a rotation of angle $\te$.
\end{definition}

 \begin{proposition}\label{Ent}
\CPK{Under Hypothesis \ref{hypoC02}  and the assumption that $D_0$}  has  $ n $ axes of symmetries, with $n\ge 3$, the entropy is a  
 super-martingale. 
 \end{proposition}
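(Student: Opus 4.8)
The plan is to prove that the drift in the entropy evolution is almost surely nonpositive; since the martingale part $-\sqrt{2}\int_0^{2\pi}\rho_t(\theta)\,d\theta\,dB_t$ is a local martingale, this will yield the supermartingale property. The first step is to rewrite the drift of Proposition \ref{prop-ev}(iv). Using the elementary identity $2\int_0^{2\pi}(\rho-\tfrac h2)^2\,d\theta-\pi h^2=2\int_0^{2\pi}\rho(\rho-h)\,d\theta$, I would put the drift in the form
\[
\mathcal{D}_t \;\df\; -\int_0^{2\pi}(\partial_\theta\rho_t)^2\,d\theta \;+\; 2\int_0^{2\pi}\rho_t(\rho_t-h_t)\,d\theta ,
\]
so that everything reduces to showing $\mathcal{D}_t\le 0$ for every $t<\tau$.

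Next I would record that the symmetry is propagated by the flow. Because the curvature SPDE \eqref{eq_rho} is driven by a single Brownian motion and its ingredients (the operator $\rho^2\partial_\theta^2$, the reaction term, and the nonlocal quantity $h_t$) are invariant under $\theta\mapsto\theta+2\pi/n$, the uniqueness part of Theorem \ref{th_cores} forces $\rho_t(\cdot+2\pi/n)=\rho_t(\cdot)$ for all $t$ whenever $\rho_0$ enjoys this period (the $n$ axes of symmetry of $D_0$ generate precisely the rotation by $2\pi/n$). In particular $\rho_t$ is $2\pi/n$-periodic, so its minimal value $\rho_{\mathrm{inf}}(t)$ is attained at $n$ equally spaced angles, one per period.

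The heart of the argument is a Wirtinger estimate at the level $h/2$. By Proposition \ref{prop_rho} (valid since strict convexity is preserved by Lemma \ref{conserve_convex}), one has $\rho_{\mathrm{inf}}(t)\le h_t/2\le\rho_{\mathrm{sup}}(t)$, so the open set $U\df\{\theta:\rho_t(\theta)>h_t/2\}$ is a proper subset of $\TT$ containing none of the $n$ minimum points. Since consecutive minimum points are $2\pi/n$ apart, every connected component $I$ of $U$ satisfies $|I|<2\pi/n$, with $\rho_t=h_t/2$ on $\partial I$. On $U^c$ we have $\rho_t-h_t\le -h_t/2<0$, so the integrand of $\mathcal{D}_t$ is nonpositive there. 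On a component $I$, writing $f\df\rho_t-h_t/2$ (so $f$ vanishes on $\partial I$ and $\partial_\theta f=\partial_\theta\rho_t$) and using $2\rho_t(\rho_t-h_t)=2f^2-h_t^2/2$, Wirtinger (Proposition \ref{prop-wirt}) gives $\int_I f^2\le(|I|/\pi)^2\int_I(\partial_\theta f)^2$. The crucial point is that for $n\ge 3$ one has $|I|<2\pi/n\le 2\pi/3<\pi/\sqrt 2$, so $(|I|/\pi)^2<4/9<1/2$ and hence $2\int_I f^2\le\int_I(\partial_\theta f)^2$; therefore $\int_I\big[-(\partial_\theta\rho_t)^2+2\rho_t(\rho_t-h_t)\big]\le-\tfrac{h_t^2}{2}|I|\le 0$. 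Summing over the components of $U$ and adding the $U^c$ contribution yields $\mathcal{D}_t\le 0$. The conclusion that $\Ent_t$ is a genuine supermartingale (not merely a local one) then follows from the a priori lower bound on $\rho_t$ of Proposition \ref{min-rho} together with the derivative bounds controlling the stochastic integral.

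The main obstacle, and the precise reason the statement needs $n\ge 3$, is the competition in $\mathcal{D}_t$ between the good term $-\int(\partial_\theta\rho_t)^2$ and the factor $2$ multiplying $\int\rho_t(\rho_t-h_t)$: this factor is exactly the It\^o correction that is absent from the deterministic Lemma \ref{ent_est}, where the analogous coefficient is $1$ and Wirtinger on intervals of length $\le\pi$ already suffices. To beat the factor $2$ one needs a Wirtinger constant $(|I|/\pi)^2\le 1/2$, i.e.\ super-level components of length at most $\pi/\sqrt 2$; the rotational symmetry caps these lengths by $2\pi/n$, and $2\pi/n\le\pi/\sqrt 2$ holds exactly when $n\ge 2\sqrt 2$, that is $n\ge 3$. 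The two delicate points to verify carefully are the localisation of the minimum set of $\rho_t$ (which follows from the $2\pi/n$-periodicity) and the fact that the level $h/2$ genuinely lies in the range of the curvature, which is precisely the content of Proposition \ref{prop_rho}.
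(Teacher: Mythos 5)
Your proposal is correct and follows essentially the same route as the paper: rewrite the drift via the identity $2\int(\rho-\tfrac h2)^2-\pi h^2=2\int\rho(\rho-h)$, use Proposition \ref{prop_rho} and the propagated $G_n$-symmetry to produce points where $\rho=h/2$ spaced at most $2\pi/n$ apart, and apply Wirtinger with constant $4/n^2$ to dominate the It\^o factor $2$, which is exactly where $n\ge 3$ enters. The only cosmetic difference is that you split $\TT$ into the super-level set $\{\rho>h/2\}$ and its complement (as in the deterministic Lemma \ref{ent_est}), whereas the paper applies Wirtinger directly on the full partition of $\TT$ by consecutive crossing points $\theta_k$; both yield the same bound.
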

 \begin{proof}
 Using proposition \ref{prop_rho}  and the symmetries there exists $\te_k \in [\frac{k\pi}{n},\frac{(k+1)\pi}{n}  )$  for $ k \in\lin 0, 2n-1\rin$ such that $\rho(\te_k) = \frac{h}{2} $. Note that we can further impose that $ \vert  \te_k - \te_{k-1} \vert \le \frac{2\pi}{n} $ for $k \in\lin 0, 2n\rin $ (with $\te_{2n}=\te_0 + 2\pi$).
 
 So using Proposition \ref{prop-wirt} ,  we get
 $$\int_{ \te_k }^{ \te_{k+1} }  \lt(\rho(\te)- \frac{h}{2}\rt)^2 d\te \le \frac{4 }{n^2} \int_{ \te_k }^{ \te_{k+1} }  \rho'(\te)^2 d\te .  $$
 Hence
 $$ -\int_{\mathbb{T}}  \rho'(\te)^2  d\te  \le -\frac{n^2}{4} \int_{\mathbb{T}}  \lt(\rho(\te)- \frac{h}{2}\rt)^2  d\te  ,$$
 and, if $ n \ge 3$ we have 
  $$ \begin{aligned}
  d\Ent_t  \le& -\frac{n^2}{4} \int_{\mathbb{T}}  \lt(\rho(\te)- \frac{h}{2}\rt)^2  d\te  + 2 \int_0^{2\pi}   \lt(\rho_t(\theta) - \frac{h}{2} \rt)^2 \, d\te dt - \pi h^2 dt  \\
 &-  \sqrt{2} \int_0^{2\pi}  \rho_t(\theta) \, d\te  dB_t  .\\
 \le &  - \pi h^2 dt   -  \sqrt{2} \int_0^{2\pi}  \rho_t(\theta) \, d\te  dB_t  \\
 \end{aligned} $$
 \end{proof}
 
\begin{Remark}
The Green-Osher's inequality, see Theorem 0.2 of \cite{MR1793675},   shows
\bq \Ent_t &= & \int_{\mathbb{T}} \ln ( \rho_t) d\te\ \ge\ \pi \ln \lt( \frac{\pi}{ \ld_t  }  \rt) .\eq
Since $ \frac{1}{\ld_t} $ is a positive martingale, the r.h.s.\ is a super-martingale (at least on its domain of definition).
Of course, this is not sufficient to insure that $(\Ent_t)_t$ itself is a super-martingale.
\end{Remark}
 In the sequel we will use comparison of processes up to a continuous martingale term: when $(X_t)_{t\in[0,\tau)}$ and $(Y_t)_{t\in[0,\tau)}$ are two predictable processes with respect to the same underlying filtration and are defined on the same  time-interval $[0,\tau)$ (where $\tau$ is a stopping time), we write 
  $$ \fo t\in[0,\tau)\qquad X_t\leqm Y_t $$
 to mean there exists a continuous martingale $(M_t)_{t\in[0,\tau)}$ such that
 
 $$ \fo t\in[0,\tau)\qquad X_t \leq Y_t+M_t $$
 
 In the next four results, $\tau$ will stand the maximal time up to which the equation of Lemma \ref{eq_rho_theta} admits a solution.
 \begin{proposition} \label{p_4.10}  \CPK{Under Hypothesis \ref{hypoC02}, we have}:
\bq d \int (\partial_{\te}\rho_t)^2  d\te &\leqm&  \lt(\lt(\frac{13}{3}\rt)^2+16\rt) \int \rho^4 \,d\te dt\eq
 \end{proposition}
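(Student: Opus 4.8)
The plan is to pass to the time-independent angular parametrization of Lemma~\ref{eq_rho_theta}, in which $\partial_\theta$ and $d_t$ commute, and to differentiate the curvature SPDE~\eqref{eq_rho} in $\theta$. Writing $\rho'=\partial_\theta\rho_t$ and differentiating term by term yields
\[
d\rho' = \bigl(\rho^2\partial_\theta^3\rho + 2\rho\rho'\partial_\theta^2\rho + 9\rho^2\rho' - 4h\rho\rho'\bigr)\,dt - 2\sqrt2\,\rho\rho'\,dB_t,
\]
the martingale coefficient $-2\sqrt2\,\rho\rho'$ being just $\partial_\theta$ of the coefficient $-\sqrt2\,\rho^2$ in~\eqref{eq_rho}. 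Applying It\^o's formula to $(\rho')^2$, the quadratic variation contributes $d\langle\rho'\rangle = 8\rho^2(\rho')^2\,dt$, so that
\[
d\bigl((\rho')^2\bigr) = \bigl(4\rho(\rho')^2\partial_\theta^2\rho + 2\rho^2\rho'\partial_\theta^3\rho + 26\,\rho^2(\rho')^2 - 8h\rho(\rho')^2\bigr)\,dt - 4\sqrt2\,\rho(\rho')^2\,dB_t,
\]
where the coefficient $26=18+8$ splits into the reaction contribution ($18$) and the It\^o correction ($8$).

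Next I would integrate over $\TT$ and discard the stochastic term. The integrand $\int_{\TT}\rho(\rho')^2\,d\theta$ is locally bounded, since $\rho,\partial_\theta\rho,\partial_\theta^2\rho$ are bounded up to the lifetime by Lemma~\ref{eq_rho_theta}, so the $dB_t$-part is a genuine local martingale and is absorbed into the relation $\leqm$. In the drift, one integration by parts turns $\int 2\rho^2\rho'\partial_\theta^3\rho\,d\theta$ into $-\int 4\rho(\rho')^2\partial_\theta^2\rho\,d\theta - 2\int\rho^2(\partial_\theta^2\rho)^2\,d\theta$, cancelling the first term and producing the good negative quantity $-2\int\rho^2(\partial_\theta^2\rho)^2\,d\theta$. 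Since $\rho>0$ (Lemma~\ref{conserve_convex}) and $h>0$, the term $-8h\int\rho(\rho')^2\,d\theta$ is nonpositive and may be dropped. This reduces the matter to
\[
d\int_{\TT}(\partial_\theta\rho)^2\,d\theta \leqm \Bigl(26\int_{\TT}\rho^2(\partial_\theta\rho)^2\,d\theta - 2\int_{\TT}\rho^2(\partial_\theta^2\rho)^2\,d\theta\Bigr)dt.
\]

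There remains an interpolation estimate. A further integration by parts gives the identity $\int_{\TT}\rho^2(\partial_\theta\rho)^2\,d\theta = -\tfrac13\int_{\TT}\rho^3\,\partial_\theta^2\rho\,d\theta$, whence $\int\rho^2(\partial_\theta\rho)^2 \le \tfrac13\int\rho^3|\partial_\theta^2\rho|$. Splitting the coefficient $26=18+8$ and applying Young's inequality $ab\le \tfrac1{4\epsilon}a^2+\epsilon b^2$ with $a=\rho^2$ and $b=\rho|\partial_\theta^2\rho|$, one parameter for each piece, bounds the $\int\rho^4$-part by $(\tfrac{13}{3})^2\int\rho^4$ for the reaction piece and by $16\int\rho^4$ for the It\^o piece; the two resulting multiples of $\int\rho^2(\partial_\theta^2\rho)^2$ (namely $\tfrac{81}{169}$ and $\tfrac19$) add to strictly less than the available budget $2$, and are therefore absorbed by $-2\int\rho^2(\partial_\theta^2\rho)^2$. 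This produces exactly $\bigl((\tfrac{13}{3})^2+16\bigr)\int\rho^4$.

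I expect the main obstacle to be the careful bookkeeping of the It\^o correction together with the sign control in this last step: one must check that after the split $26=18+8$ and the choice of Young parameters hitting the advertised constants $(\tfrac{13}{3})^2$ and $16$, the total coefficient of $\int\rho^2(\partial_\theta^2\rho)^2$ stays below $2$, so that the top-order term is genuinely absorbed rather than merely estimated. The supporting facts — commutation of $\partial_\theta$ and $d_t$, the local boundedness underlying the martingale property, and the positivity of $\rho$ and $h$ — are all supplied by Lemmas~\ref{eq_rho_theta} and~\ref{conserve_convex}.
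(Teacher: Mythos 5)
Your proof is correct, and up to the integration by parts it coincides with the paper's computation: your drift $26\int\rho^2(\partial_\theta\rho)^2-8h\int\rho(\partial_\theta\rho)^2-2\int\rho^2(\partial^2_\theta\rho)^2$ is exactly the paper's $2\int\rho^2\bigl[-(\partial^2_\theta\rho)^2+(2h-\tfrac{13}{3}\rho)\partial^2_\theta\rho\bigr]$ after moving the cross terms by parts, and your martingale $-4\sqrt2\int\rho(\partial_\theta\rho)^2\,dB_t$ equals their $2\sqrt2\int\rho^2\partial^2_\theta\rho\,dB_t$. Where you genuinely diverge is the final estimate. The paper completes the square in $\partial^2_\theta\rho$, discards the whole negative square, and is left with $2\int\rho^2(h-\tfrac{13}{6}\rho)^2$; the $h^2\int\rho^2$ piece is then controlled by Gage's inequality \eqref{Gage} together with Cauchy--Schwarz, which is where their $16$ comes from. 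You instead discard the $h$-term outright by positivity of $h$ and $\rho$, keep the good term $-2\int\rho^2(\partial^2_\theta\rho)^2$, and absorb $26\int\rho^2(\partial_\theta\rho)^2\le\tfrac{26}{3}\int\rho^3|\partial^2_\theta\rho|$ into it via two applications of Young's inequality; your bookkeeping ($\tfrac{81}{169}+\tfrac19<2$) is right, so the absorption works. Your route is more elementary in that it needs no geometric input beyond positivity of the curvature, whereas the paper's route leans on the convex Gage inequality; it is a small coincidence (engineered on your side) that both decompositions land on the same constant $(\tfrac{13}{3})^2+16$. Your remark on local boundedness of the stochastic integrand, justifying the $\leqm$ relation, is if anything more careful than the paper's bare $\stackrel{(m)}{=}$.
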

 \begin{proof}
 From Lemma \ref{eq_rho_theta}, we deduce that on $[0,\tau)$, via integrations by parts,
 \bq
\lefteqn{ d\int (\pa \rho)^2}\\&=&2\int \pa \rho \,d\pa\rho+\int d\pa\rho\, d\pa\rho \\
 &=&2\int \pa \rho \, \pa d\rho+\int \pa d\rho\, \pa d\rho \\
 &=&-2\int \pa^2\rho \, d\rho +2\int (\pa \rho^2)^2 dt\\
 &=&2\int \rho^2\pa^2\rho [(2h-3\rho-\pa^2\rho)dt+\sqrt{2}dB_t]+8\lt(\int (\rho^2\pa\rho)\pa\rho \rt) dt\\
 &=&2\lt(\int \rho^2\pa^2\rho (2h-3\rho-\pa^2\rho)\rt)dt-\f83\lt(\int \rho^3\pa^2\rho \rt) dt\\
 &&+2 \sqrt{2}\lt(\int \rho^2\pa^2\rho \rt)dB_t\\
 &\stackrel{(m)}{=}& 2\lt(\int\rho^2\lt[-(\pa^2\rho)^2+\lt(2h-\f{13}{3}\rho\rt)\pa^2\rho\rt]\rt) dt\\
 &=& 2\lt(\int\rho^2\lt[\lt(h-\f{13}{6}\rho\rt)^2-\lt(\pa^2\rho+\f{13}{6}\rho-h\rt)^2\rt]\rt) dt\\
 &\leq &2\lt(\int\rho^2\lt(h-\f{13}{6}\rho\rt)^2\rt) dt\\
 &\leq &4\lt(\int \lt(\f{13}{6}\rt)^2\rho^4+h^2\rho^2\rt) dt\\
 \eq
 where $\pa$ stands for the differentiation with respect to the underlying parameter $\theta$ (which commutes with respect to the ``stochastic differentiation with respect to time'' $d$).\par
 Taking into account Gage's inequality, we get
 \bq
 h^2\int \rho^2&\leq & \f1{\pi^2}\lt(\int\rho\rt)^2\int \rho^2\\
 &\leq &4 \int\rho^4\eq
 and finally the desired bound.
  \end{proof}
 \par
This observation leads us to investigate the evolution of $\int\rho^4$ itself:
   \begin{proposition} \CPK{Under Hypothesis \ref{hypoC02}  and the assumption that $D_0$  has $n$ axes of symmetries, with $n \ge 7 $, we have}
\bq d \int (\rho_t)^4  d\te &\leqm&  c(\omega) \,dt\eq
where $c(\omega)$ is a finite random constant (independent of time), as mentioned before Proposition \ref{min-rho}.
 \end{proposition}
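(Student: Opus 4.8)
The plan is to apply It\^o's formula to $\rho_t^4$ using the curvature evolution \eqref{eq_rho}, integrate in $\theta$ over $\TT$, and then absorb the unfavourable sixth-power term into the favourable gradient term produced by an integration by parts, using a weighted Wirtinger inequality whose constant improves as the number of symmetry axes grows.

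First I would compute the evolution of $\rho^4$. Since the martingale part of \eqref{eq_rho} is $-\sqrt 2\,\rho_t^2\,dB_t$, It\^o's formula gives $d(\rho^4)=4\rho^3\,d\rho+6\rho^2\,(d\rho)^2$ with $(d\rho)^2=2\rho^4\,dt$, whence
$$ d(\rho^4)=\bigl(4\rho^5\partial_\theta^2\rho+24\rho^6-8h\rho^5\bigr)\,dt-4\sqrt 2\,\rho^5\,dB_t. $$
Integrating over $\theta\in\TT$, discarding the martingale term (this is exactly the meaning of $\leqm$), integrating by parts the first term through $\int 4\rho^5\partial_\theta^2\rho\,d\theta=-20\int\rho^4(\partial_\theta\rho)^2\,d\theta$, and using $-8h\rho^5\le 0$, I obtain
$$ d\int\rho^4\,d\theta\ \leqm\ \Bigl(24\int\rho^6\,d\theta-20\int\rho^4(\partial_\theta\rho)^2\,d\theta\Bigr)\,dt. $$

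The key step, where the hypothesis $n\ge 7$ enters, is to control $\int\rho^6$ by $\int\rho^4(\partial_\theta\rho)^2$. As in the proof of Proposition \ref{Ent}, Proposition \ref{prop_rho} (which gives $\rho_{\mathrm{inf}}\le h/2\le\rho_{\mathrm{sup}}$) together with the $n$ axes of symmetry furnishes $2n$ points $\theta_0<\dots<\theta_{2n-1}<\theta_{2n}=\theta_0+2\pi$ with $\rho(\theta_k)=h/2$ and consecutive gaps at most $2\pi/n$. Applying the Wirtinger inequality of Proposition \ref{prop-wirt} to the function $g\df\rho^3-(h/2)^3$, which vanishes at each $\theta_k$ and satisfies $\partial_\theta g=3\rho^2\partial_\theta\rho$, on each interval $[\theta_k,\theta_{k+1}]$ and summing yields
$$ \int_\TT\bigl(\rho^3-(h/2)^3\bigr)^2\,d\theta\le\frac{36}{n^2}\int_\TT\rho^4(\partial_\theta\rho)^2\,d\theta. $$
Expanding $\bigl(\rho^3-(h/2)^3\bigr)^2$ and inserting this bound gives
$$ 24\int\rho^6-20\int\rho^4(\partial_\theta\rho)^2\le\Bigl(24-\tfrac{5n^2}{9}\Bigr)\int\rho^6+\tfrac{10n^2}{9}(h/2)^3\int\rho^3-\tfrac{10\pi n^2}{9}(h/2)^6. $$
The coefficient $24-5n^2/9$ is positive for $n\le 6$ (it equals $4$ at $n=6$) and negative for $n\ge 7$, since $24-5n^2/9<0$ is equivalent to $n^2>216/5=43.2$; this is the origin of the threshold $n\ge 7$.

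Finally, writing $a\df 5n^2/9-24>0$, dropping the nonpositive constant term, and bounding $\int\rho^3\le\sqrt{2\pi}\,(\int\rho^6)^{1/2}$ by Cauchy--Schwarz, the right-hand side becomes a downward parabola $-a\,x^2+C\sqrt{2\pi}\,x$ in the variable $x\df(\int\rho^6)^{1/2}$, with $C\df\frac{10n^2}{9}(h/2)^3$; its maximum equals $\pi C^2/(2a)$. Since $h$ is almost surely bounded on $[0,\tau)$ by Lemma \ref{prop-h-sm} (equivalently Proposition \ref{min-rho}), the quantity $C$, and hence this maximum, are bounded uniformly in $t$ by a finite random constant $c(\omega)$, giving $d\int\rho^4\,d\theta\leqm c(\omega)\,dt$. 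The main obstacle is establishing the weighted Wirtinger bound with the sharp constant $36/n^2$ and verifying that the resulting coefficient of $\int\rho^6$ flips sign precisely at $n=7$; the remaining steps are routine It\^o calculus and Cauchy--Schwarz.
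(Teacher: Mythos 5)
Your proposal is correct and follows essentially the same route as the paper's proof: It\^o's formula on $\rho^4$, an integration by parts producing $-20\int\rho^4(\partial_\theta\rho)^2\,d\theta$, a Wirtinger inequality applied to a shifted cube of $\rho$ on the $2n$ subintervals provided by the symmetry (which is where $n\ge 7$ flips the sign of the $\int\rho^6$ coefficient, with the same value $24-\tfrac{5n^2}{9}=-\tfrac{29}{9}$ at $n=7$), and finally the a.s.\ boundedness of the supermartingale $h$. The only cosmetic differences are that you centre the Wirtinger inequality at $(h/2)^3$ and discard $-8h\rho^5\le 0$ outright, whereas the paper centres at $\rho_{\mathrm{inf}}^3$ and retains that term; both variants close the argument.
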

\begin{proof}
We compute
\bq
d\int\rho^4&=&4\int \rho^3d\rho+6\int \rho^2d\rho\, d\rho\\
&\stackrel{(m)}{=}&4\lt(\int 6\rho^6+\rho^5\pa^2\rho-2h\rho^5\rt) dt\\
&=&4\lt(\int 6\rho^6-5\rho^4(\pa\rho)^2-2h\rho^5\rt) dt\\
&=&4\lt(\int 6\rho^6-\f59\lt(\pa\rho^3\rt)^2-2h\rho^5\rt) dt
\eq\par
To deal with the middle term, let us resort to Wirtinger inequality, assuming $n\geq 7$ axes of symmetry for $D_0$.
Since the evolution equation is  invariant by these symmetries, for any time $t\in[0,\tau)$, we still have that $D_t$ has $n$ axes of symmetry.
We deduce that
\bq
\int\lt(\pa\rho^3\rt)^2&=&\int\lt(\pa(\rho^3-\rinf^3)\rt)^2\\
&\geq & \f{49}{4}\int\lt(\rho^3-\rinf^3\rt)^2
\eq
so that, taking into account Proposition \ref{prop_rho},
\bq
d\int\rho^4&\leqm&4\lt(\int -\f{29}{36}\rho^6+\f{245}{18}\rho^3\rinf^3-\f{245}{36} \rinf^6-2\rho^5h\rt)dt
\\&\leq & 2\lt(\int -\f{29}{18}\rho^6+\f{245}{9}\rho^3\rinf^3-\f{389}{18}\rinf^6\rt)dt\\
&\leq & c(\omega)  \,dt
\eq
\par
To get the desired result, recall that $\rinf\leq h/2$ and that $h$ is a positive supermartingale and is thus a.s.\ bounded on its domain of definition.
\end{proof}
\par
 \begin{proposition}\label{prop_rho_prime}
  \CPK{Under  Hypothesis \ref{hypoC02} and the assumption that} has $n$ axes of symmetries  with $n \ge 7 $,  there exists a finite random variable $ c(\omega)$ such that on  the event $\tau < \infty $:
\bqn{fini} \forall t \in [0, \tau),\qquad \int (\partial_{\te}\rho_t)^2 d\te & \le &c(\omega)
\eqn
 \end{proposition}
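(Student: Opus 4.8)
The plan is to derive the bound by combining the two differential inequalities established just above: Proposition \ref{p_4.10}, which gives $d\int(\partial_\te\rho_t)^2\,d\te\leqm C\int\rho_t^4\,d\te\,dt$ with $C=(13/3)^2+16$, and the preceding estimate $d\int\rho_t^4\,d\te\leqm c(\omega)\,dt$ valid under the $n\ge 7$ symmetry assumption. Morally, integrating the second inequality controls $\int\rho_t^4$ on any finite time horizon, and feeding this into the first controls the drift of $\int(\partial_\te\rho_t)^2$; on $\{\tau<\infty\}$ the horizon $\tau$ is finite, so the accumulated drift stays bounded. The serious difficulty is that both inequalities hold only up to continuous \emph{local} martingale terms, and the constant $c(\omega)$ is not adapted (it is of the order of $(\sup_{s<\tau}h_s)^6$); a priori nothing prevents these stochastic integrals from exploding as $t\uparrow\tau$. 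The hard part is therefore to rule out such explosion without already knowing that $\rho$ is bounded.

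To get around this I would argue by localization and take expectations, so that the martingale terms disappear entirely. Write $A_t\df\int(\partial_\te\rho_t)^2\,d\te$ and $B_t\df\int\rho_t^4\,d\te$. For $m,k\in\N$ set $S_m\df\inf\{t:h_t\ge m\}\wedge\tau$ and $U_{k}\df\inf\{t:A_t\ge k\}\wedge S_m$. On $[0,U_{k}]$ everything is controlled by deterministic constants: since $A_t<k$ the oscillation of $\rho_t$ is at most $\sqrt{2\pi k}$ by Cauchy--Schwarz, and since the pseudo-median satisfies $\rho^*_t\le h_t<m$ (Lemma \ref{geo}), there is a point where $\rho_t\le m$, whence $\|\rho_t\|_\infty\le m+\sqrt{2\pi k}$; positivity is guaranteed by Lemma \ref{conserve_convex}. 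Consequently the stochastic integrals driving $A$ and $B$ (whose integrands, after integration by parts, are $\int\rho^2\partial^2_\te\rho=-2\int\rho(\partial_\te\rho)^2$ and $\int\rho^5$) have bounded quadratic variation on $[0,U_{k}]$, so they are genuine $L^2$-martingales, and the drift of $B$ is bounded there by a deterministic constant of order $m^6$.

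Taking expectations then yields, uniformly in $k$, first $\E[B_{t\wedge U_{k}}]\le B_0+Cm^6 t$, and then, via Proposition \ref{p_4.10} and Tonelli, $\E[A_{t\wedge U_{k}}]\le A_0+C B_0 t+\tfrac12 C^2 m^6 t^2\df\Phi_m(t)$, a finite deterministic function independent of $k$. A Chebyshev inequality gives $\mathbb{P}\big(\sup_{s<t\wedge S_m}A_s\ge k\big)\le \Phi_m(t)/k$, so letting $k\to\infty$ shows $\sup_{s<t\wedge S_m}A_s<\infty$ almost surely for every $t$, hence $\sup_{s<S_m}A_s<\infty$ a.s.\ on $\{S_m<\infty\}$. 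Finally, since $(h_t)$ is almost surely bounded on $[0,\tau)$ by Lemma \ref{prop-h-sm}, on $\{\tau<\infty\}$ one has $S_m=\tau$ for all $m$ larger than the finite random bound $\sup_{s<\tau}h_s$; therefore $\sup_{s<\tau}A_s<\infty$ almost surely on $\{\tau<\infty\}$, which is \eqref{fini}. The key point, and the main obstacle circumvented by this scheme, is that the expectation--Chebyshev step never requires a pathwise bound on the stochastic integrals: the $m^6$ drift bound coming from the symmetry assumption is exactly what keeps $\Phi_m$ independent of the localization level $k$.
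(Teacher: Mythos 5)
Your argument is correct, but it follows a genuinely different route from the paper's. The paper's proof is pathwise: starting from $\int\rho_t^4\,d\te\le c(\omega)\,t+M_t$, it represents the continuous (local) martingale $M$ as a time-changed Brownian motion $W_{\lan M\ran_t}$ (Dambis--Dubins--Schwarz, after enlarging the probability space) and observes that $\lim_{t\ri\tau-}\lan M\ran_t=+\iy$ would force $\liminf_t W_{\lan M\ran_t}=-\iy$ and hence $\inf_t\int\rho_t^4=-\iy$, contradicting nonnegativity; thus $\lan M\ran$, hence $M$, hence $\int\rho^4$ stay bounded on $\{\tau<\iy\}$, and the identical two-line argument is then run on $\int(\partial_{\te}\rho)^2$ using Proposition \ref{p_4.10}. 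You instead localize twice (in $h$ via $S_m$ and in $A_t=\int(\partial_{\te}\rho_t)^2$ via $U_k$), use the pseudo-median bound $\rho^*\le h$ of Lemma \ref{geo} together with the Cauchy--Schwarz oscillation estimate to obtain a deterministic sup-bound $m+\sqrt{2\pi k}$ on $\rho$ up to the stopping time, so that the stopped stochastic integrals become genuine $L^2$-martingales and the drift of $\int\rho^4$ is bounded by a deterministic constant of order $m^6$; taking expectations and applying Markov's inequality with a bound $\Phi_m(t)$ independent of the level $k$ then eliminates the martingale terms, and the conclusion follows by letting $k\ri\iy$ and then $m\ri\iy$ on $\{\tau<\iy\}$. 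Both routes are sound. The paper's is shorter and exploits nothing beyond nonnegativity of the controlled quantities; yours avoids DDS entirely, makes fully explicit why the local-martingale parts hidden in the notation $\leqm$ cannot cause explosion, and yields as a by-product quantitative moment bounds $\E[A_{t\wedge U_k}]\le\Phi_m(t)$ rather than mere almost-sure finiteness --- at the price of a noticeably heavier double localization.
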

 \begin{proof}
 Let us show that there exists a finite random variable $ c_1(\omega)$ such that on  the event $\tau < \infty $:
 \bqn{fini1} \forall t \in [0, \tau),\qquad  \int (\rho_t)^4  d\te& \le &c_1(\omega)
 \eqn
According to the  previous proposition, there exist a finite random constant $c(\omega)\geq 0$ and a continuous martingale $(M_t)_{t\in[0,\tau)}$ such that

\bq \fo t\in[0,\tau),\qquad   \int (\rho_t)^4  d\te &\leq & c(\omega) t+M_t\eq
\par
Up to enriching the underlying probability space, we can find a Brownian motion $(W_t)_{t\geq 0}$ such that
\bqn{majo}
\fo t\in[0,\tau),\qquad \int (\rho_t)^4  d\te&\leq & c(\omega) t+W_{\lan M\ran_t}\eqn\par
Thus on $\{\tau<+\iy\}$, we will deduce \eqref{fini1} as soon as we show
\bq
\lim_{t\ri\tau-}\lan M\ran_t&<&+\iy\eq
\par
Note that if we had 
\bq
\lim_{t\ri\tau-}\lan M\ran_t&=&+\iy\eq
we would get from \eqref{majo} that
\bq
\inf_{t\in[0,\tau)}   \int (\rho_t)^4  d\te  &=&-\iy\eq
which is a contradiction. Hence there exists $ c_1(\omega)$ such that \eqref{fini1} is satisfied  on  the event $\tau < \infty $.
According to  Proposition \ref{p_4.10} there exist a finite constant $c_2(\omega)\geq 0$ and a continuous martingale $(\tilde{M}_t)_{t\in[0,\tau)}$ such that on $ \{\tau<+\iy\}$
\bq \fo t\in[0,\tau),\qquad   \int (\partial_{\te}\rho_t)^2 d\te &\leq & c_2(\omega) t+\tilde{M}_t\eq
\par
We deduce \eqref{fini} by the same argument used to get \eqref{fini1}.
\end{proof}
\par
 \begin{proposition} \label{prop_rho_bound}
   \CPK{Under Hypothesis \ref{hypoC02} and the assumption that $D_0$} has $n$ axes of symmetries,  with $n \ge 7 $, there exists a  random variable $ c(\omega)$ such that on  the event $\tau < \infty $:
   $$ \rho_t \le c(\omega) < \infty \quad \forall t \in [0,\tau). $$
 \end{proposition}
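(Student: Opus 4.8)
The plan is to derive the uniform upper bound on $\rho_t$ directly from the two a priori bounds already established on the event $\{\tau<\infty\}$: the control of $\int_0^{2\pi}(\pa_\te\rho_t)^2\,d\te$ from Proposition \ref{prop_rho_prime}, and the control of $\int_0^{2\pi}\rho_t^4\,d\te$ obtained in its proof. Indeed, the statement amounts to nothing more than the one-dimensional Sobolev embedding $H^1(\TT)\hookrightarrow L^\infty(\TT)$, so the genuinely delicate analytic work has already been carried out in the preceding propositions, where the $n\ge 7$ symmetry hypothesis was essential.

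First I would bound the mean of the curvature. By H\"older's inequality, $\int_0^{2\pi}\rho_t\,d\te\le(2\pi)^{3/4}\big(\int_0^{2\pi}\rho_t^4\,d\te\big)^{1/4}$, so that the bound \eqref{fini1} yields a finite random constant controlling $\bar\rho_t\df\frac{1}{2\pi}\int_0^{2\pi}\rho_t\,d\te$ uniformly in $t\in[0,\tau)$ on $\{\tau<\infty\}$. Since for each $t$ the function $\te\mapsto\rho_t(\te)$ is continuous and $\bar\rho_t$ lies between its minimum and maximum, the intermediate value theorem provides a point $\te_0=\te_0(t)$ with $\rho_t(\te_0)=\bar\rho_t$.

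Next, for an arbitrary $\te\in\TT$ I would write $\rho_t(\te)=\rho_t(\te_0)+\int_{\te_0}^{\te}\pa_\te\rho_t\,d\te$ and estimate the integral by Cauchy--Schwarz, using $\big|\int_{\te_0}^{\te}\pa_\te\rho_t\,d\te\big|\le\sqrt{2\pi}\,\big(\int_0^{2\pi}(\pa_\te\rho_t)^2\,d\te\big)^{1/2}$. Invoking \eqref{fini} then gives $\sup_\te\rho_t(\te)\le\bar\rho_t+\sqrt{2\pi}\,\sqrt{c(\omega)}$, which, combined with the bound on $\bar\rho_t$ from the previous step, furnishes a finite random constant $c(\omega)$, independent of $t$, such that $\rho_t\le c(\omega)$ for all $t\in[0,\tau)$ on $\{\tau<\infty\}$.

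The only point requiring care---already settled upstream---is that both a priori bounds genuinely persist up to the (possibly finite) lifetime $\tau$, and not merely on compact subintervals of $[0,\tau)$. This is exactly the content of Proposition \ref{prop_rho_prime}, whose martingale-comparison argument (via the time-changed representation \eqref{majo}) converts the finite-variation drift estimates available in the symmetric regime $n\ge 7$ into almost sure uniform bounds on $\{\tau<\infty\}$. Granting these, the present statement presents no further obstacle, the remaining steps being the routine Sobolev-type estimate sketched above.
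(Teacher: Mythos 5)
Your proof is correct, but it takes a genuinely different route from the paper's. The paper argues in the Gage--Hamilton style of its deterministic counterpart (Proposition \ref{bound-rho}): it combines the gradient bound \eqref{fini} with an a.s.\ \emph{upper} bound on the entropy $\Ent_t$ (coming from the supermartingale property of Proposition \ref{Ent}) and the pointwise \emph{lower} bound on $\rho$ from Proposition \ref{min-rho}; if $r_t=\sup_{s\le t,\te}\rho_s(\te)$ were large, the H\"older-$\frac12$ control of $\rho$ would force $\rho\ge r_t/2$ on an interval of length $\asymp r_t^2/c(\omega)$, making $\Ent$ large and contradicting its upper bound. You instead bypass the entropy and the lower bound on $\rho$ entirely: the $L^4$ bound \eqref{fini1} (established inside the proof of Proposition \ref{prop_rho_prime}, so legitimately citable) controls the mean $\bar\rho_t$ via H\"older, the intermediate value theorem gives a point where $\rho_t$ equals its mean, and Cauchy--Schwarz with \eqref{fini} propagates this to a uniform bound --- i.e.\ the Sobolev embedding $H^1(\TT)\hookrightarrow L^\infty(\TT)$. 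Your version is shorter and uses strictly fewer inputs (only Proposition \ref{prop_rho_prime} and its proof, both of which already require $n\ge 7$), whereas the paper's argument keeps the structural parallel with the deterministic Section 2 and, by running through the entropy, reuses the quantity that organizes the whole symmetric analysis. Both are complete; your observation that the only delicate point is the persistence of the a priori bounds up to $\tau$ (handled by the martingale time-change in Proposition \ref{prop_rho_prime}) is exactly right.
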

 \begin{proof}On the event  $\tau < \infty $,
 according to Propositions \ref{prop_rho_prime} and \ref{Ent}, there exists a random constant $c(\omega) < \infty $  such that for all $ t < \tau $ we have:
 $$ \Ent_t \le c(\omega)  $$
 $$ \int (\partial_{\te}\rho_t)^2 \le c(\omega) .$$
 Let $r_t\df  \sup \{ \rho_s(\te), (\te,s) \in [0,2 \pi] \times [0, t]\}$ for $t < \tau. $
 Then there exists $(\te_1,t_1) \in [0,2 \pi] \times [0, t] $ such that $ \rho_{t_1}(\te_1) = r_t $.
 For all $ \te_2 \in [0,2 \pi] $, we have 
 \bq
  \vert \rho_{t_1}(\te_1) - \rho_{t_1}(\te_2 ) \vert &= &\lt\vert \int_{\te_1}^{\te_2} \partial \rho_{t_1} (\te) \, d \te  \rt\vert  \\
   &\le &\sqrt{\vert \te_1 - \te_2  \vert } \sqrt{c(\omega)},
 \eq
 so  $$ r_t - \sqrt{\vert \te_1 - \te_2  \vert } \sqrt{c(\omega)} \le  \rho_{t_1}(\te_2 ). $$
 Then using Proposition \ref{min-rho} we get
 \bq
 \Ent_{t_1} &\ge& \int_{ \vert \te - \te_1 \vert \le \frac{r_t^2 }{4 c(\omega)} \wedge \frac{\pi}{2}} \log(\rho_{t_1} (\te)) \, d \te +  \int_{ \vert \te - \te_1 \vert \ge \frac{r_t^2 }{4 c(\omega)} \wedge \frac{\pi}{2}} \log(\rho_{t_1} (\te)) \, d \te \\
 & \ge& 2\log\lt(\frac{r_t}{2}\rt) \lt(\frac{r_t^2 }{4 c(\omega)} \wedge \frac{\pi}{2}\rt)\\&& + 
 \lt( 2\pi - 2\lt(\frac{r_t^2 }{4 c(\omega)} \wedge \frac{\pi}{2}\rt)\rt)  \log  \lt( \frac{1}{  \frac{1}{\inf \rho_0} +   \sqrt{2} \sup_{[0,t_1]} B_s + 2 c(\omega)t_1 }  \rt)\\ 
 & \ge& 2\log\lt(\frac{r_t}{2}\rt) \lt(\frac{r_t^2 }{4 c(\omega)} \wedge \frac{\pi}{2}\rt)\\&& + 
 \lt( 2\pi - 2\lt(\frac{r_t^2 }{4 c(\omega)} \wedge \frac{\pi}{2}\rt)\rt)  \log  \lt( \frac{1}{  \frac{1}{\inf \rho_0} +   \sqrt{2} \sup_{[0,\tau]} B_s + 2 c(\omega)\tau }  \rt).\\ 
 \eq
 On the event $\tau < \infty $ the last term of the above equation is  almost surely bounded, since the entropy is bounded from above on $[0,\tau) $. { We get that $ \rho_t$ has to be a.s. uniformly bounded  on $t \in[0, \tau). $}
 
 \end{proof}

We will need the following lemma which is a little refinement of Lemma 4.1.1 from \cite{Gage_Hamilton}.
 
 \begin{lemma}\label{ident}
Let a $2 \pi $ periodic positive function $\rho \in C^{\alpha }(\mathbb {T}) $, with $\alpha\in(0,1)$, satisfying \eqref{cores}.
Consider the curve $ X : \theta \mapsto (\int_0 ^{\theta} \frac{\cos(u)}{\rho(u)} \,du , \int_0 ^{\theta} \frac{\sin(u)}{\rho(u)} \,du )  $, as before parametrized by the angle $\theta\in\TT$ of its tangent
with respect to the horizontal axis, and whose curvature function is $\rho$. When $X$ is parametrized by its arc-length, it becomes
 $C^{2+\alpha}$.
 \end{lemma}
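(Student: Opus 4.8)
The plan is to track how H\"older regularity propagates when passing from the angular parameter $\theta$ to the arc-length parameter, exploiting the fact that the arc-length reparametrization actually gains one order of regularity. First I would record the elementary computation
$$ X'(\theta)\ =\ \frac{1}{\rho(\theta)}(\cos\theta,\sin\theta), $$
so that the speed is $|X'(\theta)|=1/\rho(\theta)$ and the arc-length measured from $\theta=0$ is $s(\theta)=\int_0^{\theta}1/\rho(u)\,du$. That $s$ descends to a parametrization of a genuinely closed curve of total length $\sg=\int_0^{2\pi}1/\rho$ is exactly the content of the constraint \eqref{cores} together with Lemma 4.1.1 of \cite{Gage_Hamilton}; here only the regularity count has to be upgraded, which is the point of the refinement.

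Next I would set up the regularity bootstrap. Since $\rho\in C^{\alpha}(\TT)$ is positive and continuous on the compact $\TT$, it is bounded below by a positive constant, so $1/\rho\in C^{\alpha}(\TT)$ is bounded between two positive constants. Consequently $s(\theta)$ is $C^{1+\alpha}$ with derivative $s'(\theta)=1/\rho(\theta)$ bounded away from $0$ and $\infty$, hence a strictly increasing bijection from $\TT$ onto $\RR/(\sg\ZZ)$. The usual inverse function theorem gives that the inverse $\theta(\cdot)$ is $C^1$ and Lipschitz, with $\theta'(s)=1/s'(\theta(s))=\rho(\theta(s))$. The crucial point is then that $\rho\circ\theta$ is again $C^{\alpha}$: since $\rho$ is $\alpha$-H\"older and $\theta$ is Lipschitz with constant $L$,
$$ |\rho(\theta(s_1))-\rho(\theta(s_2))|\ \le\ C\,|\theta(s_1)-\theta(s_2)|^{\alpha}\ \le\ C\,L^{\alpha}\,|s_1-s_2|^{\alpha}. $$
Thus $\theta'(s)=\rho(\theta(s))\in C^{\alpha}$, that is $\theta(\cdot)\in C^{1+\alpha}$.

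Finally, writing $\gamma(s)=X(\theta(s))$ for the arc-length parametrization, the chain rule and the first computation give
$$ \gamma'(s)\ =\ X'(\theta(s))\,\theta'(s)\ =\ (\cos\theta(s),\sin\theta(s)), $$
which indeed has unit length, confirming that $s$ is arc-length. Since $\theta(\cdot)\in C^{1+\alpha}$ and the trigonometric functions are smooth, the composition $\gamma'$ lies in $C^{1+\alpha}$; equivalently $\gamma''(s)=\rho(\theta(s))\,(-\sin\theta(s),\cos\theta(s))$ is a product of $C^{\alpha}$ factors and is therefore $C^{\alpha}$. Hence $\gamma\in C^{2+\alpha}$, as claimed. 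I expect the only genuine subtlety to be the preservation of $\alpha$-H\"older regularity under the composition $\rho\circ\theta$ with the inverse map: it is precisely the Lipschitz bound on $\theta$ that makes this work, and it is exactly what lets the arc-length parametrization be one degree smoother than the original, merely $C^{1+\alpha}$, parametrization in $\theta$.
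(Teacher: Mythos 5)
Your proof is correct and follows essentially the same route as the paper's: compute $s(\theta)=\int_0^\theta 1/\rho$, observe $\theta'(s)=\rho(\theta(s))$ so that $\theta(\cdot)$ is $C^{1+\alpha}$, and conclude that the arc-length parametrization has derivative $(\cos\theta(s),\sin\theta(s))$ and hence is $C^{2+\alpha}$. You simply make explicit the one point the paper leaves implicit, namely that $\rho\circ\theta$ stays $C^{\alpha}$ because $\theta$ is Lipschitz.
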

 \begin{proof}
Under the parametrization of  $X $ by $\theta $, the curve may seem to be only of order $C^{1 + \alpha}. $ Let us check it is in fact $ C^{2+\alpha}$ under the arc-length parametrization. 
 Denoting $ s  $ the arc length parametrization of $X$, we have
  $\partial_s = \rho \partial_{\te} $ and  $ s(\te) =  \int_0^{\te }\frac{1}{\rho(u)}\, du$, $\partial_s\te(s) = \rho(\te(s)) $, $T(s) = (\cos(\te(s)), \sin(\te(s)) $ (as it should be, by definition of the parametrization by $\theta$). 
  From $\partial_s\te(s) = \rho(\te(s)) $, we see that $s\mapsto \te(s)$ is $C^{1+\alpha}$.
Furthermore, in the parameter  $s$, the curve $ \tilde{X}(s) \df X(\te(s))$ satisfies $ \partial_s \tilde{X} =(\cos(\te(s)),\sin(\te(s)) $,   so we get that $\tilde{X}$ is $ C^{2+\alpha}$.
  \end{proof}

 \begin{thm}\label{infinite-lifetime}
\CPK{Under Hypothesis \ref{hypoC02} and the assumption that $D_0$}   has $n$ axes of symmetries,  with $n \ge 7 $, a.s. $\tau = \infty $, where $\tau  $ is the maximal  lifetime of \eqref{eq_sto}. 
 \end{thm}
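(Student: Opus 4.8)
The plan is to argue by contradiction: supposing $\P(\tau<\iy)>0$, I would work on the event $\{\tau<\iy\}$ and show that the solution extends strictly beyond $\tau$, contradicting the maximality of the lifetime. First I would gather the a priori bounds already available on this event. Proposition \ref{min-rho} keeps the curvature away from $0$, Proposition \ref{prop_rho_bound} bounds it from above, and Proposition \ref{prop_rho_prime} controls $\int_0^{2\pi}(\partial_\te\rho_t)^2\,d\te$; as $\TT$ is one-dimensional, the two latter estimates combine into a uniform bound of $(\rho_t)_{t<\tau}$ in $H^1(\TT)$, hence, by the embedding $H^1(\TT)\hookrightarrow C^{1/2}(\TT)$, into a uniform bound in $C^{1/2}(\TT)$. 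So on $\{\tau<\iy\}$ the family $(\rho_t)_{t<\tau}$ is equi-H\"older and takes values in a fixed compact subinterval $[\kappa,K]\subset(0,\iy)$ of the positive reals.

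The hard part is to upgrade this spatial control, which on its own only gives precompactness in $\te$, into the existence of a genuine limit $\rho_\tau\df\lim_{t\ri\tau-}\rho_t$ of sufficient regularity. The key structural observation is that the noise enters the equation for $1/\rho_t$ additively: by Proposition \ref{prop-ev}~(ii), $u_t\df 1/\rho_t$ satisfies $d u_t=[-\partial^2_\te\rho_t-\rho_t+2h_t]\,dt+\sqrt{2}\,dB_t$, so $w_t\df u_t-\sqrt{2}B_t$ obeys, $\omega$ by $\omega$, the \emph{deterministic} equation $\partial_t w_t=-\partial^2_\te\rho_t-\rho_t+2h_t$ with $\rho_t=1/(w_t+\sqrt{2}B_t)$. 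Expanding $\partial^2_\te\rho_t$ exhibits this as a quasilinear parabolic equation whose leading coefficient is $\rho_t^2\in[\kappa^2,K^2]$, hence uniformly parabolic with coefficients that stay bounded up to $\tau$ (recall $(h_t)$ is a bounded supermartingale by Lemma \ref{prop-h-sm}, and $t\mapsto B_t(\omega)$ is bounded on the finite interval $[0,\tau]$). I would then invoke standard parabolic regularity for this pathwise problem, mimicking the bootstrap of Proposition \ref{bound_derivative}: starting from the $L^\iy$ and $H^1$ bounds one controls the higher $\te$-derivatives and obtains convergence, as $t\ri\tau-$, of $\rho_t$ to a limit $\rho_\tau$ of class at least $C^{\alpha'}(\TT)$ for some $\alpha'\in(0,1)$.

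It then remains to verify that $\rho_\tau$ is an admissible curvature function. Positivity is immediate from $\rho_\tau\ge\kappa>0$. Relation \eqref{cores} is preserved by the flow --- in the proof of Theorem \ref{th_cores} the maps $t\mapsto\int_0^{2\pi}\cos(\te)/\rho_t\,d\te$ and $t\mapsto\int_0^{2\pi}\sin(\te)/\rho_t\,d\te$ were shown to vanish identically --- and thus survives in the limit by continuity. Hence $\rho_\tau\in C^{\alpha'}(\TT)$ is positive and satisfies \eqref{cores}, so Lemma \ref{ident} guarantees that it is the curvature of a simple strictly convex closed curve $C_\tau$ of class $C^{2+\alpha'}$; this is precisely the point of the refinement recorded in Lemma \ref{ident}, which allows a restart from merely H\"older curvature data.

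Finally, since $\tau$ is a stopping time, the strong Markov property lets me restart \eqref{eq_sto} from $C_\tau$ driven by the shifted Brownian motion $(B_{\tau+r}-B_\tau)_{r\ge0}$. The short-time existence and uniqueness recalled after the definition of \eqref{eq_sto} --- valid for a $C^{2+\alpha'}$ initial curve --- produce a solution on $[\tau,\tau+\varepsilon)$ for some $\varepsilon>0$, which glues with $(C_t)_{t<\tau}$ into a solution on $[0,\tau+\varepsilon)$. This contradicts the maximality of $\tau$ on $\{\tau<\iy\}$, so that event is negligible and $\tau=\iy$ almost surely. As indicated, the main obstacle is the middle step: converting the $L^\iy$ and $H^1$ estimates into a bona fide regular limit $\rho_\tau$, for which the additive-noise reduction to a uniformly parabolic deterministic equation --- and hence the applicability of the deterministic machinery of Section \ref{S2} --- is the crucial device.
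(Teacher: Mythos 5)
Your overall architecture coincides with the paper's: argue by contradiction on $\{\tau<\infty\}$, collect the a priori bounds of Lemma \ref{prop-h-sm} and Propositions \ref{min-rho}, \ref{prop_rho_bound}, \ref{prop_rho_prime}, produce a limiting curve $C_\tau$ of class $C^{2+\alpha'}$, invoke Lemma \ref{ident}, and restart by the Markov property. The endgame (positivity of the limit curvature from Proposition \ref{min-rho}, persistence of \eqref{cores}, short-time existence from a $C^{2+\alpha'}$ initial curve) is exactly the paper's.

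The divergence --- and the weak point --- is the step you yourself flag as the hard one: producing the limit $\rho_\tau$. You propose to remove the noise via $w_t=1/\rho_t-\sqrt2 B_t$ and then to run a pathwise parabolic bootstrap ``mimicking Proposition \ref{bound_derivative}''. As written this is a gap: Proposition \ref{bound_derivative} is proved only for the deterministic flow, and the resulting quasilinear equation for $w$ has coefficients that are merely H\"older-$(1/2)^{-}$ in time through $B_t(\omega)$ and must be controlled uniformly up to the random time $\tau$; the appeal to ``standard parabolic regularity'' would need an actual argument (interior estimates with time-measurable coefficients, uniform as $t\uparrow\tau$) that you do not supply. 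More importantly, it is unnecessary: since Lemma \ref{ident} only asks for a positive $C^{\alpha'}$ curvature satisfying \eqref{cores}, Arzel\`a--Ascoli applied to $(\rho_t)_{t<\tau}$ --- uniformly bounded and equi-$C^{1/2}$ by your own $L^\infty$ and $H^1$ bounds --- already gives a subsequential uniform limit $\rho_\tau\in C^{1/2}$, and no higher derivative is ever needed. The paper then identifies the limit curve without any further PDE estimate: on $\{\tau<\infty\}$ the drift of \eqref{eq_sto} is bounded, so $C(t,\cdot)$ is H\"older in time and converges uniformly to some $C_\tau$; on the other hand the reconstruction formula of Theorem \ref{th_cores} writes $C(t_n,\cdot)$ as the explicit integral of $\bigl(\cos\theta_1/\rho_{t_n},\sin\theta_1/\rho_{t_n}\bigr)$ plus an additive term converging to $C_\tau(0)$, so $C_\tau$ coincides with the curve built from $\rho_\tau$ and Lemma \ref{ident} makes it $C^{2+1/2}$ in arc length. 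If you replace your bootstrap by this compactness-plus-reconstruction argument, the rest of your proof goes through unchanged.
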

 \begin{proof}
 Suppose that $ \mathbb{P}(\tau < \infty)  > 0 . $
 Let $C_t(\te)$ be the solution of \eqref{eq_sto} namely 
  \begin{equation*} \left \{  \begin{array}{lcl}
 d_t C(t,\te) & = & \lt([-\rho_t(C(t,\te)) + 2h_t ]dt + \sqrt{2}dB_t \rt) \nu_t(C(t,\te))  \\
  C(0,\te) &= & C_0(\te) \\
  \end{array}
  \right. 
  \end{equation*}
  On the event $ \{ \tau < \infty \}$, using Lemma \ref{prop-h-sm} and \ref{prop_rho_bound} we have for all $ t < \tau $, $h_t \le c(\omega) < \infty $ and $ \rho_t(\te) \le c(\omega)< \infty $.  Since $\Vert \nu_t(C(t,\te)) \Vert = 1 $  we have for  
  $ s, t < \tau $ such that $ \vert t-s \vert $ is small:
  $$ \vert C(s,\te) - C(t,\te) \vert  \le c_1(\omega) \vert t-s \vert ^{\frac12 - \epsilon}, $$
  where the random variable $c_1 $ depends on $ c$.
     Hence there exists $ C_{\tau} : \mathbb{T} \mapsto \mathbb{R}^2$ such that $ C_t  $  converges uniformly  to $C_{\tau} $.
  On the other hand, using Proposition \ref{prop_rho_prime} we get by H\"older inequality that for all $ t < \tau $
  $$  \vert \rho_t(\te) - \rho_t(\beta) \vert \le \left\vert \int_{\beta}^{\te} \partial \rho_t(\gamma) d\gamma  \right\vert \le c(\omega)\sqrt{\vert \te - \beta \vert}.$$
  Hence $ \rho_{.} $ is equi-continuous. So using again Proposition  \ref{prop_rho_bound} and Ascoli Theorem we get that there exists a sequence $(t_n)_{n}$ converging to  $\tau $ and a $ C^{\frac12}$ function $ \rho_{\tau}$ such that $\rho_{t_n} $ converges uniformly to $ \rho_{\tau}$.
  
  We want to show that $C_{\tau}$, is in fact $C^{2+\frac12}$.
   
 By Theorem \ref{th_cores} we have the following representation of the solution  of \eqref{eq_sto}:
 $${C}(t,\te)\df  \tilde{C}(t,\te) + \int _0^t (-\partial_{\te}\rho_u(0), \rho_u(0)-2h_u ) \,du - (0, \sqrt{2} B_t  ) $$
where
$$ \tilde{C}(t,\theta) =  \left( \int_0^{\theta} \frac{\cos(\theta_1)}{\rho_t(\theta_1)} \, d \theta_1 ,  \int_0^{\theta} \frac{\sin(\theta_1)}{\rho_t(\theta_1)} \, d \theta_1   \right). $$
Since $C(t_n, 0) \to C_{\tau}(0)$,  there exists $A \in \mathbb{R}^2$ such that  
$$  \int _0^{t_n} (-\partial_{\te}\rho_u(0), \rho_u(0)-2h_x ) \,du - (0, \sqrt{2} B_{t_n}  ) \to A .$$ 
 Also since $\rho_{t_n} $ converges uniformly to $ \rho_{\tau}$ and by Proposition \ref{min-rho}, $ \rho_{\tau}>0$, we have that $\tilde{C}(t_n, .) $ converges uniformly to 
 $ \left(\int_0^{.} \frac{\cos(\theta_1)}{\rho_{\tau}(\theta_1)} \, d \theta_1 ,  \int_0^{.} \frac{\sin(\theta_1)}{\rho_{\tau}(\theta_1)} \, d \theta_1   \right) .$
 Hence $$ C(t_n, .) \to  \left(\int_0^{.} \frac{\cos(\theta_1)}{\rho_{\tau}(\theta_1)} \, d \theta_1 ,  \int_0^{.} \frac{\sin(\theta_1)}{\rho_{\tau}(\theta_1)} \, d \theta_1   \right) + A = C_{\tau}(.)  $$  
 
 By Lemma \ref{ident} we get that the curve $C_\tau $ is $C^{2+\frac12}$. Using Theorem 22 in \cite{zbMATH07470497}, and the  Markov property we can extend the solution after the time $\tau$ by a solution starting at the curve $C_\tau $, which is in contradiction with the maximality of $\tau$.
   
 \end{proof}
 
 We have the following corollary of Theorem 61 in \cite{zbMATH07470497}.
 \begin{corollary}\CPK{Consider $(D_t)_{t\geq 0}$ the solution of \eqref{eq_sto}. Under Hypothesis \ref{hypoC02} and the assumption that $D_0$}  has $n$ axes of symmetries,  with $n \ge 7 $,  
we have a.s.\ in the Hausdorff metric,
\bq
\lim_{t\rightarrow +\infty} \frac{D_t}{\sqrt{\lambda(D_t)}}&=&B(0,1/\sqrt{\pi})\eq
where $B(0,1/\sqrt{\pi})$ is the Euclidean  ball centered at 0 of radius $1/\sqrt{\pi}$.
 \end{corollary}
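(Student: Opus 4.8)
The plan is to reduce everything to the two facts already in hand: the a.s.\ infinite lifetime of Theorem~\ref{infinite-lifetime} and the monotonicity of the isoperimetric deficit. Once $\tau=\infty$ a.s., the process $(D_t)_{t\ge0}$ is globally defined and, by Lemma~\ref{conserve_convex}, stays strictly convex, so the hypotheses of Theorem~61 in \cite{zbMATH07470497} are met and its conclusion already gives the claim; I would nevertheless spell out the underlying mechanism. Set $\delta_t\df\sg_t^2-4\pi\ld_t\ge0$. By Proposition~\ref{prop-ev}~(i) the drift of $\delta_t$ is nonpositive and its martingale part vanishes, so $(\delta_t)_t$ is a.s.\ non-increasing and $\delta_t\le\delta_0$. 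The relevant scale-invariant quantity is $\delta_t/\ld_t$: Bonnesen's inequality \eqref{Bonne} gives $\pi^2(\rao(t)-\rai(t))^2\le\delta_t$, whence $(\rao(t)-\rai(t))/\sqrt{\ld_t}\le\sqrt{\delta_t/\ld_t}/\pi$, so it suffices to prove $\delta_t/\ld_t\to0$ a.s., and since $\delta_t$ is bounded this follows from $\ld_t\to\infty$ a.s.

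To obtain $\ld_t\to\infty$, recall from Remark~\ref{rm_sys} that $d\ld_t=(2\sg_t^2/\ld_t)\,dt+\sqrt2\,\sg_t\,dB_t$. A direct It\^o computation then yields $d(1/\ld_t)=-\sqrt2\,(\sg_t/\ld_t^2)\,dB_t$, so $1/\ld_t$ is a positive local martingale, hence a supermartingale, and converges a.s.\ to some $L\in[0,\infty)$; equivalently $\ld_t$ converges in $(0,+\infty]$ to $1/L$. By the isoperimetric inequality the drift $2\sg_t^2/\ld_t\ge8\pi$ is bounded below, so the finite-variation part $A_t$ of $\ld_t$ satisfies $A_t\ge8\pi t\to\infty$. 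On the event $\{L>0\}$ we would have $\ld_t\to1/L<\infty$ and $\sg_t^2=\delta_t+4\pi\ld_t$ bounded above and below by positive constants for large $t$, hence $\langle\ld\rangle_\infty=2\int_0^\infty\sg_s^2\,ds=\infty$; the continuous martingale part of $\ld_t$ would then oscillate with $\limsup=+\infty$, contradicting $\ld_t-A_t\to-\infty$. Thus $L=0$ a.s.\ and $\ld_t\to\infty$. I expect this step—controlling the martingale part of $\ld_t$—to be the main obstacle, since the supermartingale $h_t$ of Lemma~\ref{prop-h-sm} only delivers an exponential, not a usable polynomial, a priori upper bound on $\ld_t$, and the conclusion must be extracted from the oscillation dichotomy for continuous local martingales rather than from moment estimates.

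It remains to assemble the shape convergence. From $\delta_t/\ld_t\to0$ and Bonnesen we get $(\rao(t)-\rai(t))/\sqrt{\ld_t}\to0$, while $\pi\rai(t)^2\le\ld_t\le\pi\rao(t)^2$ sandwiches $\rai(t)/\sqrt{\ld_t}\le1/\sqrt\pi\le\rao(t)/\sqrt{\ld_t}$; hence both normalized radii converge a.s.\ to $1/\sqrt\pi$. Finally, the $G_n$-invariance with $n\ge7$ (in particular $n\ge3$) is preserved by \eqref{eq_sto}—the scalar noise $\sqrt2\,dB_t$ displaces along the normal identically at points exchanged by a reflection, so the evolving curve breathes in and out symmetrically—whence the center of symmetry is fixed and may be taken at the origin, and the inscribed and circumscribed disks of the $G_n$-symmetric convex body $D_t$ are then centered at $0$. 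Consequently $B(0,\rai(t)/\sqrt{\ld_t})\subseteq D_t/\sqrt{\ld_t}\subseteq B(0,\rao(t)/\sqrt{\ld_t})$, and since both radii tend to $1/\sqrt\pi$ the Hausdorff distance of $D_t/\sqrt{\ld_t}$ to $B(0,1/\sqrt\pi)$, being controlled by $(\rao(t)-\rai(t))/\sqrt{\ld_t}$, tends to $0$ a.s., as asserted.
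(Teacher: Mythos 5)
Your proof is correct, but it is genuinely more detailed than the paper's, which consists of a single sentence invoking Theorem~61 of \cite{zbMATH07470497} once Theorem~\ref{infinite-lifetime} guarantees $\tau=\infty$ a.s. What you supply instead is a self-contained probabilistic argument, and its three ingredients are all sound: (a) the isoperimetric deficit $\delta_t=\sg_t^2-4\pi\ld_t$ has vanishing martingale part and nonpositive drift (this is exactly Proposition~\ref{prop-ev}~(i), where the $4\sqrt{2}\pi\sg_t\,dB_t$ terms cancel), hence is pathwise non-increasing; (b) $1/\ld_t$ is a positive local martingale (a fact the paper itself records in the remark following Proposition~\ref{Ent}), so it converges a.s., and your oscillation-dichotomy argument ruling out a positive limit is the right tool: on $\{\lim\ld_t<\infty\}$ the quadratic variation $2\int_0^\infty\sg_s^2\,ds$ of the martingale part of $\ld_t$ diverges while the drift part grows at least like $8\pi t$, which is incompatible with convergence of $\ld_t$; (c) Bonnesen then gives $(\rao-\rai)/\sqrt{\ld_t}\to0$ and the sandwich $\pi\rai^2\le\ld_t\le\pi\rao^2$ forces both normalized radii to $1/\sqrt{\pi}$. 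The one step you should spell out a little more is the centering: the $G_n$-equivariance of \eqref{eq_sto} (by uniqueness of solutions, $g(C_t)$ solves the same equation driven by the same $B$ for every linear isometry $g\in G_n$) keeps $D_t$ invariant under $G_n$, and since the set of centers of maximal inscribed disks and the center of the (unique) circumscribed disk are $G_n$-invariant convex data, they contain, respectively equal, the unique fixed point $0$ of the rotation $R_{2\pi/n}$; this is what legitimates $B(0,\rai(t))\subseteq D_t\subseteq B(0,\rao(t))$ with both balls centered at the origin, and hence the convergence without any recentering. What your route buys is independence from the external Theorem~61; what the paper's route buys is brevity and the reuse of a general result of \cite{zbMATH07470497}.
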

 
 \section{Symmetric convex sets in $\mathbb{R}^2$ with star shaped skeletons}
\label{conv-sym}
Let $\mathcal{C}$ be the set of smooth closed simple and strictly convex curves embedded in $\mathbb{R}^2$. 

Fix $n\ge 2$.
Let $\mathcal{T}_n$ be the set of closed curves  symmetric with respect to the vertical axis, denoted  $\Delta$, and invariant by the rotation $R_{2\pi/n}$ of angle $2\pi/n$ (and thus invariant by the group $G_n$ generated by these two isometries).\par
  
  Let us describe the set  $\mathcal{C}$ in terms of its curvature. Let $C_0 \in \mathcal{C} $, and let $ C : \TT \to \mathbb{R}^2$ be the parametrization of $C_0$ such that  $\theta$ is the angle between the tangent line and the $x$ axis at the point $C(\theta)$ i.e a tangent vector is $ (\cos(\theta),\sin(\theta))\in T_{C(\theta)}C$. Note that this parametrization is possible since $ \partial_s \theta = \rho(\theta) >0$ where $s$ is the arc-length parametrization (due to Fr\'enet equation). From now on, we will take this parametrization for   curves in $\mathcal{C}$.

Recall from Lemma 4.1.1 of  Gage and Hamilton \cite{Gage_Hamilton}
  that  a $2 \pi $ periodic positive function $\rho $ represents the curvature of a simple closed strictly convex plane curve if and only if 
 $I_{c,\rho}(2\pi) = I_{s,\rho}(2\pi) = 0$, where

$$ 
 I_{c, \rho}(\beta) \df \int_0 ^{\beta} \frac{\cos(\te)}{\rho(\te)} \,d\te , \quad
 I_{s, \rho}(\beta) \df  \int_0 ^{\beta} \frac{\sin(\te)}  {\rho(\te)} \,d\te \quad
  \beta  \in \TT .
 $$
 
More precisely, we have
\bq
 \mathcal{C}  &\simeq & \{ \rho \in \rC^1(\TT, ]0,\infty))\st I_{c,\rho}(2\pi) = I_{s,\rho}(2\pi) = 0 \} \times \mathbb{R}^2 \eq
 through the reciprocal bijections given by
 \bq
 \{ \theta \mapsto C(\theta)  \} &\longmapsto& (\{ \theta \mapsto \rho(\theta) \}, C(0))\\
  \{\theta \mapsto (I_{c, \rho}(\theta) , I_{s, \rho}(\theta) ) + X \} &\longmapsfrom& (\{ \theta \mapsto \rho(\theta) \}, X) \\
\eq
 
 Let us describe the set  $\mathcal{C} \cap \mathcal{T}_n  $ in terms of its curvature. Let $C \in \mathcal{C} \cap \mathcal{T}_n  $. For any $\theta\in\TT$, denote $S_\theta$ the symmetry with respect to $R_\theta(\Delta)$.
 Using the symmetry $S_0$ we have $C(-\theta)=S_0(C(\theta))$ implying that $C(0)=S_0(C(0))$ and
\begin{equation}\label{b}
 C(0) = (0, -b)\quad\hbox{for some}\quad  b\ge 0.
\end{equation}
  Using the symmetry   $S_{\pi/n}=R_{2\pi/n}S_0$ (thus belonging to $G_n$) we have:
 $C(2\pi/n - \theta) = S_{\pi/n}(C(\theta))  $,  yielding for $\theta=\pi/n$:  
\begin{equation}\label{a}
C\left(\frac{\pi}{n}\right) = R_{\pi/n}((0,-a))\quad\hbox{ for some}\quad  a\ge 0.
\end{equation}
 The two numbers $b,a$ are positive  since $ (0,0) \in \mathrm{int} (C)$ by convexity. 
 Also $C$ is completely defined by its restriction to $[0,\frac{\pi}{n}] $. Using the invariance by $G_n$  we have the following property of the associated curvature 
 \begin{equation}\label{sym}
  \rho \left(\theta +  \frac{\pi}{n}\right) = \rho\left(\frac{\pi}{n} - \theta \right),  \theta \in \lt[0, \frac{\pi}{n}\rt], \quad\hbox{ and }\rho\hbox{ is }\frac{2\pi}{n}\hbox{-periodic.}
 \end{equation}

   So $ \partial_{\theta}\rho\left(\frac{k\pi}{n}\right)  = 0 $ for all $k\in\{0,\ldots, 2n-1\}$.

 A fundamental object for the study of elements of $\mathcal{C} \cap \mathcal{T}_n$ will be the projection to some well chosen lines. Let $C\in \mathcal{C} \cap \mathcal{T}_n$. For $\theta\in (  0, \frac{\pi}{n}]$, 
 let $(0,\Pi(\theta) )$ be the intersection of the line $ D_{\te}$  orthogonal to $C$ at the point $ C(\theta)$ and the vertical axis~$\Delta$. Define 

\begin{equation}\label{Sn}
\mathcal{S}_n := \left\{ C\in \mathcal{C} \cap \mathcal{T}_n, \ \Pi \ \hbox{is \strictly increasing on}\ \left[0,\frac{\pi}{n}\right]\right\}.
\end{equation}

Define also
\begin{equation}\label{Snrho}
\mathcal{S}_n^\downarrow := \left\{ C\in \mathcal{C} \cap \mathcal{T}_n, \ \rho \ \hbox{is \strictly decreasing on}\  \left[0,\frac{\pi}{n}\right]\right\}.
\end{equation}
Notice that for $C\in \mathcal{S}_n $ or $C\in \mathcal{S}_n^\downarrow $,
 since $C \in \mathcal{C}\cap  \mathcal{T}_n  $, it is characterized by its values for $\theta \in [  0, \frac{\pi}{n}] $. 
 \begin{proposition}\label{proj-skeleton}
 	Let $C\in \mathcal{S}_n $.
  Then  $\Pi(\pi/n)=0$, and $\Pi$ has a limit $-y_0<0$ as $\theta\searrow0$, so it extends to a $C^1$ nonpositive non-increasing function on $[0,\pi/n]$. 
 \end{proposition}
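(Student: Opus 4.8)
The plan is to reduce the whole statement to the explicit expression of $\Pi$ in the angular parametrization and then analyse its two endpoints, the delicate one being $\theta\to0$. Using the representation recalled above with $C(0)=(0,-b)$, write $C(\theta)=(x(\theta),y(\theta))$ with $x(\theta)=I_{c,\rho}(\theta)$ and $y(\theta)=-b+I_{s,\rho}(\theta)$; the outer normal at $C(\theta)$ is $\nu(\theta)=(\sin\theta,-\cos\theta)$. For $\theta\in(0,\pi/n]$ one has $\sin\theta\neq0$, so the normal line $D_\theta=\{C(\theta)+t\nu(\theta):t\in\RR\}$ is transverse to $\Delta=\{x=0\}$ and meets it at the single point of ordinate
\[
\Pi(\theta)=y(\theta)+x(\theta)\cot\theta .
\]
Since $\rho$ is smooth and $\sin\theta$ does not vanish on $(0,\pi/n]$, $\Pi$ is automatically smooth there, so only the two endpoints require work.

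For the right endpoint I would argue geometrically: $C(\pi/n)$ lies on the symmetry axis $R_{\pi/n}(\Delta)$, and at a point of a symmetric curve lying on an axis of symmetry the tangent is perpendicular to that axis, so the normal is carried by it; hence $D_{\pi/n}=R_{\pi/n}(\Delta)$, a line through the origin which, for $n\ge2$, is distinct from $\Delta$. Therefore $D_{\pi/n}\cap\Delta=\{0\}$ and $\Pi(\pi/n)=0$. The same value also falls out of the formula upon inserting $C(\pi/n)=R_{\pi/n}(0,-a)=(a\sin(\pi/n),-a\cos(\pi/n))$, which I would record as a consistency check.

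For the left endpoint, from $x(\theta)/\theta\to1/\rho(0)$ and $\theta\cot\theta\to1$ I obtain $\lim_{\theta\to0}\Pi(\theta)=-b+1/\rho(0)=:-y_0$; geometrically this is exactly the centre of curvature of $C$ at the bottom point $C(0)$, which by symmetry sits on $\Delta$. Strict negativity of the limit is then free from the defining property of $\mathcal{S}_n$: since $\Pi$ is strictly increasing, $\Pi(\theta)<\Pi(\pi/n)=0$ for every $\theta\in(0,\pi/n)$, and letting $\theta\to0$ gives $-y_0\le\Pi(\theta_0)<0$ for any fixed $\theta_0$, so $y_0>0$. Combined with $\Pi(\pi/n)=0$ and the assumed monotonicity this also yields $\Pi\le0$ on the whole interval.

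The main obstacle is the $C^1$ extension at $0$. Differentiating the formula gives
\[
\Pi'(\theta)=\frac{1}{\sin^2\theta}\Bigl(\frac{\sin\theta}{\rho(\theta)}-x(\theta)\Bigr)=:\frac{g(\theta)}{\sin^2\theta},
\]
an a priori $0/0$ — indeed potentially divergent — expression, since $g(0)=0$ and $\sin^2\theta\to0$. Setting $g(\theta)=\sin\theta/\rho(\theta)-x(\theta)$, the two $\cos\theta/\rho(\theta)$ contributions cancel and $g'(\theta)=-\sin\theta\,\rho'(\theta)/\rho(\theta)^2$. Here the symmetry is decisive: \eqref{sym} forces $\rho'(0)=0$, so that $g'(\theta)=O(\theta^2)$, whence $g(\theta)=O(\theta^3)$ by integration and $\Pi'(\theta)=O(\theta)\to0$. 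Thus $\Pi'$ extends continuously to $0$ with $\Pi'(0)=0$, giving the claimed $C^1$ regularity on $[0,\pi/n]$. I expect this cancellation, together with the essential use of $\rho'(0)=0$ (without which $\Pi'$ would blow up like $1/\theta$), to be the one genuinely delicate point; the remaining estimates are routine Taylor expansions using the smoothness of $\rho$.
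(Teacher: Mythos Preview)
Your proof is correct and follows essentially the same path as the paper's. Both arguments hinge on the explicit formula $\Pi(\theta)=y(\theta)+x(\theta)\cot\theta$, the geometric observation that $D_{\pi/n}$ coincides with the symmetry axis $R_{\pi/n}(\Delta)$ (forcing $\Pi(\pi/n)=0$), and the crucial use of $\rho'(0)=0$ to show $\Pi'(\theta)\to0$. Your computation $g'(\theta)=-\sin\theta\,\rho'(\theta)/\rho(\theta)^2$ followed by integration is exactly the paper's integration-by-parts identity
\[
\Pi'(\theta)=\frac{-1}{\sin^2\theta}\int_0^\theta\frac{\rho'(\beta)\sin\beta}{\rho^2(\beta)}\,d\beta
\]
read in the other direction, so the two proofs are the same argument in different dress.
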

\begin{proof}
	Since the outward normal at $C(\theta)$ is $ \nu(\te)\df  (\sin(\theta), -\cos(\theta)) $ we have for all $\theta \in (  0, \frac{\pi}{n}) $
	$$ \Pi(\theta) = -b+  \int_0 ^{\te} \frac{\sin(\beta)}{\rho(\beta)} \,d\beta + \cot(\te)\int_{0}^{\theta} \frac{\cos(\beta)}{\rho(\beta)} \,d\beta= -b +\int_0^\theta \frac{\cos(\theta-\beta)}{\rho(\beta)\sin(\theta)} \, d\beta$$
with $b$ defined in~\eqref{b}, so 
	\begin{equation}\label{limPi}
	\lim_{ \te \searrow 0} \Pi(\theta) = -b + \frac{1}{\rho(0)}=:-y_0.
	\end{equation}
On the other hand, by symmetry of $C$, the point $(0,\Pi(\pi/n))$ also belongs to $R_{2\pi/n}(\Delta)$, so $\Pi(\pi/n)=0$. As a consequence, since we have assumed that $\Pi$ is non-decreasing, we have $y_0>0$ and $\Pi$ is negative on $[0,\pi/n)$.

From now on we let $\Pi(0):=-y_0$.

	 Using an integration by part we have for $\theta\in (0,\pi/n)$
  \bqn{pi-prime} 
 \nonumber \Pi'(\te) &= & \frac{1}{\rho(\te)\sin(\te)} - \frac{1}{\sin^2(\te)} \int_{0}^{\te} \frac{\cos(\beta)}{\rho(\beta)} \,d\beta      \\
  \nonumber  &=& \frac{1}{\rho(\te)\sin(\te)} + \frac{1}{\sin^2(\te)}  \lt( \lt[-\frac{\sin(\beta)}{\rho(\beta)}\rt]^{\te}_{0} - \int_{0}^{\te} \frac{\rho'(\beta) \sin(\beta)}{\rho^2(\beta)} \, d\beta  \rt) \\
            &=& \frac{-1}{\sin^2(\te)} \int_{0}^{\te} \frac{\rho'(\beta) \sin(\beta)}{\rho^2(\beta)} \, d\beta.  
  \eqn
 Note that $$\Pi \in \rC^1\left(\left(0,\frac{\pi}{n}\right]\right) \cap   C^0\left(\left[0,\frac{\pi}{n}\right]\right)  .$$ Taking into account that $ \lim_{\te \searrow 0}\Pi'(\te) = 0 $, due to $ \rho'(0) = 0$, we end up with $\Pi \in \rC^1([0,\frac{\pi}{n}])$. 
\end{proof}

The following result is a direct consequence of Equation~\eqref{pi-prime}:
\begin{proposition}\label{rhotoPi}
We have $\mathcal{S}_n^\downarrow\subset \mathcal{S}_n$.
\end{proposition}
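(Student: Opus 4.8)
The plan is to read the conclusion directly off the explicit formula \eqref{pi-prime} for $\Pi'$ established in the proof of Proposition \ref{proj-skeleton}, namely
\[
\Pi'(\theta) = \frac{-1}{\sin^2(\theta)} \int_{0}^{\theta} \frac{\rho'(\beta)\sin(\beta)}{\rho^2(\beta)}\,d\beta, \qquad \theta \in \left(0,\frac{\pi}{n}\right),
\]
and to exploit the sign of each factor. First I would fix $C\in\mathcal{S}_n^\downarrow$, so that by definition $\rho$ is strictly decreasing, hence $C^1$ with $\rho'\le 0$, on $[0,\pi/n]$. Since $n\ge 2$ forces $\pi/n\le\pi/2$, one has $\sin(\beta)>0$ and $\rho^2(\beta)>0$ for every $\beta\in(0,\pi/n)$, so the integrand $\rho'(\beta)\sin(\beta)/\rho^2(\beta)$ is nonpositive throughout $(0,\theta)$, whence $\Pi'(\theta)\ge 0$.

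Next I would upgrade this to a strict sign. For any $\theta\in(0,\pi/n]$ the integral $\int_0^\theta \rho'\sin/\rho^2$ can vanish only if the nonpositive continuous integrand vanishes identically on $(0,\theta)$; but $\sin(\beta)/\rho^2(\beta)>0$ there, so this would force $\rho'\equiv 0$ on $(0,\theta)$, contradicting the strict decrease of $\rho$. Hence the integral is strictly negative, and since $\sin^2(\theta)>0$ I conclude that $\Pi'(\theta)>0$ for all $\theta\in(0,\pi/n)$, and likewise at $\theta=\pi/n$ by the same computation.

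Finally, recalling from Proposition \ref{proj-skeleton} that $\Pi$ extends to a $C^1$ function on the closed interval $[0,\pi/n]$, the strict positivity of $\Pi'$ on the open interval together with continuity yields that $\Pi$ is \strictly increasing on $[0,\pi/n]$; the vanishing $\Pi'(0)=0$ at the single left endpoint does not affect strict monotonicity. Therefore $C\in\mathcal{S}_n$, which proves $\mathcal{S}_n^\downarrow\subset\mathcal{S}_n$. The only delicate point is the passage from $\rho'\le 0$ to the strict positivity of $\Pi'$, that is, ruling out flat pieces of $\rho$; this is precisely where the strictness in the definition of $\mathcal{S}_n^\downarrow$ enters, and it is handled by the elementary integrand-vanishing argument above rather than by any quantitative estimate.
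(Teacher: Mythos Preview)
Your argument is correct and follows exactly the paper's approach: the paper's entire proof is the single remark that the inclusion is ``a direct consequence of Equation~\eqref{pi-prime}'', and you have simply spelled out why. One small point worth making explicit is that although \eqref{pi-prime} appears inside the proof of Proposition~\ref{proj-skeleton}, whose hypothesis is $C\in\mathcal{S}_n$, the derivation of that formula uses only that $C\in\mathcal{C}\cap\mathcal{T}_n$, so there is no circularity in applying it to $C\in\mathcal{S}_n^\downarrow$; your extra care in upgrading $\Pi'\ge 0$ to strict positivity is also sound and more detailed than what the paper writes.
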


Consider the mapping $r$ defined by
\bq\fo \theta\in[0,\pi/n],\qquad r (\theta ) &\df& \Vert C(\te) - (0,\Pi(\te)) \Vert \eq \par Since the curve does not cross the vertical axis before $\frac{\pi}{n} $,  $r \in \rC^1((0,\frac{\pi}{n}])\cap C^0([0,\frac{\pi}{n}])  $. Hence we have the following parametrization of the curve $C$, for $\theta \in (  0, \frac{\pi}{n}] $
\bqn{Ctheta} C(\te) &=& (0,\Pi(\te)) + r(\te)(\sin(\te),-\cos(\te))\eqn
\begin{lemma}\label{Pi-r-param}
The map $\theta\mapsto (\Pi(\theta), r(\theta))$  extends to a $C^1$ map defined on $[0,\pi/n]$, and satisfying $$(\Pi(0), r(0))=\left(-b+\f1{\rho(0)},\f1{\rho(0)}\right).$$
\end{lemma}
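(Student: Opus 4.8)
The plan is to handle the two coordinate functions separately. For the first one there is nothing new to do: Proposition~\ref{proj-skeleton} already shows that $\Pi$ extends to a $\rC^1$ function on $[0,\pi/n]$, and~\eqref{limPi} gives $\Pi(0)=-y_0=-b+1/\rho(0)$. So the whole question reduces to the regularity of $r$ at the endpoint $\te=0$. First I would read off an explicit formula for $r$ from the first coordinate of the parametrization~\eqref{Ctheta}. Since $(0,\Pi(\te))$ contributes nothing to the first coordinate and the first entry of the unit vector $(\sin\te,-\cos\te)$ is $\sin\te$, while the first coordinate of $C(\te)$ is $I_{c,\rho}(\te)=\int_0^\te \cos(\beta)/\rho(\beta)\,d\beta$ (recall $C(0)=(0,-b)$), this yields
$$ r(\te)\,\sin\te \ =\ I_{c,\rho}(\te),\qquad \te\in\lt(0,\f{\pi}{n}\rt]. $$

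The key point is that both sides vanish to exactly first order at $\te=0$, so their quotient is regular there, and I would make this precise with the elementary factorization of Hadamard. Since $I_{c,\rho}$ is as regular as $\rho$ with $I_{c,\rho}(0)=0$ and $I_{c,\rho}'(0)=\cos(0)/\rho(0)=1/\rho(0)$, one writes $I_{c,\rho}(\te)=\te\,\phi(\te)$ with $\phi(\te)\df\int_0^1 \cos(t\te)/\rho(t\te)\,dt$ of the same regularity (hence at least $\rC^1$) and $\phi(0)=I_{c,\rho}'(0)=1/\rho(0)$. Likewise $\sin\te=\te\,\psi(\te)$ with $\psi(\te)\df\sin(\te)/\te$ smooth, $\psi(0)=1$, and $\psi>0$ on all of $[0,\pi/n]$ because $n\ge 2$ forces $\te\in[0,\pi/2]$. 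Dividing, for $\te\in(0,\pi/n]$ we get $r(\te)=\phi(\te)/\psi(\te)$, a ratio of $\rC^1$ functions with nowhere-vanishing denominator, which therefore extends to a $\rC^1$ function on the closed interval with
$$ r(0)\ =\ \f{\phi(0)}{\psi(0)}\ =\ \f{1}{\rho(0)}. $$

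Combining the two components gives that $\te\mapsto(\Pi(\te),r(\te))$ extends to a $\rC^1$ map on $[0,\pi/n]$ with $(\Pi(0),r(0))=(-b+1/\rho(0),1/\rho(0))$, as claimed. The only genuinely delicate point is the one just resolved: at $\te=0$ the outer normal $\nu(0)=(0,-1)$ is parallel to the axis $\Delta$, so the foot $(0,\Pi(\te))$ of the normal on $\Delta$ and the length $r(\te)$ are a priori $0/0$ indeterminate, and it is precisely the simultaneous first-order vanishing in the displayed identity that makes $r$ regular. As a sanity check the limiting values can be read geometrically: since $C(0)=(0,-b)\in\Delta$ has inward normal $(0,1)$, the point $(0,\Pi(0))=(0,-b+1/\rho(0))$ is exactly the centre of curvature of $C$ at $C(0)$ and $r(0)=1/\rho(0)$ is the corresponding radius of curvature, which is what one expects of the limiting normal foot. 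One may note that the symmetry relation~\eqref{sym} forces $\rho'(0)=0$, whence a short expansion even yields $r'(0)=0$, although this refinement is not needed for the statement.
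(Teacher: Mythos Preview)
Your argument is correct. The formula $r(\te)\sin\te=I_{c,\rho}(\te)$ follows immediately from the first coordinate of~\eqref{Ctheta}, and the Hadamard factorization cleanly removes the apparent $0/0$ singularity at $\te=0$, giving $r\in\rC^1([0,\pi/n])$ with $r(0)=1/\rho(0)$.

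The paper proceeds differently: instead of working with the explicit integral expression, it differentiates~\eqref{Ctheta} and takes inner products with $T$ and $\nu$ to obtain the differential system
\[
-\Pi'(\te)\cos\te+r'(\te)=0,\qquad \Pi'(\te)\sin\te+r(\te)=\f1{\rho(\te)},
\]
from which $r'(\te)=\Pi'(\te)\cos\te$ gives $\lim_{\te\searrow 0}r'(\te)=0$ via $\Pi'(0)=0$. Your route is more direct and self-contained for the regularity statement itself; the paper's route has the advantage that the system~\eqref{r} it establishes is reused later (for instance in the proof of Proposition~\ref{caract} to show $r(\te)\le 1/\rho(\te)$ is equivalent to $\Pi'(\te)\ge 0$, and in Proposition~\ref{isoperimetric} for the support function identities). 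Your closing remark that $\rho'(0)=0$ forces $r'(0)=0$ is consistent with what the paper obtains from $r'=\Pi'\cos\te$.
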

\begin{proof}
We are only left to prove the assertion for the map $r$.
 Putting the two parametrizations together, since $\langle C'(\te), N(\te) \rangle = 0$ and   $\langle C'(\te), T(\te) \rangle = \frac{1}{\rho(\te)}$, from \eqref{vrho}, we deduce from \eqref{Ctheta} that for any $\theta\in(0,\pi/n]$,
 $$\lim_{ \te \searrow 0}  r(\te) = \frac{1}{\rho(0)},$$ 
 \begin{equation}\label{r} \left\{ 
  \begin{array}{lcr}
  -\Pi'(\te)\cos(\te) + r'(\te) &=& 0 \\
 \Pi'(\te)\sin(\te) + r(\te) &= & \frac{1}{\rho(\te)}, \\
  \end{array} 
 \right.  
 \end{equation}
i.e
\begin{equation} \left\{
  \begin{array}{lcr}
 \begin{pmatrix}
\Pi(\te)\\
r(\te)
\end{pmatrix}^{'}
+
\begin{pmatrix}
 0 & \frac{1}{\sin(\te)}\\
 1 & \cot{\te}
\end{pmatrix}
 \begin{pmatrix}
0\\
r(\te)
\end{pmatrix}
=
\begin{pmatrix}
 0 & \frac{1}{\sin(\te)}\\
 1 &\cot{\te}
\end{pmatrix}
 \begin{pmatrix}
0\\
\frac{1}{\rho(\te)}
\end{pmatrix} \\
\begin{pmatrix}
\Pi\left(\frac{\pi}n\right)\\
r\left(\frac{\pi}n\right)
\end{pmatrix}
=
\begin{pmatrix}
0\\
a
\end{pmatrix}
\end{array} 
 \right.   
 \end{equation}
where $a$ is defined in~\eqref{a}.
Using the first equation of \eqref{r}, we get that $\lim_{\te \searrow 0} r'(\te) =0,$ so $r \in \rC^1([0,\frac{\pi}{n}]) $.
\end{proof}

\begin{proposition} \label{caract}
Let $C$ be a curve in  $\mathcal{C} \cap \mathcal{T}_n$. 
\begin{enumerate}
\item If   $\Pi$ is non-decreasing on $[0,\pi/n]$ (i.e. if $C\in \mathcal{S}_n$), then the skeleton of $C$ is $G_n(\{0\}\times [-y_0,0]).$
\item If  the skeleton of $C$ is  $G_n(\{0\}\times [-y,0])$ 
then $\Pi$ is non-decreasing.
\end{enumerate}
\end{proposition}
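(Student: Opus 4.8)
The plan is to treat the skeleton as the medial axis of the convex body $D$, i.e.\ the set of centers of maximal inscribed disks, equivalently the points of the interior having at least two nearest points on $C$, together with the endpoints of its one-dimensional arcs. Since $C$ is $G_n$-invariant, so is its skeleton, and it is enough to work in the closed sector bounded by $\Delta$ and $R_{\pi/n}(\Delta)$: there the boundary is parametrized by $\theta\in[0,\pi/n]$, the relevant reflection is $S_0$ (fixing $\Delta$ and sending $C(\theta)$ to $C(-\theta)$), and I will show that inside this sector the skeleton is exactly the segment $\{0\}\times[-y_0,0]$ on $\Delta$; the full statement then follows by applying $G_n$.

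The whole argument hinges on one identity read off the second line of \eqref{r}, namely $\Pi'(\theta)\sin\theta = \rho(\theta)^{-1}-r(\theta)$, so that for $\theta\in(0,\pi/n)$, where $\sin\theta>0$, one has $\Pi'(\theta)\ge 0$ if and only if $r(\theta)\le \rho(\theta)^{-1}$. Geometrically $r(\theta)$ is the distance, measured along the inward normal at $C(\theta)$, from $C(\theta)$ to its foot $(0,\Pi(\theta))$ on $\Delta$ (this is precisely \eqref{Ctheta}), while $\rho(\theta)^{-1}$ is the distance along the same normal to the center of curvature, i.e.\ the focal point. Thus $\Pi$ non-decreasing is equivalent to saying that along every inward normal the axis $\Delta$ is met no later than the focal point; at the endpoint $\theta=0$ one has equality $r(0)=\rho(0)^{-1}$ by Lemma~\ref{Pi-r-param}, the foot being exactly the center of curvature $(0,-y_0)$ of $C$ at $C(0)$.

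For (1), assume $\Pi$ non-decreasing, hence a homeomorphism of $[0,\pi/n]$ onto $[-y_0,0]$ by Proposition~\ref{proj-skeleton}, so that $r(\theta)\le\rho(\theta)^{-1}$ throughout. Given $p=(0,y)$ with $y\in(-y_0,0)$ I take $\theta=\Pi^{-1}(y)$; by \eqref{Ctheta} and $S_0$-symmetry the two distinct points $C(\pm\theta)$ lie at equal distance $r(\theta)$ from $p$ along the two normals through $p$. Because $r(\theta)\le\rho(\theta)^{-1}$ the disk $B(p,r(\theta))$ stays inside the osculating circles at the contact points, and, using convexity to forbid any far protrusion, it is inscribed and tangent to $C$ only at $C(\pm\theta)$; hence $p$ has two nearest points and lies on the skeleton. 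Conversely, a skeleton point $q$ in the open sector would be a first collision place of inward normals; since $r\le\rho^{-1}$ forces every inward normal to reach $\Delta$ no later than its focal point, the grassfire fronts collide first along $\Delta$ (the $S_0$-symmetric collisions), leaving no medial-axis point off $\Delta$, so $q\in\Delta$. Together with the endpoint $(0,-y_0)$, recovered as the focal point of $C(0)$, this gives both inclusions.

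For (2) I argue by contraposition through the same dictionary. If $\Pi$ is not non-decreasing, there is $\theta_0\in(0,\pi/n)$ with $r(\theta_0)>\rho(\theta_0)^{-1}$; then along the inward normal at $C(\theta_0)$ the focal point is reached strictly before $\Delta$ and strictly inside $D$, and neighbouring normals focus there, producing medial-axis points off $\Delta$, hence off all of $\bigcup_k R_{k\pi/n}(\Delta)$. This contradicts the assumption that the skeleton is $G_n(\{0\}\times[-y,0])$, so $\Pi$ must be non-decreasing. The main obstacle is exactly the rigorous version of this grassfire dictionary: in (1), upgrading the pointwise inequality $r\le\rho^{-1}$ to genuine inscribedness, i.e.\ proving that the candidate disks $B((0,\Pi(\theta)),r(\theta))$ are \emph{globally} contained in $D$ (this is where convexity and the monotone nesting of the disks enter), and in (2), extracting an honest off-axis medial-axis point from a normal that focuses before meeting the axis. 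Once these are in place the bijection between axis skeleton points and $S_0$-symmetric normal pairs is comparatively soft, and applying $G_n$ completes the characterization.
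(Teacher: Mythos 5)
Your dictionary $\Pi'(\theta)\sin\theta=\rho(\theta)^{-1}-r(\theta)$ is exactly the identity the paper exploits, and your overall architecture (both inclusions for (1), the same identity for (2)) matches the paper's. But the step you yourself flag as ``the main obstacle'' --- upgrading the pointwise inequality $r(\theta)\le \rho(\theta)^{-1}$ to the global containment $\bar B((0,\Pi(\theta)),r(\theta))\subset \bar D$ --- is a genuine gap, and the justification you offer for it is not valid. Being inside the osculating circle at the tangency point, plus convexity of $D$, does \emph{not} prevent the disk from protruding through a distant part of the boundary: take a long thin convex body and a boundary point where the curvature is small; a disk internally tangent there of radius well below $1/\rho$ can still stick far out of the body. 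What actually makes the containment work in the paper is (i) the \emph{global} monotonicity of $\Pi$ on $[0,\pi/n]$, used in a contradiction argument comparing distances to nearest points (if some $C(\theta')$ with $\theta'<\theta$ touched the circle of radius $r(\theta)$, one builds an intermediate angle $\alpha$ whose normal foot $q_\alpha$ violates the triangle inequality), which only yields that a circular \emph{sector} of angular width $2\pi/n$ of the disk lies in $\bar D$; and (ii) the $G_n$-invariance and convexity of $\bar D$, which show that the convex hull of the $n$ rotated copies of that sector contains the whole disk. The symmetry is thus essential to the containment, not merely a device for reducing to one sector, and nothing in your sketch replaces it.

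The remaining soft spots are of the same nature. For the reverse inclusion in (1), ``the grassfire fronts collide first along $\Delta$'' needs to be replaced by the paper's uniqueness-of-nearest-point argument: for $P=(0,\Pi(\theta))+r\nu(\theta)$ with $0<r<r(\theta)$, a nearest point $C(\theta')$ with $\theta'\neq\theta$ would force the normals $D_\theta$ and $D_{\theta'}$ to meet at $P$ and hence contradict the monotonicity of $\Pi$, and then $d(P,C(\theta))<r(\theta)\le 1/\rho(\theta)$ rules out a focal singularity. For (2), your contrapositive via ``neighbouring normals focus before the axis, producing off-axis medial points (off \emph{all} the axes)'' is plausible but unproven; the paper's direct route is shorter: if the skeleton lies on the axes then $B((0,\Pi(\theta)),r(\theta))\subset D$, whence $r(\theta)\le 1/\rho(\theta)$ by curvature comparison at the tangency, and the identity gives $\Pi'\ge 0$. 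In short, you have located the right pivot but the two load-bearing steps (global disk containment, and uniqueness of the nearest point off the axis) are asserted rather than proved, and the first one cannot be proved by the local osculating-circle argument you propose.
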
 
\begin{proof}
(1) First assume that $C\in \mathcal{S}_n$.
Denote by $S$ the skeleton of $C$. 

a) First we prove that $G_n(\{0\}\times [-y_0,0])\subset S$. For this it is sufficient to prove that for all $\theta\in [0,\pi/n]$, the point $(0,\Pi(\theta))$ belongs to $S$. 

 We only need to prove it for $\theta\in (0,\pi/n)$ since the skeleton is closed. For the same reason we can also assume that $\Pi'(\theta)>0$. So let $\theta\in (0,\pi/n)$ with $\Pi'(\theta)>0$. The closed disk $\bar B((0,\Pi(\theta)), r(\theta))$ centered at $(0,\Pi(\theta))$ and with radius $r(\theta)$ meets $C$ at least at the two points $C(\theta)$ and $C(-\theta)$. To prove that $(0,\Pi(\theta))\in S$ we need to prove that it is inside $\bar D$. This will be done in two steps.
\begin{itemize}
\item
 We prove that the set 
$$
\left\{ (0,\Pi(\theta))+r (\cos\varphi, \sin\varphi) |\ 0\le r\le r(\theta), \ -\frac\pi2-\frac{\pi}n\le \varphi\le -\frac\pi2+\frac{\pi}n\right\}
$$
is included in $\bar D$.\\
 The proof is by contradiction,
assume there exists $\theta'\in [0,\theta)$ such that  
$
\Vert C(\te') - (0,\Pi(\te)) \Vert =r(\theta)
$. Consider the closed disk $\cC$ centred at $(0,\Pi(\theta))$ of radius $r(\theta)$, passing through $C(\theta)$ and $C(\theta')$, see Figure \ref{cC1}.

On one hand, by~\eqref{r} we have  $r(\theta)< \frac1{\rho(\theta)}$, so for $\alpha<\theta$ and $\alpha$ close to $\theta$, the points $C(\alpha)$ are outside
the disk $\cC$. On the other hand, since $\Pi(\alpha)<\Pi(\theta)$,
there exists $ q_{\alpha} \in D_{\alpha} \cap ((0,\Pi(\te)),C(\te')]$. As we can see in the proof of the point b) below,  $C(\alpha) $
is the nearest point of $q_{\alpha}$ in $C$. We have $ \Vert q_{\alpha} - C(\alpha) \Vert \le  \Vert q_{\alpha} - C(\te') \Vert$. Hence 
\begin{align*}
 \Vert C(\alpha) - (0, \Pi(\te) ) \Vert &< \Vert q_{\alpha} - C(\alpha) \Vert + \Vert q_{\alpha} - (0, \Pi(\te) ) \Vert \\
 & \le \Vert q_{\alpha} - C(\te') \Vert + \Vert q_{\alpha} - (0, \Pi(\te) ) \Vert = r(\te) \\ 
\end{align*}
and we get a contradiction.

\begin{figure}[ht]
\caption{\label{cC1}}
\begingroup%
  \makeatletter%
  \providecommand\color[2][]{%
    \errmessage{(Inkscape) Color is used for the text in Inkscape, but the package 'color.sty' is not loaded}%
    \renewcommand\color[2][]{}%
  }%
  \providecommand\transparent[1]{%
    \errmessage{(Inkscape) Transparency is used (non-zero) for the text in Inkscape, but the package 'transparent.sty' is not loaded}%
    \renewcommand\transparent[1]{}%
  }%
  \providecommand\rotatebox[2]{#2}%
  \newcommand*\fsize{\dimexpr\f@size pt\relax}%
  \newcommand*\lineheight[1]{\fontsize{\fsize}{#1\fsize}\selectfont}%
  \ifx\svgwidth\undefined%
    \setlength{\unitlength}{199.27298238bp}%
    \ifx\svgscale\undefined%
      \relax%
    \else%
      \setlength{\unitlength}{\unitlength * \real{\svgscale}}%
    \fi%
  \else%
    \setlength{\unitlength}{\svgwidth}%
  \fi%
  \global\let\svgwidth\undefined%
  \global\let\svgscale\undefined%
  \makeatother%
  \begin{picture}(1,0.74965599)%
    \lineheight{1}%
    \setlength\tabcolsep{0pt}%
    \put(0,0){\includegraphics[width=\unitlength,page=1]{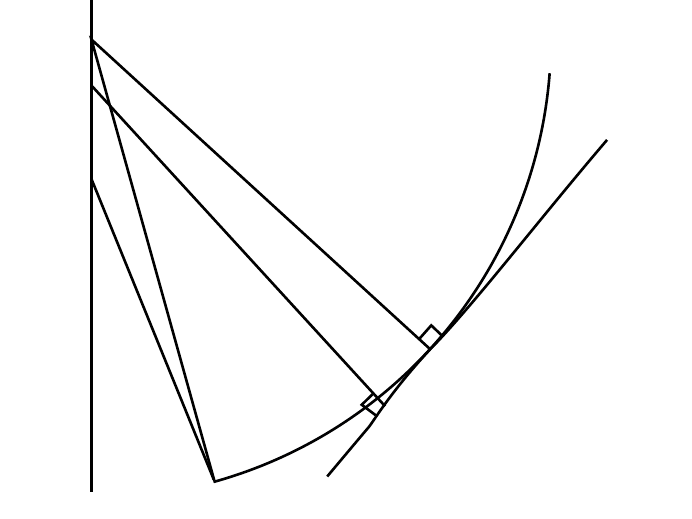}}%
    \put(0.00612993,0.68326662){\color[rgb]{0,0,0}\makebox(0,0)[lt]{\lineheight{1.25}\smash{\begin{tabular}[t]{l}$\Pi(\theta)$\end{tabular}}}}%
    \put(0.00612993,0.61797106){\color[rgb]{0,0,0}\makebox(0,0)[lt]{\lineheight{1.25}\smash{\begin{tabular}[t]{l}$\Pi(\alpha)$\end{tabular}}}}%
    \put(0.00533881,0.48137625){\color[rgb]{0,0,0}\makebox(0,0)[lt]{\lineheight{1.25}\smash{\begin{tabular}[t]{l}$\Pi(\theta')$\end{tabular}}}}%
    \put(0.87618433,0.5077036){\color[rgb]{0,0,0}\makebox(0,0)[lt]{\lineheight{1.25}\smash{\begin{tabular}[t]{l}$C$\end{tabular}}}}%
    \put(0.69610917,0.592806){\color[rgb]{0,0,0}\makebox(0,0)[lt]{\lineheight{1.25}\smash{\begin{tabular}[t]{l}$\mathcal O$\end{tabular}}}}%
    \put(0.63566642,0.20729844){\color[rgb]{0,0,0}\makebox(0,0)[lt]{\lineheight{1.25}\smash{\begin{tabular}[t]{l}$C(\theta)$\end{tabular}}}}%
    \put(0.57087248,0.13051339){\color[rgb]{0,0,0}\makebox(0,0)[lt]{\lineheight{1.25}\smash{\begin{tabular}[t]{l}$C(\alpha)$\end{tabular}}}}%
    \put(0.29978423,0.00900224){\color[rgb]{0,0,0}\makebox(0,0)[lt]{\lineheight{1.25}\smash{\begin{tabular}[t]{l}$C(\theta')$\end{tabular}}}}%
    \put(0.17230162,0.59543361){\color[rgb]{0,0,0}\makebox(0,0)[lt]{\lineheight{1.25}\smash{\begin{tabular}[t]{l}$q_\alpha$\end{tabular}}}}%
    \put(0.49189777,0.37556662){\color[rgb]{0,0,0}\makebox(0,0)[lt]{\lineheight{1.25}\smash{\begin{tabular}[t]{l}$r(\theta)$\end{tabular}}}}%
    \put(0.27595823,0.21109452){\color[rgb]{0,0,0}\makebox(0,0)[lt]{\lineheight{1.25}\smash{\begin{tabular}[t]{l}$r(\theta)$\end{tabular}}}}%
  \end{picture}%
\endgroup
\end{figure}
A similar contradiction is obtained if we assume 
there exists  $\theta'\in (\theta,\pi/n]$, with $
\Vert C(\te') - (0,\Pi(\te)) \Vert =r(\theta)
$.\\  We get the wanted inclusion.

\item We easily check that the convex hull $H(\theta)$ of the $n$ pieces of disks 
$$
G_n\left(\left\{ (0,\Pi(\theta))+r (\cos\varphi, \sin\varphi) |\ 0\le r\le r(\theta), \ -\frac\pi2-\frac{\pi}n\le \varphi\le -\frac\pi2+\frac{\pi}n\right\}\right)
$$
contains $\bar B((0,\Pi(\theta)), r(\theta))$ (check for instance that the curvature of its boundary is everywhere smaller than $1/r(\theta)$). But $H(\theta)\subset \bar D$ since $\bar D$ is left invariant by $G_n$ and convex. As a conclusion, $\bar B((0,\Pi(\theta)), r(\theta))\subset \bar D$, so $(0,\Pi(\theta))\in S$.
\end{itemize}

b) Finally we prove that $S\subset G_n(\{0\}\times [-y_0,0])$. For $\theta\in [0,\pi/n]$ and $r\in (0, r(\theta))$, consider the point $P=(0,\Pi(\theta))+ r\nu(\theta)$. We have to prove that it does not belong to $S$. 
Consider $\theta'\in[0,2\pi)$ such that $C(\theta')$ minimizes the distance between $ P$ and $C$.
First note that we must have $\theta'\in [0,\pi/n]$, otherwise the minimizing segment would cross an axis of symmetry, allowing to construct a shorter segment from $P$ to $C$.
Next let us show that necessarily $\theta'=\theta$. Indeed, otherwise, the lines $D_{\theta}$ and $D_{\theta'}$ would then intersect at $P$.
Assume for instance that $\theta<\theta'$, then we would get
 that $\Pi(\theta)>\Pi(\theta')$, which is forbidden.
 Finally, since $d(P,C(\theta))<r(\theta)\le 1/\rho(\theta)$, the distance to $C$ is not singular at $P$ and $P$ cannot belong to $S$. 
Using all symmetries, this proves  that the complementary of $G_n(\{0\}\times [-y_0,0])$ in $\bar D$ does not meet the cutlocus~$S$ of distance to~$C$.

(2) Assume that the skeleton of $C$ is $G_n(\{0\}\times [-y_0,0]).$ Then for all $\theta\in (0,\pi/n)$, we have $B((0,\Pi(\theta)), r(\theta))\subset D$. This implies that $r(\theta)\le 1/\rho(\theta)$.  Then  by~\eqref{r} we get 
$$
\Pi'(\theta)=\frac{\frac1{\rho(\theta)}-r(\theta)}{\sin(\theta)}\ge 0,
$$
so $\Pi$ is non-decreasing.
\end{proof}

\begin{proposition} \label{prop_S}
The set of curve $\mathcal{S}_n^\downarrow $ is stable under the stochastic curvature flow namely \eqref{eq_sto_CF}. It is also stable under the usual deterministic curvature flow.
\end{proposition}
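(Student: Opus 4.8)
The plan is to show that the two defining properties of $\mathcal{S}_n^\downarrow$—membership in $\mathcal{C}\cap\mathcal{T}_n$, and strict decrease of the curvature on $[0,\pi/n]$—are each propagated by the flow, working throughout in the angular parametrization. \emph{First} I would dispose of the class $\mathcal{C}\cap\mathcal{T}_n$. Positivity of $\rho$ (hence convexity and smoothness, so membership in $\mathcal{C}$ together with the closure condition \eqref{cores}) is preserved by Lemma~\ref{conserve_convex} for \eqref{eq_sto} and by the analogous positivity statement for \eqref{eq_sto_CF}. For the $G_n$-symmetry I would argue by equivariance and uniqueness: the curvature equation \eqref{eq_sto_rho} (and its deterministic analogue) is invariant under $\theta\mapsto-\theta$ and $\theta\mapsto\theta+2\pi/n$ \emph{driven by the same Brownian motion}, since $\partial_\theta^2\rho$ is even under reflection and the reaction term is a pointwise function of $\rho$. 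Hence if $\rho_0$ satisfies \eqref{sym}, then $\theta\mapsto\rho_t(-\theta)$ and $\theta\mapsto\rho_t(\theta+2\pi/n)$ solve the same equation with the same initial datum, so by the uniqueness of Theorem~\ref{th_cores} (resp.\ Corollary~\ref{cor2}) the relations \eqref{sym} persist for all $t$. In particular $\partial_\theta\rho_t(k\pi/n)=0$ for every $t$ and every $k$, which will serve as boundary conditions.

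The crux is the monotonicity, and the key observation is that the right variable is $1/\rho$, not $\rho$. Indeed, exactly as in \eqref{d1r}, for both flows Itô's formula gives the drift of $1/\rho$ equal to $-(\partial_\theta^2\rho+\rho)$, while the stochastic part of $d(1/\rho)$ is the \emph{spatially constant} term $\sqrt2\,dB_t$ (and there is no noise at all in the deterministic case). Therefore, setting $u:=\partial_\theta(1/\rho)$ and using that $\partial_\theta$ commutes with $d_t$ and that $\partial_\theta(\sqrt2\,dB_t)=0$, the quantity $u$ satisfies a \emph{pathwise} (noise-free) evolution equation. Writing it out in terms of $\rho$ and $u$ one finds
\[
\partial_t u=\rho^2\,\partial_\theta^2 u-6\rho^3\,u\,\partial_\theta u+\bigl(\rho^2+6\rho^4u^2\bigr)u .
\]
Since $\rho>0$, this is a uniformly parabolic equation with leading coefficient $\rho^2>0$, and it is linear in $(u,\partial_\theta u,\partial_\theta^2 u)$ with coefficients that are bounded up to the lifetime by the regularity estimates (Lemma~\ref{eq_rho_theta} together with parabolic smoothing under Hypothesis~\ref{hypoC02}). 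Crucially, because $\rho>0$ we have the equivalence: $\rho_t$ is non-increasing on $[0,\pi/n]$ if and only if $u=-\partial_\theta\rho/\rho^2\ge 0$ there.

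\emph{Finally} I would run the maximum principle on the interval $[0,\pi/n]$, following verbatim the scheme already used in Lemma~\ref{conv}, Corollary~\ref{conv_plus} and Proposition~\ref{min-rho}. The boundary values $u(0,t)=u(\pi/n,t)=0$ hold for all $t$ by the symmetry established above, and $u_0\ge 0$ with $u_0\not\equiv 0$ since $\rho_0$ is strictly decreasing. Considering $e^{-Kt}u$ with $K$ larger than the (bounded) zeroth-order coefficient and examining a first interior negative minimum yields the usual contradiction, so $u\ge 0$ is preserved; the strong maximum principle (Hopf) then upgrades this to $u>0$ on $(0,\pi/n)$ for $t>0$, i.e.\ $\rho_t$ is strictly decreasing on $[0,\pi/n]$. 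Combined with the first paragraph this gives $C_t\in\mathcal{S}_n^\downarrow$, and the deterministic curvature flow is handled by the identical computation, the noise term being simply absent. The only genuinely delicate point is the passage to $1/\rho$: it is precisely this change of variable that makes the multiplicative, $\theta$-dependent noise of the curvature SPDE collapse into a constant additive term, so that $\partial_\theta(1/\rho)$ obeys a deterministic parabolic equation and the whole stochastic problem reduces to a pathwise maximum-principle argument; verifying the boundedness of the coefficients up to $\tau$ and justifying the strong maximum principle for the strictness are the remaining technical checks.
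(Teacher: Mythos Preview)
Your proposal is correct and follows essentially the same route as the paper: pass to $1/\rho$ so that the noise becomes the spatially constant term $\sqrt2\,dB_t$, differentiate in $\theta$ to obtain a pathwise parabolic equation for $\partial_\theta(1/\rho)$, use the $G_n$-symmetry (preserved by uniqueness) to get Dirichlet boundary conditions at $0$ and $\pi/n$, and conclude by a weighted maximum-principle argument. The paper works with $\psi=\partial_\theta\rho/\rho^2=-u$ and writes the equation as
\[
\partial_t\psi=\rho^2\partial_\theta^2\psi+4\rho(\partial_\theta\rho)\partial_\theta\psi+(\partial_\theta^2\rho^2+\rho^2)\psi,
\]
which is the same equation as yours after substituting $u=-\rho'/\rho^2$; one small slip in your write-up is the assertion that your form is ``linear in $(u,\partial_\theta u,\partial_\theta^2 u)$'' --- as written it has $u\,\partial_\theta u$ and $u^3$ terms, but recasting the coefficients in terms of $\rho,\rho',\rho''$ (as above) makes it genuinely linear in $\psi$ with coefficients bounded on $[0,t^*]$, which is exactly what the maximum principle needs. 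Your invocation of the strong maximum principle for strictness is an extra refinement the paper does not spell out.
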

\begin{proof}
Let $C_0 $ be a curve in $\mathcal{S}_n^\downarrow $, and $\rho_0$ the associated curvature function, by hypothesis $ \partial \rho_0 (\te) \le 0$ for $\te \in  [0,\frac{\pi}{n}]$.
Let $C_t $ be the solution of the stochastic curvature started at $C_0$, namely the solution of:
 \begin{equation*} \left \{  \begin{array}{lcl}
 d_t C(t,u) & = & (-\rho_t(C(t,u)) )\nu_{C(t,u)}dt + \sqrt{2} \nu_{C(t,u)} dB_t  \\
  C(0,u) &= & C_0(u). \\
 \end{array}  \right. \end{equation*}
 Using the parametrization by the angle $\te $ of the tangent vector and the horizontal axis as above we have, denoting $\rho(t,\theta)\df\rho_t(\theta)$, 
 \begin{equation*} \left \{  \begin{array}{lcl}
  d_t\rho(t,\theta) = \rho^2(t,\theta) (\partial^2_{\theta} \rho(t,\theta)) dt+ \rho^2(t,\theta)   \left( 3\rho(t,\theta)dt - \sqrt{2} dB_t  \right), \\
  \rho(0,\cdot) = \rho_0.
 \end{array}  \right. \end{equation*}
 (see \eqref{eq_rho} with $h$ replaced by 0).
 
 Using Lemma \ref{conserve_convex} we get that $\rho_t>0$ for $ t < \tau $ where $\tau  $ is any lifetime of the stochastic curvature flow. Using It\^o formula, we have for $ 0 \le t < \tau  $
\begin{equation}\label{central} d \frac{1}{\rho_t(\theta)} = (-\partial^2_{\theta} \rho_t(\theta)   - \rho_t(\theta)) dt + \sqrt{2}dB_t,
\end{equation}
 
  Computations similar to those of the proof of  Theorem \ref{th_cores} show that $ I_{c,\rho_t}(2\pi) = I_{s,\rho_t}(2\pi) = 0  .$
Recall $ S_0 $ is the reflection with respect to the vertical axis.
Using the uniqueness of the stochastic curvature flow, we have 
$$ S_0 (C_t(C_0)) = C_t(S_0 (C_0))=C_t(C_0). $$ \par Doing the same thing with the rotation $R_{2\pi/n}$,
it follows that $\rho_t$ satisfies Equation \eqref{sym}.
To get the result we only have to show that $\partial_{\te} \rho_t(\te) \le 0$ for $  0 \le t < \tau $ and $\te \in (0, \frac{\pi}{n})$.\par

Differentiating \eqref{central} in $\te$ we get:
$$d \left(\frac{\partial_{\te}\rho_t(\theta) }{\rho^2_t(\theta) }  \right) = \partial^2_{\theta} (\partial_{\te}\rho_t(\theta)) dt + \partial_{\te}\rho_t(\theta) dt. $$
Let $ \psi_t(\te) = \frac{\partial_{\te}\rho_t(\theta) }{\rho^2_t(\theta) }$, then $ \psi $ satisfies the following partial differential equation with stochastic coefficient and with lifetime $\tau $ (see Lemma \ref{conserve_convex}):
\begin{align*}
\partial_t \psi_t(\te)&=  \partial^2_{\theta} (\psi_t(\te) \rho^2_t(\theta) ) + \psi_t(\te)\rho^2_t(\theta)\\
&= \rho^2_t(\theta) \partial^2_{\theta} \psi_t(\te) + 4 \rho_t(\theta) (\partial_{\theta} \psi_t(\te)) (\partial_{\theta} \rho_t(\theta) ) +  \psi_t(\te)  \left( \partial^2_{\theta} \rho^2_t(\theta) +   \rho^2_t(\theta)   \right)
\end{align*} 
with initial condition $\psi_0(\te) = \frac{\partial_{\te}\rho_0(\theta) }{\rho^2_0\theta) }  $. By hypothesis $\psi_0(\te) \le 0 $. Note also by the conservation
of the symmetry that we have the boundary conditions $ \psi_t(0)= \psi_t(\frac{\pi}{n})= 0.$
To show that $ \partial_{\te}\rho_t(\te)\le 0 $ for all $ t< \tau$ we will argue by contradiction.
Suppose that there exists $ t^{*} < \tau $ and $ \te \in [0, \frac{\pi}{n}]$ such that $ \partial_{\te}\rho_{t^{*}}(\te) > 0 $  so $\psi_{t^{*}}(\te) > 0 $.
 Let  $$ \mu = -2  \left( \Vert \partial^2_{\theta} \rho^2_.(.)\Vert_{[0, t^{*}] \times [0, \frac{\pi}{2}]}  + \Vert \rho^2_.(.) \Vert_{[0, t^{*}] \times [0, \frac{\pi}{2}]}  \right) > - \infty ,$$
and $ W_t(\te) = e^{\mu t} \psi_t(\te) $,  which satisfies the following equation: 
\begin{equation}\label{max}
\begin{split}
&\partial_t W_t(\te) \\&= \rho^2_t(\theta) \partial^2_{\theta}W_t(\te) + 4 \rho_t(\theta)(\partial_{\theta} \rho_t(\theta) )(\partial_{\theta} W_t(\te)) + W_t(\theta)( \partial^2_{\theta} \rho^2_t(\te) + \rho^2_t(\te) + \mu ).
\end{split}
\end{equation}
\par
Define $\aaaa\df\sup_{\te \in [0, \frac{\pi}{n}]} W_{t^{*}}(\te) >0 $, \bq t_0&\df &\inf \lt\{ t\le t^{*},\,s.t. \, \sup_{\te \in [0, \frac{\pi}{n}]}W_{t}(\te) = \aaaa \rt\} \eq
and let $ \te^{*}$ be such that $W_{t_0}(\te^{*}) = \aaaa$. From boundary conditions we have  $ \te^{*} \in ] 0, \frac{\pi}{n} [$.
At $(t_0,\te^{*}) $ we have 
\begin{align*}
\partial_{t}W_t(\te^{*})_{\vert _{t_0}} \ge 0,   \quad \partial^2_{\theta}W_{t_0}(\te)_{\vert_{\te^{*}}}  \le 0,  \quad \partial_{\theta}W_{t_0}(\te)_{\vert_{\te^{*}}}  = 0.
\end{align*}
Using equation \eqref{max} we get the contradiction, since
$$ 0 \le \partial_{t}W_t(\te^{*})_{\vert _{t_0}}  \le   \aaaa \frac{\mu}{2} <0 .$$
With a similar proof, we get  the second part, namely the conservation of the class $\mathcal{S}_n^\downarrow $ under  the usual deterministic curvature flow.    
\end{proof}

\begin{corollary}
The class of domain $\mathcal{S}_n^\downarrow $ is also stable under the normalized stochastic curvature flow \eqref{eq_sto}.
\end{corollary}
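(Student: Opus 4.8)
The plan is to reduce the statement to the maximum principle argument already carried out in the proof of Proposition~\ref{prop_S}, by observing that the only structural difference between the stochastic curvature flow \eqref{eq_sto_CF} and the normalized flow \eqref{eq_sto} is the addition of the non-local drift $2h_t$, and that this term is \emph{independent of the angular variable} $\theta$, hence invisible to $\partial_\theta$.

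First I would record the preliminary facts, all of which are already available for \eqref{eq_sto}. Under Hypothesis~\ref{hypoC02}, Theorem~\ref{th_cores} and Lemma~\ref{conserve_convex} guarantee that along \eqref{eq_sto} the curvature $\rho_t$ stays strictly positive and the relation~\eqref{cores} is preserved, so that $C_t\in\mathcal{C}$ for all $t<\tau$. Moreover $h_t$ is a scalar geometric quantity, invariant under the isometries generating $G_n$; hence by the uniqueness provided by Corollary~\ref{cor1} and the equivariance of \eqref{eq_sto} under $G_n$, the symmetry of $C_0$ propagates, giving $C_t\in\mathcal{C}\cap\mathcal{T}_n$ and the symmetry relation \eqref{sym} for $\rho_t$. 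In particular $\partial_\theta\rho_t(0)=\partial_\theta\rho_t(\pi/n)=0$. It therefore only remains to show that the monotonicity $\partial_\theta\rho_t\le 0$ on $[0,\pi/n]$ is preserved.

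The key computation is the following. By Proposition~\ref{prop-ev}(ii) the process $1/\rho_t$ obeys
\[ d\,\frac{1}{\rho_t(\theta)}=\lt(-\partial_\theta^2\rho_t(\theta)-\rho_t(\theta)+2h_t\rt)\,dt+\sqrt{2}\,dB_t, \]
which differs from \eqref{central} only through the $\theta$-independent term $2h_t\,dt$. Differentiating in $\theta$ annihilates this term (and the martingale part as well, since $B_t$ does not depend on $\theta$), so that $\psi_t\df\partial_\theta\rho_t/\rho_t^2$ satisfies \emph{exactly} the same pathwise linear parabolic equation as in the proof of Proposition~\ref{prop_S}, namely
\[ \partial_t\psi_t(\theta)=\rho_t^2(\theta)\,\partial_\theta^2\psi_t(\theta)+4\rho_t(\theta)\,\partial_\theta\psi_t(\theta)\,\partial_\theta\rho_t(\theta)+\psi_t(\theta)\lt(\partial_\theta^2\rho_t^2(\theta)+\rho_t^2(\theta)\rt), \]
with boundary data $\psi_t(0)=\psi_t(\pi/n)=0$ and initial data $\psi_0\le 0$. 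Here $h_t$ has completely disappeared. Consequently the maximum principle argument leading to \eqref{max} applies unchanged: setting $W_t=e^{\mu t}\psi_t$ with $\mu$ chosen, as in Proposition~\ref{prop_S}, from the uniform bounds on $\rho_t$, $\partial_\theta^2\rho_t^2$ and $\rho_t^2$ over a compact interval $[0,t^*]\subset[0,\tau)$, a first touching time $t_0$ of a strictly positive supremum at an interior point $\theta^*$ would force $\partial_tW\ge0$, $\partial_\theta^2W\le0$ and $\partial_\theta W=0$ there, in contradiction with \eqref{max}. Hence $\psi_t\le 0$, i.e.\ $\rho_t$ is non-increasing on $[0,\pi/n]$ for all $t<\tau$, which yields $C_t\in\mathcal{S}_n^\downarrow$ exactly as in Proposition~\ref{prop_S}.

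There is essentially no obstacle beyond this bookkeeping: the whole content of the corollary is the remark that the non-local drift $h_t$ is angularly constant and therefore drops out under $\partial_\theta$, so that the monotonicity analysis for \eqref{eq_sto} is identical to the one for \eqref{eq_sto_CF}. The only point requiring a little care is ensuring that the coefficients $\rho_t$ and $\partial_\theta^2\rho_t^2$ remain bounded on every $[0,t^*]\subset[0,\tau)$, so that the constant $\mu$ used in the maximum principle is finite; this is guaranteed by Lemma~\ref{eq_rho_theta} (boundedness of $\rho,\partial_\theta\rho,\partial_\theta^2\rho$ up to $\tau$) together with the boundedness of $h_t$ coming from Lemma~\ref{prop-h-sm}.
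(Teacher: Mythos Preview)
Your argument is correct: differentiating the evolution of $1/\rho_t$ in $\theta$ kills both the $\theta$-independent drift $2h_t$ and the noise, so $\psi_t=\partial_\theta\rho_t/\rho_t^2$ satisfies \emph{exactly} the same linear parabolic equation as in Proposition~\ref{prop_S}, and the maximum principle goes through verbatim. The boundedness of the coefficients on compact subintervals of $[0,\tau)$ is indeed supplied by Lemma~\ref{eq_rho_theta}.

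The paper, however, takes a different and much shorter route. It observes that the drift $2h_t$ in \eqref{eq_sto} differs from \eqref{eq_sto_CF} only by a term multiplying $dt$ in front of the same noise direction $\nu$, so that by Girsanov's theorem the law of the solution of \eqref{eq_sto} is absolutely continuous with respect to that of \eqref{eq_sto_CF} on every $[0,t]\subset[0,\tau)$. A change of probability cannot alter the state space of the process; hence the pathwise set $\{C_t\in\mathcal{S}_n^\downarrow\ \forall t<\tau\}$, which has full probability under the law of \eqref{eq_sto_CF} by Proposition~\ref{prop_S}, also has full probability under the law of \eqref{eq_sto}. Your approach has the advantage of being self-contained and of isolating the structural reason (angular constancy of $h_t$) explicitly; the paper's approach buys brevity and avoids re-running the maximum principle, at the cost of invoking absolute continuity of the two laws.
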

\begin{proof}
Since the solutions of \eqref{eq_sto} are obtained by a change of probability from the solutions of the stochastic curvature flow, the state space does not change, and the result follows from Proposition \ref{prop_S}. 
\end{proof}
 
 \section{A new isoperimetric estimate} 
 
 Let us end our consideration of $\mathcal{S}_n$ by observing that its elements are quite round when $n^2$ is much larger that the length of their skeleton:
 \begin{proposition}\label{isoperimetric}
 For any curve $C$ in the set $\mathcal{S}_n $ defined in~\eqref{Sn} (and in particular with   skeleton $ G_n\left(\{0\}\times \left[ -{L(C)}/{n},0\right]\right) $)
we have
\begin{align*} \pi^2 (\rao -\rai)^2 &\le \sigma^2(C) - 4\pi \vol(D)\\&
   \le\ \frac{2\pi^2 }{n^2}  L(C)^2 \left(1-\frac{\sin\left(\frac{2\pi}{n}\right)}{\frac{2\pi}{n}}\right)\\&
\le \frac{4\pi^4}{3n^4}L(C)^2 ,
\end{align*}
 where  $ L(C)$ is the length of the skeleton of $C$.
 \end{proposition}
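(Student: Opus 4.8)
The plan is to prove the three inequalities separately: the leftmost is immediate, the rightmost is elementary, and the whole difficulty lies in the middle estimate, which I would obtain by computing $\sigma$ and $\vol(D)$ explicitly on the fundamental angular sector $[0,\pi/n]$ in terms of the support function. The leftmost inequality is exactly Bonnesen's inequality \eqref{Bonne}. The rightmost one follows from the elementary bound $1-\frac{\sin x}{x}\le \frac{x^2}{6}$ (valid for $x\ge 0$, since $\sin x\ge x-\frac{x^3}{6}$), applied with $x=2\pi/n$, which turns the factor $1-\frac{\sin(2\pi/n)}{2\pi/n}$ into at most $\frac{2\pi^2}{3n^2}$ and hence $\frac{2\pi^2}{n^2}L(C)^2$ into at most $\frac{4\pi^4}{3n^4}L(C)^2$.

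For the middle inequality I would set $\alpha:=\pi/n$ and $y_0:=L(C)/n$, so that by Proposition \ref{caract} the skeleton is $G_n(\{0\}\times[-y_0,0])$, and by Proposition \ref{proj-skeleton} the function $\Pi$ is $C^1$ and non-decreasing on $[0,\alpha]$ with $\Pi(0)=-y_0$ and $\Pi(\alpha)=0$; in particular $|\Pi|\le y_0$ there. Reading off \eqref{Ctheta}, introduce the support function $p(\theta):=\langle C(\theta),\nu(\theta)\rangle = r(\theta)-\Pi(\theta)\cos\theta$ (the origin may be taken inside $D$). The crucial simplification is that, using the first relation $r'=\Pi'\cos\theta$ of \eqref{r}, one finds $p'=\Pi\sin\theta$, whence the Neumann-type conditions $p'(0)=p'(\alpha)=0$ hold (the second because $\Pi(\alpha)=0$); moreover the second relation of \eqref{r} gives the standard identity $1/\rho=p+p''$.

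With these in hand I would use the $G_n$-symmetry to reduce the global formulas $\sigma=\int_0^{2\pi}p\,d\theta$ and $\vol(D)=\frac12\int_0^{2\pi}\frac{p}{\rho}\,d\theta$ from (the proof of) Proposition \ref{prop_rho} to the fundamental sector, obtaining $\sigma=2n\int_0^\alpha p\,d\theta$ and, after one integration by parts in which the boundary terms vanish because $p'(0)=p'(\alpha)=0$, $\vol(D)=n\int_0^\alpha (p^2-p'^2)\,d\theta$. Writing $\bar p:=\frac1\alpha\int_0^\alpha p$ and using $4\pi n=4n^2\alpha$, a short algebraic manipulation yields the exact identity
$$\sigma^2-4\pi\vol(D) = 4\pi n\left[\int_0^\alpha (p')^2\,d\theta - \int_0^\alpha (p-\bar p)^2\,d\theta\right]\ \le\ 4\pi n\int_0^\alpha (p')^2\,d\theta.$$
Substituting $p'=\Pi\sin\theta$ and bounding $\Pi^2\le y_0^2$ then gives $\int_0^\alpha (p')^2\,d\theta\le y_0^2\int_0^\alpha \sin^2\theta\,d\theta = y_0^2\big(\frac{\alpha}{2}-\frac{\sin 2\alpha}{4}\big)$, and plugging this in with $\alpha=\pi/n$ and $y_0=L(C)/n$ produces exactly $\frac{2\pi^2}{n^2}L(C)^2\big(1-\frac{\sin(2\pi/n)}{2\pi/n}\big)$.

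The main obstacle is bringing the two geometric functionals into the clean one-dimensional variational form above; the key observation that makes everything collapse is the identity $p'=\Pi\sin\theta$, which simultaneously supplies the boundary conditions $p'(0)=p'(\alpha)=0$ and lets the skeleton length enter only through the crude but (as it turns out) sharp-in-form estimate $\Pi^2\le y_0^2$. The points requiring care are checking that the origin may be placed inside $D$ so that the Green formulas of Proposition \ref{prop_rho} apply with $p>0$, verifying that the boundary terms in the integrations by parts genuinely vanish, and confirming that the $2n$-fold tiling of $[0,2\pi]$ by the sector $[0,\pi/n]$ is exploited consistently for both $\sigma$ and $\vol(D)$.
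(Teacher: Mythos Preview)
Your proof is correct and arrives at exactly the same bounds as the paper's argument, via the same three key ingredients: Bonnesen's inequality for the left, the identity $p'(\theta)=\Pi(\theta)\sin\theta$ and the crude bound $\Pi^2\le y_0^2$ for the middle, and $1-\sin x/x\le x^2/6$ for the right.

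The one genuine difference is how you pass from the support function to the inequality $\sigma^2-4\pi\vol(D)\le 4\pi n\int_0^{\pi/n}(p')^2\,d\theta$. The paper does this globally via the Fourier expansion $p(\theta)=a_0+\sum_{k\ge1}a_k\cos(kn\theta)$ (encoding the $G_n$-symmetry) and Parseval, writing $\sigma^2-4\pi\vol(D)=2\pi^2\sum_{k\ge1}a_k^2(n^2k^2-1)\le 2\pi\int_0^{2\pi}(p')^2\,d\theta$. You instead localize to the fundamental sector, integrate by parts using the Neumann conditions $p'(0)=p'(\pi/n)=0$, and obtain the exact identity $\sigma^2-4\pi\vol(D)=4\pi n\big[\int_0^{\pi/n}(p')^2-\int_0^{\pi/n}(p-\bar p)^2\big]$, then drop the non-positive variance term. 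These are equivalent (your variance term is precisely what Parseval captures), but your route is slightly more elementary and makes transparent that no spectral information beyond the mean is needed. The paper's Fourier formulation, on the other hand, makes explicit how the $n$-symmetry forces the first nontrivial mode to be at frequency $n$. Both approaches lead to the identical estimate $4\pi n\int_0^{\pi/n}\Pi^2\sin^2\theta\,d\theta$, and from there the computations coincide.
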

 \begin{proof}
The lower bound on $\sigma^2(C) - 4\pi \vol(D)$ is just Bonnesen inequality \eqref{Bonne}. For the upper bound, let
 $\rho$ be the  curvature function associated to $C$, and $p(\te)= \langle C(\te), \nu(\te) \rangle$ the support function.
Using computation in \eqref{r} we have 
$$ p(\te) =  -\Pi(\te)\cos(\te)  + r(\te) ,$$
$$ p'(\te) = \Pi(\te) \sin(\te)   $$ 
 $$ p''(\te) + p(\te) = \frac{1}{\rho(\te)}. $$
 By symmetry of $C$  we have the following Fourier series  of $p$:
 $$ p(\te) = a_0 + \sum_{k \ge 1} a_k \cos(kn\te) .$$ 
 Also $  \vol(D) = \frac12 \int_0^{2\pi} p(\te) (p(\te) + p''(\te)) d\te =  \pi a_0^2 + \frac{\pi}{2} \sum_{k \ge 2} a^2_k (1-n^2k^2  )$
 and $a_0 =\frac{1}{2\pi} \int  p(\te) d\te = \frac{1}{2\pi} \sigma (C).$ 
 Hence
 $$\begin{aligned} \sigma^2(C) - 4\pi \vol(D)  &= 2 \pi^2 \sum_{k\ge 1}a^2_k(n^2k^2 -1  )\\
 &\le 2 \pi \int_0^{2\pi} p'(\te)^2 d \te \\
 &= 4n\pi \int_0^{\pi/n} \Pi^2(\te) \sin^2(\te) d\te \\
 & \le 4n \pi \left( \frac{ L(C)}{n}\right)^2 \int_0^{\pi/n} \sin^2(\te) d\te \\
 &= \frac{2\pi^2 }{n^2}  L(C)^2 \left(1-\frac{\sin\left(\frac{2\pi}{n}\right)}{\frac{2\pi}{n}}\right)\\
&\le  \frac{4\pi^4}{3n^4}L(C)^2
 \end{aligned}$$
since $1-\sin(x)/x\leq x^2/6$ for any $x\in \RR$ (with the usual convention $\sin(0)/0=1$).
  \end{proof}
  \begin{figure}[h]
\begingroup%
  \makeatletter%
  \providecommand\color[2][]{%
    \errmessage{(Inkscape) Color is used for the text in Inkscape, but the package 'color.sty' is not loaded}%
    \renewcommand\color[2][]{}%
  }%
  \providecommand\transparent[1]{%
    \errmessage{(Inkscape) Transparency is used (non-zero) for the text in Inkscape, but the package 'transparent.sty' is not loaded}%
    \renewcommand\transparent[1]{}%
  }%
  \providecommand\rotatebox[2]{#2}%
  \newcommand*\fsize{\dimexpr\f@size pt\relax}%
  \newcommand*\lineheight[1]{\fontsize{\fsize}{#1\fsize}\selectfont}%
  \ifx\svgwidth\undefined%
    \setlength{\unitlength}{213.63245927bp}%
    \ifx\svgscale\undefined%
      \relax%
    \else%
      \setlength{\unitlength}{\unitlength * \real{\svgscale}}%
    \fi%
  \else%
    \setlength{\unitlength}{\svgwidth}%
  \fi%
  \global\let\svgwidth\undefined%
  \global\let\svgscale\undefined%
  \makeatother%
  \begin{picture}(1,0.84698057)%
    \lineheight{1}%
    \setlength\tabcolsep{0pt}%
    \put(0,0){\includegraphics[width=\unitlength,page=1]{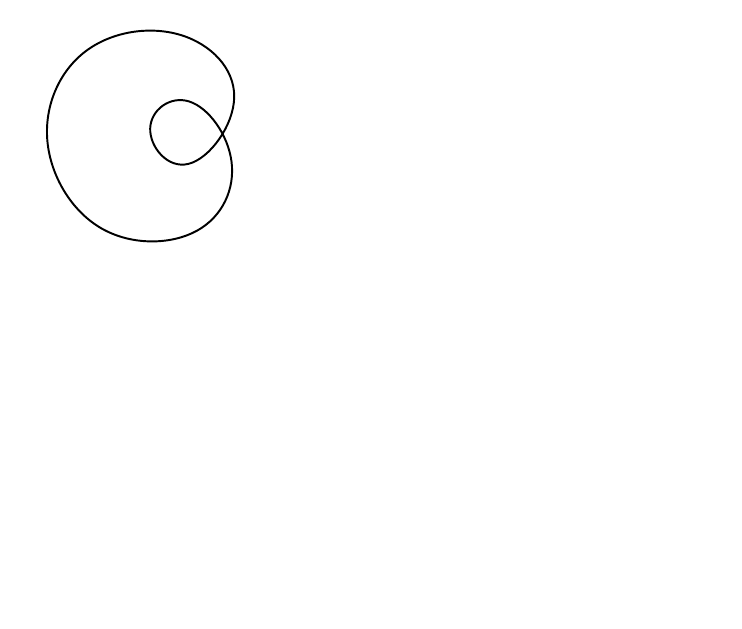}}%
    \put(-0.00333905,0.79756283){\color[rgb]{0,0,0}\makebox(0,0)[lt]{\lineheight{1.25}\smash{\begin{tabular}[t]{l}a)\end{tabular}}}}%
    \put(-0.00591487,0.35265417){\color[rgb]{0,0,0}\makebox(0,0)[lt]{\lineheight{1.25}\smash{\begin{tabular}[t]{l}b)\end{tabular}}}}%
    \put(0,0){\includegraphics[width=\unitlength,page=2]{dessin1.pdf}}%
    \put(0.24061069,0.01535956){\color[rgb]{0,0,0}\makebox(0,0)[lt]{\lineheight{1.25}\smash{\begin{tabular}[t]{l}$t=0$\end{tabular}}}}%
    \put(0,0){\includegraphics[width=\unitlength,page=3]{dessin1.pdf}}%
    \put(0.7342763,0.01535956){\color[rgb]{0,0,0}\makebox(0,0)[lt]{\lineheight{1.25}\smash{\begin{tabular}[t]{l}$t=T_c$\end{tabular}}}}%
  \end{picture}%
\endgroup
\caption{\label{dessin}}
\end{figure}
  

\end{document}